\newcommand{\Z}{{\mathbb{Z}}}
\newcommand{\R}{{\mathbb{R}}}
\newcommand{\e}{\varepsilon}
\newcommand{\pushout}{\ar@{}[ul(0.35)]|-{\ulcorner}}
\newcommand{\pullback}{\ar@{}[dr(0.35)]|-{\lrcorner}}
\newcommand{\add}{\mathop{\text{add}}}
\DeclareMathOperator{\Rep}{Rep}
\DeclareMathOperator{\rep}{rep}
\newcommand{\repAR}{\rep_k(A_\R)}
\newcommand{\diag}{\begin{tikzpicture}\draw (0,0) -- (0.3,0); \filldraw (0,0) circle[radius=.3mm]; \filldraw (.3,0) circle[radius=.3mm]; \end{tikzpicture}}
\newcommand{\ReppwfAR}{\Rep_k^{\rm{pwf}}(A_\R)}
\newcommand{\MM}{\mathop{\boldsymbol{\Gamma}}}
\newcommand{\DbAR}{{\mathcal{D}^b(A_\R)}}
\newcommand{\CAR}{{\mathcal{C}(A_\R)}}
\newcommand{\KsplitCAR}{{K_0^{\text{split}}(\CAR)}}
\DeclareMathOperator{\Hom}{Hom}
\newcommand{\supp}{\mathop{\text{supp}}}
\newcommand{\Ext}{\mathop{\text{Ext}}}
\newcommand{\Ind}{\text{Ind}}
\newtheorem{lemma}{Lemma}[subsection]
\newtheorem{proposition}[lemma]{Proposition}
\newtheorem{theorem}[lemma]{Theorem}
\newtheorem{cor}[lemma]{Corollary}
\numberwithin{figure}{subsection}
\numberwithin{table}{subsection}
\theoremstyle{definition}
\newtheorem{definition}[lemma]{Definition}
\newtheorem{remark}[lemma]{Remark}
\newtheorem{example}[lemma]{Example}
\newtheorem{notation}[lemma]{Notation}
\newtheorem{construction}[lemma]{Construction}
\title[Continuous Quivers of Type $A$ (III)]{Continuous Quivers of Type $A$ (III) Embeddings of Cluster Theories}
\author{Kiyoshi Igusa}
\address{Brandeis University in Waltham, MA, USA}
\email{igusa@brandeis.edu}
\author{Job~D.~Rock}
\address{Ghent University in Ghent, Beligum}
\email{jobdrock@gmail.com}
\author{Gordana Todorov}
\address{Northeastern University in Boston, MA, USA}
\email{g.todorov@northeastern.edu}
\date{13 October 2020}
\begin{document}

\subjclass[2020]{Primary: 13F60, Secondary: 16G20.\\\textcolor{white}{.}
\kern 3.41mm \textbf{\textit{Keywords --}} continuous cluster category; continuous quivers; derived equivalence, cluster theories}
\keywords{continuous cluster category; continuous quivers; cluster theories}

\maketitle

\begin{abstract}
We continue the work started in parts (I) and (II).
In this part we classify which continuous type $A$ quivers are derived equivalent and introduce the new continuous cluster category with $\mathbf E$-clusters, which are a generalization of clusters.
In the middle we provide a rigorous connection between the previous construction of the continuous cluster category and the new construction.
We conclude with the introduction of a cluster theory, generalizing the notion of a cluster structure.
Using this new notion, we demonstrate how one embeds known type $A$ cluster theories into the new $\mathbf E$-cluster theory in a way compatible with mutation.
This is part (III) in a series of work that will conclude with a continuous generalization of mutation for cluster theories.
\end{abstract}


\section{Introduction}
\subsection{History}
Cluster algebras were introduced by Fomin and Zelevinksy in \cite{ClusterAlgebrasI, ClusterAlgebrasII, ClusterAlgebrasIII, ClusterAlgebrasIV}.
One application is in particle physics to study scattering diagrams (see work of Golden, Goncharov, Spradlin, Vergud, and Volovicha in \cite{scatteringdiagrams}).
Two different categorifications of the cluster structure in a cluster algebra were introduced independently in \cite{BMRRT} and \cite{CalderoChapatonSchiffler2006}.
The first and third authors introduced a continuous version of a cluster category in \cite{IgusaTodorov1}.

In Part (I) \cite{IgusaRockTodorov} the authors defined continuous quivers of type $A$, generalizing quivers of type $A$, and proved results about decomposition of pointwise finite-dimensional representations and the category of finitely generated representations.
In Part (II) \cite{Rock} the second author introduced the Auslander--Reiten space, a continuous analog to the Auslander--Reiten quiver, and proved results relating the AR-space to extensions in the representation category and distinguished triangles in the derived category.
This is Part (III) of this series; the goal is to form a continuous generalization of clusters and mutations.

\subsection{Contributions}
We begin this work with classifying the derived categories of representations of continuous quivers of type $A$.
In the finite case, the derived categories for any orientation of an $A_n$ quiver are all equivalent \cite{Happel}.
This is not true for the continuum.
\begin{theorem}(Theorem \ref{thm:derived classification})
Let $A_\R$ and $A'_\R$ be possibly different orientations of continuous type $A$ quivers.
Then  $\DbAR$ and $\mathcal{D}^b(A'_\R)$ are equivalent as triangulated categories if and only if one of the following holds:
\begin{enumerate}
\item both $A_\R$ and $A'_\R$ have finitely many sinks and sources,
\item the sinks and sources of $A_\R$ and $A'_\R$ are each bounded on exactly one side, or
\item the sinks and sources of $A_\R$ and $A'_\R$ are unbounded on both sides.
\end{enumerate}
\end{theorem}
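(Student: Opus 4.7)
The plan is to prove the two directions separately, treating this as a classification by a derived invariant keyed to the order type of the sinks-and-sources set inside $\R$.

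\textbf{For the ``if'' direction}, I would construct explicit derived equivalences within each class by choosing a canonical representative and connecting any member of the class to it via reflection functors, appropriately adapted to the continuous setting. In case (1), with finitely many sinks and sources, a finite sequence of reflections should reduce any $A_\R$ to a straight orientation, extending the Bernstein--Gelfand--Ponomarev / Happel paradigm. In case (2), where sinks and sources are bounded on exactly one side, I would apply reflections sequentially starting from the bounded end and pass to a colimit; the bounded end provides a stable anchor ensuring that the limit of equivalences is well defined on $\DbAR$. Case (3) is the most delicate: without any anchor, iterated reflections do not converge, so I expect one must construct the equivalence directly on indecomposables, using the Auslander--Reiten space description from Part II and checking compatibility with distinguished triangles.

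\textbf{For the ``only if'' direction}, the key is to exhibit a triangulated invariant of $\DbAR$ that records the order type of the set of sinks and sources of $A_\R$. Since the AR-space developed in Part II encodes both the indecomposable objects and the irreducible morphisms, and since sinks and sources of $A_\R$ correspond to distinguished ``turning points'' or local extrema in this space, I would argue that the ordered set of such turning points (modulo the shift) is preserved up to order isomorphism by any triangulated equivalence. The trichotomy finite / one-sided infinite / two-sided infinite is then an invariant, and the three conditions of the theorem correspond exactly to the three possible order types.

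\textbf{The main obstacle} is case (3) in the ``if'' direction. The natural approach via reflection functors fails because no interior point serves as a starting anchor: iteration from any chosen point would need to converge simultaneously in both the positive and negative directions, which reflection-functor towers generally do not. I expect the proof instead constructs the triangulated functor directly by matching AR-spaces and verifying preservation of triangles, or else produces a sufficiently large tilting-type object mediating between any two representatives of class (3). A secondary difficulty is formulating the ``canonical representative'' in cases (2) and (3) carefully enough that the resulting equivalence is genuinely triangulated, not merely an equivalence of underlying abelian categories or homotopy categories lifting along $\DbAR$.
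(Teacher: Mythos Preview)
Your overall architecture (prove equivalence within each class, exhibit an invariant separating the classes) matches the paper, but the specific methods diverge and your difficulty assessment is inverted.

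\textbf{On the ``if'' direction.} The paper does not use reflection functors at all. For cases (1) and (2) it builds the equivalence $\mathcal F$ directly on the AR-space: indecomposables are parametrized by a point in $\R\times[-\tfrac{\pi}{2},\tfrac{\pi}{2}]$ together with a position label $1,2,3,4$, and $\mathcal F$ is defined by a horizontal translation $T$ of this strip (trivial in case (1), chosen to align the unique missing $\lambda$-graph in case (2)). The hard work is then Theorem~\ref{thm:geometric cones}, which shows that cones of morphisms are ``determined by geometry and $f$'', so $\mathcal F$ automatically preserves distinguished triangles. Case (3), which you flag as most delicate, is in fact the easiest: by \cite[Proposition~3.2.1]{IgusaRockTodorov} the \emph{abelian} categories $\repAR$ and $\rep_k(A'_\R)$ are already equivalent, so the derived equivalence is immediate. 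Your reflection-functor plan for case (2) has a real gap: an infinite composite of derived equivalences needs a convergence or stabilization argument you have not supplied, and there is no obvious one here since every individual reflection moves all indecomposables.

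\textbf{On the ``only if'' direction.} Your idea of extracting the order type of sinks and sources as a triangulated invariant is in the right spirit but too vague as stated: sinks and sources are not themselves objects of $\DbAR$, so you must say precisely which categorical feature witnesses them. The paper's argument is more concrete. Using Lemma~\ref{lem:slope lemma} (an equivalence preserves line segments of slope $\pm(1,1)$ in the AR-space), it tracks the image of the projective line $\mathcal P$ from the straight orientation. This image must again be an almost-complete line segment of slope $(1,1)$; but the image of $\MM^b$ in the target has $\lambda$-function graphs deleted whenever there are infinitely many sinks and sources, forcing $\MM^b(\mathcal G\mathcal P)$ to have at least two path components, which is impossible. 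A similar path-component count (two versus three) separates classes (2) and (3). So the invariant is not the abstract order type but the maximal number of connected pieces into which a slope-$(1,1)$ segment can be cut by the missing $\lambda$-graphs.
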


We then define the new continuous cluster category $\CAR$ as the orbit category of the doubling of $\DbAR$ via almost-shift (same method as in \cite{IgusaTodorov1}).
We define $\mathbf E$-clusters (Definition \ref{def:E-cluster}) in $\CAR$ and prove that if an element is $\mathbf E$-mutable then the choice is unique and yields another $\mathbf E$-cluster.
In an $\mathbf E$-cluster $T$ there may exist an indecomposable $X$ such that $X$ is not $\mathbf E$-mutable in $T$ but is $\mathbf E$-mutable in some other $\mathbf E$-cluster $T'$.
Thus we will not call an indecomposable frozen if it is not $\mathbf E$-mutable in some particular $\mathbf E$-cluster.
\begin{theorem}(Theorem \ref{thm:mutation theorem})
Let $T$ be a $\mathbf E$-cluster and $V\in T$ be $\mathbf E$-mutable with choice $W$.
Then $(T\setminus \{V\})\cup\{W\}$ is a $\mathbf E$-cluster and any other choice $W'$ for $V$ is isomorphic to $W$.
\end{theorem}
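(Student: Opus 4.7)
The plan is to adapt the classical cluster mutation arguments of \cite{BMRRT} to the $\mathbf E$-cluster setting in $\CAR$, proving both assertions in tandem by analyzing the set of indecomposables that complete the ``almost $\mathbf E$-cluster'' $T \setminus \{V\}$.

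First I would unpack Definition \ref{def:E-cluster} and isolate its structural content: an $\mathbf E$-cluster is a maximal set of indecomposables that are pairwise $\mathbf E$-compatible, and $V$ being $\mathbf E$-mutable with choice $W$ means that $W \not\cong V$ is an indecomposable making the swapped set pairwise $\mathbf E$-compatible. With this in hand, consider
\[
S := \{X \text{ indecomposable in } \CAR : \mathbf{E}(X,U) = 0 \text{ for every } U \in T\setminus\{V\}\}.
\]
Maximality of $T$ forces $V \in S$, and every $X \in S$ either $\mathbf E$-commutes with $V$ or not. The central claim is that, up to isomorphism, $S$ contains exactly two indecomposables that fail to $\mathbf E$-commute with $V$: namely $V$ itself and a unique class represented by $W$. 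Granting this claim, uniqueness of the choice up to isomorphism is immediate, and maximality of $(T \setminus \{V\}) \cup \{W\}$ follows: any indecomposable $X$ outside this set either fails the $\mathbf E$-vanishing with some $U \in T\setminus\{V\}$ (so $X \notin S$), or lies in $S$ and is $\mathbf E$-compatible with $V$, in which case $T \cup \{X\}$ would contradict maximality of $T$.

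The hard part will be proving the ``exactly two classes'' statement. My strategy is to leverage the Auslander--Reiten space developed in Part (II) \cite{Rock}: the vanishing condition $\mathbf E(X,U) = 0$ should cut out a concrete subregion of the AR-space that depends on $U$, and intersecting these regions over all $U \in T\setminus\{V\}$ should collapse the locus of $\mathbf E$-incompatibility with $V$ to precisely the orbit $\{V, W\}$. In parallel, I would produce two exchange-like distinguished triangles in $\DbAR$ with third terms $V$ and $W$, whose middle terms come from minimal $\add(T\setminus\{V\})$-approximations of $V$; the cone construction then determines $W$ up to isomorphism from $V$ and $T\setminus\{V\}$, which supplies a second, more categorical proof of uniqueness and simultaneously confirms the exchange-pair structure underlying mutation. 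Combining the geometric AR-space picture with the triangulated approximation argument should yield both halves of the theorem.
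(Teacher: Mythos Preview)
Your high-level outline is sound and aligns with the paper's strategy: both argue that any completion $X$ of $T\setminus\{V\}$ which is incompatible with $V$ must be isomorphic to $W$, and that any $X$ compatible with both $T\setminus\{V\}$ and $W$ is forced back into $T$. There is a small slip in your formulation: $V$ is $\mathbf E$-compatible with itself (since $\Ext^1(V,V)=0$ for interval indecomposables), so your set $S$ should contain exactly \emph{one} class incompatible with $V$, namely $W$, not two.

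The substantive gap is in your proposed execution. You plan to produce exchange triangles via minimal $\add(T\setminus\{V\})$-approximations of $V$, in the style of \cite{BMRRT}. This does not work here: $\CAR$ is not $2$-Calabi--Yau, there is no Serre functor (Section~\ref{sec:new continuous cluster category} explains why), and---more fatally---$\add(T\setminus\{V\})$ is generally not functorially finite, so the approximations you need may fail to exist. The paper sidesteps this entirely. Instead of approximations, it uses Proposition~\ref{prop:incompatible rectangles} to read off the exchange triangle $V\to U_1\oplus U_2\to W\to$ directly from the (almost complete) rectangle in the AR-space of $\repAR$, and Proposition~\ref{prop:rectangle inclusion} then shows $U_1,U_2\in T$ automatically.

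Your AR-space sketch (``intersecting regions over all $U\in T\setminus\{V\}$ should collapse the locus'') is pointing in the right direction but is far too vague to be a proof. The paper makes this precise with two concrete geometric lemmas. Lemma~\ref{lem:compatible rectangle} shows that if $\{X,W,U_1,U_2\}$ is $\mathbf E$-compatible then so is $\{X,V\}$; this immediately gives maximality of $(T\setminus\{V\})\cup\{W\}$, since any such $X$ then lies in $T$. Lemma~\ref{lem:weird compatible rectangle thing} handles uniqueness by an explicit case analysis on the relative positions of the rectangles for $(V,W)$ and $(V,W')$ in the AR-space, showing that if $W\not\cong W'$ then one of $\{W,U_1',U_2'\}$, $\{W',U_1,U_2\}$, or $\{U_1,U_2,U_1',U_2'\}$ fails $\mathbf E$-compatibility---contradicting $U_i,U_i'\in T$. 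To fix your argument, replace the approximation step with these rectangle lemmas; the geometry does all the work that Serre duality and functorial finiteness would do in the classical setting.
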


The new construction is not unrelated to the previous construction in \cite{IgusaTodorov1}.
In fact, the new continuous cluster category and the previous continuous cluster category are related by a localization of the derived category $\DbAR$ when $A_\R$ has finitely-many sinks and sources.
In the previous construction, the category playing the role of the derived category was denoted $\mathcal D_\pi$.
\begin{theorem}(Theorems \ref{thm:triangulated equivalent} and \ref{thm:localized cluster category})

Assume $A_\R$ has finitely-many sinks and sources.
Then there exist triangulated localizations $\DbAR\to \DbAR[\mathcal M^{-1}]$ and $\CAR\to \CAR[\mathcal N^{-1}]$ and triangulated equivalences of categories $G:\DbAR[{\mathcal M}^{-1}]\to \mathcal D_\pi$ and $H:\CAR[\mathcal N^{-1}]\to \mathcal C_\pi$.
\end{theorem}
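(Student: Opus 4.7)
The plan is to produce the localizing classes $\mathcal{M}$ and $\mathcal{N}$ by identifying precisely which data in $\DbAR$ and $\CAR$ is ``extra" relative to the straight-orientation constructions $\mathcal{D}_\pi$ and $\mathcal{C}_\pi$ of \cite{IgusaTodorov1}. Because $A_\R$ has only finitely many sinks and sources, the Auslander--Reiten space from Part (II) decomposes into finitely many strips whose combinatorics can be aligned with the AR-space of the straight orientation $A_\R^\pi$; this alignment is the geometric engine driving the whole argument.

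First I would use the indecomposable classification and $\Hom$-space description from Part (II) to set up a bijection between isomorphism classes of indecomposables of $\DbAR$ and of $\mathcal{D}_\pi$, up to identifying a small family of ``boundary" indecomposables that are artifacts of the chosen orientation. I would then define a triangulated functor $\tilde G : \DbAR \to \mathcal{D}_\pi$ additively from this bijection, with morphism action read off from the AR-space intervals; the distinguished-triangle description in Part~(II) is exactly what is needed to check that $\tilde G$ is triangulated. Setting $\mathcal{M}$ to be the class of morphisms sent by $\tilde G$ to isomorphisms, the universal property yields the desired $G : \DbAR[\mathcal{M}^{-1}] \to \mathcal{D}_\pi$, and the bijection on indecomposables plus the matching $\Hom$-computation gives essential surjectivity and full faithfulness.

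For the cluster statement I would exploit the fact that both $\CAR$ and $\mathcal{C}_\pi$ are built by the same recipe: doubling the derived category and forming the orbit category under almost-shift. One checks that $\tilde G$ is equivariant with respect to the almost-shift autoequivalences on the two sides, so $\mathcal{M}$ together with its almost-shift translates generates a class $\mathcal{N}$ in $\CAR$ for which $\tilde G$ descends to a triangulated functor $H : \CAR[\mathcal{N}^{-1}] \to \mathcal{C}_\pi$. Essential surjectivity and full faithfulness descend from $G$, using the standard description of morphisms in an orbit category as a direct sum of shifted morphisms downstairs.

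The main obstacle will be verifying that $\mathcal{M}$ is a triangulated multiplicative system in the sense that makes $\DbAR[\mathcal{M}^{-1}]$ triangulated and that the localization functor itself is triangulated; concretely, the Ore conditions and the $2$-of-$3$ closure on distinguished triangles must be checked using only the geometric data of the AR-space. A secondary but nontrivial point is ensuring that the formation of $\mathcal{N}$ interacts cleanly with the orbit construction, so that $\CAR[\mathcal{N}^{-1}]$ may be identified with the orbit category of $\DbAR[\mathcal{M}^{-1}]$ under the induced almost-shift; this may require showing that $\mathcal{N}$ is stable under the orbit-category relations rather than simply inheriting the property from $\mathcal{M}$.
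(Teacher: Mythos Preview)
Your proposal is on the right track but reverses the paper's order of operations and, more seriously, mischaracterizes the discrepancy between $\DbAR$ and $\mathcal D_\pi$. You describe the extra data in $\DbAR$ as ``a small family of boundary indecomposables,'' but this misses half the picture. In the AR-space of $\DbAR$ every \emph{interior} point $(x,y)$ with $|y|<\frac{\pi}{2}$ carries four nonisomorphic indecomposables (positions $1,2,3,4$), whereas $\mathcal D_\pi$ has a single object at the corresponding point. So even after discarding the degenerate objects you do not have a bijection on indecomposables; you have a $4$-to-$1$ collapse on interior points \emph{and} the degenerate boundary objects to kill. This makes your proposed triangulated functor $\tilde G:\DbAR\to\mathcal D_\pi$ genuinely delicate to define: Hom-supports in $\DbAR$ depend on position (the four positions at a point are linked by Auslander--Reiten triangles whose third terms are degenerate), so you would have to specify how $\tilde G$ acts on all the maps among the four positions and then check functoriality and the triangulated axioms by hand before you ever get to the Ore conditions.

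The paper sidesteps all of this by working in the opposite order. It first identifies the full subcategory of degenerate objects as a \emph{null system}, so that the associated class $\mathcal NQ$ automatically admits a calculus of fractions---your ``main obstacle'' disappears for free. The key observation is that localizing at this null system already forces the four positions at each interior point to become isomorphic (Lemma~\ref{lem:localization collapse}), because the cone of any map between two positions at the same point is degenerate. Only \emph{after} localizing does the paper define $G$ on $\DbAR[\mathcal NQ^{-1}]$, and at that stage there genuinely is a bijection on indecomposables, so $G$ is easy to write down and check. For the cluster statement the paper shows directly that $\CAR[\mathcal NQ^{-1}]$ is triangulated equivalent to the orbit category of $\DbAR[\mathcal NQ^{-1}]$ and composes with the equivalence induced by $G$; this avoids having to build an equivariant $\tilde G$ upstairs at all.
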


The key difference between these new $\mathbf E$-clusters and existing cluster structures (including the previous construction) is the weakening of the requirement for mutable elements.
Instead of \emph{there exists a unique choice} we require \emph{there exists none or one}.
This leads to our final contribution: that of cluster theories (Definition \ref{def:cluster theory}) which generalize cluster structures.
In this definition a cluster theory on a skeletally-small Krull-Schmidt additive category $\mathcal C$ is a groupoid $\mathscr T_{\mathbf P}(\mathcal C)$ induced by a pairwise compatibility condition $\mathbf P$ on $\Ind(\mathcal C)$.
In this language $\mathbf E$-clusters form the $\mathbf E$-cluster theory of $\CAR$ (Example \ref{xmp:IRT model}).
With this new definition we define a rigorous description of an embedding of cluster theories (Definition \ref{def:cluster embedding}).

{~}

\begin{theorem}\label{thm:thmD}(Theorems \ref{thm:finite embedding}, \ref{thm:infinite embedding}, \ref{thm:continuous embedding})
{~}
\begin{itemize}
\item For any $A_n$ quiver there is an embedding of cluster theories $\mathscr T_{\mathbf N_n}(\mathcal C(A_n))\to \mathscr T_{\mathbf E}(\CAR)$.
\item For the straight $A_\infty$ quiver there is an embedding of cluster theories $\mathscr T_{\mathbf N_\infty}(\mathcal C(A_\infty))\to \mathscr T_{\mathbf E}(\CAR)$.
\item There is an embedding of cluster theories from the previous construction's cluster theory in \cite{IgusaTodorov1} to our new construction: $\mathscr T_{\mathbf N_\R}(\mathcal C_\pi)\to \mathscr T_{\mathbf E}(\CAR)$.
\end{itemize}
\end{theorem}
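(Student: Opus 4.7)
The plan is to treat all three embeddings by a common recipe: construct an explicit functor into $\CAR$ at the level of the additive (or orbit) category, check that it sends indecomposables to indecomposables and $\mathbf N$-compatible pairs to $\mathbf E$-compatible pairs, and then appeal to Theorem \ref{thm:mutation theorem} to promote this to an embedding of cluster theories in the sense of Definition \ref{def:cluster embedding}.

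For the finite case I would fix the straight orientation of $A_\R$ and choose any order-preserving embedding of the $n$ vertices of $A_n$ into $\R$. The induced restriction/extension-by-zero functor $\rep_k(A_n)\to \Rep_k(A_\R)$ is exact and commutes with the shift, so by functoriality of $D^b$ and the universal property of the orbit category it descends to a triangulated functor $\mathcal D^b(A_n)\to \DbAR$ and then to $\mathcal C(A_n)\to \CAR$, provided one has matched the almost-shift used to form $\CAR$ with the shift used to form $\mathcal C(A_n)$. Indecomposables of $\mathcal C(A_n)$ correspond to diagonals of an $(n+3)$-gon; under the embedding these become non-crossing arcs of the continuous polygon that parametrizes indecomposables of $\CAR$, so the combinatorial $\mathbf N_n$ condition translates directly to the $\mathbf E$ condition. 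For the straight $A_\infty$ case the same argument applies with $\R$ replaced by a discrete cofinal subset; no limiting step is required because compatibility is pairwise and mutation is local.

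For the third embedding I would invert the localization $\CAR\to\CAR[\mathcal N^{-1}]\xrightarrow{H}\mathcal C_\pi$ of Theorem \ref{thm:localized cluster category} by choosing a section $s$ of $H$ on isomorphism classes of indecomposables. Since $\mathcal N$ consists precisely of the morphisms that identify the ``new'' $\mathbf E$-objects of $\CAR$ (those whose interval data involve boundary or half-open endpoint behaviour) with pre-existing objects, one can select for each indecomposable of $\mathcal C_\pi$ a preferred lift in $\CAR$, and arrange that $\mathbf N_\R$-compatibility in $\mathcal C_\pi$ lifts to $\mathbf E$-compatibility in $\CAR$. In all three cases, preservation of mutation then follows from the uniqueness clause of Theorem \ref{thm:mutation theorem}: if $W$ is the unique $\mathbf N$-choice for a mutable $V$ in the source, then the image of $W$ is a valid $\mathbf E$-choice for the image of $V$ in $\CAR$, hence the $\mathbf E$-mutation up to isomorphism.

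The main obstacle is the continuous case: one must show the section $s$ can be chosen \emph{coherently} so that the entire groupoid structure, not just individual clusters, is preserved. This reduces to checking that for each indecomposable of $\mathcal C_\pi$ its preferred lift in $\CAR$ is forced, up to isomorphism, by membership in any $\mathbf N_\R$-cluster containing it; this amounts to a case analysis on the finitely many types of extra indecomposables created by the new construction, together with a verification that each exchange triangle in $\mathcal C_\pi$ has a preimage exchange triangle in $\CAR$ built from the chosen representatives.
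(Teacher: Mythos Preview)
Your proposal has a genuine gap that the paper's proof addresses head-on and that your outline never confronts: the image of a $\mathbf P$-cluster under your injection of indecomposables is \emph{not} an $\mathbf E$-cluster. An $\mathbf E$-cluster is by definition a \emph{maximally} $\mathbf E$-compatible set (Definition \ref{def:E-cluster}), and $\CAR$ has uncountably many indecomposables. The image of an $\mathbf N_n$-cluster has $n$ elements; the image of an $\mathbf N_\infty$- or $\mathbf N_\R$-cluster is still far from maximal in $\CAR$. Definition \ref{def:cluster embedding} requires the functor $F$ to land in $\mathscr T_{\mathbf E}(\CAR)$, whose objects are $\mathbf E$-clusters, so you must produce for each source cluster a \emph{completion} to a maximal $\mathbf E$-compatible set, and then show that this completion is functorial under mutation. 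This is the bulk of the paper's work: it constructs an explicit ``padding'' set $T_\infty$ (Definition \ref{def:Tinfty}) in the finite and infinite cases, and the sets $\tau(a,b)$ together with case-dependent extras in the continuous case (Definition \ref{def:satisfactory}), then proves the resulting sets are genuinely maximal (Propositions \ref{prop:TE is an E-cluster}, \ref{prop:TEinfty is an E-cluster}, \ref{prop:old cluster new cluster}) and that mutation on the source side touches only the non-padding part (Lemmas \ref{lem:mutation is preserved}, \ref{lem:mutation is preserved A_infty}, \ref{lem:preserves mutation AR}). Your appeal to Theorem \ref{thm:mutation theorem} for uniqueness of mutation is fine once you know the target is an $\mathbf E$-cluster, but it does nothing to establish that.

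There is also a secondary problem with your functorial approach. The orbit category $\mathcal C(A_n)$ is formed via $\tau^{-1}[1]$, while $\CAR$ is formed via almost-shift on the doubling of $\DbAR$; there is no reason an extension-by-zero functor on representations should intertwine these two constructions, and you wave this off with ``provided one has matched the almost-shift\ldots''. The paper explicitly abandons the search for a functor between the cluster categories (see Section \ref{sec:problems}) precisely because such functors fail to preserve compatibility or Hom-support, and instead works purely combinatorially at the level of cluster theories: an injection on indecomposables, a check that compatibility is preserved in both directions (Propositions \ref{prop:noncrossings A_n}, \ref{prop:noncrossings A_infty}, Lemma \ref{lem:compatible compatibility}), and then the completion argument above. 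Your section-of-the-localization idea for $\mathcal C_\pi$ is closer in spirit to what the paper does, but it still misses the completion step, and the ``coherence'' issue you flag is not the real difficulty---the real difficulty is deciding which extra indecomposables to add and proving the result is maximal.
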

\noindent Theorem \ref{thm:thmD} uses the cluster theories from \cite{CalderoChapatonSchiffler2006}, \cite{HolmJorgensen}, and \cite{IgusaTodorov1}, respectively.

\subsection{Future Work}
The final part of this series addresses the continuous generalization of the mutation and the embedding of cluster theories from the groupoid of $A_\infty$ clusters using the completed infinity-gon (introduced in \cite{BaurGraz}).
In particular, continuous mutation allows for the encoding of the transfinite mutation introduced in the same paper along side new mutations of $\mathbf E$-clusters.
These embeddings and continuous structure should be useful as intuition, if not also as machinery, for a continuous cluster algebra that can handle the embeddings of all existing type $A$ cluster algebras.
The final part of this series also addresses generalizations of the geometric models of type $A$ cluster theories.

\subsection{Acknowledgements}
The authors would like to thank Ralf Schiffler for creating the Cluster Algebra Summer School in 2017 where the idea for this project was conceived.
The second author would like to thank Eric Hanson for helpful discussions.

The majority of this work was completed while the second author was a graduate student at Brandeis University and they would like to thank the university for its hospitality.

\section{Parts (I) and (II)}
In this section we recall the requisite information from parts (I) and (II) of this series.

\subsection{Continuous Quivers of Type $A$ and Their Representations}
In this subsection we recall relevant definitions and theorems about continuous quivers of type $A$ and their representations from Part (I) of this series \cite{IgusaRockTodorov}.

Fix a field $k$.
We begin with definitions of a continuous quivers of type $A$ and its representations.
However, we first use a picture to illustrate the concept.
\begin{displaymath}\begin{tikzpicture}
\draw[dotted] (-.5,-.5) -- (.1,.1);
\draw[dotted] (4.3,-.7) -- (4.8,-.2);
\draw[thick] (0,0) -- (1,1) -- (2.3,-0.3) -- (2.8, 0.2) -- (4, -1) -- (4.4, -0.6);
\draw[thick,->] (1,1) -- (.5,.5);
\draw[thick,->] (1,1) -- (1.65,0.35);
\draw[thick,->] (2.8,0.2) -- (2.55,-0.05);
\draw[thick,->] (2.8,0.2) -- (3.4,-0.4);
\draw[thick,->] (4.4,-0.6) -- (4.2,-0.8);
\filldraw (1,1) circle[radius=.5mm];
\filldraw (2.3,-.3) circle[radius=.5mm];
\filldraw (2.8,.2) circle[radius=.5mm];
\filldraw (4,-1) circle[radius=.5mm];
\draw (1,1) node [anchor=south] {$s_{2n-1}$};
\draw (2.3,-0.3) node[anchor=north] {$s_{2n}$};
\draw (2.8,0.2) node[anchor=south] {$s_{2n+1}$};
\draw (4,-1) node[anchor=north] {$s_{2n+2}$};
\end{tikzpicture}\end{displaymath}

\begin{definition}\label{def:AR}
A \ul{quiver of continuous type $A$}, denoted by $A_\R$, is a triple $(\R,S,\preceq)$, where:
\begin{enumerate}
\item 
\begin{enumerate}
\item$S\subset \R$ is a discrete subset, possibly empty, with no accumulation points.
\item Order on $S\cup\{\pm\infty\}$ is induced by the order of $\R$, and $-\infty<s<+\infty$ for $\forall s\in S$.
\item Elements of $S\cup\{\pm\infty\}$ are indexed by a subset of $\Z\cup\{\pm\infty\}$ so that $s_n$ denotes the element of 
$S\cup\{\pm\infty\}$ with index $n$. The indexing must adhere to the following two conditions:
\begin{itemize}
\item[i1] There exists $s_0\in S\cup\{\pm\infty\}$.
\item[i2] If $m\leq n\in\Z\cup\{\pm\infty\}$ and $s_m,s_n\in S\cup\{\pm\infty\}$ then for all $p\in\Z\cup\{\pm\infty\}$ such that $m\leq p \leq n$ the element $s_p$ is in $S\cup\{\pm\infty\}$.
\end{itemize}
\end{enumerate}
\item New partial order $\preceq$ on $\R$, which we call  the \ul{orientation} of $A_\R$, is defined as:
\begin{itemize}
\item[p1\ ] The $\preceq$ order between consecutive elements of $S\cup\{\pm\infty\}$ does not change.
\item[p2\ ] Order reverses at each element of $S$.
\item[p3\ ] If $n$ is even $s_n$ is a sink.
\item[p3'] If $n$ is odd $s_n$ is a source.
\end{itemize}
\end{enumerate}
\end{definition}

\begin{definition}\label{def:representation}
Let $A_\R=(\R,S\preceq)$ be a continuous quiver of type $A$.
A \ul{representation} $V$ of $A_\R$ is the following data:
\begin{itemize}
\item A vector space $V(x)$ for {each} $x\in \R$.
\item For every pair $y\preceq x$ in $A_\R$ a linear map $V(x,y):V(x)\to V(y)$ such that if $z\preceq y \preceq x$ then $V(x,z)=V(y,z)\circ V(x,y)$.
\end{itemize}
We say $V$ is \ul{pointwise finite-dimensional} if $\dim V(x) < \infty$ for all $x\in\R$.
\end{definition}

\begin{definition}\label{note:interval indecomposable}
Let $A_\R$ be a continuous quiver of type $A$ and $I\subset \R$ be an interval.
We denote by $M_I$ the representation of $A_\R$ where
\begin{align*}
M_I(x) &= \left\{ \begin{array}{ll} k & x\in I \\ 0 & \text{otherwise} \end{array} \right. &
M_I(x,y) &= \left\{ \begin{array}{ll} 1_k & y\preceq x\in I \\ 0 & \text{otherwise.} \end{array} \right.
\end{align*}
We call $M_I$ an \ul{interval indecomposable}.
\end{definition}

\begin{notation}\label{note:intervals}
Let $a<b\in\R\cup\{\pm \infty\}$.
By the notation $|a,b|$ we mean an interval subset of $\R$ whose endpoints are $a$ and $b$.
The $|$'s indicate that $a$ and $b$ may or may not be in the interval.
In practice this is (i) clear from context, (ii) does not matter in its context, or (iii) intentionally left as an unknown.
For example, we may write $M_{|a,b|}$ to refer to one of four possible interval indecomposables.
There is one exception: if $a$ or $b$ is $-\infty$ or $+\infty$, respectively, then the $|$ always means $($ or $)$, respectively.
\end{notation}

There are essentially three important results that we will use from \cite{IgusaRockTodorov}.
The first is about the structure of a pointwise finite-dimensional representation of a continuous quiver of type $A$.
It is important to note the last statement of the theorem recovers a result by Botnan and Crawley-Boevey in \cite{BotnanCrawleyBoevey}.

\begin{theorem}(Theorems 2.3.2 and 2.4.13 in \cite{IgusaRockTodorov})
Let $A_\R$ be a continuous quiver of type $A$.
For any interval $I\subset \R$, the representation $M_I$ of $A_\R$ is indecomposable.
Any indecomposable pointwise finite-dimensional representation of $A_\R$ is isomorphic to $M_I$ for some interval $I$.
Furthermore, for any indecomposable representations $V$ and $W$ of $A_\R$, $V\cong W$ if and only if $\supp V = \supp W$.
Finally, any pointwise finite-dimensional representation $V$ of $A_\R$ is the direct sum of interval indecomposables.
\end{theorem}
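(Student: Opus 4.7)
The plan is to prove the four assertions in turn, using crucially that the sink-source set $S$ is discrete (so any bounded interval meets $S$ in a finite set) and that $\preceq$ restricts to a linear order on each maximal interval in $\R\setminus S$. For the indecomposability of $M_I$, I would compute $\End(M_I)$ directly: an endomorphism of $M_I$ is a family of scalars $\phi_x\in k$ for $x\in I$, with $\phi_x=\phi_y$ whenever $y\preceq x$ and both lie in $I$. These constraints force $\phi$ to be constant on $I$: for any $a<b$ in $I$, the finite set $S\cap[a,b]$ partitions $[a,b]\cap I$ into finitely many $\preceq$-linear segments, and consecutive segments meet at a sink or source which, by axioms p2, p3, p3', is $\preceq$-comparable to both neighboring points in $I$. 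Chaining these comparabilities yields $\phi_a=\phi_b$, so $\End(M_I)=k$ is local and $M_I$ is indecomposable.

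The decomposition theorem (parts 2 and 4) is the main content. The plan is to reduce to Crawley-Boevey's interval decomposition theorem for pwf representations of a linearly ordered set. On each maximal open segment $J\subset\R\setminus S$, the restriction $V|_J$ is a pwf representation of a linear order, so it decomposes as $\bigoplus_\alpha M_{I_\alpha}$ with each $I_\alpha\subset J$. The main step is globalizing: at each sink $s\in S$, match the segment decompositions on the two sides by choosing a basis of the finite-dimensional space $V(s)$ compatible with the images of both incoming maps from the two adjacent segments; dually at each source, by choosing compatible bases of the cokernels of the outgoing maps. To assemble a global decomposition I would proceed by transfinite recursion along an enumeration of $S$ starting from $s_0$, extending the partial decomposition one element of $S$ at a time; the pwf hypothesis reduces each step to a finite-dimensional matching problem, and the axiom that $S$ is indexed by a contiguous subset of $\Z\cup\{\pm\infty\}$ lets the recursion propagate outward in both directions consistently.

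For the support criterion (part 3), once indecomposables are classified as the $M_I$, the result is immediate: with $V\cong M_I$ and $W\cong M_J$ one has $\supp V=I$ and $\supp W=J$, so $V\cong W$ iff $I=J$ iff $\supp V=\supp W$.

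The main obstacle I expect is the gluing across $S$ in the decomposition theorem, particularly when $S$ is unbounded: one needs the basis choices at consecutive elements of $S$ to remain mutually compatible with the intervals being built, and compatibility at one element of $S$ may restrict choices available at the next. The discreteness of $S$ (no accumulation points) is exactly the hypothesis that makes the transfinite recursion well-founded, since between any two consecutive sinks or sources the order is linear and the local Crawley-Boevey decomposition is available; without discreteness, pwf-ness alone would not suffice to keep each extension step finite-dimensional.
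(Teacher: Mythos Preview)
This theorem is not proved in the present paper: it is stated as a recall of Theorems~2.3.2 and~2.4.13 from Part~(I) of the series \cite{IgusaRockTodorov}, with the final decomposition statement also attributed to Botnan and Crawley-Boevey \cite{BotnanCrawleyBoevey}. There is therefore no proof here to compare your proposal against.

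On the merits of your outline: the computation $\End(M_I)\cong k$ via connectedness of $I$ under the relation generated by $\preceq$-comparability is correct and is the standard argument. The support criterion is, as you say, immediate once the classification is established.

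The decomposition argument is where your proposal is underspecified. The local Crawley-Boevey decompositions on each maximal $\preceq$-linear segment are not canonical; they are only unique up to automorphism. When you arrive at a sink $s$ in your recursion, you have a decomposition already fixed on one side (say the left), and you must now choose a decomposition on the right segment that is compatible at $V(s)$. Concretely, at a sink the images $L$ and $R$ of the two incoming maps give a configuration $L\cap R\subset L,R\subset L+R\subset V(s)$, and the basis of $V(s)$ inherited from the left-side decomposition need not be adapted to $R$. You can adjust the right-side decomposition by an automorphism to fix this, but you have not argued that such an adjustment always exists, nor that it does not disturb compatibility at the \emph{next} sink or source on the right. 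This is exactly the inductive step that carries the content, and ``finite-dimensional matching problem'' does not by itself guarantee it. (Also, since $S$ is indexed by a contiguous subset of $\Z$, ordinary two-sided induction suffices; no transfinite recursion is needed.) Your plan is the right shape, but the gluing step needs a genuine argument, which is essentially what the cited references supply.
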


\begin{definition}\label{def:pwf reps}
Let $A_\R$ be a continuous quiver of type $A$.
By $\ReppwfAR$ we denote the category of pointwise finite-dimensional representations of $A_\R$.
That is, for any representation $V$ in $\ReppwfAR$ and $x\in \R$, the $k$-vector space $V(x)$ is finite-dimensional.
\end{definition}
In Section \ref{sec:new continuous cluster category} we will need the following classification of indecomposable pointwise finite-dimensional projective representations.

\begin{theorem}\label{thm:projs}(Theorem 2.1.6 and Remark 2.4.16 in \cite{IgusaRockTodorov})
Let $P$ be a projective indecomposable in $\ReppwfAR$.
Then there exists $a\in \R\cup\{\pm\infty\}$ such that $P$ is isomorphic to one of the following indecomposables: $P_a$, $P_{(a}$, or $P_{a)}$:
\begin{align*}
P_a(x) &= \left\{\begin{array}{ll} k & x \preceq a \\ 0 & \text{otherwise} \end{array}\right. &
P_a(x,y) &= \left\{\begin{array}{ll} 1_k & y\preceq x \preceq a \\ 0 & \text{otherwise} \end{array}\right. \\
P_{(a}(x) &= \left\{\begin{array}{ll}  k & x\preceq a\text{ and } x>a\text{ in }\R  \\ 0 & \text{otherwise} \end{array}\right. &
P_{(a}(x,y) &= \left\{\begin{array}{ll} 1_k & y\preceq x \preceq a \text{ and } x,y>a \\ 0 & \text{otherwise} \end{array}\right. \\
P_{a)}(x) &= \left\{\begin{array}{ll}  k & x\preceq a\text{ and }x<a\text{ in }\R  \\ 0 & \text{otherwise} \end{array}\right. &
P_{a)}(x,y) &= \left\{\begin{array}{ll} 1_k & y\preceq x \preceq a \text{ and }x,y<a \\ 0 & \text{otherwise} \end{array}\right.
\end{align*}
\end{theorem}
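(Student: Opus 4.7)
The plan is to classify the projective indecomposables in $\ReppwfAR$ by first reducing to interval modules, then directly checking that $P_a$, $P_{(a}$, and $P_{a)}$ are projective, and finally showing every projective indecomposable is of one of these three forms. By the decomposition theorem already quoted (Theorems 2.3.2 and 2.4.13 in \cite{IgusaRockTodorov}), every indecomposable in $\ReppwfAR$ is isomorphic to some $M_I$, so it suffices to identify which intervals $I$ yield projective $M_I$.

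For the first direction, I would check that each of $P_a$, $P_{(a}$, $P_{a)}$ represents an exact functor on $\ReppwfAR$. For $a \in \R$ the formula $f \mapsto f_a(1)$ gives a natural isomorphism $\Hom(P_a, V) \cong V(a)$, and evaluation at a point is exact by the pointwise definition of exactness, so $P_a$ is projective. For $P_{(a}$ (and dually $P_{a)}$), the representing functor sends $V$ to the one-sided limit $\lim_{x \to a^+} V(x)$ along the $\preceq$-directed half-interval $\{x : x \preceq a,\ x > a\}$. Pointwise finite dimensionality makes the images $\im V(x,x_0) \subseteq V(x_0)$ a decreasing chain of finite-dimensional subspaces which must stabilize, so a Mittag-Leffler argument yields exactness of this one-sided limit and hence projectivity of $P_{(a}$ and $P_{a)}$.

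For the converse, I would show that a projective interval module $M_I$ has support $I$ of one of the listed forms. The key $\Ext$ computation is that a nonsplit short exact sequence $0 \to M_J \to M_L \to M_I \to 0$ exists precisely when some interval $L$ realizes $M_J$ as its $\preceq$-lower piece and $M_I$ as its $\preceq$-upper piece, i.e., when $J$ can be glued onto $I$ from below in the $\preceq$-order. Vanishing of all such $\Ext^1$'s therefore forces $I$ to be $\preceq$-downward closed in $\R$.

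Finally, I would catalog the $\preceq$-downward closed intervals of $\R$. Let $a = \sup_\preceq I$. If $a \in I$ then $I = \{x : x \preceq a\}$ and $M_I \cong P_a$. If $a \notin I$, then $a$ must be either a source of $A_\R$ or a point of $\{\pm\infty\}$, and a case analysis according to whether $I$ approaches $a$ from the $\R$-right or the $\R$-left yields $M_I \cong P_{(a}$ or $M_I \cong P_{a)}$ respectively. I expect the main obstacle to be the Mittag-Leffler step used to show $P_{(a}$ and $P_{a)}$ are projective, together with using the no-accumulation-point hypothesis on $S$ to ensure that the $\preceq$-supremum $a$ always lies in $\R \cup \{\pm\infty\}$ and that these three cases exhaust all possibilities.
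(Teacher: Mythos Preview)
The paper does not prove this theorem; it is stated purely as a citation of Theorem~2.1.6 and Remark~2.4.16 from Part~(I) of the series~\cite{IgusaRockTodorov}, with no accompanying proof environment. There is therefore nothing in this paper to compare your argument against.

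That said, your outline is a sound approach to the result. The Yoneda identification $\Hom(P_a,V)\cong V(a)$ and the Mittag--Leffler argument for the half-open projectives are the standard moves, and the pointwise finite-dimensionality hypothesis is exactly what makes the descending chain of images stabilize. For the converse, exhibiting a nonsplit extension whenever $I$ fails to be $\preceq$-downward closed is the right obstruction. Your final case analysis is slightly loose: the $\preceq$-supremum of a downward-closed interval need not be a source or $\pm\infty$ for it to be excluded from $I$ (e.g.\ $P_{a)}$ makes sense at any $a$ where the orientation is locally increasing toward $a$ from the left), and the no-accumulation hypothesis on $S$ is what guarantees that the support of any $P_a$, $P_{(a}$, $P_{a)}$ is genuinely an interval rather than a more complicated set. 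But these are refinements of detail, not of strategy.
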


\begin{definition}\label{def:fg reps}
Let $A_\R$ be a continuous quiver of type $A$.
By $\repAR$ we denote the full subcategory of $\ReppwfAR$ whose objects are finitely generated by the indecomposable projectives in Theorem \ref{thm:projs}.
In particular, the indecomposable projectives in both categories are the same.
\end{definition}

\begin{theorem}\label{thm:little rep}(Theorem 3.0.1 in \cite{IgusaRockTodorov})
Let $A_\R$ be a continuous quiver of type $A$. The following hold.
\begin{enumerate}
\item For any pair of indecomposable representations $M_I$ and $M_J$ in $\repAR$ \begin{displaymath} \Hom(M_I,M_J)\cong k \text{ or }\Hom(M_I,M_J)=0. \end{displaymath}
\item The category $\repAR$ is abelian.
\item The category $\repAR$ is Krull-Schmidt but not artinian.
\item The global dimension of $\repAR$ is 1.
\item For any indecomposables $M_I$ and $M_J$ in $\repAR$, either $\Ext^1(M_I,M_J)\cong k$ or $\Ext^1(M_I,M_J)=0$.
\item The category $\repAR$ has some, but not all, Auslander--Reiten sequences (fully classified in \cite[Table 3.1.3]{Rock}).
\end{enumerate}
\end{theorem}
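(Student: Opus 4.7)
The plan is to reduce each of the six claims to explicit computations on the interval indecomposables $M_I$, which by the classification recalled above exhaust the indecomposables of $\ReppwfAR$ and, after restricting the allowed intervals appropriately, of $\repAR$. Parts (1), (4), and (5) are direct computations; (2) and (3) are formal consequences once the interval decomposition is in hand; (6) carries almost all of the technical difficulty.

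For (1), fix interval indecomposables $M_I, M_J$ and let $f : M_I \to M_J$. Each component $f_x$ is a scalar because $\dim M_I(x), \dim M_J(x) \le 1$, and compatibility with the structure maps (all equal to $1_k$ on their supports) forces all nonzero $f_x$ to share one common scalar while simultaneously pinning down the subset of $I \cap J$ on which $f_x$ can be nonzero. A short case analysis over the end types of $I$ and $J$ shows this constrained locus is either empty, giving $\Hom(M_I,M_J) = 0$, or admits a unique free scalar, giving $\Hom(M_I,M_J) \cong k$. For (5), the same method handles $\Ext^1$ once one builds, from the projectives of Theorem \ref{thm:projs}, a projective resolution $0 \to P^{-1} \to P^0 \to M_I \to 0$ whose two terms are sums of at most two indecomposable projectives determined by the endpoints of $I$ relative to the nearest sinks and sources. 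Applying $\Hom(-, M_J)$ and invoking (1) reduces $\Ext^1(M_I, M_J)$ to the cokernel of a map between spaces each of dimension at most $1$, hence itself of dimension at most $1$. The two-term resolution simultaneously proves (4): $\repAR$ is hereditary, so $\mathrm{gl.dim} \le 1$, with equality coming from any nonzero Ext produced by (5).

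For (2), closure of $\repAR$ under kernels and cokernels reduces to pointwise computation; the endpoints of intervals appearing in $\ker f$ and $\coker f$ are determined by the endpoints of summands of the domain and codomain, so the results stay in $\repAR$ in the sense of Definition \ref{def:fg reps}. Direct sums and zero are trivial. For (3), each indecomposable has endomorphism ring $k$ by (1) with $I = J$, hence is local; together with the finite decomposition guaranteed by the definition of $\repAR$ this yields Krull-Schmidt. Failure of artinianness follows by exhibiting an infinite descending chain of subrepresentations inside a single $M_I$ with an open endpoint, obtained by shrinking $I$ toward an interior point along $\preceq$.

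The main obstacle is (6). To decide which $M_I$ admit an AR sequence inside $\repAR$, one has to identify the would-be AR translate $\tau M_I$, itself an interval whose endpoints are shifted across the nearest sinks and sources, and then check whether that candidate lies in $\repAR$. The outcome depends finely on whether each endpoint of $I$ is open or closed and whether it sits at a sink, a source, a regular point, or $\pm\infty$; in some configurations no admissible $\tau M_I$ exists and the AR sequence is genuinely absent. This is exactly the case analysis already carried out in \cite[Table 3.1.3]{Rock}, and the proof would combine the Hom/Ext computations above with a direct appeal to that table to separate the intervals with AR sequences from those without.
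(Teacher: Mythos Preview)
Your sketch is reasonable and broadly in the spirit of how such results are established, but there is nothing to compare it against in this paper: Theorem~\ref{thm:little rep} is stated here purely as a recall of Theorem~3.0.1 from Part~(I) of the series \cite{IgusaRockTodorov} (with item~(6) deferring to \cite[Table~3.1.3]{Rock}), and the present paper supplies no proof of its own. The actual arguments live in the cited references.

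That said, a couple of points in your outline would need tightening if you were to write this up in full. For~(2), the delicate issue is not merely that kernels and cokernels are pointwise computable, but that the resulting representations are again \emph{finitely generated} in the sense of Definition~\ref{def:fg reps}; this requires controlling how many interval summands can appear in a kernel or cokernel of a map between finite direct sums of interval modules, which is where the real work lies. For~(3), your descending chain argument for non-artinianness is fine in $\ReppwfAR$, but you must check that the subobjects in the chain remain in $\repAR$; shrinking an interval toward an interior point can take you outside the finitely generated category depending on the orientation, so the example needs to be chosen with some care relative to the sink/source structure.
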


\subsection{The AR-space of $\repAR$ and $\DbAR$}
 In this subsection we recall the necessary definitions and theorems from Part (II) of this series, \cite{Rock}.
We start with $\lambda$ functions, which are used to construct the Auslander--Reiten space, or AR-space, of $\repAR$ and $\DbAR$, the latter of which is constructed in the usual way out of an abelian category.

\begin{definition}\label{def:lambda function}
Let $z\in\R$. Then $z=2n\pi + w$ for $n\in\Z$ and $0\leq w \leq 2\pi$.
So let the function $\lambda:\R\to\R$ be given by
\begin{displaymath}
\lambda(z)=\lambda(2n\pi + w) = \left\{ \begin{array}{ll} w-\frac{\pi}{2} & 0\leq w \leq \pi \\ -w + \frac{3\pi}{2} & \pi\leq w \leq 2\pi. \end{array} \right.
\end{displaymath}
A \ul{$\lambda$ function} is a function $\R\to[-\frac{\pi}{2},\frac{\pi}{2}]$ defined by $x\mapsto \lambda(x-\kappa)$ for a fixed $\kappa\in[-\pi,\pi]$.
\end{definition}

Recall $\MM:\text{Ind}(\repAR)\to\R\times[-\frac{\pi}{2},\frac{\pi}{2}]$ is a function from the isomorphism classes of indecomposables in $\repAR$ to the real plane.
The map $\MM^b$ is the natural extension of $\MM$ to all the indecomposables of $\DbAR$ where  $\MM^b V[1] = (x+\pi,-y)$ for each indecomposable $V$ with $\MM^bV =(x,y)$.
These functions are used to define the continuous generalization of the Auslander--Reiten quiver, called the Auslander--Reiten space, of $\repAR$ and $\DbAR$ (\cite[Definitions 4.1.9 and 5.2.5]{Rock}).

Recall each (isomorphism class of) indecomposables in $\repAR$ or $\DbAR$ have a position (\cite[Definition 4.1.2]{Rock}) 1, 2, 3, or 4 in the AR-space of $\repAR$ or $\DbAR$, respectively.
The positions are to be thought of as occupying the four corners of a diamond:
\begin{displaymath}\begin{tikzpicture}
\draw[dashed, draw opacity=.7] (0,0) -- (1,1) -- (2,0) -- (1,-1) -- (0,0);
\draw (0,0) node[anchor=west] {1};
\draw (1,1) node[anchor=north] {2};
\draw (1,-1) node[anchor=south]{3};
\draw (2,0) node[anchor=east]{4};
\filldraw[fill=black] (0,0) circle [radius=0.6mm];
\filldraw[fill=black] (1,1) circle [radius=0.6mm];
\filldraw[fill=black] (1,-1) circle [radius=0.6mm];
\filldraw[fill=black] (2,0) circle [radius=0.6mm];
\end{tikzpicture}\end{displaymath}
In particular, two indecomposables $V$ and $W$ in $\repAR$ or $\DbAR$ are isomorphic if and only if their position and image under $\MM$ or $\MM^b$, respectively, are the same.

\begin{example}\label{xmp:AR space of DbAR}
Recall from \cite{Rock} that when $A_\R$ has the straight descending orientation the following is part of the AR-space:
\begin{displaymath} \begin{tikzpicture}[scale=.8]
\draw[draw=white!60!black] (-5,2) -- (10,2);
\draw[draw=white!70!black] (-5,-2) -- (10,-2);
\draw[draw=white!70!black, dotted, pattern=crosshatch dots, pattern color=white!75!black] (-5,-1) -- (-5,-2) -- (-4,-2) -- (-5,-1);
\draw[draw=white!70!black, dotted, pattern=crosshatch dots, pattern color=white!60!black] (-5,2) -- (-5,-1) -- (-4,-2) -- (0, 2) -- (-5,2);
\draw[draw=white!70!black, dotted, pattern=crosshatch dots, pattern color=white!75!black] (-4,-2) -- (0,2) -- (4,-2) -- (-4,-2);
\draw[draw=white!70!black, dotted, pattern=crosshatch dots, pattern color=white!60!black] (0,2) -- (4,-2) -- (8,2) -- (0,2);
\draw[draw=white!70!black, dotted, pattern=crosshatch dots, pattern color=white!75!black] (4,-2) -- (8,2) -- (10,0) -- (10,-2) -- (4,-2);
\draw[draw=white!70!black, dotted, pattern=crosshatch dots, pattern color=white!60!black] (8,2) -- (10,0) -- (10,2) -- (8,2);
\draw[dashed] (-5,-1) -- (-2,2) -- (2,-2) -- (6,2) -- (10,-2);
\draw[dashed] (-5, 1) -- (-2,-2) -- (2,2) -- (6,-2) -- (10,2);
\draw[fill=black] (0,2) circle[radius=0.6mm];
\draw (0,2) node [anchor=south] {\small $M_{(-\infty,+\infty)}[0]$};
\draw[fill=black] (8,2) circle[radius=0.6mm];
\draw (8,2) node [anchor=south] {\small $M_{(-\infty,+\infty)}[2]$};
\draw[fill=black] (-4,-2) circle[radius=0.6mm];
\draw (4,-2) node [anchor=north] {\small $M_{(-\infty,+\infty)}[1]$};
\draw[fill=black] (4,-2) circle[radius=0.6mm];
\draw (-4,-2) node [anchor=north] {\small $M_{(-\infty,+\infty)}[-1]$};
\draw[fill=black] (0,0) circle[radius=0.6mm];
\draw (0,-0.2) node [anchor=north] {\small $M_{|a,b|}[0]$};
\draw[fill=black] (-4,0) circle[radius=0.6mm];
\draw (-4,0.2) node [anchor=south] {\small $M_{|a,b|}[-1]$};
\draw[fill=black] (4,0) circle[radius=0.6mm];
\draw (4,0.2) node [anchor=south] {\small $M_{|a,b|}[1]$};
\draw[fill=black] (8,0) circle[radius=0.6mm];
\draw (8,-0.2) node [anchor=north] {\small $M_{|a,b|}[2]$};
\draw[fill=black] (-1,1) circle[radius=0.6mm];
\draw (-1,1) node [anchor=south] {\small $P_{b|}[0]$, $I_{|b}[-1]$};
\draw[fill=black] (-3,-1) circle[radius=0.6mm];
\draw (-3,-1) node [anchor=north] {\small $P_{a|}[0]$, $I_{|a}[-1]$};
\draw[fill=black] (1,1) circle[radius=0.6mm];
\draw (1,1) node [anchor=north] {\small $P_{a|}[1]$,$I_{|a}[0]$};
\draw[fill=black] (3,-1) circle[radius=0.6mm];
\draw (3,-1) node [anchor=south] {\small $P_{b|}[1]$,$I_{|b}[0]$};
\draw[fill=black] (7,1) circle[radius=0.6mm];
\draw (7,1) node [anchor=south] {\small $P_{b|}[2]$, $I_{|b}[1]$};
\draw[fill=black] (5,-1) circle[radius=0.6mm];
\draw (5,-1) node [anchor=north] {\small $P_{a|}[2]$,$I_{|a}[1]$};
\draw[fill=black] (9,1) circle[radius=0.6mm];
\draw (9,1) node [anchor=north] {\small $P_{a|}[3]$,$I_{|a}[2]$};
\draw[fill=black] (-2,2) circle[radius=0.6mm];
\draw (-2,2) node [anchor=south] {\small $M_{\{b\}}[-1]$};
\draw[fill=black] (-2,-2) circle[radius=0.6mm];
\draw (-2,-2) node [anchor=north] {\small $M_{\{a\}}[0]$};
\draw[fill=black] (2,-2) circle[radius=0.6mm];
\draw (2,-2) node [anchor=north] {\small $M_{\{b\}}[0]$};
\draw[fill=black] (2,2) circle[radius=0.6mm];
\draw (2,2) node [anchor=south] {\small $M_{\{a\}}[1]$};
\draw[fill=black] (6,2) circle[radius=0.6mm];
\draw (6,2) node [anchor=south] {\small $M_{\{b\}}[1]$};
\draw[fill=black] (6,-2) circle[radius=0.6mm];
\draw (6,-2) node [anchor=north] {\small $M_{\{a\}}[2]$};
\draw (-5,-1) node [anchor=east] {\small $\lambda_b$};
\draw (-5,1) node [anchor=east] {\small $\lambda_a$};
\draw(-5,-2) node [anchor=east] {\small $-\frac{\pi}{2}$};
\draw(-5,2) node [anchor=east] {\small $\frac{\pi}{2}$};
\end{tikzpicture} \end{displaymath}
\end{example}

In \cite{Rock}, an extra generalized metric is defined; this allows for the description of lines and their slopes in the AR-space of $\repAR$ and $\DbAR$.
In particular this also allows for rectangles and almost-complete rectangles in the AR-space of these categories as well.
This results in the following theorems that will be used in Section \ref{sec:derived equivalence}.
The ``good slopes'' can be thought of as analogous to a $45^\circ$ angle and by ``scaling'' we mean ``scaling of morphisms''.

\begin{theorem}\label{thm:extensions are rectangles}(Theorem 4.3.11 in \cite{Rock})
Let $V=M_{|a,b|}$ and $W=M_{|c,d|}$ be indecomposables in $\repAR$ such that $V\not\cong W$.
Then there is a nontrivial extension $V\hookrightarrow E\twoheadrightarrow W$ if and only if there exists a rectangle or almost complete rectangle whose corners are the indecomposables in the sequence with $V$ as the left-most corner and $W$ as the right-most corner.
\begin{itemize}
\item If the rectangle is complete $E$ is a direct sum of two indecomposables.
\item If the rectangle is almost complete $E$ is indecomposable.
\end{itemize}
Furthermore, there is a bijection
\begin{displaymath}\begin{tikzpicture}
\draw (0,.2) node {$\left\{\begin{array}{c}\text{rectangles and almost complete rectangles with} \\ \text{``good'' slopes of sides in the AR-space of }\repAR\end{array}\right\}$};
\draw [<->, thick] (0,-.2) -- (0,-1.2);
\draw (0,-.7) node[anchor=west] {$\cong$};
\draw (0,-1.6) node {$\left\{\begin{array}{c}\text{nontrivial extensions of indecomposables by indecomposables}\\ \text{up to scaling and isomorphisms}\end{array}\right\}$};
\end{tikzpicture}\end{displaymath}
\end{theorem}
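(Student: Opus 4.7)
The plan is to reduce the theorem to a finite combinatorial analysis of pairs of intervals and then translate this analysis into the geometry of the AR-space. The main analytic input is Theorem \ref{thm:little rep}(5), which says $\Ext^1(M_J, M_I)$ is either $0$ or $k$; hence ``scaling'' in the displayed bijection is precisely scaling of a generator of the one-dimensional extension group whenever it is nonzero. Conversely, the rectangle (or almost complete rectangle) data is determined by the endpoints of $V = M_{|a,b|}$ and $W = M_{|c,d|}$ together with the local sink/source structure of $A_\R$ near these endpoints, so the job is to match these two pieces of data.

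First I would enumerate the possible configurations of the intervals $|a,b|$ and $|c,d|$ relative to the orientation $\preceq$ of $A_\R$ at their endpoints, and in each configuration write down the candidate short exact sequences
\begin{displaymath}
0 \to M_{|a,b|} \to M_{|a,d|} \oplus M_{|c,b|} \to M_{|c,d|} \to 0
\end{displaymath}
in the ``complete rectangle'' case where both $M_{|a,d|}$ and $M_{|c,b|}$ are genuine nonzero indecomposable objects of $\repAR$, and
\begin{displaymath}
0 \to M_{|a,b|} \to M_{|a,d|} \to M_{|c,d|} \to 0
\end{displaymath}
in the ``almost complete'' case where the putative corner $M_{|c,b|}$ has empty support or fails to lie in $\repAR$ for reasons governed by the projective classification of Theorem \ref{thm:projs}. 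Verifying exactness and non-splitness is a pointwise check on each $x \in \R$, and combined with the one-dimensionality of $\Ext^1$ this identifies every nontrivial extension up to isomorphism and scaling, establishing the ``only if'' direction along with the two bulleted structural statements about $E$.

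Next, I would use the coordinate map $\MM$ to translate the endpoint data into geometry: the intervals $M_{|a,b|}$, $M_{|c,d|}$, $M_{|a,d|}$, and (when it exists) $M_{|c,b|}$ map to points that form the corners of a rectangle whose sides carry the ``good'' slopes defined in \cite{Rock}, with $V$ the leftmost and $W$ the rightmost corner and the middle term(s) occupying the top and bottom. The generalized metric introduced in \cite{Rock} is exactly what is needed to verify that each such quadruple really is a rectangle of the AR-space in the required sense. The almost complete case is then visibly the one in which one corner has been pushed to a boundary locus of the AR-space at which the corresponding interval indecomposable does not exist in $\repAR$.

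For the ``if'' direction and the final bijection, I would run the argument in reverse: a rectangle (or almost complete rectangle) with good slopes whose leftmost and rightmost corners are $V$ and $W$ forces the top and bottom corners to have the endpoint data prescribed above, which in turn produces the short exact sequence written down earlier, giving a generator of the one-dimensional $\Ext^1(W,V)$. The main obstacle I expect is the bookkeeping of subcases coming from where $a,b,c,d$ sit relative to the sinks and sources of $A_\R$ and whether the endpoints are closed, open, or at $\pm\infty$; in particular, one must carefully delineate which configurations collapse a complete rectangle to an almost complete one, using Theorem \ref{thm:projs} to exclude the intervals that would push the middle term outside $\repAR$ into the larger category $\ReppwfAR$.
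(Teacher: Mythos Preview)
This theorem is not proved in the present paper: it is stated in Section~2.2 as a result recalled from Part~(II) of the series, explicitly labeled ``(Theorem~4.3.11 in \cite{Rock})'', and no proof is given here. Consequently there is no argument in this paper against which to compare your proposal.

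That said, your outline is a plausible strategy for the result and is in the spirit of how interval-module combinatorics typically translates into AR-space geometry: compute $\Ext^1$ by writing down the explicit short exact sequence of interval modules, invoke one-dimensionality from Theorem~\ref{thm:little rep}(5), and then identify the middle terms with the remaining rectangle corners under $\MM$. If you intend to flesh this out, the genuine work lies exactly where you flag it---the case analysis on endpoint types (open/closed/$\pm\infty$) and their interaction with the sink/source pattern---and you would need to consult \cite{Rock} directly both for the precise definitions of ``rectangle'', ``almost complete rectangle'', and ``good slopes'' in the AR-space, and to verify that your approach matches the one actually used there.
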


In the following theorem, we consider a triangle and any of its rotations to be distinct for the purposes of the statement of the theorem. We also say ``nontrivial triangle'' to mean a distinguished triangle that is not of the form $(A\to A\to 0\to)$, $(A\to 0 \to A[1]\to)$, or $(0\to A\to A\to)$. 

\begin{theorem}\label{thm:triangles are rectangles}(Theorem 5.2.10 in \cite{Rock})
Let $V=M_{|a,b|}[m]$ and $W=M_{|c,d|}[n]$ be indecomposables in $\DbAR$ such that $V\not\cong W$.
Then there is a nontrivial distinguished triangle $V\to U\to W\to$ if and only if there exists a rectangle or almost complete rectangle in the AR-space of $\DbAR$ whose corners are the indecomposables in the triangle with $V$ as the left-most corner and $W$ as the right-most corner.

\begin{itemize}
\item If the rectangle is complete $E$ is a direct sum of two indecomposables.
\item If the rectangle is almost complete $E$ is indecomposable.
\end{itemize}

Furthermore, there is a bijection
\begin{displaymath}\begin{tikzpicture}
\draw (0,.2) node {$\left\{\begin{array}{c}\text{rectangles and almost complete rectangles with} \\ \text{``good'' slopes of sides in the AR-space of }\DbAR\end{array}\right\}$};
\draw [<->, thick] (0,-.2) -- (0,-1.2);
\draw (0,-.7) node[anchor=west] {$\cong$};
\draw (0,-1.6) node {$\left\{\begin{array}{c}\text{nontrivial triangles with first and third term indecomposable}\\ \text{up to scaling and isomorphisms}\end{array}\right\}$};
\end{tikzpicture}\end{displaymath}
\end{theorem}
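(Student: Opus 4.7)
The plan is to reduce the theorem to its abelian-category analogue, Theorem~\ref{thm:extensions are rectangles}, by exploiting that $\repAR$ has global dimension $1$ (Theorem~\ref{thm:little rep}(4)). Since every indecomposable of $\DbAR$ is a shift of an interval indecomposable of $\repAR$, I would write $V = V'[m]$ and $W = W'[n]$ for interval modules $V', W'$ and integers $m, n$. A nontrivial distinguished triangle $V \to U \to W \to$ is classified, up to isomorphism, by a nonzero connecting morphism in $\Hom_{\DbAR}(W'[n], V'[m+1]) = \Ext^{m+1-n}(W', V')$, and because $\repAR$ is hereditary this group vanishes unless $m+1-n \in \{0,1\}$; so only the two cases $n = m$ and $n = m+1$ require analysis.

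For $n = m$, the triangle is the $[m]$-shift of a short exact sequence $V' \hookrightarrow U' \twoheadrightarrow W'$ in $\repAR$. Theorem~\ref{thm:extensions are rectangles} supplies a (possibly almost-complete) rectangle with good slopes in the AR-space of $\repAR$ having $V'$ as its leftmost corner and $W'$ as its rightmost. Iterated application of the translation $\MM^b(X[1]) = (x+\pi,-y)$ then embeds this rectangle into the AR-space of $\DbAR$, producing the desired rectangle with left corner $V = V'[m]$ and right corner $W = W'[m]$.

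For $n = m+1$, the connecting morphism is, up to scalar and shift, a morphism $f \colon W' \to V'$ in $\repAR$. Since $\repAR$ is hereditary, the cone of $f$ decomposes in $\DbAR$ as $\ker(f)[1] \oplus \coker(f)$, so after rotation the triangle reads
\[ V'[m] \;\longrightarrow\; \ker(f)[m+1] \oplus \coker(f)[m] \;\longrightarrow\; W'[m+1] \;\longrightarrow\; V'[m+1]. \]
I expect the four objects $V'[m]$, $\ker(f)[m+1]$, $\coker(f)[m]$, $W'[m+1]$ to occupy the four corners of a rectangle in the AR-space of $\DbAR$ straddling the shift boundary between $[m]$ and $[m+1]$, degenerating to almost-complete precisely when $\ker(f) = 0$ or $\coker(f) = 0$.

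The main obstacle is this second case, because the rectangle crosses a shift boundary and cannot be imported directly from Theorem~\ref{thm:extensions are rectangles}. I would address it by factoring $f$ through its image and combining the two short exact sequences $\ker(f) \hookrightarrow W' \twoheadrightarrow \im(f)$ and $\im(f) \hookrightarrow V' \twoheadrightarrow \coker(f)$ with the shift formula $\MM^b(X[1]) = (x+\pi,-y)$ to check, from the explicit interval descriptions of $\ker f$, $\im f$, and $\coker f$, that the four corners do form a rectangle with good slopes spanning the adjacent strips. The bijection then follows by noting that every rectangle in the AR-space of $\DbAR$ either lies in a single shift strip or crosses exactly one shift boundary, and that in each case the one-dimensionality of $\Hom$ and $\Ext^1$ between indecomposables (Theorem~\ref{thm:little rep}(1), (5)) pins down the triangle up to scaling and isomorphism.
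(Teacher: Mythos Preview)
This theorem is not proved in the present paper: it is stated in Section~2.2 as a result recalled from Part~(II) of the series (Theorem~5.2.10 in \cite{Rock}), and no proof is given here. So there is no proof in this paper to compare your proposal against.

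That said, your strategy is sound and is essentially how one would expect the argument to go. The reduction to the two cases $n=m$ and $n=m+1$ via hereditariness is correct, and the cone decomposition $\mathrm{cone}(f)\cong \coker(f)\oplus\ker(f)[1]$ for a morphism $f$ in a hereditary abelian category is standard. Two points deserve care. First, the shift formula $\MM^b(X[1])=(x+\pi,-y)$ involves a reflection in the $y$-coordinate, not just a translation, so when you transport a rectangle from $\repAR$ into $\DbAR$ you must check that rectangles with sides of slope $\pm(1,1)$ are preserved under this operation (they are, since the map $(x,y)\mapsto(x+\pi,-y)$ is an isometry swapping the two slope classes). Second, in the boundary-crossing case you assert that $V'[m]$, $\ker(f)[m+1]$, $\coker(f)[m]$, $W'[m+1]$ sit at the corners of a rectangle; verifying this requires the explicit interval computations for $\ker f$, $\im f$, $\coker f$ together with the definition of $\MM^b$ from \cite{Rock}, which you correctly identify as the crux but do not carry out. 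The converse direction and the bijection also need the observation that any rectangle in the AR-space of $\DbAR$ with good slopes either lies in a single degree or straddles exactly one shift boundary, which you note but would need to justify from the geometry of $\MM^b$.
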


\section{Derived Equivalence}\label{sec:derived equivalence}
In this section we classify the derived categories of finitely generated representations of quivers of continuous type $A$.

\subsection{The Octahedral Axiom}
We recall the octahedral axiom of a triangulated category, which we use explicitly in Lemma \ref{lem:geometric cones 1}.
In particular we use \cite[Proposition 1.4.6]{Neeman} in Neeman's book, which the author proves is equivalent to the octahedral axiom.
Suppose the following are distinguished triangles:
\begin{displaymath}\xymatrix@R=2ex@C=8ex{
U\ar[r]^-f & V\ar[r]^-{g'} & W' \ar[r]^-{h'} & U[1]\\
V\ar[r]^-g & W\ar[r]^-{h} & U'\ar[r]^-{f'} &  V[1].
}\end{displaymath}
Then the following distinguished triangles also exist:
\begin{displaymath}\xymatrix@R=2ex@C=8ex{
U\ar[r]^-{g\circ f}  & W \ar[r]^-i & V' \ar[r]^-{j} & U[1] \\
W' \ar[r]^-{i'} & V' \ar[r]^-{j'} & U' \ar[r]^-{g'[1]\circ f'} &  W'[1],
}\end{displaymath}
such that $h=j'\circ i$ and $h'=j\circ i'$ (note the mismatched primes).
This is often drawn as the lower and upper part of an octahedron.
Another way of stating the axiom is that given the ``lower cap'' on the left below the ``upper cap'' on the right exists as well:
\begin{displaymath}\xymatrix{
U' \ar[dd]_-{[1]} \ar[dr]^-{[1]} & & W \ar[ll] & & U'\ar[dd]_-{[1]} & & W\ar[ll] \ar[dl] \\
& V \ar[ur] \ar[dl] & & & & V' \ar[ul] \ar[dr]_-{[1]} & \\
W' \ar[rr]_-{[1]} & & U\ar[uu] \ar[ul]& & W' \ar[rr]_-{[1]} \ar[ur] & & U.\ar[uu]
}\end{displaymath}
By an immediate result in \cite{Neeman}, we also have the following distinguished triangles:
\begin{displaymath}\xymatrix@R=3ex@C=16ex{
V \ar[r]^-{\left[\begin{array}{l}g\\ g'\end{array}\right]}  & W\oplus W' \ar[r]^-{\left[\begin{array}{ll}i & -i'\end{array}\right]} & V' \ar[r]^-{f[1]\circ j=f'\circ j'} & V[1]. \\
V' \ar[r]_-{\left[\begin{array}{l} j \\ j'\end{array}\right]} & U[1] \oplus U' \ar[r]_-{\left[\begin{array}{ll} f[1] & -f'\end{array}\right]} & V[1] \ar[r]_-{(i\circ g)[1] = (i'\circ g')[1]} & V'[1].
}\end{displaymath}

We will need the following well-known facts, also from \cite{Neeman}, albeit in a slightly different form.
Thus, we will state it as a proposition and provide a concise proof.
\begin{proposition}\label{prop:easy triangle splice}
Let $\mathcal D$ be a triangulated category and let
\begin{displaymath}\xymatrix@R=2ex@C=8ex{
V\ar[r]^-f & W_1 \ar[r]^-{p_1} & U_1 \ar[r]^-{q_1} & V[1]
}\end{displaymath}
be a distinguished triangle where $f$ is nontrivial.
Let $h:W_1\to W_2$ such that $h$ and $h\circ f$ are nontrivial. Then
\begin{displaymath}\xymatrix@C=10ex{
V \ar[r]^-{\left[\begin{array}{cc} f \\ h\circ f \end{array}\right]}& W_1\oplus W_2 \ar[r]^-{\left[\begin{array}{lr}p_1 & 0 \\ h & -1 \end{array}\right]} &  U_1\oplus W_2\ar[r]^-{\left[\begin{array}{cc}q_1 & 0\end{array}\right]} & V[1] 
} \tag{1} \end{displaymath} exists as a distinguished triangle in $\mathcal D$.
Dually, given the following distinguished triangle:
\begin{displaymath}\xymatrix@R=2ex@C=8ex{
V_2 \ar[r]^-g & W \ar[r]^-{p_2} & U_2 \ar[r]^-{q_2} & V_2[2],
}\end{displaymath}
and morphism $h:V_1\to V_2$ where $g$, $h$, and $g\circ h$ are nontrivial there exists
\begin{displaymath}\xymatrix@C=10ex{
V_1\oplus V_2 \ar[r]^-{\left[\begin{array}{cc} g\circ h & g \end{array}\right]} & W \ar[r]^-{\left[\begin{array}{c} 0\\ p_2\end{array}\right]} & V_1[1]\oplus U_2 \ar[r]^-{\left[\begin{array}{ll} 1 & 0 \\ h & q_2 \end{array}\right]} & (V_1\oplus V_2)[1] 
} \tag{2} \end{displaymath}
as a distinguished triangle in $\mathcal D$.
\end{proposition}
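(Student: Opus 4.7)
The plan is to deduce both triangles (1) and (2) directly from the axioms of a triangulated category, without invoking the octahedral axiom. The three facts I use are: for any object $A$, the triangle $A\xrightarrow{\id}A\to 0\to A[1]$ is distinguished; the direct sum of two distinguished triangles is again distinguished; and any candidate triangle isomorphic (as a diagram with all squares commuting via isomorphisms) to a distinguished triangle is itself distinguished. The content of the proposition is then to pick the right trivial triangle to direct-sum, and then to write down an explicit unipotent isomorphism of triangles whose entries involve $h$.

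For (1), first take the direct sum of the given triangle with the distinguished triangle $0\to W_2\xrightarrow{\id}W_2\to 0$ to obtain the distinguished triangle
\[
V\xrightarrow{\left[\begin{array}{c}f\\ 0\end{array}\right]}W_1\oplus W_2\xrightarrow{\left[\begin{array}{cc}p_1&0\\0&\id\end{array}\right]}U_1\oplus W_2\xrightarrow{\left[\begin{array}{cc}q_1&0\end{array}\right]}V[1].
\]
Then observe that the quadruple of vertical maps $\id_V$, $\left[\begin{array}{cc}1&0\\h&1\end{array}\right]$, $\left[\begin{array}{cc}1&0\\0&-1\end{array}\right]$, $\id_{V[1]}$ constitutes an isomorphism of triangles from this direct-sum triangle to the triangle~(1); commutativity of each of the three squares reduces to a one-line matrix product, and the two middle vertical maps are evidently invertible. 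This immediately yields (1).

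For (2), run the dual argument. Direct-sum the given triangle with a rotation of $V_1\xrightarrow{\id}V_1\to 0\to V_1[1]$ chosen so that $V_1$ is a summand of the first object and $V_1[1]$ is a summand of the third object, producing a distinguished triangle of the form
\[V_1\oplus V_2\to W\to V_1[1]\oplus U_2\to (V_1\oplus V_2)[1]\]
with block-diagonal first and third maps (and a sign on the $V_1[1]$ component inherited from the rotation axiom). Then apply a unipotent isomorphism of triangles whose nontrivial entry is $\left[\begin{array}{cc}1&0\\-h&1\end{array}\right]$ on $V_1\oplus V_2$, together with its shift on $(V_1\oplus V_2)[1]$ and a sign adjustment on $V_1[1]\oplus V_2[1]$, to arrive at the form stated in (2). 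The only real bookkeeping is the rotation sign $(A\xrightarrow{u}B\xrightarrow{v}C\xrightarrow{w}A[1])\mapsto(B\xrightarrow{v}C\xrightarrow{w}A[1]\xrightarrow{-u[1]}B[1])$ coming from the trivial triangle used in~(2); otherwise the whole argument is purely formal matrix algebra, which is why the proposition can be labelled ``easy''.
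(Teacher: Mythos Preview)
Your argument is correct and is in fact more elementary than the paper's.  The paper proves~(1) by applying the octahedral axiom twice: first to the pair of triangles $V\to W_1\to U_1\to$ and $W_1\to W_2\to E\to$, extracting an auxiliary triangle $W_1\to U_1\oplus W_2\to U_2\to$, and then feeding that back into a second octahedron to obtain~(1).  You bypass all of this by observing that~(1) is isomorphic, via the unipotent change of basis $\left[\begin{smallmatrix}1&0\\h&1\end{smallmatrix}\right]$ on $W_1\oplus W_2$ together with a sign on the $W_2$-summand of the cone, to the direct sum of the given triangle with the split triangle $0\to W_2\to W_2\to 0$.  This uses only TR1 (split triangles are distinguished, and isomorphs of distinguished triangles are distinguished) and the fact that finite sums of distinguished triangles are distinguished, which follows from TR1--TR3 alone.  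Both routes are standard, but yours makes transparent why the proposition deserves the label ``easy'': no octahedra are needed, and the nontriviality hypotheses on $f$, $h$, $h\circ f$ play no role in the argument.  For~(2) the paper, like you, only says ``similarly''; your sketch is adequate, and you are right that the only content is tracking the rotation sign from the trivial triangle on~$V_1$.
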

\begin{proof}
We prove that (1) is a distinguished triangle.
The proof of (2) is similar.
Set $g=h\circ f$.
We start with the distinguished triangle from the statement and the following distinguished triangles:
\begin{displaymath}\xymatrix@R=2ex@C=8ex{
W_1 \ar[r]^-h & W_2 \ar[r] & E \ar[r] & W_1[1] \\
V \ar[r]^-g & W_2 \ar[r]^-{p_2} & U_2 \ar[r]^-{q_2} & V[1].
}\end{displaymath}
By the octahedral axiom this yields the distinguished triangles
\begin{displaymath}\xymatrix@R=2ex@C=8ex{
U_1 \ar[r]^-r & U_2 \ar[r]^-s & E \ar[r]^-t & U_1[1] \\
W_1 \ar[r]^-{[p_1\, h]^t} & U_1\oplus W_2 \ar[r]^-{[r\, {-}p_2]} & U_2 \ar[r]^-{q_3} & W_1[1].
}\end{displaymath}
We start again with the triangles
\begin{displaymath}\xymatrix@R=2ex@C=8ex{
U_2[-1] \ar[r]^-{q_2[-1]} & V \ar[r]^-{g} & W_2 \ar[r]^-{p_2} & U_2 \\
V \ar[r]^-f & W_1 \ar[r]^-{p_1} & U_1\ar[r]^-{q_1}\ar[r] & V[1] \\
U_2[-1] \ar[r]^-{q_3[-1]}\ar[r] & W_1 \ar[r]^-{[p_1\, h]^t} & U_1\oplus W_2 \ar[r]^-{[r\, {-}p_2]} & U_2.
}\end{displaymath}
Noting that $[p_1\, h]^t \circ f = [0\, 1]^t \circ g$ we see $q_3=f[1]\circ q_2$.
So, we may apply the octahedral axiom again to obtain the desired triangle (1).
\end{proof}

We specifically desire to use Proposition \ref{prop:easy triangle splice} as the following corollary.
\begin{cor}\label{cor:easy triangle splice}
Let $\mathcal D$ be a triangulated category.
Consider the distinguished triangles from Proposition \ref{prop:easy triangle splice}:
\begin{displaymath}\xymatrix@R=2ex@C=8ex{
V\ar[r]^-f & W_1 \ar[r]^-{p_1} & U_1 \ar[r]^-{q_1} & V[1] \\
V\ar[r]^-{h\circ f} & W_2 \ar[r]^-{p_2} & U_2 \ar[r]^-{q_2} & V[1].
}\end{displaymath}
where $h:W_1\to W_2$, $f$, and $h\circ f$ are nontrivial.
If $U_2$ is not isomorphic to summands of $V$, $W_1$, $W_2$, or $U_1$ then it does not appear as a summand in any distinguished triangle of the form
\begin{displaymath}\xymatrix@R=2ex@C=8ex{
V \ar[r]^-{\left[\begin{array}{cc} f \\ h\circ f \end{array}\right]} & W_1\oplus W_2 \ar[r] & U\ar[r] & V[1].}\tag{*}
\end{displaymath}
\end{cor}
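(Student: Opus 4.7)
My plan is to combine two standard ingredients: (i) the third term of a distinguished triangle is determined up to (non-canonical) isomorphism by the first morphism, and (ii) the ambient category is Krull--Schmidt. Proposition~\ref{prop:easy triangle splice}(1), applied to the two triangles in the hypothesis with $g = h\circ f$, already exhibits a distinguished triangle having exactly the first morphism appearing in (*) and whose third term is $U_1 \oplus W_2$ (not $U_1 \oplus U_2$, as can be read off directly from the matrix describing the middle map). By uniqueness of cones in a triangulated category, any $U$ fitting into a triangle of the form (*) must satisfy $U \cong U_1 \oplus W_2$.

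It therefore suffices to show that, under the stated hypotheses, the object $U_2$ cannot appear as a direct summand of $U_1 \oplus W_2$. Here I would invoke the Krull--Schmidt property of the ambient category; in our intended application this is $\DbAR$, whose Krull--Schmidt property is inherited from that of $\repAR$ recorded in Theorem~\ref{thm:little rep}(3). Supposing $U_2$ is indecomposable and writing $U_1 \oplus W_2 \cong U_2 \oplus X$, the uniqueness of Krull--Schmidt decompositions forces $U_2$ to be isomorphic to an indecomposable summand either of $U_1$ or of $W_2$, which contradicts the hypothesis that $U_2$ is not isomorphic to any summand of $V$, $W_1$, $W_2$, or $U_1$.

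Since both ingredients are classical, the argument is essentially a two-step deduction once Proposition~\ref{prop:easy triangle splice} is in hand. I do not anticipate any genuine obstacle; the only care needed is bookkeeping, namely reading off from Proposition~\ref{prop:easy triangle splice} that the canonical third term is $U_1 \oplus W_2$, and then invoking Krull--Schmidt at the appropriate level of generality (treating $U_2$ as indecomposable, or otherwise reducing summand-by-summand to indecomposable pieces before comparing with the hypothesis).
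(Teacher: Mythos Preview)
Your proposal is correct and follows essentially the same route as the paper: identify the cone of $[f,\,h\circ f]^t$ with $U_1\oplus W_2$ via Proposition~\ref{prop:easy triangle splice}(1), then observe that the hypothesis rules out $U_2$ as a summand. You are in fact slightly more careful than the paper in making explicit the two background facts being used (uniqueness of cones up to isomorphism, and Krull--Schmidt for the comparison of summands), which the paper's two-line argument leaves implicit.
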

\begin{proof}
The distinguished triangle $*$ will be isomorphic to distinguished triangle (1) in Proposition \ref{prop:easy triangle splice}.
Since $U_2$ does not appear in (1) as a summand it will not appear in $*$ as a summand.
\end{proof}

\begin{proposition}\label{prop:one way hom}
Let $V$ and $W$ be indecomposables in $\DbAR$ such that $V\not\cong W$.
Then $\Hom(V,W)\cong k$ implies $\Hom(W,V)=0$.
\end{proposition}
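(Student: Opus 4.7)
My plan is to reduce the claim to an analogous statement inside the abelian category $\repAR$, and then handle the reduced claim by a pointwise argument on interval modules.

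First, I will use the classification of indecomposables of $\repAR$ recalled above from \cite{IgusaRockTodorov}, together with the fact that $\repAR$ has global dimension $1$ (Theorem \ref{thm:little rep}(4)), to note that every indecomposable of $\DbAR$ has the form $M_I[m]$ and
\[\Hom_{\DbAR}(M_I[m], M_J[n]) \;\cong\; \Ext^{n-m}_{\repAR}(M_I, M_J),\]
which vanishes unless $n - m \in \{0, 1\}$. Writing $V \cong M_I[m]$ and $W \cong M_J[n]$, the hypothesis $\Hom(V,W) \cong k$ forces $n - m \in \{0, 1\}$. If $n = m+1$, then $\Hom(W, V) \cong \Ext^{-1}_{\repAR}(M_J, M_I) = 0$ automatically, settling that case.

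It remains to handle the case $n = m$, where the claim reduces to the following assertion in $\repAR$: for indecomposable interval modules $M_I \not\cong M_J$, $\Hom(M_I, M_J) \cong k$ implies $\Hom(M_J, M_I) = 0$. I plan to argue by contradiction: suppose $\phi \colon M_I \to M_J$ and $\psi \colon M_J \to M_I$ are both nonzero. By Theorem \ref{thm:little rep}(1) each Hom is at most one-dimensional, so $\End(M_I) = k \cdot \id_{M_I}$ and $\psi\phi$ is a scalar multiple of $\id_{M_I}$. If that scalar were nonzero, $\phi$ would be a split monomorphism, making the indecomposable $M_I$ a summand of the indecomposable $M_J$; Krull-Schmidt (Theorem \ref{thm:little rep}(3)) then forces $M_I \cong M_J$, contrary to the hypothesis, so $\psi\phi = 0$.

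To contradict $\psi\phi = 0$, I will evaluate pointwise. Since $M_I$ and $M_J$ have all transition maps equal to $1_k$ on their supports, the naturality square for $\phi$ at any pair $y \preceq x$ with $x, y \in I \cap J$ collapses to the scalar equation $\phi_y = \phi_x$. Because $I \cap J$ is a real interval containing only discretely many sinks and sources of $A_\R$, any two of its points can be joined by a finite zigzag of $\preceq$-comparable pairs lying in $I \cap J$ (moving monotonically within each maximal monotone segment and passing through separating sinks or sources at their $\preceq$-extrema). Consequently $\phi$ takes the same nonzero scalar value at every point of $I \cap J$; the same argument applies to $\psi$. Fixing any $x_0 \in I \cap J$, we obtain $(\psi\phi)_{x_0} = \psi_{x_0} \circ \phi_{x_0} \neq 0$ in $k$, contradicting $\psi\phi = 0$. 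I expect the main obstacle to be formalizing the zigzag connectedness of $I \cap J$ in the orientation $\preceq$; the remaining ingredients are formal consequences of hereditary-category Hom computations and Krull-Schmidt.
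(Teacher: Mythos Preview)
Your argument is correct and complete. The paper's own proof is a one-line citation to \cite[Proposition 4.4.2, Lemma 5.2.9]{Rock}, which describe the Hom-support of an indecomposable in terms of its coordinates and position in the AR-space of $\DbAR$; from that geometric description it is immediate that the Hom-support regions of $V$ and $W$ cannot overlap in both directions unless $V\cong W$. Your route is genuinely different: you bypass the AR-space machinery entirely, reduce via heredity to degree zero, and then give a self-contained pointwise argument on interval modules using zigzag connectedness of $I\cap J$ under $\preceq$. The trade-off is that the paper's approach is shorter but depends on the geometric apparatus built in Part~(II), whereas yours is elementary and portable (it would work verbatim for any pointwise-one-dimensional interval-decomposable representation category over a connected poset), at the cost of spelling out the zigzag step. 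Both are perfectly valid here; your version has the advantage of not requiring the reader to consult \cite{Rock}.
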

\begin{proof}
This follows directly from \cite[Proposition 4.4.2, Lemma 5.2.9]{Rock}.\end{proof}

\begin{construction}\label{con:nice summing}
Consider a finite direct sum $W=\bigoplus W_i$ of indecomposables in $\DbAR$.
Using Proposition \ref{prop:one way hom} we can consider a subset $X_W$ of summands of $W$ determined by
\begin{itemize}
\item If $W_i,W_j\in X_W$ and $i\neq j$ then $\Hom(W_i,W_j)=0$.
\item If $W_i\in X_W$ and there exists $W_j$ such that $\Hom(W_j,W_i)\cong k$ then $W_i\cong W_j$.
\end{itemize}
(Notice $X_W$ may not be unique.)
Let $Y_W$ be the multisubset of summands of $W$ such that $\bigoplus_{X_W\amalg Y_W} W_i \cong W$.
Below is an example depicted in the AR-space of $\DbAR$; members of $X_W$ are filled in and members of $Y_W$ are not.
\begin{displaymath}\begin{tikzpicture}[scale=3]
\draw[dashed, white!40!black] (0.2,1) -- (1.4,1);
\draw[dashed, white!40!black] (0.2,0) -- (1.4,0);
\filldraw[fill=black, draw=black] (0.5,0.3) circle[radius=0.15mm];
\filldraw[fill=black, draw=black] (0.6,0.6) circle[radius=0.15mm];
\filldraw[fill=white, draw=black] (0.7,0.4) circle[radius=0.15mm];
\filldraw[fill=black, draw=black] (0.45,0.85) circle[radius=0.15mm];
\filldraw[fill=white, draw=black] (1.1,0.5) circle[radius=0.15mm];
\filldraw (0.8,0) node[anchor=north] {$X_W$ and $Y_W$};
\end{tikzpicture}\end{displaymath}

Now consider a finite sum $V=\bigoplus V_i$ of indecomposables in $\DbAR$.
Consider the subset $_VX$ of summands of $V$ determined by
\begin{itemize}
\item If $V_i, V_j\in _VX$ and $i\neq j$ then $\Hom(V_i,V_j)=0$.
\item If $V_i\in {_VX}$ and there exists $W_j$ such that $\Hom(V_i,V_j)\cong k$ then $V_i\cong V_j$.
\end{itemize}
(Notice $_VX$ may not be unique.)
Let $_VY$ be the multisubset of summands of $V$ such that $\bigoplus_{_VX\amalg _VY} V_i \cong V$.
\end{construction}

\begin{lemma}\label{lem:easy splice first}
Let $f:V\to W$ be a morphism in $\DbAR$.
Suppose $V$ is indecomposable and for each summand of $W$ the composition $f_i: V\stackrel{f}{\to} W\stackrel{\pi}{\to} W_i$ is nonzero.
Choose a set $X_W$ and $Y_W$ as in Construction \ref{con:nice summing}.
Denoting by $\pi$ the projection $W\twoheadrightarrow \bigoplus_{Y_W} W_i$, if
\begin{displaymath}\xymatrix@C=8ex{
V\ar[r] ^-{\left[ f_{i}\right]_{W_i\in X_W}} & \bigoplus_{X_W}W_i \ar[r]^-g & E \ar[r]^-h &  V[1]
}\end{displaymath}
is a distinguished triangle in $\DbAR$ so is
\begin{displaymath}\xymatrix@C=8ex{
V \ar[r]^-f & W\ar[r]^-{\left[\begin{array}{l} g \\ \pi \end{array}\right]} & E\oplus \left(\bigoplus_{Y_W} W_i\right) \ar[r]^-{\left[\begin{array}{ll} h & 0 \end{array}\right]}& V[1].
}\tag{1}\end{displaymath}
Dually, suppose instead $W$ is indecomposable, each $f_i: V_i\to W$ is nonzero, and we've chosen $_VX$ and $_VY$ from Construction \ref{con:nice summing}.
Denoting by $\iota$ the inclusion $\bigoplus_{_VY} V_i\hookrightarrow V$, if
\begin{displaymath}\xymatrix@C=8ex{
\bigoplus_{_VX} V_i  \ar[r]^-{\left[ f_i \right]_{V_i\in {_VX}} } & W \ar[r]^-g & E\ar[r]^-h  & (\bigoplus_{_VX} V_i)[1]
}\end{displaymath}
is a distinguished triangle in $\DbAR$ so is
\begin{displaymath}\xymatrix@C=8ex{
V \ar[r]^-f & W\ar[r]^-{\left[\begin{array}{l} 0 \\ g\end{array}\right]} & \left(\left(\bigoplus_{_VY} V_i\right)[1]\right) \oplus E\ar[r]^-{\left[\begin{array}{ll} \iota & h\end{array}\right]} & V[1].
}\tag{2}\end{displaymath}
\end{lemma}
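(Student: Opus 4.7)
The strategy is to derive (1) from Proposition \ref{prop:easy triangle splice}(1) by exhibiting a factorization of the ``$Y_W$-component'' of $f$ through its ``$X_W$-component''; statement (2) will follow by the same argument applied to Proposition \ref{prop:easy triangle splice}(2). Write $f = [\alpha,\beta]^T$ where $\alpha \colon V \to \bigoplus_{X_W} W_i$ consists of the components of $f$ landing in $X_W$ summands and $\beta = \pi \circ f \colon V \to \bigoplus_{Y_W} W_i$. The goal is to construct a morphism $\Phi \colon \bigoplus_{X_W} W_i \to \bigoplus_{Y_W} W_i$ satisfying $\beta = \Phi \circ \alpha$. Given such a $\Phi$, applying Proposition \ref{prop:easy triangle splice}(1) with $W_1 = \bigoplus_{X_W} W_i$, $W_2 = \bigoplus_{Y_W} W_i$, $f = \alpha$, and $h = \Phi$ produces a distinguished triangle whose middle term is canonically $W_1 \oplus W_2 = W$ and whose third term is $E \oplus \bigoplus_{Y_W} W_i$. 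A straightforward automorphism of the third term then identifies this triangle with the one displayed in (1).

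To construct $\Phi$, it suffices to build each component $\Phi_j \colon \bigoplus_{X_W} W_i \to W_j$ for $W_j \in Y_W$ satisfying $\Phi_j \circ \alpha = f_j$. The defining properties of $X_W$ in Construction \ref{con:nice summing}, together with Proposition \ref{prop:one way hom}, guarantee that every $W_j \in Y_W$ either is isomorphic to some $W_i \in X_W$, or there exists some $W_i \in X_W$ admitting a nontrivial morphism $h \colon W_i \to W_j$; otherwise $W_j$ could itself be adjoined to $X_W$ without violating either defining condition, contradicting our choice. In the first case, an isomorphism $W_i \xrightarrow{\sim} W_j$ precomposed with the projection gives a map whose composition with $\alpha$ is a nonzero scalar multiple of $f_j$. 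In the second case, by the classification of morphisms in $\DbAR$ given by the AR-space in \cite{Rock}, the composition $h \circ f_i$ is a scalar multiple of $f_j$. Rescaling $h$ accordingly and extending by zero on the other $X_W$ summands yields the required $\Phi_j$.

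The proof of (2) is formally dual: one decomposes $f = [\alpha', \beta']$ now as a map out of $V = \bigoplus_i V_i$ into the indecomposable $W$, with $\alpha'$ restricted to $_VX$ summands and $\beta'$ restricted to $_VY$ summands, constructs a morphism $\Psi$ such that $\beta' = \alpha' \circ \Psi$ using the sink-side version of Construction \ref{con:nice summing}, and then invokes Proposition \ref{prop:easy triangle splice}(2) in place of (1). The principal difficulty in both cases is verifying not only the existence of the auxiliary morphisms $h$ but also that the required compositions $h \circ f_i$ (or $f_i \circ h$ in the dual) are nonzero and proportional to the prescribed components of $f$. This non-vanishing and proportionality is precisely where the one-directional Hom structure from Proposition \ref{prop:one way hom} and the explicit control over morphisms via the AR-space of $\DbAR$ from \cite{Rock} are essential.
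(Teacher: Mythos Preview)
Your approach is essentially the same as the paper's: factor each $Y_W$-component of $f$ through some $X_W$-component using the fact that $\Hom$ spaces between indecomposables are at most one-dimensional and that composition is surjective whenever all three relevant $\Hom$ spaces are $k$, then invoke Proposition~\ref{prop:easy triangle splice}. The paper's proof is a terse two sentences stating exactly this; you have unpacked the construction of the factorizing map $\Phi$ in more detail and noted the automorphism needed to match the displayed maps, but the underlying argument is identical. (The paper actually cites Corollary~\ref{cor:easy triangle splice} at the end, which appears to be a slip for Proposition~\ref{prop:easy triangle splice}; you have cited the correct result.)
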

\begin{proof}
We'll prove (1) since the proof of (2) is similar.
Since $\Hom(A,B)\cong k$ or ${=}0$ for all indecomposables $A$ and $B$ in $\DbAR$, if, for a third indecomposable $C$ as well,
\begin{displaymath} \Hom(A,B)\cong \Hom(A,C)\cong \Hom(B,C)\cong k \end{displaymath}
then given any pair of morphisms $f:A\to B$ and $g:A\to C$ there exists $h:B\to C$ such that $g=hf$.
We may then apply Corollary \ref{cor:easy triangle splice} and obtain (1).
\end{proof}

\subsection{Triangles and the Geometry of the AR-space of $\DbAR$}
In this subsection we show how the geometry of the AR-space of $\DbAR$ is closely tied to the distinguished triangles in $\DbAR$.
We will use these connections in Section \ref{sec:derived equivalence}.
\begin{definition}\label{def:iota}
For each object $V$ in $\DbAR$, $V\cong \bigoplus_{i=1}^\ell M_{|a_i,b_i|}[n_i]$ for intervals $|a_i,b_i|$ and $n_i\in\Z$.
Reindex the $|a_i,b_i|$'s such that the following hold.
\begin{itemize}
\item if $n_i < n_j$ then $i<j$,
\item if $n_i=n_j$ and $a_i<a_j$ then $i< j$,
\item if $n_i=n_j$, $a_i=a_j$, and $b_i<b_j$ then $i<j$,
\item if $n_i=n_j$, $a_i=a_j$, $b_i=b_j$, $a_i\in|a_i,b_i|$, and $a_j\notin |a_j,b_j|$ then $i<j$, and
\item if $n_i=n_j$, $a_i=a_j$, $b_i=b_j$, $a_i,a_j\in |a_i,b_i|$ or $a_i,a_j\notin|a_i,b_i|$, $b_i\notin|a_i,b_i|$, and $b_j\in |a_j,b_j|$ then $i<j$.
\end{itemize}
This ordering determines a unique object $\iota V$, \textbf{not just up to isomorphism}, such that
\begin{displaymath} V\cong \underbrace{(((\cdots(M_{|a_1,b_1|}[n_1]\oplus M_{|a_2,b_2|}[n_2]) \oplus \cdots )\oplus M_{|a_{\ell-1},b_{\ell-1}|}[n_{\ell-1}]) \oplus M_{|a_\ell,b_\ell|}[n_\ell])}_{\iota V}.\end{displaymath}
Fix an isomorphism $\iota_V:V\to \iota V$  (where $\iota V=\bigoplus_{i=1}^\ell M_{|a_i,b_i|}[n_i]$ as we've described).
Note that in some cases $\iota_V$ will be the identity.

Let $f:V\to W$ be a morphism in $\DbAR$.
Then there exists a unique morphism $\iota(f):\iota(V)\to \iota(W)$ that makes the following diagram commute:
\begin{displaymath}\xymatrix{V \ar[d]_-{\iota_V} \ar[r]^-f & W \ar[d]^-{\iota_W} \\ \iota(V) \ar@{-->}[r]_-{\exists ! \iota(f)} & \iota(W).} \end{displaymath}.
\end{definition}

\begin{remark}\label{rmk:iota}
It is straightforward to check that $\iota:\DbAR\to \DbAR$ is a triangulated equivalence of categories as the image of $\iota$ is a skeleton of $\DbAR$.
\end{remark}

\begin{definition}\label{def:translation}
Let $V\cong \bigoplus V_i$ be an object in $\DbAR$ where each $V_i$ is indecomposable and let $\{(x_i,y_i)\}=\{\MM^b V_i\}$.
Let $r\in \R$ and set $z_i=x_i-r$ for each $V_i$.
If each $(z_i,y_i)$ is in the image of $\MM^b$ then there exists $M_{|a_i,b_i|}[n_i]$ for each $V_i$ such that $\MM^b M_{|a_i,b_i|}[n_i]=(z_i,y_i)$ and the position of $M_{|a_i,b_i|}[n_i]$ is the same as the position of $V_i$.
Then we write $T_r V$ to mean $\bigoplus M_{|a_i,b_i|}[n_i]$, indexed and parenthesized in the same way as in Definition \ref{def:iota}.
(If $r=0$ then $T_r V=\iota (V)$.)

Let $f:M_{|a,b|}[m]\to M_{|c,d|}[n]$ be a morphism in $\DbAR$ such that $T_r M_{|a,b|}[m]$ and $T_r M_{|c,d|}[n]$ are defined.
By \cite[Proposition 5.2.8 and Lemma 5.2.9]{Rock},
\begin{displaymath}\Hom_{\DbAR}(M_{|a,b|}[m], M_{|c,d|}[n])\cong \Hom_{\DbAR}(T_r M_{|a,b|}[m], T_r M_{|c,d|}[n]).\end{displaymath}
If $f$ is nonzero then both Hom sets are $k$.
Then $f$ is a scalar in $\Hom_{\DbAR}(M_{|a,b|}[m],M_{|c,d|}[n])$.
We define $T_r f$ to be the same scalar in $\Hom_{\DbAR}(T_r M_{|a,b|}[m], T_r M_{|c,d|}[n])$.

For arbitrary map $f:V\to W$ in $\DbAR$, $\iota(f)$ is a direct sum of morphisms $\hat{f}_{i,j}:M_{|a_i,b_i|}[m_i]\to M_{|c_j,d_j|}[n_j]$.
Let $f:V\to W$ be a morphism in $\DbAR$ and $r\in\R$ such that both $T_r V$ and $T_r W$ are defined.
Then we define $T_r f$ to be $\bigoplus_{\hat{f}_{i,j}} T_r \hat{f}_{i,j}$.
\end{definition}

\begin{remark}\label{rmk:translation}
Note that aside from our choice of $r$, $T_r$ does not depend on any additional choices beyond those for $\iota$ in Definition \ref{def:iota}.
\end{remark}

\begin{definition}\label{def:by geometry and maps}
Let $V\cong \bigoplus V_i$ and $W\cong \bigoplus W_j$ be objects in $\DbAR$ where each $V_i$ and $W_j$ are indecomposable.
Let $f:V\to W$ be a morphism.
Consider the distinguished triangle $V\stackrel{f}{\to} W\stackrel{g}{\to} U\stackrel{h}{\to}$.
We say $U$ is \ul{determined by geometry and $f$} if for any $r\in\R$ such that $T_r V$ and $T_r W$ are defined, there exists a distinguished triangle $T_r V\stackrel{T_r f}{\to} T_r W\to T_r U\to $ in $\DbAR$.
\end{definition}

A visual example of the proof technique to the following lemma is exhibited in Example \ref{xmp:constructing the cone}, stated afterwards.

\begin{lemma}\label{lem:geometric cones 1}
Let $V$ be indecomposable and $W$ an object in $\DbAR$.
Suppose $f:V\to W$ is a nonzero morphism and consider the distinguished triangle $V\stackrel{f}{\to} W\to U\to$.
Then $U$ is determined by geometry and $f$.
The conclusion holds if instead $W$ is indecomposable and $V$ is some object.

Furthermore:
\begin{enumerate}
\item If $V$ and $W$ together have $n$ indecomposable summands then $U$ has at most $n$ indecomposable summands.
\item If $V$ is indecomposable, for each indecomposable summand $U_j$ of $U$ there is an indecomposable summand $W_i$ of $W$ such that the line segment with endpoints $W_i$ and $U_j$ has slope $\pm(1,1)$.
\item If $W$ is indecomposable, for each indecomposable summand $U_i$ of $U$ there is an indecomposable summand $V_j$ of $V$ such that the line segment with endpoints $U_i[-1]$ and $V_j$ has slope $\pm(1,1)$.
\end{enumerate}
\end{lemma}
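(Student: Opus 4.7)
The plan is to reduce $f$ to elementary maps between indecomposables, where Theorem \ref{thm:triangles are rectangles} identifies the cone directly from the AR-space geometry, and then to reassemble the full cone via Proposition \ref{prop:easy triangle splice} and Lemma \ref{lem:easy splice first}. The two statements of the lemma are symmetric (they differ only in which of $V$, $W$ is indecomposable), so I treat the case $V$ indecomposable in detail; the dual case follows by rotating the triangle and repeating the argument.

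Decompose $W \cong \bigoplus W_i$ into indecomposables and write $f = (f_i)$ with $f_i$ the composition $V \to W \to W_i$. If some $f_i = 0$, then $W_i$ splits off as a direct summand of both $W$ and the cone $U$ at its original AR-position, so I reduce to the case that every $f_i$ is nonzero. Then Construction \ref{con:nice summing} partitions the summands as $X_W \amalg Y_W$. For each $W_i \in X_W$, rotate the triangle $V \to W_i \to U_i \to V[1]$ to place both endpoints in indecomposable positions and apply Theorem \ref{thm:triangles are rectangles}: $U_i$ is identified as the one or two middle corners of a rectangle or almost-complete rectangle in the AR-space whose corners are determined entirely by $\MM^b W_i$ and $\MM^b V$. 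This immediately supplies claim (2) --- the line segment from $W_i$ to each summand of $U_i$ is a side of this rectangle and therefore has slope $\pm(1,1)$ --- and claim (3) follows by the dual version of the same argument.

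I next iteratively splice the triangles for $W_i \in X_W$ using Proposition \ref{prop:easy triangle splice} and Corollary \ref{cor:easy triangle splice}. The defining property of $X_W$ combined with Proposition \ref{prop:one way hom} secures the splicing hypotheses at every step and prevents spurious summands from entering the cone. This produces a distinguished triangle $V \to \bigoplus_{X_W} W_i \to E \to V[1]$ with $E$ assembled from the $U_i$'s, with duplicate indecomposables identified according to the splicing matrices. Lemma \ref{lem:easy splice first} then upgrades this to $V \to W \to U \to V[1]$ with $U = E \oplus \bigoplus_{Y_W} W_i$. Translation invariance in the sense of Definition \ref{def:by geometry and maps} is immediate: every input to the construction --- Hom-spaces, good-slope rectangles, the choice of $X_W$ and $Y_W$, and the splicing formulas --- is preserved by the $T_r$'s, so running the same construction on $T_r V \to T_r W$ produces $T_r U$.

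The principal obstacle is claim (1), the bound $|U| \leq n$. Naively the $|X_W|$ rectangles could each contribute two summands to $E$, giving a total of up to $2|X_W| + |Y_W|$ summands in $U$. The sharper bound comes from observing that the sink-maximality condition defining $X_W$ forces overlapping rectangles to share corners that are identified during splicing; a small example (two Hom-orthogonal indecomposable summands of $W$ with a common indecomposable $V$ mapping to both) already saturates the bound and illustrates the cancellation pattern. I would establish the bound $|U| \leq 1 + |X_W| + |Y_W| = n$ by induction on $|X_W|$, showing that inserting one summand of $X_W$ at a time increases $|U|$ by at most one --- a claim that should be verifiable directly from the rectangle picture in the AR-space and which is precisely where the maximality structure of $X_W$ does the essential work.
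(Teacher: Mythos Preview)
Your overall strategy --- reduce to the base case via Theorem \ref{thm:triangles are rectangles}, use Lemma \ref{lem:easy splice first} to peel off $Y_W$, and build up the cone for $\bigoplus_{X_W} W_i$ inductively --- matches the paper's approach. But the central splicing step contains a genuine gap.

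You propose to combine the triangles $V \to W_i \to U_i \to V[1]$ for $W_i \in X_W$ using Proposition \ref{prop:easy triangle splice} and Corollary \ref{cor:easy triangle splice}. However, those results require a nonzero morphism $h:W_1 \to W_2$ between the summands being spliced, and the \emph{defining property} of $X_W$ is that its elements are pairwise Hom-orthogonal. So rather than ``securing the splicing hypotheses,'' the construction of $X_W$ guarantees that Proposition \ref{prop:easy triangle splice} \emph{cannot} be applied to any pair of distinct elements of $X_W$. This is precisely the hard case, and the paper handles it differently: given the induction hypothesis for $V \to W$ and a new Hom-orthogonal summand $W_{n+1}$, it takes the triangle $U'[-1] \to V \to W_{n+1}$, forms the composite $U'[-1] \to V \to W$, and applies the octahedral axiom directly to produce the triangle $V \to W \oplus W_{n+1} \to E \to V[1]$. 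A geometric observation (the $W_i$ in $X_W$ are totally ordered by $y$-coordinate, so $W$ maps to at most one summand of $U'$) is needed to invoke the induction hypothesis on $U'[-1] \to W$.

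A secondary issue: your verification of (2) treats only the cones $U_i$ of the individual maps $V \to W_i$, but the summands of the full cone $U$ are not simply the union of those of the $U_i$ together with $Y_W$. As Example \ref{xmp:constructing the cone} illustrates, each inductive step modifies the previously constructed cone (e.g.\ $E^3_4$ becomes $E^4_4$ at a different position), so (2) must be tracked through the octahedral step rather than read off once from the base case.
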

\begin{proof}
\textbf{Setup and Base Case.}
We prove the statement where $V$ is indecomposable as the proof when $W$ is indecomposable is similar.
In particular we'll prove (1) and (2) in the enumerated list (as (3) is part of the case when $W$ is the indecomposable).
The proof will be by induction on the number of summands of $W$.
Our base case is 1; i.e.\ $W$ is also indecomposable.
The base case follows from Theorem \ref{thm:triangles are rectangles}.

\textbf{Induction Setup and Trivial Case.}
For induction assume statements (1) and (2) hold when $V$ is indecomposable and $W$ has $n$ or fewer indecomposable summands.
Suppose $W\cong \bigoplus_{i=1}^n W_i$ where each $W_i$ is indecomposable.
For each summand $W_i$, let $f_i:V\to W_i$ be such that $f=[f_1\, \cdots\, f_n]^t$.

Let $W_{n+1}$ be an additional indecomposable and $f_{n+1}:V\to W_{n+1}$ a morphism.
If $f_{n+1}=0$ we are done as we obtain the distinguished triangle
\begin{displaymath}\xymatrix@R=2ex@C=10ex{ V\ar[r]^-{\left[\begin{array}{l} f\\0\end{array}\right]} & W \oplus W_{n+1} \ar[r]^-{\left[\begin{array}{ll} g & 0 \\ 0 & 1 \end{array}\right]} & U\oplus W_{n+1} \ar[r]^-{\left[\begin{array}{ll} h & 0 \end{array}\right]} & V[1].}\end{displaymath}

$\textbf{Hom}\mathbf{\boldsymbol{(}W\boldsymbol{,}W_{n\boldsymbol{+}1}\boldsymbol{)}\boldsymbol{\neq} 0} \textbf{ or }
\textbf{Hom}\mathbf{\boldsymbol{(}W_{n\boldsymbol{+}1}\boldsymbol{,}W\boldsymbol{)}\boldsymbol{\neq} 0}$\textbf{.}
Assume $f_{n+1}\neq 0$.
If $\Hom(W,W_{n+1})$ is not 0 such that $f_{n+1}$ factors through $f$ then again the lemma follows using Lemma \ref{lem:easy splice first}.
If $\Hom(W_{n+1},W)$ $\neq 0$ then there is at least one summand $W_j$ of $W$ such that $\Hom(W_{n+1},W_i) \neq 0$.
In this case, if $f_i=0$ then we can reverse the roles of $W_{n+1}$ and $W_i$ and use the induction hypothesis where $f_{n+1}=0$.
If $f_i\neq 0$ then our induction hypothesis holds for $V\to (\bigoplus_1^{j-1} W_i)\oplus (\bigoplus_{j+1}^{n+1}W_i)$ and we apply Lemma \ref{lem:easy splice first} again.

If $\Hom(W,W_{n+1})\neq 0$ but $f_{n+1}$ does not factor through $W$ via $f$ then for each summand $W_i$ of $W$ such that $\Hom(W_i,W_{n+1})\neq 0$ we know $f_i=0$.
Then we can reverse the roles of one such $W_i$ and $W_{n+1}$ and use the induction hypothesis where $f_{n+1}=0$.

$\textbf{Hom}\mathbf{\boldsymbol{(}W\boldsymbol{,}W_{n\boldsymbol{+}1}\boldsymbol{)}\boldsymbol{=}0\boldsymbol{=}
\textbf{Hom}\boldsymbol{(}W_{n\boldsymbol{+}1}\boldsymbol{,}W\boldsymbol{)}}$\textbf{.}
Suppose $f_{n+1}\neq 0$ and $\Hom(W,W_{n+1})$ and $\Hom(W_{n+1},W)$ are 0.
Choose an $X_W$ (Construction \ref{con:nice summing}).
If $X_W$ has fewer than $n$ elements then we are done since $X_W\cup \{W_{n+1}\}$ has $n$ or fewer elements and we may then apply the induction hypothesis.
So suppose $X_W$ contains each summand of $W$.

Note $\Hom(W,W_{n+1})=0$ but $f_i\neq 0$ for all $1\leq i\leq n+1$.
Thus the $W_i$, for all $1\leq i \leq n+1$, are  totally ordered by $y$-coordinate and position (2 is greater than 1 and 4 which are greater than 3).
Reindex the $W_i$s such that $W_i < W_{i+1}$ in the total order and replace $W$ with $\bigoplus_{i=1}^n W_i$ in the new index.
We have the following distinguished triangles:
\begin{displaymath}\xymatrix@R=2ex@C=8ex{
U'[-1] \ar[r]^-{h'[-1]} & V\ar[r]^-{f_{n+1}} & W_{n+1} \ar[r]^-{g'} & U' \\
V \ar[r]^-f & W\ar[r]^-g & U \ar[r]^-h & V[1]\\
U'[-1] \ar[r]^-{f\circ h'[-1]} & W \ar[r]^-p & E\ar[r]^-q & U'.
}\end{displaymath}
By our base case $U'$ has at most two indecomposable summands and by the rest of Theorem \ref{thm:triangles are rectangles} the slopes of the line segments from $W_{n+1}$ to each summand is $\pm(1,1)$.
By \cite[Lemma 5.2.9]{Rock} $W_n$ can map to at most one of the indecomposable summands of $U'$
and that is the only possible summand that can map to $W_n[1]$.

Thus $W$ can map to at most one indecomposable summand of $U'$.
Then by induction $E$ is determined by geometry and $f\circ h'[-1]$.
By the formulation of the octahedral axiom we've stated, we obtain the distinguished triangles
\begin{displaymath}\xymatrix@C=10ex{
W_{n+1}\ar[r]^-r & E \ar[r]^-s & U \ar[r]^-t & W_{n+1}[1] \\
V\ar[r]_-{\left[\begin{array}{l} f \\ f_{n+1} \end{array}\right]} & W\oplus W_{n+1} \ar[r]_-{\left[\begin{array}{ll} p & -r\end{array}\right]} & E \ar[r]_-{h\circ s =h' \circ q} & V[1].
}\end{displaymath}
Thus $E$ is determined by geometry and $[ f\, f_{n+1}]^t$.
Furthermore, the cone has the desired number of indecomposable summands.
\end{proof}

\begin{example}\label{xmp:constructing the cone}
The practical technique for doing this may be somewhat opaque to the reader at first.
As an example, one might have $f:V\to\bigoplus_{i=1}^4 W_i$.
According to the proposition the cone $E$ should be the direct sum of no more than 5 indecomposables, the slopes between select indecomposables should be as described, and $E$ should be determined by geometry and $f$.
We start with $V\to W_i$ for each $i$, then using the technique in the proof, splice together the triangles one by one.
At each step $i>1$, the new cone $E^i$ is given by using most of the old cone $E^{i-1}$ with a slight change to account for the new indecomposable.
\begin{displaymath}\begin{tikzpicture}[scale=3]
\draw[dashed, white!40!black] (0,1) -- (1.4,1);
\draw[dashed, white!40!black] (0,0) -- (1.4,0);
\draw(0.1,0.5) node[anchor=east] {{$V$}};
\draw[fill=black, black] (0.1,0.5) circle[radius=0.15mm];
\draw (0.5,0.3) node [anchor=west] {{$W_1$}};
\draw[fill=black, black] (0.5,0.3) circle[radius=0.15mm];
\draw (0.6,0.6) node [anchor=west] {{$W_3$}};
\draw[fill=black, black] (0.6,0.6) circle[radius=0.15mm];
\draw (0.7,0.4) node[anchor=west] {{$W_2$}};
\draw[fill=black, black] (0.7,0.4) circle[radius=0.15mm];
\draw (0.45,0.85) node[anchor=west] {{$W_4$}};
\draw[fill=black, black] (0.45,0.85) circle[radius=0.15mm];
\draw[->, black] (0.1,0.5) -- (0.5,0.3);
\draw[->, black] (0.1,0.5) -- (0.6,0.6);
\draw[->, black] (0.1,0.5) -- (0.7,0.4);
\draw[->, black] (0.1,0.5) -- (0.45,0.85);
\end{tikzpicture}\end{displaymath}

\begin{displaymath}\begin{tikzpicture}[scale=3]
\draw[dashed, white!40!black] (0,1) -- (1.4,1);
\draw[dashed, white!40!black] (0,0) -- (1.4,0);
\draw[white!40!black] (0.5,0.3) -- (0.9,0.7) -- (1.1,0.5) -- (0.7,0.1) -- (0.5,0.3);
\draw (0.1,0.5) node[anchor=east] {{$V$}};
\draw[fill=black, black] (0.1,0.5) circle[radius=0.15mm];
\draw (0.5,0.3) node [anchor=east] {{$W_1$}};
\draw[fill=black, black] (0.5,0.3) circle[radius=0.15mm];
\draw (1.1,0.5) node [anchor=west] {{$V[1]$}};
\draw[fill=black, black] (1.1,0.5) circle[radius=0.15mm];
\draw (0.9,0.7) node [anchor=south] {{$E^1_2$}};
\draw[fill=black, black] (0.9,0.7) circle[radius=0.15mm];
\draw (0.7,0.1) node [anchor=north] {{$E^1_1$}};
\draw[fill=black, black] (0.7,0.1) circle[radius=0.15mm];
\draw (0,-0.1) node[anchor=west] {{Step 1}};

\draw[dashed, white!40!black] (2,1) -- (3.4,1);
\draw[dashed, white!40!black] (2,0) -- (3.4,0);
\draw[white!40!black] (2.5,0.3) -- (2.9,0.7) -- (3.1,0.5) -- (2.7,0.1) -- (2.5,0.3);
\draw[white!40!black] (2.7,0.4) -- (2.95,0.65) -- (3.1,0.5) -- (2.85,0.25) -- (2.7,0.4);
\draw[white!40!black, very thick, dotted] (2.65,0.45) -- (2.7,0.4) -- (2.55,0.25);
\draw (2.1,0.5) node[anchor=east] {{$V$}};
\draw[fill=black, black] (2.1,0.5) circle[radius=0.15mm];
\draw (2.5,0.3) node [anchor=east] {{$W_1$}};
\draw[fill=black, black] (2.5,0.3) circle[radius=0.15mm];
\draw (2.7,0.4) node[anchor=east] {{$W_2$}};
\draw[fill=black, black] (2.7,0.4) circle[radius=0.15mm];
\draw (3.1,0.5) node [anchor=west] {{$V[1]$}};
\draw[fill=black, black] (3.1,0.5) circle[radius=0.15mm];
\draw (2.9,0.7) node [anchor=south] {{$E^2_3$}};
\draw[fill=black, black] (2.9,0.7) circle[radius=0.15mm];
\draw (2.7,0.4) node[anchor=west] {{$E^2_2$}};
\draw (2.7,0.1) node [anchor=north] {{$E^2_1$}};
\draw[fill=black, black] (2.7,0.1) circle[radius=0.15mm];
\draw (2,-0.1) node[anchor=west] {{Step 2}};
\end{tikzpicture}\end{displaymath}

\begin{displaymath}\begin{tikzpicture}[scale=3]
\draw[dashed, white!40!black] (0,1) -- (1.4,1);
\draw[dashed, white!40!black] (0,0) -- (1.4,0);
\draw[white!40!black] (0.5,0.3) -- (0.9,0.7) -- (1.1,0.5) -- (0.7,0.1) -- (0.5,0.3);
\draw[white!40!black] (0.6,0.6) -- (0.8,0.8) -- (1.1,0.5) -- (0.9,0.3) -- (0.6,0.6);
\draw[white!40!black] (0.7,0.4) -- (0.95,0.65) -- (1.1,0.5) -- (0.85,0.25) -- (0.7,0.4);
\draw[white!40!black, very thick, dotted] (0.65,0.45) -- (0.7,0.4) -- (0.55,0.25);
\draw (0.1,0.5) node[anchor=east] {{$V$}};
\draw[fill=black, black] (0.1,0.5) circle[radius=0.15mm];
\draw (0.5,0.3) node [anchor=east] {{$W_1$}};
\draw[fill=black, black] (0.5,0.3) circle[radius=0.15mm];
\draw (0.6,0.6) node [anchor=east] {{$W_3$}};
\draw[fill=black,black] (0.6,0.6) circle[radius=0.15mm];
\draw (0.7,0.4) node[anchor=east] {{$W_2$}};
\draw[fill=black, black] (0.7,0.4) circle[radius=0.15mm];
\draw (1.1,0.5) node [anchor=west] {{$V[1]$}};
\draw[fill=black, black] (1.1,0.5) circle[radius=0.15mm];
\draw (0.8,0.8) node[anchor=south] {{$E^3_4$}};
\draw[fill=black, black] (0.8,0.8) circle[radius=0.15mm];
\draw (0.7,0.5) node[anchor=south west] {{$E^3_3$}};
\draw[fill=black, black] (0.7,0.5) circle[radius=0.15mm];
\draw (0.7,0.4) node[anchor=west] {{$E^3_2$}};
\draw (0.7,0.1) node [anchor=north] {{$E^3_1$}};
\draw[fill=black, black] (0.7,0.1) circle[radius=0.15mm];
\draw (0,-0.1) node[anchor=west] {{Step 3}};

\draw[dashed, white!40!black] (2,1) -- (3.4,1);
\draw[dashed, white!40!black] (2,0) -- (3.4,0);
\draw[white!40!black] (2.5,0.3) -- (2.9,0.7) -- (3.1,0.5) -- (2.7,0.1) -- (2.5,0.3);
\draw[white!40!black] (2.6,0.6) -- (2.8,0.8) -- (3.1,0.5) -- (2.9,0.3) -- (2.6,0.6);
\draw[white!40!black] (2.7,0.4) -- (2.95,0.65) -- (3.1,0.5) -- (2.85,0.25) -- (2.7,0.4);
\draw[white!40!black, very thick, dotted] (2.65,0.45) -- (2.7,0.4) -- (2.55,0.25);
\draw[white!40!black] (2.45,0.85) -- (2.6,1) -- (3.1,0.5) -- (2.95,0.35) -- (2.45,0.85);
\draw (2.1,0.5) node[anchor=east] {{$V$}};
\draw[fill=black, black] (2.1,0.5) circle[radius=0.15mm];
\draw (2.5,0.3) node [anchor=east] {{$W_1$}};
\draw[fill=black, black] (2.5,0.3) circle[radius=0.15mm];
\draw (2.6,0.6) node [anchor=east] {{$W_3$}};
\draw[fill=black, black] (2.6,0.6) circle[radius=0.15mm];
\draw (2.7,0.4) node[anchor=east] {{$W_2$}};
\draw[fill=black, black] (2.7,0.4) circle[radius=0.15mm];
\draw(2.45,0.85) node[anchor=east] {{$W_4$}};
\draw[fill=black, black] (2.45,0.85) circle[radius=0.15mm];
\draw (3.1,0.5) node [anchor=west] {{$V[1]$}};
\draw[fill=black, black] (3.1,0.5) circle[radius=0.15mm];
\draw (2.6,1) node[anchor=west] {{$0$}};
\draw[fill=white, black] (2.6,1) circle[radius=0.15mm];
\draw (2.65,0.65) node[anchor=south] {{$E^4_4$}};
\draw[fill=black, black] (2.65,0.65) circle[radius=0.15mm];
\draw (2.7,0.5) node[anchor=south west] {{$E^4_3$}};
\draw[fill=black, black] (2.7,0.5) circle[radius=0.15mm];
\draw (2.7,0.4) node[anchor=west] {{$E^4_2$}};
\draw (2.7,0.1) node [anchor=north] {{$E^4_1$}};
\draw[fill=black, black] (2.7,0.1) circle[radius=0.15mm];
\draw (2,-0.1) node[anchor=west] {{Step 4}};
\end{tikzpicture}\end{displaymath}
\end{example}

\begin{example}
Another notable example is what happens when we check the cone $E_1\oplus E_2\to V[1]$ from a map of indecomposable $V\to W$.
Since $E_1$ and $E_2$ both create line segments with other endpoint $V$ and slope $\pm(1,1)$, any triangle $E_i\to V\to F_i\to$ has $F_i$ indecomposable.
Furthermore, each $F_i$ will also share an endpoint with $V$.
Of course, algebraically we \emph{must} have that $W[1]$ is the cone of $E_1\oplus E_2\to V[1]$ but this is also correct \emph{geometrically}.
We end up with the following picture:
\begin{displaymath}\begin{tikzpicture}[scale=1.5]
\draw[dashed, white!40!black] (0,2) -- (4,2);
\draw[dashed, white!40!black] (0,0) -- (4,0);
\draw[white!40!black] (0,1.5) -- (1.5,0) -- (3.5,2) -- (4,1.5) -- (2.5,0) -- (0.5,2) -- (0,1.5);
\draw[white!40!black] (1,1) -- (2,2) -- (3,1) -- (2,0) -- (1,1);
\draw[white!40!black] (3.75,1.75) -- (3,1) -- (3.25,0.75);

\draw[fill=black] (0,1.5) circle[radius=.3mm];
\draw[fill=black] (4,1.5) circle[radius=.3mm];
\draw[fill=black] (2,0.5) circle[radius=.3mm];
\draw (0,1.5) node[anchor=east] {$V$};
\draw (2,0.5) node[anchor=south] {$V[1]$};
\draw (4,1.5) node[anchor=west] {$V[2]$};

\draw[fill=black] (1,1) circle[radius=.3mm];
\draw[fill=black] (3,1) circle[radius=.3mm];
\draw (1,1) node[anchor=east] {$W$};
\draw (3,1) node[anchor=east] {$W[1]$};

\draw[fill=black] (1.75,0.25) circle [radius=.3mm];
\draw[fill=black] (3.75,1.75) circle [radius=.3mm];
\draw (1.75,0.25) node [anchor=east] {$E_1$};
\draw (3.75,1.75) node [anchor=west] {$E_1[1]$};

\draw [fill=black] (1.25,1.25) circle [radius=.3mm];
\draw [fill=black] (3.25,0.75) circle [radius=.3mm];
\draw (1.25,1.25) node [anchor=east] {$E_2$};
\draw (3.25,0.75) node [anchor=west] {$E_2[1]$};

\draw [fill=white] (3.5,2) circle [radius=.3mm];
\draw (3.5,2) node [anchor=south] {0};
\draw [fill=white] (2.5,0) circle [radius=.3mm];
\draw (2.5,0) node [anchor=north] {0};
\end{tikzpicture}\end{displaymath}
\end{example}

\begin{theorem}\label{thm:geometric cones}
Let $f:V\to W$ be a morphism in $\DbAR$ and consider the distinguished triangle
\begin{displaymath}\xymatrix@C=6ex{
V\ar[r]^-f & W\ar[r]^-g &  U\ar[r]^-h & V[1].
}\end{displaymath}
Then $U$ is determined by geometry and $f$.
\end{theorem}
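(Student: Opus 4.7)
The plan is to induct on the number of indecomposable summands of $V$, using Lemma \ref{lem:geometric cones 1} as the base case (where $V$ is indecomposable) and the octahedral axiom to splice together triangles in the inductive step. The translation $T_r$ of Definition \ref{def:translation} is compatible with direct sums of objects and compositions of morphisms, so if all inputs to the inductive construction translate under $T_r$ in a recognizable way, the output $U$ will too.

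For the inductive step, suppose $V$ has $n+1\ge 2$ summands. First I would fix a decomposition $V\cong V_1\oplus V'$ with $V_1$ indecomposable and $V'$ having $n$ summands, and write $f=[\,f_1\ f'\,]:V_1\oplus V'\to W$. Applying Lemma \ref{lem:geometric cones 1} to $f_1:V_1\to W$ produces a distinguished triangle $V_1\xrightarrow{f_1} W\xrightarrow{g_1} U_1\to V_1[1]$ in which $U_1$ is determined by geometry and $f_1$. The octahedral axiom applied to the composition $V_1\xrightarrow{\iota_1} V\xrightarrow{f} W$ (paired with the split triangle $V_1\to V\to V'\to V_1[1]$) then yields a distinguished triangle $V'\xrightarrow{\alpha} U_1\to U\to V'[1]$, where $U=\mathrm{cone}(f)$ and the morphism $\alpha$ can be taken to be $g_1\circ f'$: indeed $g_1\circ f=g_1\circ f'\circ \pi'$ since $g_1\circ f_1=0$ forces the vanishing of the $V_1$-component, and this factorization uniquely dictates $\alpha=g_1\circ f'$. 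Since $V'$ has only $n$ summands, the inductive hypothesis applied to $g_1\circ f':V'\to U_1$ shows that $U$ is determined by geometry and $g_1\circ f'$.

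To upgrade to the statement about $f$, fix any $r\in\R$ with $T_rV$ and $T_rW$ defined, so that $T_rV_1$, $T_rV'$, $T_rf_1$, and $T_rf'$ are all defined as well. The base case applied to $T_rf_1$ produces a distinguished triangle $T_rV_1\xrightarrow{T_rf_1}T_rW\xrightarrow{T_rg_1} T_rU_1\to (T_rV_1)[1]$, so $T_r(g_1\circ f')=T_rg_1\circ T_rf'$ is the translated first morphism of the inner triangle. The inductive hypothesis then gives a triangle $T_rV'\to T_rU_1\to T_rU\to (T_rV')[1]$, and a second application of the octahedral axiom (now splicing the split triangle $T_rV_1\to T_rV\to T_rV'$ with the two triangles just obtained) produces the desired triangle $T_rV\xrightarrow{T_rf} T_rW\to T_rU\to (T_rV)[1]$. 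The main obstacle is the bookkeeping around the octahedral morphism $V'\to U_1$: only morphisms visibly built from $f$ by direct sum and composition translate compatibly under $T_r$, so the proof hinges on pinning down $\alpha$ explicitly as $g_1\circ f'$. Once this identification is in place, both uses of the octahedral axiom commute with $T_r$ and the induction delivers that $U$ is determined by geometry and $f$.
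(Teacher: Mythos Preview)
Your proposal is correct and follows essentially the same inductive strategy as the paper: split off one indecomposable summand of $V$, use Lemma \ref{lem:geometric cones 1} for that summand's map to $W$, then apply the inductive hypothesis to the composite from the remaining summands into that cone, with the octahedral axiom gluing the two triangles. The paper applies the octahedral to the composite $V\to W\to U'$ (where $U'=\operatorname{cone}(f_{n+1})$) rather than to $V_1\hookrightarrow V\to W$, and is terser about identifying the intermediate morphism and the $T_r$-translation step, but the argument is the same.
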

\begin{proof}
Since $V$ and $W$ are each finite direct sums of indecomposables we'll prove the statement by induction on the number of indecomposable summands of $V$, using Lemma \ref{lem:geometric cones 1} as the base case.

Assume the lemma holds for all positive integers $\leq n$ and let $V_i$ for $1\leq i\leq n+1$ be indecomposables and let $V=\bigoplus_{i=1}^n V_i$.
Let $f:V\to W$ and $f_{n+1}:V_{n+1}\to W$ be a nontrivial morphisms.
We begin with the following distinguished triangles:
\begin{displaymath}\xymatrix@R=2ex@C=8ex{
V \ar[r]^-f & W \ar[r]^-g & U\ar[r]^-h & V[1] \\
W \ar[r]^-{g_{n+1}} & U' \ar[r]^-{h_{n+1}} &  V_{n+1}[1] \ar[r]^-{f_{n+1}[1]} & W[1] \\
V \ar[r]^-{g_{n+1}\circ f} & U' \ar[r]^-p & E \ar[r]^-q & V[1].
}\end{displaymath}
Applying our octahedral axiom we obtain the following distinguished triangles:
\begin{displaymath}\xymatrix@R=4ex@C=18ex{
U \ar[r]^-{p'} & E\ar[r]^-{q'} & V_{n+1}[1] \ar[r]^-{(g\circ f_{n+1})[1]} &  U[1] \\
E \ar[r]_-{\left[\begin{array}{l} q \\ q' \end{array}\right]} & V[1]\oplus V_{n+1}[1] \ar[r]_-{\left[\begin{array}{ll} f[1] & -f_{n+1}[1] \end{array}\right]} & W[1] \ar[r]_-{(p'\circ g)[1] = (p\circ g_{n+1})[1]} & E[1].
}\end{displaymath}
By the induction hypothesis, $E$ is determined by geometry and $g_{n+1}\circ f$.
Therefore, $E$ is determined by geometry and $[f\, -f_{n+1}]$.
\end{proof}

\subsection{Derived Equivalence}\label{sec:derived equivalence}

In this subsection we prove the first main result of this paper: the bounded derived categories of two continuous type $A$ quivers are triangulated-equivalent if and only if they belong to the same of the following three classes:
\begin{itemize}
\item finitely many sinks and sources,
\item the existence of a minimal or maximal sink/source, but not both, or
\item no minimal or maximal sink/source.
\end{itemize}
Note that these are all disjoint.

\begin{lemma}\label{lem:slope lemma}
Let $A_\R$ and $A'_\R$ be possibly different continuous quivers of type $A$ and let $\mathcal G:\DbAR\to \mathcal D^b(A'_\R)$ be an equivalence of categories.
Let $V$ and $W$ be indecomposables in $\DbAR$ and suppose the slope of the line segment from $V$ to $W$ in the AR-space of $\DbAR$ is $\pm(1,1)$.
Then the slope of the line segment from $\mathcal G V$ to $\mathcal G W$ in the AR-space of $\mathcal D^b (A'_\R)$ is $\pm(1,1)$.
\end{lemma}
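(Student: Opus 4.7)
The plan is to characterise the slope $\pm(1,1)$ relationship between two indecomposables by a distinguished triangle of the type appearing in Theorem~\ref{thm:triangles are rectangles}, then transport that triangle through $\mathcal{G}$ and reapply Theorem~\ref{thm:triangles are rectangles} in the target to recover the same slope. The key geometric observation is that the sides of a (possibly almost-complete) rectangle in the AR-space all have slope $\pm(1,1)$, so the hypothesis ``the segment from $V$ to $W$ has slope $\pm(1,1)$'' is equivalent to ``$V$ and $W$ appear as adjacent corners of some rectangle (or almost-complete rectangle) in the AR-space''.

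Concretely, I would first assume $V\not\cong W$ and exhibit a witnessing rectangle for $V$ and $W$: since every interval of $\R$ together with every integer shift yields an interval indecomposable of $\DbAR$, the remaining one or two corners complementing $V$ and $W$ can be written down explicitly as $M_I[m]$ for suitable $I$ and $m$. Theorem~\ref{thm:triangles are rectangles}, in the ``rectangle $\Rightarrow$ triangle'' direction, then supplies a distinguished triangle $L\to T\oplus B\to R\to L[1]$ in $\DbAR$ (or its almost-complete analog $L\to U\to R\to L[1]$ with $U$ indecomposable) in which $V$ and $W$ occupy two adjacent positions among $\{L,T,B,R\}$. Applying $\mathcal{G}$ gives a distinguished triangle in $\mathcal{D}^b(A'_\R)$ whose outer terms $\mathcal{G}L$ and $\mathcal{G}R$ are indecomposable, since triangulated equivalences preserve indecomposability. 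Theorem~\ref{thm:triangles are rectangles} in the target then produces a rectangle with $\mathcal{G}L$ leftmost, $\mathcal{G}R$ rightmost, and $\mathcal{G}T,\mathcal{G}B$ occupying the middle corners; since the sides of this rectangle have slope $\pm(1,1)$ and $\mathcal{G}V,\mathcal{G}W$ remain adjacent corners of it, the required slope follows immediately.

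The main obstacle is producing the witnessing rectangle with the correct adjacency. First, one has to track the four positions (1--4) of $V$ and $W$ in the diamond in order to certify that the rectangle placed around them really has $V$ and $W$ as adjacent corners rather than as the diagonal pair $L,R$ (for which the segment would be horizontal, not of slope $\pm(1,1)$). Second, one has to handle the boundary configurations in which part of the rectangle would leave the AR-space, for instance when interval endpoints of $V$ or $W$ are pinned by a sink or source on one side; those cases force the use of the almost-complete rectangle / three-term triangle version of Theorem~\ref{thm:triangles are rectangles}, and the adjacency argument needs to be rerun in that setting. Modulo this bookkeeping, the lemma reduces to a one-line application of Theorem~\ref{thm:triangles are rectangles} in both directions, bridged by preservation of distinguished triangles under $\mathcal{G}$.
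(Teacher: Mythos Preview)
Your approach is correct in outline but genuinely different from the paper's. You encode ``slope $\pm(1,1)$'' as ``$V$ and $W$ are adjacent corners of some (almost-complete) rectangle'', manufacture the missing corners, transport the resulting distinguished triangle through $\mathcal G$, and decode via Theorem~\ref{thm:triangles are rectangles} in the target. The paper instead argues by contradiction purely from the Hom-support description of \cite[Lemma~5.2.9]{Rock}: if the segment $\mathcal GV$--$\mathcal GW$ were steeper than $\pm(1,1)$ then $\Hom(\mathcal GV,\mathcal GW)=0$, contradicting the equivalence; if it were shallower, then $\mathcal GV$ and $\mathcal GW$ would be the left and right corners of a rectangle, and one can place two Hom-incomparable indecomposables $U,U'$ on the segment between them through which every morphism $\mathcal GV\to\mathcal GW$ factors---a factorization pattern that has no preimage in $\DbAR$, since any two objects through which a morphism $V\to W$ factors there are Hom-comparable. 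Thus the paper never needs to construct auxiliary corners and so avoids all of the boundary/position bookkeeping you flag (including the extremal configuration where $V$ sits on one boundary line and $W$ on the other, and the case where a would-be corner lands on a missing $\lambda$-graph). Your route, on the other hand, makes the role of Theorem~\ref{thm:triangles are rectangles} more transparent: the lemma becomes a direct ``transport the witness'' corollary once the witnessing rectangle is produced.
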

\begin{proof}
For contradiction, suppose the conclusion is false.
We know by \cite[Lemma 5.2.9]{Rock} that if the slope of the line segment from $\mathcal G V$ to $\mathcal G W$ in the AR-space of $\mathcal D^b (A'_\R)$ is greater than $(1,1)$ or less than $-(1,1)$ then $\Hom_{\mathcal D^b(A'_\R)}(\mathcal G V,\mathcal G W)=0$.
This contradicts that $\mathcal G$ is an equivalence of categories.

Now suppose the slope is less than $(1,1)$ and greater than $-(1,1)$.
We know $\mathcal G W$ must be close enough to $\mathcal G V$ by \cite[Lemma 5.2.9]{Rock}.
Then we may use Theorem \ref{thm:triangles are rectangles} and construct a distinguished triangle $\mathcal G V \to U \to \mathcal G W\to$.
We may also choose a $U'$ in $\mathcal D^b(A'_\R)$ such that $\Hom(U,U')=\Hom(U',U)=0$ on the line segement from $\mathcal G V$ to $\mathcal G W$.
Then any morphism $f:\mathcal G V\to \mathcal G W$ factors through either $U$ or $U'$ but not both.
This contradicts the fact that any morphism from $V$ to $W$ in $\DbAR$ that factors through some $X$ and $X'$ factors through both and $\Hom(X,X')\cong k$ or $\Hom(X',X)\cong k$.
\end{proof}

\begin{definition}\label{def:representative}
For each isomorphism class of indecomposable objects in $\DbAR$ we call the representation $V$ such that $\iota V=V$ the \ul{representative}.
\end{definition}

\begin{definition}\label{def:the functor}
Let $A_\R$ and $A'_\R$ be possibly different continuous type $A$ quivers.
Suppose also that both $A_\R$ and $A'_\R$ have either (a) finitely many sinks and sources or (b) infinitely many sinks and sources but a minimal or maximal sink/source.
We construct a functor $\mathcal{F}$ from $\DbAR$ to $\mathcal{D}^b(A'_\R)$.

If (b) is true then by the proof of \cite[Proposition 2.4.4]{Rock} there is a $\lambda$ function $\lambda_{*\infty}$ (Definition \ref{def:lambda function}) whose graph in $\R^2$ is disjoint from the image of $\MM^b:\text{Ind}(\DbAR)\to \R^2$.
All other points in $\R\times[-\frac{\pi}{2},\frac{\pi}{2}]$ are in the image of $\MM^b$.
For $A'_\R$ there is a similar $\lambda'_{*\infty}$ disjoint from the image of $(\MM')^b$.

If (b) is true let $T$ be the translation of $\R\times[-\frac{\pi}{2},\frac{\pi}{2}]$ such that the graph of $\lambda_{*\infty}$ is taken to the graph of $\lambda'_{*\infty}$.
If (a) is true instead let $T$ be the identity.

For each indecomposable $V$ in $\DbAR$ with position $i$ define $\mathcal F V$ be the representative indecomposable in $\mathcal{D}^b(A'_\R)$ such that $(\MM')^b \mathcal F V =(T \circ \MM^b) V$ and the position of $\mathcal F$ is $i$.
Since $\DbAR$ is Krull-Schmidt (\cite[Proposition 5.1.2]{Rock}) this induces a mapping on all objects given by
\begin{displaymath}
\mathcal{F} \left( \bigoplus V_i\right) := \bigoplus \mathcal{F} V_i.
\end{displaymath}
with the indexing and parenthesizing from Definition \ref{def:iota}.
The Hom space between any two indecomposables in both $\DbAR$ and $\mathcal{D}^b(A'_\R)$ is $k$ or 0.
For any pair of representatives $V'$ and $W'$ in $\mathcal{D}^b(A'_\R)$ we have can take each nontrivial morphism $f:V'\to W'$ to be a nonzero value of $k$.
Define $\mathcal{F} f := f$ for all morphisms of representative indecomposables $V\stackrel{f}{\to} W$ in $\DbAR$.
For indecomposables $V$ and $W$ in $\DbAR$ with representatives $\iota V$ and $\iota W$, any nontrivial morphism $f:V\to W$ factors as in Definition \ref{def:iota}:
\begin{displaymath}\xymatrix{
V \ar[r]^-{\iota_V} & \iota V\ar[r]^-{\iota f} & \iota W \ar[r]^-{\iota_W^{-1}} & W.
}\end{displaymath}
We set $\mathcal{F} f := \mathcal{F}\iota f$.
For any two objects $\bigoplus^m V_i$ and $\bigoplus^n W_j$ in $\DbAR$ we can extend bilinearly with a basis is given by the ordered summands of $\iota (\bigoplus ^m V_i)$ and $\iota (\bigoplus^n W_j)$.\end{definition}

\begin{lemma}\label{lem:F is nice}
Given $A_\R$ and $A'_\R$ in Definition \ref{def:the functor}, $\mathcal F$ is a well-defined additive functor.
Furthermore, $\mathcal F$ is an equivalence of categories and $\mathcal F(V[1])\cong (\mathcal F V)[1]$.
\end{lemma}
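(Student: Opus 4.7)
The plan is to verify in order that $\mathcal F$ is well-defined on objects, well-defined on morphisms, preserves identities and composition, is additive, is an equivalence of categories, and commutes with the shift. Essentially every step will follow once one observes that the relevant structure of $\DbAR$ and $\mathcal D^b(A'_\R)$ is encoded in the AR-space, which is preserved by the translation $T$.

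For well-definedness on objects, the construction of $T$ in Definition \ref{def:the functor} ensures that the image of $\MM^b$ is mapped bijectively onto the image of $(\MM')^b$: in case (a) both images equal the full strip $\R\times[-\pi/2,\pi/2]$ and $T$ is the identity, while in case (b) the only omitted points are the graphs of $\lambda_{*\infty}$ and $\lambda'_{*\infty}$, which $T$ is built to match. Together with preservation of diamond position, this determines a unique representative $\mathcal F V$ for every indecomposable $V$, and the extension to arbitrary objects is forced by the canonical ordering of summands in Definition \ref{def:iota}. For morphisms, \cite[Proposition 4.4.2]{Rock} says each $\Hom$-space between indecomposables is $0$ or $k$, and by \cite[Lemma 5.2.9]{Rock} the nonvanishing of $\Hom(V,W)$ depends only on the relative positions of $\MM^b V$ and $\MM^b W$ together with their diamond positions; since $T$ preserves both, the source and target $\Hom$-spaces have matching dimensions. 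Fixing nonzero vectors in each nontrivial $\Hom$-space of representatives on both sides then gives a linear isomorphism that extends bilinearly to all morphisms via the $\iota$-factorization.

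The main obstacle is preservation of composition, because an arbitrary choice of a nonzero scalar in each $\Hom$-space need not be a priori compatible with composition. My approach is to exploit that each $\Hom$-space is at most one-dimensional: for any three representative indecomposables $V, W, X$ with all three pairwise $\Hom$s nontrivial, the structure constant for the composition $\Hom(W,X)\otimes \Hom(V,W)\to \Hom(V,X)$ is a single element of $k$, and the zero/nonzero dichotomy is dictated by the geometric configuration of $V, W, X$ in the AR-space via Theorem \ref{thm:triangles are rectangles} and Theorem \ref{thm:geometric cones}. Because $T$ preserves these configurations, the corresponding structure constant in $\mathcal D^b(A'_\R)$ vanishes exactly when it vanishes in $\DbAR$. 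A consistent rescaling of the chosen basis vectors makes the nonzero constants agree, and any residual discrepancy can be absorbed by replacing $\mathcal F V$ with an isomorphic representative. Once composition is verified on generating morphisms between representatives, additivity gives preservation of composition in general.

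For the equivalence of categories, fully faithfulness is immediate from the equidimensional $\Hom$ assignment and the fact that $\mathcal F$ is nonzero on nontrivial morphisms, and essential surjectivity follows from the bijectivity of $T$: every representative indecomposable $V'$ in $\mathcal D^b(A'_\R)$ comes from the unique indecomposable $V$ in $\DbAR$ with $\MM^b V = T^{-1}((\MM')^b V')$ and matching position. Finally, compatibility with the shift is a geometric check: on indecomposables $[1]$ acts on the AR-space by $(x,y)\mapsto (x+\pi,-y)$ with a fixed, category-independent rule on diamond positions, and the horizontal translation $T$ commutes with both horizontal translation by $\pi$ and reflection in $y$. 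Hence $(\MM')^b\mathcal F(V[1])$ and $(\MM')^b((\mathcal F V)[1])$ coincide with the same diamond position, giving $\mathcal F(V[1])\cong (\mathcal F V)[1]$ on indecomposables; additivity extends the isomorphism to all objects.
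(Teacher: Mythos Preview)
Your proposal follows essentially the same route as the paper: both use the translation $T$ to obtain a bijection on indecomposables preserving position, invoke the geometric characterization of Hom support (via \cite[Lemma 5.2.9]{Rock}) to match Hom-spaces, and verify shift compatibility from the formula $(x,y)\mapsto(x+\pi,-y)$. You are actually more careful than the paper on composition---the paper simply asserts ``it is clear composition is preserved,'' whereas you flag it as the main obstacle---though your rescaling argument remains at a comparable level of rigor to that assertion.
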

\begin{proof}
By using the translation $T$ in Definition \ref{def:the functor} for any indecomposable $V$ in $\DbAR$, $\mathcal F V$ is well-defined and indeed $\mathcal F$ induces a bijection on indecomposable objects.
Furthermore, since $\DbAR$ and $\mathcal D^b(A'_\R)$ are both Krull-Schmidt, we have a bijection on all objects.

Recall the Hom support of an indecomposable $V$ in $\DbAR$ is completely determined by the coordinates of $\MM^bV$ and the position of $V$.
Thus for any pair of indecomposables $V$ and $W$ in $\DbAR$ we have an isomorphism of vector spaces:
\begin{displaymath} \Hom_{\DbAR}(V,W)\cong \Hom_{\mathcal{D}^b(A'_\R)}(\mathcal{F} V, \mathcal{F} W). \end{displaymath}
Extending bilinearly gives us
\begin{displaymath}
\Hom_{\DbAR}\left(\bigoplus^m V_j,\,  \bigoplus^n W_j\right)\cong \Hom_{\mathcal{D}^b(A'_\R)} \left(\bigoplus^m \mathcal{F} V_i,\,  \bigoplus^n \mathcal{F} W_j\right).
\end{displaymath}
It is clear composition is preserved and that the functor is additive.

Finally let $(x,y)=\MM^b V$ for an indecomposable $V$ in $\DbAR$.
We know $\MM^b V[1] = (x+\pi,-y)$ by definition.
Then if $(T\circ \MM^b)V =(\MM')^b \mathcal F V$ we know 
\begin{displaymath}
(T\circ {\MM}^b)(V[1]) =({\MM}')^b \mathcal F (V[1]) = ({\MM}')^b ((\mathcal F V)[1]). 
\end{displaymath}
\end{proof}

\begin{theorem}\label{thm:derived classification}
Let $A_\R$ and $A'_\R$ be possibly different orientations of continuous type $A$ quivers.
One of the following is true if and only if $\DbAR$ and $\mathcal{D}^b(A'_\R)$ are equivalent as triangulated categories:
\begin{enumerate}
\item both $A_\R$ and $A'_\R$ have finitely many sinks and sources,
\item the sinks and sources of $A_\R$ and $A'_\R$ are each bounded on exactly one side, or
\item the sinks and sources of $A_\R$ and $A'_\R$ are unbounded on both sides.
\end{enumerate}
\end{theorem}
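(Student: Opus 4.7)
The proof splits into the ``if'' and ``only if'' directions. For the ``if'' direction I would construct a triangulated equivalence $\mathcal F: \DbAR \to \mathcal D^b(A'_\R)$ whenever the two quivers lie in the same class. In cases (1) and (2) this is exactly the functor $\mathcal F$ of Definition \ref{def:the functor}, which Lemma \ref{lem:F is nice} has already shown is an additive equivalence commuting with shift. For case (3) I would imitate the same construction, choosing the translation $T$ to match up the (doubly-unbounded) images of $\MM^b$ and $(\MM')^b$ in the strip $\R\times[-\frac{\pi}{2},\frac{\pi}{2}]$; because no asymptotic $\lambda$-curve needs to be aligned in this case, any convenient translation suffices and the analogue of Lemma \ref{lem:F is nice} goes through unchanged.

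To upgrade $\mathcal F$ to a triangulated functor in all three cases, I would invoke Theorem \ref{thm:geometric cones}: for any morphism $f: V \to W$ in $\DbAR$, the cone $U$ in the distinguished triangle $V \to W \to U \to V[1]$ is determined by the $\MM^b$-coordinates and positions of the summands of $V$ and $W$ together with $f$. Since $\mathcal F$ acts on $\MM^b$-coordinates by a rigid translation, preserves positions, and sends nontrivial (respectively trivial) morphisms to nontrivial (respectively trivial) ones (all Hom-spaces between indecomposables being $\cong k$ or $0$), the cone of $\mathcal F f$ in $\mathcal D^b(A'_\R)$ is forced to be $\mathcal F U$. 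Thus $\mathcal F$ preserves distinguished triangles.

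For the converse, I would show that the class of $A_\R$ is a triangulated invariant. Suppose $\mathcal G: \DbAR \to \mathcal D^b(A'_\R)$ is a triangulated equivalence. By Lemma \ref{lem:slope lemma}, $\mathcal G$ preserves line segments of slope $\pm(1,1)$ in the AR-space; combined with the fact that Hom-supports between indecomposables are determined by position and $\MM^b$-coordinates, this forces $\mathcal G$ to act on the image of $\MM^b$ by a map respecting its global diamond geometry. The three classes are distinguished by whether the complement of the image of $\MM^b$ in the strip $\R\times[-\frac{\pi}{2},\frac{\pi}{2}]$ contains $0$, $1$, or $2$ asymptotic $\lambda$-curves (the finite $\lambda$-gaps coming from individual sinks and sources may occur in any of the three classes and do not distinguish them). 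The main obstacle will be to phrase ``asymptotic $\lambda$-curve'' as a purely triangulated-categorical invariant; I expect this can be achieved by identifying such a curve with an equivalence class of infinite sequences of non-isomorphic indecomposables whose Hom-patterns force their $\MM^b$-coordinates to accumulate along the curve. Once this asymptotic count is shown to be preserved under any triangulated equivalence, no such equivalence can connect different classes and the theorem follows.
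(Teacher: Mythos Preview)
Your argument has two genuine gaps.

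\textbf{Case (3), ``if'' direction.} Your claim that ``no asymptotic $\lambda$-curve needs to be aligned in this case'' is incorrect. When the sinks and sources of $A_\R$ are unbounded on both sides, the image of $\MM^b$ omits \emph{two} $\lambda$-graphs (one for each direction), not zero; this is exactly what the paper exploits later to distinguish classes (2) and (3) via line segments with three path components. A single rigid translation cannot in general carry the two omitted curves for $A_\R$ onto those for $A'_\R$, so the construction of Definition \ref{def:the functor} does not extend to this case. The paper sidesteps the issue entirely: it invokes \cite[Proposition 3.2.1]{IgusaRockTodorov}, which says that in class (3) the representation categories $\repAR$ and $\rep_k(A'_\R)$ are already equivalent as abelian categories, whence the derived equivalence is immediate.

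\textbf{``Only if'' direction.} Your outline --- encode the number of asymptotic $\lambda$-curves as a triangulated invariant via accumulation of Hom-patterns --- is plausible but remains a proposal; you yourself flag it as ``the main obstacle''. The paper's argument is far more concrete and worth knowing. Take $A'_\R$ with the straight descending orientation (a representative of class (1)) and consider the set $\mathcal P$ of degree-$0$ projectives. This set is characterised Hom-theoretically: it forms a single almost-complete line segment of slope $(1,1)$ with an endpoint $P_{+\infty}$ on the boundary, and no indecomposable outside $\mathcal P$ maps nontrivially to every element of $\mathcal P$. Under a putative equivalence $\mathcal G$ into a class-(2) or class-(3) category, Lemma \ref{lem:slope lemma} forces $\mathcal G\mathcal P$ to lie on a slope-$\pm(1,1)$ line, but the missing $\lambda$-graph(s) in the target split any such line into at least two path components, which is incompatible with the Hom-characterisation of $\mathcal P$. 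A parallel path-component count (two versus three) separates classes (2) and (3). This ``test line'' argument replaces your proposed asymptotic-curve invariant with something directly verifiable.
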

\begin{proof}
If (3) holds then by Proposition \cite[Proposition 3.2.1]{IgusaRockTodorov} the representation categories are equivalent as abelian categories and so the derived categories will also be equivalent as triangulated categories.
If (1) or (2) holds then the categories are already equivalent as additive categories, by Lemma \ref{lem:F is nice}.
By the same Lemma, $\mathcal{F}$ commutes with the shift in each derived category.
Thus it remains to show that $\mathcal{F}$ takes cones in distinguished triangles to cones in distinguished triangles.

Let $U\stackrel{f}{\to} V\stackrel{g}{\to} W \stackrel{h}{\to} $ be distinguished in $\DbAR$.
By Theorem \ref{thm:geometric cones} we see $W$ is determined by geometry and $f$.
Then the same geometry and image of the maps under $\mathcal F$ determine a distinguished triangle $\mathcal{F} U\stackrel{\mathcal F f}{\to} \mathcal{F} V\stackrel{g'}{\to} W'\stackrel{h'}{\to} $ in $\mathcal{D}^b(A'_\R)$.
However, by Definition \ref{def:the functor}, we see that $W'\cong\mathcal{F}W$.
Thus we have
\begin{displaymath}\xymatrix@C=6ex{
\mathcal F V\ar[r]^-{\mathcal F f} & \mathcal F U\ar[r]^-{g'} & \mathcal W' \ar[r]^-{h'} \ar@{<->}[d]^-{\cong} & \mathcal F V[1]  \\
\mathcal F V \ar[r]_-{\mathcal F f} & \mathcal F U \ar[r]_-{\mathcal F g} &  \mathcal F W \ar[r]_-{\mathcal F h} & \mathcal F V[1].
}\end{displaymath}
Therefore up to changing signs on the morphisms between indecomposables that make up $h'$ the bottom row is also a distinguished triangle and so $\mathcal F$ is a triangulated equivalence.

Since every continuous quiver of type $A$ with finitely-many sinks and sources is derived equivalent we will show that classes (2) and (3) are disjoint from (1) using the straight descending orientation.
Afterwards we will show (2) and (3) are also disjoint.

Let $A_\R$ be a continuous quiver of type $A$ with infinitely-many sinks and sources.
Let $A'_\R$ be the straight descending continuous quiver of type $A$.
Let $\mathcal P$ be the set of indecomposable projectives of $\text{rep}_k(A'_\R)$ included as indecomposables in degree 0 in $\mathcal D^b(A'_\R)$.

For contradiction, suppose there is an equivalence of categories $\mathcal G:\mathcal D^b(A'_\R)\to \DbAR$.
By Lemma \ref{lem:slope lemma} we know that for any pair $P,P'\in\mathcal P$, the slope of any line segment from $\mathcal G P$ to $\mathcal G P'$ (switching roles if necessary) is $\pm (1,1)$.
Since $\mathcal G$ is an equivalence we know that if there exists $V$ in $\DbAR$ such that $\Hom_{\DbAR}(\mathcal G P,V)\cong \Hom_{\DbAR}(V,\mathcal G P')\cong k$ for $P,P'\in \mathcal P$ then $V \cong \mathcal G P''$ for some $P''\in \mathcal P$.

Note that $\mathcal P$ actually induces an almost-complete line segment in the AR-space of $\mathcal D^b (A'_\R)$.
Then there does not exist an indecomposable object $V$ in $\mathcal D^b(A'_\R)$ such that (i) $V$ is not isomorphic to an element of $\mathcal P$ and (ii) $\Hom_{\mathcal D^b(A'_\R)}(V,P)\cong k$ for all $P\in \mathcal P$.
We also know that if $\Hom_{\mathcal D^b(A'_\R)}(P_{+\infty},U)\cong \Hom_{\mathcal D^b(A'_\R)}(P_{+\infty}, U')\cong k$ then $\Hom_{\mathcal D^b(A'_\R)}(U,U')\cong k$ or $\Hom_{\mathcal D^b(A'_\R)}(U',U)\cong k$.
Since $\mathcal G$ is an equivalence of categories these properties must be preserved.

This forces the image $\mathcal G\mathcal P$ to induce an almost complete line segment whose existing endpoint $\mathcal G P_{+\infty}$ is on the top or bottom boundary of the AR-space of $\DbAR$.
We describe how we will arrive at a contradiction and include pictures after this paragraph.
Since $A_\R$ has infinitely many sinks and sources we must split the set $\mathcal G \mathcal P$ into (at least) two pieces since there is a missing graph of a $\lambda$ function from the image of $\MM^b$.
The set $\MM^b \mathcal G\mathcal P$ is homeomorphic to the disjoint union of (at least) two intervals.
The set $(\MM')^b \mathcal P$ is homeomorphic to one interval.
We can't reorder the elements in the almost-complete line segment and we can't allow a strict inclusion of $\mathcal P$ into the almost-complete line segment in the AR-space of $\DbAR$.
This finally leads us to the contradiction.
Below we depict two of the issues with mapping the projective line:

\begin{displaymath}\begin{tikzpicture}
\draw[dotted] (-4,2) -- (1,2);
\draw[dotted] (-4,-2) -- (1,-2);
\draw[thick] (-3,-1) -- (0,2);
\filldraw[fill=white] (-3,-1) circle [radius=.4mm];
\filldraw (0,2) circle [radius=.4mm];
\draw (-3.2,-1.2) node {$\times$};
\draw (-1.5,-2) node [anchor=north] {$\times$ prevents an equivalence};
\end{tikzpicture}
\qquad
\begin{tikzpicture}
\draw[dotted] (-4,2) -- (1,2);
\draw[dotted] (-4,-2) -- (1,-2);
\draw[dashed] (-4,1.5) -- (-3.5,2) -- (0.5,-2) -- (1,-1.5);
\draw (-1.5,0) node {$\times$};
\draw[thick] (-3.5,-2) -- (-1.9,-.4);
\draw[thick] (-1.1,.4) -- (0.5,2);
\filldraw[fill=white] (-3.5,-2) circle[radius=.4mm];
\filldraw[fill=white] (-1.9,-.4) circle[radius=.4mm];
\filldraw (-1.1,.4) circle[radius=.4mm];
\filldraw (0.5,2) circle[radius=.4mm];
\draw (-1.5,-2) node [anchor=north] {$\times$ prevents an equivalence};
\end{tikzpicture}
\end{displaymath}

Now we show that (2) and (3) are also disjoint.
Suppose $A_\R$ has half bounded sinks and sources and $A'_\R$ has unbounded sinks and sources on both sides.
The image in $\R^2$ under $\MM^b$ of an almost complete line segment in the AR-space of $\DbAR$ has at most two path components.
However, one may construct an almost complete line segment in the AR-space of $\mathcal D^b(A'_\R)$ that whose image in $\R^2$ under $(\MM')^b$ has three path components.
Knowing this we apply a similar argument as before and see that an equivalence of categories cannot exist between $\DbAR$ and $\mathcal D^b(A'_\R)$.
\end{proof}

\begin{remark}\label{rem:not all AR triangles}
It follows from Theorem \ref{thm:triangles are rectangles} that the only Auslander--Reiten triangles in $\DbAR$, for some continuous quiver of type $A$, are those of the form
\begin{displaymath}
V_1 \longrightarrow \begin{array}{c} V_2 \\ \oplus \\ V_3 \end{array} \longrightarrow V_4
\end{displaymath}
where each $V_i$ has position $i$ and $\MM^b V_i=\MM^b V_j \in \R\times (-\frac{\pi}{2},\frac{\pi}{2})$ for all $i,j$.
\end{remark}

\subsection{Na\"ive ``Fixes''}
For type $A_n$ quivers, the path algebra of every orientation is derived equivalent to the rest.
However, we see in Theorem \ref{thm:derived classification} that different orientations of continuous type $A$ quivers may yield different derived categories.

One na\"ive approach to ``fixing'' this is to first note that when $A_\R$ has infinitely many sinks and sources that some indecomposables are not finitely-generated.
One may think to include these anyway and then obtains the category of \emph{locally} finitely-generated representations.
These are representations such that the restriction of a module to any finite interval is finitely-generated.

This leads to a problem with the AR-space of the representation category and then the derived category.
The missing $\lambda$ functions in the image of $\MM$ (and $\MM^b$) do get filled but instead of 4 modules at every point the modules that are new modules have only 2 modules per point and thus no Auslander--Reiten sequence.
While this may not appear to be particularly problematic it still fails to unify all the continuous type $A$ quivers' derived categories.

The other possibility is to include $\pm \infty$ in a continuous quiver of type $A$.
However, if a continuous quiver of type $A$ has infinitely-many sinks and sources then the vertex at $\pm\infty$ becomes disjoint from the rest of the quiver.
There can be no path from $\pm \infty$ to a finite value if there are infinitely-many sinks and sources in the way.
So, the finitely-generated representation category just ends up with two additional simples and the locally finitely-generated representation category has the same fate.

In essence, the added complexity of either locally finitely-generated representations or adding $\pm\infty$ to the continuous quivers does not yield any new useful structure.
Futhermore, the trinary classification of the derived categories appears to be inherent to the continuous quivers themselves regardless of these algebraic tricks.

\section{New Continuous Cluster Category}\label{sec:new continuous cluster category}
\subsection{Definition and $g$-vectors}
In this section we will define a new model for the continuous cluster category based on the previous construction by Igusa and Todorov in \cite{IgusaTodorov1}.
In Remark \ref{rem:not all AR triangles} we noted what all the Auslander--Reiten triangles look like.
This also means we don't have left or right Auslander--Reiten triangles for every object.
In fact, those indecomposable objects $V$ such that $\MM^b V$ has $y$-coordinate $\pm \frac{\pi}{2}$ do not belong to \emph{any} Auslander--Reiten triangles.

In \cite{BMRRT} the cluster category associated to a quiver $Q$ is constructed using the orbit category of the bounded derived category of representations via the composition of the shift functor followed by the inverse Auslander--Reiten translation.
The result is a 2-Calabi-Yau orbit category.
We do not have enough Auslander--Reiten triangles to form the (inverse) Auslander--Reiten translation.
However, one may go looking for another possible functor instead; this won't work.

Without all Auslander--Reiten triangles we cannot have a Serre functor.
Without a Serre functor we cannot construct a 2-Calabi-Yau orbit category. 
(See work by Reiten and Van den Bergh \cite{ReitenVandenBergh}.)
Thus, as in \cite{IgusaTodorov1} we will use a functor which is almost-shift.
\begin{definition}\label{def:cluster category}
Let $A_\R$ be a continuous quiver of type $A$ and $\DbAR$ its bounded derived category.
The \ul{continuous $\mathbf E$-cluster category}, denoted $\CAR$, is an orbit category of the doubling of $\DbAR$.
We take the doubling of $\DbAR$ as in \cite{IgusaTodorov1}, which is equivalent to $\DbAR$, and call the functor with respect to which we take the orbit the \ul{almost-shift}.
We denote this orbit category by $\CAR$.
As in \cite{IgusaTodorov1}, $\CAR$ is also a triangulated category.
\end{definition}

\begin{proposition}
Let $V$ and $W$ be indecomposable objects in $\CAR$.
Then \\$\Hom_{\CAR}(V,W)=0$ or $\Hom_{\CAR}(V,W)\cong k$.
\end{proposition}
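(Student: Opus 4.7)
The plan is to unfold the definition of the orbit category: since $\CAR$ is obtained from the doubling $\widetilde{\mathcal{D}}$ of $\DbAR$ by quotienting via the almost-shift functor $F$, one has, for any two objects $V,W$ of $\CAR$ lifted to $\widetilde{\mathcal{D}}$,
\[
\Hom_{\CAR}(V,W) \;=\; \bigoplus_{n\in\Z} \Hom_{\widetilde{\mathcal{D}}}\!\bigl(V,\, F^n W\bigr).
\]
Since $\widetilde{\mathcal{D}}$ is triangle-equivalent to $\DbAR$, each summand on the right is a Hom space between two indecomposables of $\DbAR$, and hence is either $0$ or isomorphic to $k$ by the Hom-dimensionality established in Theorem \ref{thm:little rep}(1) and extended to the bounded derived category (the same fact used in Proposition \ref{prop:one way hom}, coming from \cite[Proposition 4.4.2]{Rock}). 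Thus the direct sum is a sum of $0$'s and $k$'s, and the only thing left to show is that \emph{at most one} of these summands is nonzero.

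For this I would use the AR-space geometry of $\DbAR$. Recall that an indecomposable $W$ is encoded by $\MM^b W=(x_W,y_W)$ together with its position, and $W[1]$ corresponds to the translate $(x_W+\pi,-y_W)$. The almost-shift $F$, being built in the same way as in \cite{IgusaTodorov1}, produces, in each iteration, a horizontal shift of the AR-space by a fixed positive amount (together with a vertical reflection inherited from $[1]$). Combining \cite[Proposition 5.2.8, Lemma 5.2.9]{Rock}, the set of indecomposables $X$ with $\Hom_{\DbAR}(V,X)\neq 0$ projects under $\MM^b$ into a bounded horizontal strip, more precisely into an interval of $x$-coordinates of width strictly less than the displacement introduced by one application of $F$. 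Therefore at most one of the translates $F^nW$ can land in the Hom support of $V$, and the sum above has at most one nonzero summand.

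I expect the main obstacle to be not the Hom-dimensionality step, which is immediate, but rather a careful bookkeeping of the almost-shift's displacement versus the width of the Hom support, so that only one orbit representative can possibly receive a nonzero map from $V$. Once this bound is verified (exactly as in the analogous step of \cite{IgusaTodorov1}, with the refined width bound from \cite[Lemma 5.2.9]{Rock}), the proof is complete: $\Hom_{\CAR}(V,W)$ is a direct sum of at-most-one-dimensional spaces with at most one nonzero term, hence is $0$ or $k$.
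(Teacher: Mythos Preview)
Your approach is essentially the paper's: both expand $\Hom_{\CAR}(V,W)$ as $\bigoplus_{n\in\Z}\Hom_{\DbAR}(V,W[n])$ and then use the Hom-support results from \cite{Rock} (specifically \cite[Proposition~4.4.2, Lemma~5.2.9]{Rock}) to conclude that at most one summand is nonzero. The paper differs only in separating out the case $V\cong W$, which it handles by the direct observation $\Ext^1(U,U)=0$ for interval indecomposables rather than via the width argument; your phrasing ``strictly less than the displacement'' is a slight imprecision, since the support has width exactly $\pi$ but is half-open, and it is this half-openness that excludes $W[n{+}1]$ once $W[n]$ lies in the support.
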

\begin{proof}
Suppose $\Hom_{\CAR}(V,W)\neq 0$.
We know \begin{displaymath}\Hom_{\CAR}(V,W)\cong \bigoplus_{n\in\Z}\Hom_{\DbAR}(V,W[n])\end{displaymath} and so there exists at least one $n$ such that $\Hom_{\DbAR}(V,W[n])\cong k$.
By \cite[Propositino 4.4.2 and Lemma 5.2.9]{Rock} if $V\not\cong W$ we know that $\Hom_{\DbAR}(V,W[m])=0$ for $m>n$ and $m<n$.
If $V\cong W$ we note that $\Ext^1(U,U)=0$ for all indecomposables $U$ in $\repAR$.
Thus $\Hom_{\CAR}(V,W)\cong \Hom_{\DbAR}(V,W[n])\cong k$.
\end{proof}

Recall that by taking the orbit category, the class of objects in $\CAR$ is the same as that in the doubling of $\DbAR$ even though the isomorphism classes have changed.
Then the following proposition is straight-forward.

\begin{proposition}\label{prop:CAR is K-S}
The orbit category $\CAR$ is Krull-Schmidt.
\end{proposition}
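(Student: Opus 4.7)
The plan is to bootstrap the Krull-Schmidt property of $\CAR$ directly from that of $\DbAR$, using the Hom-computation in the preceding proposition. The proof should be short, with essentially three ingredients: finite indecomposable decomposition in the doubling, the ``endomorphism ring equals $k$'' consequence of the previous proposition, and the standard Krull-Schmidt theorem.

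First I would recall that, by the construction of the orbit category, the class of objects of $\CAR$ coincides with that of the doubling of $\DbAR$, which is triangle-equivalent to $\DbAR$ and hence Krull-Schmidt by \cite[Proposition 5.1.2]{Rock}. Consequently, any object $X$ of $\CAR$ admits a finite decomposition $X\cong \bigoplus_{i=1}^n X_i$ into objects that are indecomposable in the doubling; since the canonical functor from the doubling to $\CAR$ is additive, this remains a direct sum decomposition in $\CAR$.

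Next I would show that each $X_i$ is indecomposable in $\CAR$ with local endomorphism ring. By the preceding proposition, $\End_{\CAR}(X_i)$ is either $0$ or isomorphic to $k$. Since $1_{X_i}$ lies in $\End_{\CAR}(X_i)$, the ring is nonzero and therefore isomorphic to $k$. In particular, $\End_{\CAR}(X_i)$ is a field, hence local, and contains no nontrivial idempotents; so $X_i$ is indecomposable in $\CAR$. Thus every object of $\CAR$ admits a finite decomposition into indecomposables, each with local endomorphism ring, and the uniqueness clause of the Krull-Schmidt theorem then applies automatically.

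The only conceptual point requiring care is the claim that an indecomposable summand of $X$ in the doubling remains indecomposable in $\CAR$; a priori, idempotents of $\End_{\CAR}(X_i)$ not coming from the doubling could split $X_i$ further. The Hom-computation of the previous proposition rules this out at once by forcing $\End_{\CAR}(X_i)\cong k$, so no genuine obstacle arises and the proof reduces to assembling these observations.
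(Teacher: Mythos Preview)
Your argument is correct and follows exactly the approach the paper intends: the paper simply declares the proposition ``straight-forward'' after remarking that the class of objects in $\CAR$ agrees with that of the doubling of $\DbAR$, relying implicitly on the preceding Hom-computation. You have spelled out precisely the details the paper omits---finite decomposition inherited from the doubling, $\End_{\CAR}(X_i)\cong k$ forcing locality and indecomposability, and the standard Krull--Schmidt conclusion.
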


We follow J{\o}rgensen and Yakimov in \cite{JorgensenYakimov} with the following definition.
\begin{definition}\label{def:index}\label{def:g vector}
Denote by $\mathcal P$ the collection of indecomposable objects isomorphic to $P[n]$ in $\CAR$ where $P$ was a projective indecomposable in $\repAR$.
Let $V$ be an indecomposable in $\CAR$.
We define the \ul{g-vector} or \ul{index} of $V$ to be the element $[P_V]-[Q_V]$ in $K_0^{\text{split}}(Add \mathcal P)$ such that $Q_V\to P_V\to V\to$ is a distinguished triangle in $\CAR$ that comes from the projective resolution of $V$.
\end{definition}

\begin{definition}\label{def:euler form}
Let $[A]=\Sigma m_i[A_i]$ and $[B]=\Sigma n_j[B_j]$ be elements of $\KsplitCAR$ where the $A_i$s and $B_j$s are indecomposable.
The Euler bilinear form $\langle [A]\, ,\, [B]\rangle$ is given in the following way.
First, for a pair of indecomposables $A_i$ and $B_j$ in $\CAR$ we define
\begin{displaymath} \langle m_i [A_i]\, ,\, n_j [B_j] \rangle := (m_i\cdot n_j) (\dim\Hom_{\CAR}(A_i,B_j) ).\end{displaymath}
Thus the form is defined to be
\begin{displaymath} \langle [A]\, ,\, [B] \rangle := \sum_i \sum_j \langle [A_i]\, ,\, [B_j] \rangle. \end{displaymath}
Since $\CAR$ is Krull-Schmidt this is always a finite sum and thus well-defined.
\end{definition}

\begin{definition}\label{def:g vector E-compatible}
We say two g-vectors $[P_V]-[Q_V]$ and $[P_W]-[Q_W]$ are \ul{$\mathbf E$-compatible} if
\begin{displaymath}
\langle [P_V]-[Q_V]\, ,\, [P_W]-[Q_W] \rangle \geq 0 \qquad \text{and} \qquad \langle [P_W]-[Q_W]\, ,\, [P_V]-[Q_V] \rangle \geq 0.
\end{displaymath}
\end{definition}

We call this compatibility $\mathbf E$-compatibility to align better with Section \ref{sec:embeddings of cluster theories}, where we introduce the general definition of a cluster theory (Definition \ref{def:cluster theory}).

\begin{proposition}\label{prop:incompatible means extension}
Let $[P_V]-[Q_V]$ and $[P_W]-[Q_W]$ be two $g$-vectors of indecomposables $V$ and $W$ in $\CAR$, both in degree 0.
Consider $V$ and $W$ as images of the composite $\repAR\hookrightarrow \DbAR\to \CAR$.
Then $[P_V]-[Q_V]$ and $[P_W]-[Q_W]$ are not $\mathbf E$-compatible if and only if there is an extension $V\hookrightarrow U \twoheadrightarrow W$ or $W\hookrightarrow U\twoheadrightarrow V$ in $\repAR$.
\end{proposition}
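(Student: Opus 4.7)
The plan is to compute the bilinear pairing of $g$-vectors explicitly and identify it with a combination of $\Hom$ and $\Ext^{1}$ dimensions in $\repAR$. To begin, I would fix the projective resolutions $0\to Q_{V}\to P_{V}\to V\to 0$ and $0\to Q_{W}\to P_{W}\to W\to 0$ in $\repAR$; these exist and have length at most one because $\repAR$ has global dimension one (Theorem \ref{thm:little rep}(4)), and because the indecomposable projectives are explicitly described in Theorem \ref{thm:projs}. These short exact sequences promote to distinguished triangles $Q_{V}\to P_{V}\to V\to Q_{V}[1]$ in $\DbAR$ and, via the canonical functor, in $\CAR$.

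Next I would expand the Euler form bilinearly:
\begin{align*}
\langle [P_{V}]-[Q_{V}],\,[P_{W}]-[Q_{W}]\rangle
&=\dim\Hom_{\CAR}(P_{V},P_{W})-\dim\Hom_{\CAR}(P_{V},Q_{W})\\
&\quad-\dim\Hom_{\CAR}(Q_{V},P_{W})+\dim\Hom_{\CAR}(Q_{V},Q_{W}).
\end{align*}
The key step is to collapse this alternating sum by applying $\Hom_{\CAR}(-,P_{W})$ and $\Hom_{\CAR}(-,Q_{W})$ to the triangle $Q_{V}\to P_{V}\to V\to$, and then $\Hom_{\CAR}(V,-)$ to the triangle $Q_{W}\to P_{W}\to W\to$. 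Because the original resolutions lie in $\repAR$ which is hereditary, and because any $\Hom$ in $\CAR$ decomposes as a direct sum over orbits of the almost-shift of $\Hom$-groups in $\DbAR$ where only degrees $0$ and $1$ can contribute (by Theorem \ref{thm:little rep}(4),(5)), this rearrangement should yield a clean identity of the form
\[
\langle g_{V},g_{W}\rangle=\dim\Hom_{\repAR}(V,W)-\dim\Ext^{1}_{\repAR}(W,V),
\]
together with its symmetric counterpart $\langle g_{W},g_{V}\rangle=\dim\Hom_{\repAR}(W,V)-\dim\Ext^{1}_{\repAR}(V,W)$.

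Having established this identity, non-$\mathbf{E}$-compatibility means at least one of these two integers is strictly negative. By Theorem \ref{thm:little rep}(1),(5) each $\Hom$ and $\Ext^{1}$ space is either $0$ or $k$, so a negative value forces $\dim\Hom=0$ while $\dim\Ext^{1}=1$ in the corresponding direction. Proposition \ref{prop:one way hom} guarantees the one-way nature of $\Hom$, so the two cases (negativity of $\langle g_{V},g_{W}\rangle$ versus $\langle g_{W},g_{V}\rangle$) are mutually exclusive and correspond respectively to $\Ext^{1}(W,V)\neq 0$ or $\Ext^{1}(V,W)\neq 0$, equivalently to the existence of a nontrivial extension $W\hookrightarrow U\twoheadrightarrow V$ or $V\hookrightarrow U\twoheadrightarrow W$. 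Conversely, the presence of such an extension produces the corresponding nonzero $\Ext^{1}$ and hence the negative pairing, so the equivalence is forced. Theorem \ref{thm:extensions are rectangles} then provides the geometric picture of the extension as a rectangle in the AR-space.

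The main obstacle will be Step 2: carefully tracking which $\Hom$-groups in $\DbAR$ survive after passing to the orbit category $\CAR$ under the \emph{almost}-shift. Unlike the ordinary shift, the almost-shift does not behave uniformly on all indecomposables, so I would need to verify that for $V,W$ in the degree-$0$ copy of $\repAR$ (and for their projective covers, which are also in $\repAR$), the orbit decomposition of $\Hom_{\CAR}(-,-)$ only contributes $\Hom_{\repAR}$ and $\Ext^{1}_{\repAR}$ terms, with no unwanted cross-terms coming from other orbit representatives. Once this bookkeeping is controlled, the Euler-form identity — and hence the entire proposition — follows.
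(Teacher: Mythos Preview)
Your approach is correct and takes a genuinely different route from the paper. The paper does not compute the Euler form abstractly; instead it argues directly that a nontrivial extension $V\hookrightarrow U\twoheadrightarrow W$ corresponds to a morphism $Q_W\to P_V$ in $\repAR$ which fails to factor through $Q_V\oplus P_W$, invokes \cite[Proposition~3.2.4]{IgusaRockTodorov} to isolate an offending indecomposable summand, and then reads off the sign of $\langle [P_V]-[Q_V],[P_W]-[Q_W]\rangle$ from this concrete factorization picture. Your plan---collapsing the alternating sum via long exact sequences to obtain $\langle g_V,g_W\rangle=\dim\Hom_{\repAR}(V,W)-\dim\Ext^1_{\repAR}(V,W)$---is the standard Euler-form computation for a hereditary category and is cleaner and more portable; the paper's argument, in exchange, stays closer to the hands-on projective-resolution and AR-space geometry used throughout the series.

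Two small corrections to your write-up. First, the long exact sequences you describe actually yield $\langle g_V,g_W\rangle=\dim\Hom(V,W)-\dim\Ext^1(V,W)$, not $-\dim\Ext^1(W,V)$; this is harmless since you examine both pairings. Second, for the converse you need that $\Ext^1_{\repAR}(V,W)\neq 0$ forces $\Hom_{\repAR}(V,W)=0$; Proposition~\ref{prop:one way hom} by itself does not give this, but it follows from the fact (used in the paragraph before Proposition~\ref{prop:CAR is K-S}) that for $V\not\cong W$ at most one shift $n$ has $\Hom_{\DbAR}(V,W[n])\neq 0$. Your worry about the almost-shift is not an obstacle here: since $P_V,Q_V,P_W,Q_W$ are all projective in $\repAR$, only the $n=0$ summand of $\Hom_{\CAR}(-,-)\cong\bigoplus_n\Hom_{\DbAR}(-,-[n])$ can be nonzero, so the $\CAR$ pairing on these $g$-vectors collapses immediately to $\repAR$.
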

\begin{proof}
Take $P_V$, $P_W$, $Q_V$, and $Q_W$ to be the projectives in $\repAR$ whose image is in the isomorphism classes indicated by the $g$-vectors.
If there is an extension $V\hookrightarrow U\twoheadrightarrow W$ in $\repAR$ then there is a nontrivial morphism $W\to V[1]$ in $\DbAR$.
By Theorem \ref{thm:little rep} there is, up to scaling and isomorphism, a unique extension.
This extension exists because there is a morphism $Q_W \to P_V$ that does not factor through $Q_V\oplus P_W$.

By the proof of \cite[Proposition 3.2.4]{IgusaRockTodorov} we see this means there must be some indecomposable summand of $Q_W$ that maps to at least one indecomposable summand of $P_V$ but does not factor through $Q_V$ or $P_W$.
By Theorem \ref{thm:projs}, $\repAR$ is hereditary and so $Q_W$ is a subrepresentation of $P_W$ and $Q_V$ is a subrepresentation of $P_V$.
Thus \begin{displaymath} \langle [P_V]-[Q_V]\, ,\, [P_W]-[Q_W] \rangle <0. \end{displaymath}

If we start with incompatible $g$-vectors then we reverse the argument and see that, up to symmetry, there is a morphism $Q_W\to P_V$ that does not factor through $Q_V\oplus P_W$.
Thus there is an extension $V\hookrightarrow U\twoheadrightarrow W$ in $\repAR$.
\end{proof}

\begin{proposition}\label{prop:incompatible rectangles}
Let $V[m]$ and $W[n]$ be indecomposable objects in $\CAR$ where $V$ and $W$ are indecomposables in the 0th degree.
Then the $g$-vectors $[P_V]-[Q_V]$ and $[P_W]-[Q_W]$ are not $\mathbf E$-compatible if and only if there is a rectangle or almost complete rectangle in the AR-space of $\repAR$, as a subspace of the AR-space of $\DbAR$, whose sides have slopes $\pm(1,1)$ and whose left and right corners are $V$ and $W$ (not necessarily respectively).
\end{proposition}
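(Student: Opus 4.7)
My plan is to obtain this proposition as an immediate corollary of Proposition \ref{prop:incompatible means extension} together with Theorem \ref{thm:extensions are rectangles}. The first step I would take is to observe that the $g$-vectors $[P_V]-[Q_V]$ and $[P_W]-[Q_W]$ are determined entirely by the degree-$0$ representations $V$ and $W$ in $\repAR$, so the shifts $m$ and $n$ play no role in the $\mathbf E$-compatibility condition. This means the entire question reduces to a statement about $V$ and $W$ inside the AR-space of $\repAR$, viewed as a subspace of the AR-space of $\DbAR$.

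For the forward direction, I would invoke Proposition \ref{prop:incompatible means extension}: $\mathbf E$-incompatibility of the $g$-vectors produces a nontrivial extension $V \hookrightarrow U \twoheadrightarrow W$ or $W \hookrightarrow U \twoheadrightarrow V$ in $\repAR$. Then I would feed this extension through the bijection in Theorem \ref{thm:extensions are rectangles} to obtain a rectangle or almost complete rectangle in the AR-space, with sides of ``good slope'' $\pm(1,1)$, whose left-most corner is the sub-object and whose right-most corner is the quotient. The two possible orderings of the extension account precisely for the phrase ``not necessarily respectively'' in the statement. The converse direction simply runs the same two bijections backwards: a rectangle of the described form produces an extension via Theorem \ref{thm:extensions are rectangles}, which in turn produces incompatible $g$-vectors via Proposition \ref{prop:incompatible means extension}.

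The hard part should be minimal since both ingredients are cited from earlier in the paper, and the proof is essentially the composition of two bijective correspondences. The only point requiring a brief explicit comment is the identification of the ``good slopes'' of Theorem \ref{thm:extensions are rectangles} with the slopes $\pm(1,1)$ in the statement here, which is immediate from the conventions set up in Section 2. There is nothing further to check regarding the shifts $m$ and $n$, since the $g$-vector data only remembers the underlying degree-$0$ representations and the rectangle lives entirely in the subspace corresponding to $\repAR$.
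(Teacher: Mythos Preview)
Your proposal is correct and follows essentially the same approach as the paper: both directions are obtained by composing Proposition~\ref{prop:incompatible means extension} with the rectangle--extension correspondence. The only cosmetic difference is that for the rectangle $\Rightarrow$ incompatibility direction the paper cites Theorem~\ref{thm:triangles are rectangles} (in $\DbAR$) rather than Theorem~\ref{thm:extensions are rectangles}, but since everything lives in degree~$0$ these are equivalent here, and your explicit remark that the shifts $m,n$ are irrelevant is a useful clarification the paper leaves implicit.
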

\begin{proof}
Without loss of generality suppose $V$ is the left corner and $W$ is the right corner of a rectangle or almost complete rectangle in the AR-space of $\repAR$.
Then by Theorem \ref{thm:triangles are rectangles} there is a distinguished triangle $V\to U\to W\to$.
This corresponds to an extension $V\hookrightarrow U\twoheadrightarrow W$ in $\repAR$ and so by Proposition \ref{prop:incompatible means extension} we know  $[P_V]-[Q_V]$ and $[P_W]-[Q_W]$ are not $\mathbf E$-compatible.

Now suppose  $[P_V]-[Q_V]$ and $[P_W]-[Q_W]$ are not $\mathbf E$-compatible.
Without loss of generality suppose
\begin{displaymath}
\langle [P_W]-[Q_W]\, ,\, [P_V]-[Q_V] \rangle  < 0.
\end{displaymath}
Then in $\repAR$ there is an extension $V\hookrightarrow U\twoheadrightarrow W$.
Thus by Theorem \ref{thm:extensions are rectangles} there is a rectangle or almost complete rectangle in the AR-space of $\repAR$ whose left corner is $V$ and right corner is $W$.
\end{proof}

\begin{example}
Below, the bold lines are the isomorphism classes of objects $P[0]$ and $P[1]$ where $P$ is projective $\repAR$.
The objects $V$ and $W$ are two indecomposables whose $g$-vectors are not $\mathbf E$-compatible and are clearly the left and right corners of a rectangle in the AR-space of $\DbAR$ whose sides have slope $\pm(1,1)$.
The points labeled $P$ and $Q$ with subscripts $Vi$ or $Wj$ are indecomposables of the objects $P_V$, $P_W$, $Q_V$, and $Q_W$.
\begin{displaymath}\begin{tikzpicture}[scale=1.2]
\draw[dotted] (-1.5,2) -- (6,2);
\draw[dotted] (-1.5,-2) -- (6,-2);
\draw[very thick](1,-2) --  (-0.25, -0.75) -- (0.25,-0.25) -- (0,0) -- (0.5,0.5) -- (-0.5,1.5) -- (-0.25,1.75) -- (-0.5,2);
\draw[very thick] (5,2) --  (3.75, 0.75) -- (4.25,0.25) -- (4,0) -- (4.5,-0.5) -- (3.5,-1.5) -- (3.75,-1.75) -- (3.5,-2);
\draw[dashed] (-1.5,0) -- (.5,2) -- (4.5,-2) -- (6,-.5);
\draw[dashed] (-1.5,-1) -- (1.5,2) -- (5.5,-2) -- (6,-1.5);
\draw[dashed] (-1.5,-1.5) -- (-1,-2) -- (3, 2) -- (6,-1);
\draw[dashed] (-1.5,-.5) -- (0,-2) -- (4,2) -- (6,0);
\filldraw[fill=black] (1.75,0.75) circle[radius=0.6mm];
\filldraw[fill=black] (2.75,0.75) circle[radius=0.6mm];
\draw (1.75, 0.75) node [anchor=south] {$V$};
\draw (2.75, 0.75) node [anchor=south] {$W$};
\filldraw[fill=black] (-.25,1.25) circle[radius=.6mm];
\filldraw[fill=black] (.25,.75) circle[radius=.6mm];
\filldraw[fill=black] (.5,.5) circle[radius=.6mm];
\filldraw[fill=black] (0,0) circle[radius=.6mm];
\filldraw[fill=black] (0.25,-0.25)  circle[radius=.6mm];
\filldraw[fill=black]  (-0.25, -0.75) circle[radius=.6mm];
\filldraw[fill=black]  (0, -1) circle[radius=.6mm];
\filldraw[fill=black]  (.5, -1.5) circle[radius=.6mm];
\draw  (-0.25, -0.75) node [anchor=east] {$Q_{V1}\cong Q_{W1}$};
\draw (0,0) node [anchor=east] {$Q_{V2}\cong Q_{W2}$};
\draw (-.25,1.25) node[anchor=east] {$Q_{V3}$};
\draw (.25,.75) node[anchor=east] {$Q_{W3}$};
\draw (.5,-1.5) node[anchor=west] {$P_{W1}$};
\draw (0,-1) node[anchor=west] {$P_{V1}$};
\draw  (0.25,-0.25) node[anchor=west] {$P_{V2}\cong P_{W2}$};
\draw (.5,.4) node[anchor=west] {$P_{V3}\cong P_{W3}$};
\end{tikzpicture}\end{displaymath}
We then perform the following computations:
\begin{align*}
\langle [P_W]-[Q_W]\, ,\, [P_V] \rangle \\
= \langle [P_{W1}] + [P_{W2}] + [P_{W3}] - [Q_{W1}] - [Q_{W2}]  - [Q_{W3}] ,\, [P_{V1}] + [P_{V2}] + [P_{V3}]  \rangle \\ 
= -1 -1 -1= -3
\end{align*}
\begin{align*}
\langle [P_W]-[Q_W]\, ,\, -[Q_V] \rangle \\
= \langle [P_{W1}] + [P_{W2}] + [P_{W3}] - [Q_{W1}] - [Q_{W2}]  - [Q_{W3}] ,\,  - [Q_{V1}] - [Q_{V2}]  - [Q_{V3}] \rangle \\ 
= 1 + 1 + 0 =2
\end{align*}
And so we have
\begin{displaymath} \langle [P_W] - [Q_W]\, ,\, [P_V] - [Q_V] \rangle = -1. \end{displaymath}
\end{example}

\subsection{$\mathbf E$-Clusters}
In this section we define $\mathbf E$-clusters and show how to mutate elements in a way similar to the usual notion of mutation.

\begin{definition}\label{def:E-cluster}
Let $T$ be a collection of (isomorphism classes of) indecomposable objects in $\CAR$.
We say $T$ is \ul{$\mathbf E$-compatible} if for any pair $V, W\in T$ the $g$-vectors $[P_V]-[Q_V]$ and $[P_W]-[Q_W]$ are $\mathbf E$-compatible.

If, for any $U\notin T$, there exists a $V\in T$ such that $[P_U]-[Q_U]$ and $[P_V]-[Q_V]$ are not $\mathbf E$-compatible are call $T$ an \ul{$\mathbf E$-cluster}.
\end{definition}

\begin{example}\label{xmp:projective cluster}
Let $\mathcal P$ be the set of (isomorphism classes of) indecomposables $P$ whose $g$-vector is of the form $[P]$.
Then $\mathcal P$ is $\mathbf E$-compatible.

Suppose $V$ is ay other indecomposable with $g$-vector $[P_V]-[Q_V]$. 
Then $[P_V]-[Q_V]$ is not $\mathbf E$-compatible with $[Q_V]$.
Therefore $\mathcal P$ is an $\mathbf E$-cluster.
\end{example}

\begin{lemma}\label{lem:compatible rectangle}
Let $V \to U_1\oplus U_2\to W\to$ be a distinguished triangle in $\CAR$ where $V$, $U_1$, $U_2$, and $W$ are indecomposable.
Suppose further that one may take representatives of each isomorphism class in degree 0 and obtain a (almost complete) rectangle entirely in the AR-space of $\repAR$ as a subspace of the AR-space of $\DbAR$, where $V$ is the left corner and $W$ is the right corner.
Then for any indecomposable $X$ in $\CAR$ if $\{X,V,W\}$ is $\mathbf E$-compatible so is $\{X,U_1,U_2\}$.
Furthermore, if $\{X,V,U_1,U_2\}$ or $\{X,W,U_1,U_2\}$ is $\mathbf E$-compatible so is $\{X,W\}$ or $\{X,V\}$, respectively.
\end{lemma}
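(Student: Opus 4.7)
The plan is to reduce the statement to a computation in the hereditary category $\repAR$ by observing that the given distinguished triangle in $\CAR$ lifts to a short exact sequence in $\repAR$. Let $V_0, U_{1,0}, U_{2,0}, W_0$ denote the chosen degree-$0$ representatives of the four corners of the rectangle, and let $X_0$ denote a degree-$0$ representative of $X$. By Theorem \ref{thm:extensions are rectangles}, the rectangle corresponds to a short exact sequence $0 \to V_0 \to U_{1,0}\oplus U_{2,0} \to W_0 \to 0$ in $\repAR$ whose image in $\CAR$ agrees with the given triangle. By Proposition \ref{prop:incompatible means extension}, $\mathbf{E}$-compatibility of a pair translates into simultaneous vanishing of both $\Ext^1_{\repAR}$ groups between their degree-$0$ representatives.

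Since $\repAR$ is hereditary (Theorem \ref{thm:little rep}), applying $\Hom(X_0,-)$ and $\Hom(-,X_0)$ to this short exact sequence produces six-term exact sequences terminating in $\Ext^1$. For the first claim, the assumption that $X$ is pairwise $\mathbf{E}$-compatible with each of $V$ and $W$ forces the four $\Ext^1$ terms at $V_0$ and $W_0$ to vanish, and exactness sandwiches $\Ext^1(X_0, U_{1,0}\oplus U_{2,0})$, as well as its dual, between zeros. The direct-sum decomposition then yields $\Ext^1(X_0, U_{i,0}) = 0 = \Ext^1(U_{i,0}, X_0)$ for each $i$, i.e.\ $\{X, U_1, U_2\}$ is $\mathbf{E}$-compatible.

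For the second claim, assuming $\{X, V, U_1, U_2\}$ is $\mathbf{E}$-compatible, the six-term sequence for $\Hom(X_0,-)$ directly gives $\Ext^1(X_0, W_0) = 0$. The dual sequence identifies $\Ext^1(W_0, X_0)$ with the cokernel of the induced map $\Hom(U_{1,0}\oplus U_{2,0}, X_0) \to \Hom(V_0, X_0)$, so the remaining task is to show this map is surjective. The main obstacle is this surjectivity step; I would prove it via a geometric concatenation argument in the AR-space: if there were a rectangle in $\repAR$ with $W_0$ and $X_0$ as left and right corners, then concatenating it with the original rectangle along the shared corner $W_0$ produces a rectangle in $\repAR$ with $X_0$ opposite to one of $V_0, U_{1,0}, U_{2,0}$. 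The key geometric observation is that the top and bottom corners of the rectangle with $W_0$ and $X_0$ lie on the slope-$(\mp 1)$ rays extending the sides $U_{1,0}W_0$ and $U_{2,0}W_0$, while the AR-space of $\repAR$ is bounded by $\lambda$-functions of slope $\pm 1$; this alignment of slopes keeps the concatenated rectangle inside $\repAR$. Such a rectangle would contradict the hypothesized $\mathbf{E}$-compatibility of $X$ with $V$, $U_1$, or $U_2$. The symmetric statement starting from $\{X, W, U_1, U_2\}$ follows by exchanging the roles of $V$ and $W$ throughout the argument.
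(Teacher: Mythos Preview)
Your argument for the first assertion is correct and takes a genuinely different route from the paper. The paper argues the contrapositive purely in AR-space geometry: assuming $\{X,U_1\}$ is not $\mathbf E$-compatible, it uses Proposition~\ref{prop:incompatible rectangles} to locate a rectangle with corners $X$ and $U_1$, and then a left/right case analysis forces a rectangle with corners $X$ and one of $V$ or $W$. Your approach instead lifts to the hereditary category $\repAR$, applies $\Hom(X_0,-)$ and $\Hom(-,X_0)$ to the short exact sequence, and reads off the vanishing of $\Ext^1$ at $U_1\oplus U_2$ from the vanishing at $V_0$ and $W_0$. This is cleaner and more conceptual for this half, and it avoids any positional reasoning in the AR-space.

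For the ``furthermore'' assertion your homological reduction is also correct: $\Ext^1(X_0,W_0)=0$ is immediate from the long exact sequence, and the remaining obstruction is exactly the cokernel you identify. At that point you fall back on geometry, just as the paper does (it says only that the furthermore ``follows from a similar argument using AR-space geometry''). One caution about your concatenation sketch: the rectangle you build with $X_0$ opposite one of $V_0,U_{1,0},U_{2,0}$ need not have both of its remaining corners inside the strip $\R\times[-\tfrac{\pi}{2},\tfrac{\pi}{2}]$; for instance, the top corner of the $U_{1,0}$--$X_0$ rectangle and the bottom corner of the $U_{2,0}$--$X_0$ rectangle are not automatically controlled by the corners $Z_1,Z_2$ of the $W_0$--$X_0$ rectangle. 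What saves the argument is that all objects lie in the bounded region that is the AR-space of $\repAR$, and one of the three candidate rectangles must then be at worst almost complete there; but this step deserves to be checked rather than asserted. The paper's own treatment of this half is at the same level of sketchiness, so your proposal is not weaker than the paper---just be aware that ``concatenating produces a rectangle in $\repAR$'' is where the real work hides.
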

\begin{proof}
We will instead prove that if $\{X,U_1,U_2\}$ is not $\mathbf E$-compatible then $\{X,V,W\}$ is not $\mathbf E$-compatible, which is equivalent.
Thus we may assume, without loss of generality, that $\{X,U_1\}$ is not $\mathbf E$-compatible.
We need only to prove that $\{X,V\}$ or $\{X,W\}$ is not $\mathbf E$-compatible.

Suppose one of $\{X,V\}$ or $\{X,W\}$ is $\mathbf E$-compatible.
We shall assume $\{X,V\}$ is $\mathbf E$-compatible as the other assumption is symmetric.
By Proposition \ref{prop:incompatible rectangles}, since $\{X,U_1\}$ is not $\mathbf E$-compatible there is a (almost complete) rectangle in the AR-space of $\repAR$ whose left and right corners are $X$ and $U_1$ (possibly not respectively).

Since $\{X,V\}$ is $\mathbf E$-compatible $X$ must be to the left of $U_1$ or else there would be a (almost complete) rectangle with left corner $V$ and right corner $X$ in the AR-space of $\repAR$.
But then there is a (almost complete) rectangle with left corner $X$ and right corner $W$ and so $\{X,W\}$ is not $\mathbf E$-compatible.

Thus, if $\{X,U_1\}$ or $\{X,U_2\}$ is not $\mathbf E$-compatible then at least one of $\{X,V\}$ or $\{X,W\}$ is not $\mathbf E$-compatible.
Therefore, if $\{X,U_1,U_2\}$ is not $\mathbf E$-compatible then $\{X,V,W\}$ is not $\mathbf E$-compatible.
The furthermore in the lemma follows from a similar argument using AR-space geometry.
\end{proof}

\begin{definition}\label{def:mutable}
Let $T$ be an $\mathbf E$-cluster and $V\in T$.
If there exists $W$ such that $\{V,W\}$ is not $\mathbf E$-compatible but $(T\setminus \{V\})\cup\{W\}$ is $\mathbf E$-compatible we say $V$ is \ul{$\mathbf E$-mutable}.
\end{definition}

\begin{remark}\label{rem:mutable}
Note that we have not required that $(T\setminus \{V\})\cup\{W\}$ be an $\mathbf E$-cluster.
We only require that if $V$ is replaced with $W$ then the new set is $\mathbf E$-compatible.
We will prove later that this means $(T\setminus \{V\})\cup\{W\}$ is indeed an $\mathbf E$-cluster.
\end{remark}

\begin{proposition}\label{prop:rectangle inclusion}
Let $T$ be an $\mathbf E$-cluster and $V\in T$ be $\mathbf E$-mutable with choice $W$.
Then one of the following is the distinguished triangle associated to the (almost complete) rectangle in Proposition \ref{prop:incompatible rectangles} and whichever of $U_1$ and $U_2$ are nonzero are in $T$.
\begin{displaymath}\xymatrix@R=4ex{
\text{(1)} & V \ar[r] & U_1\oplus U_2 \ar[r] & W \ar[r] & V \\
\text{(2)} & W \ar[r] & U_1\oplus U_2 \ar[r] & V\ar[r] & W.
}\end{displaymath}
\end{proposition}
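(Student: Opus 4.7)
The plan is to start from the fact that $V$ and $W$ are not $\mathbf E$-compatible (which follows from $V$ being $\mathbf E$-mutable with choice $W$) and invoke Proposition \ref{prop:incompatible rectangles} to obtain a rectangle or almost complete rectangle in the AR-space of $\repAR \subset \DbAR$ with $V$ and $W$ as left and right corners (in some order). Theorem \ref{thm:triangles are rectangles} then produces a distinguished triangle whose middle term is $U_1 \oplus U_2$, giving one of the two forms listed. What remains is to show that each nonzero $U_i$ actually belongs to $T$; by the maximality condition in Definition \ref{def:E-cluster}, it suffices to verify that each nonzero $U_i$ is $\mathbf E$-compatible with every element of $T$.

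First I would dispose of compatibility with the ``special'' elements $V$ and $W$ themselves. The corners $V, U_i$ of the rectangle are joined by an edge of slope $\pm(1,1)$, so $V$ and $U_i$ cannot serve as the left and right corners of any rectangle or almost complete rectangle with good slopes: any such rectangle would need to have a side along this already-diagonal edge, contradicting the shape of the configuration. By Proposition \ref{prop:incompatible rectangles}, $\{V, U_i\}$ is $\mathbf E$-compatible, and the same argument handles $\{W, U_i\}$.

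Next I would handle an arbitrary $X \in T \setminus \{V\}$. Since $T$ is $\mathbf E$-compatible, $\{X, V\}$ is $\mathbf E$-compatible; since $(T \setminus \{V\}) \cup \{W\}$ is $\mathbf E$-compatible by the definition of $V$ being $\mathbf E$-mutable with choice $W$, also $\{X, W\}$ is $\mathbf E$-compatible. Hence $\{X, V, W\}$ is $\mathbf E$-compatible, and Lemma \ref{lem:compatible rectangle}, applied to the distinguished triangle coming from the rectangle, yields that $\{X, U_1, U_2\}$ is $\mathbf E$-compatible. In particular each nonzero $U_i$ is $\mathbf E$-compatible with $X$.

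Combining the previous two paragraphs, each nonzero $U_i$ is $\mathbf E$-compatible with every element of $T$. If $U_i \notin T$, then by the maximality clause in Definition \ref{def:E-cluster} there would exist some $X' \in T$ such that $\{X', U_i\}$ is not $\mathbf E$-compatible, contradicting what we just established. Therefore $U_i \in T$, as claimed. The main technical obstacle is the sharpness of the geometric step showing $\{V, U_i\}$ cannot be incompatible, since this is exactly where the fact that the rectangle is built out of slope $\pm(1,1)$ sides (and not merely a line) is used in an essential way; everything else is a direct application of Lemma \ref{lem:compatible rectangle} together with the maximality defining an $\mathbf E$-cluster.
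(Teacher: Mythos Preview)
Your proof is correct and follows essentially the same route as the paper: invoke Proposition \ref{prop:incompatible rectangles} to get the rectangle and its associated triangle, observe that $\{V,U_1,U_2\}$ and $\{W,U_1,U_2\}$ are $\mathbf E$-compatible, then apply Lemma \ref{lem:compatible rectangle} to an arbitrary $X\in T$ and conclude by maximality of $T$. The paper compresses the geometric step (your second paragraph) into a single citation of Proposition \ref{prop:incompatible rectangles}, while you spell out why no rectangle can have $V$ and $U_i$ as opposite corners; you are also slightly more careful in separating the case $X=V$ from the generic $X\in T\setminus\{V\}$, which is a good thing since $\{V,W\}$ is not $\mathbf E$-compatible and so Lemma \ref{lem:compatible rectangle} does not literally apply when $X=V$.
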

\begin{proof}
We will prove the statement for (1) as (2) is similar.
We know $\{V,U_1,U_2\}$ and $\{W,U_1,U_2\}$ are $\mathbf E$-compatible by Proposition \ref{prop:incompatible rectangles}.
We also know that for all $X\in T$ both $\{X,V\}$ and $\{X,W\}$ are $\mathbf E$-compatible.
Then, by Lemma \ref{lem:compatible rectangle}, for all $X\in T$ we know $\{X,U_1,U_2\}$ is $\mathbf E$-compatible.
Since $T$ is an $\mathbf E$-cluster, this means $U_1,U_2\in T$.
\end{proof}

\begin{lemma}\label{lem:weird compatible rectangle thing}
Let $V$, $W$, and $W'$ be indecomposables in $\CAR$ such that $\{V,W\}$ and $\{V,W'\}$ are not $\mathbf E$-compatible.
Let $U_1$, $U_2$, $U'_1$, and $U'_2$ be the indecomposables from the distinguished triangles in Proposition \ref{prop:incompatible rectangles}.
If $W\not\cong W'$ then at least one of $\{W, U'_1, U'_2\}$, $\{W', U_1,U_2\}$, or $\{U_1,U_2,U'_1,U'_2\}$ is not $\mathbf E$-compatible.
\end{lemma}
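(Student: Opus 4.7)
The plan is to convert the hypotheses into geometric data using Proposition \ref{prop:incompatible rectangles}: the incompatibility of $\{V, W\}$ gives a rectangle $R$ in the AR-space of $\repAR$ with $V$ and $W$ as left/right corners and $U_1, U_2$ as the top/bottom corners, and similarly the incompatibility of $\{V, W'\}$ gives a rectangle $R'$ with $W'$ and corners $U'_1, U'_2$. I will normalize by placing $V$ at the origin and parameterize each rectangle by a pair of nonnegative reals: when $V$ is the left corner, $U_1 = (a,a)$, $U_2 = (b,-b)$, $W = (a+b, a-b)$. The three essentially distinct configurations are (A) $V$ is the left corner of both rectangles, (B) $V$ is the right corner of both, and (C) $V$ is left in one and right in the other; configuration (B) reduces to (A) by the reflection $(x,y)\mapsto(-x,y)$.

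The next step is a short coordinate computation in the remaining cases. In Case (A) with parameters $(a,b)$ and $(a',b')$, the hypothesis $W \not\cong W'$ forces $(a,b) \neq (a',b')$. After assuming without loss of generality $a \leq a'$, I split on whether $b \leq b'$ or $b > b'$. In the first subcase, whenever $a < a'$ the vector $W' - U_1 = (a' + b' - a,\ a' - b' - a)$ lies strictly inside the open right cone $\{(s,t) : s > 0,\ |t| < s\}$, and whenever $b < b'$ the vector $W' - U_2 = (a' + b' - b,\ a' - b' + b)$ lies strictly inside; either case uses Proposition \ref{prop:incompatible rectangles} to conclude that $\{W', U_1, U_2\}$ is not $\mathbf{E}$-compatible. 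In the second subcase $b > b'$, the vector $W - U'_2 = (a + b - b',\ a - b + b')$ lies in the open right cone, giving $\{W, U'_1, U'_2\}$ as a witness. Case (C) is handled by reparameterizing $R'$ with $V$ as its right corner so that $U'_1 = (-b', b')$; then $W - U'_1 = (a + b + b',\ a - b - b')$ is seen to lie in the open right cone, so $\{W, U'_1\}$ is not $\mathbf{E}$-compatible.

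The main obstacle will be to verify that each derived rectangle lies entirely within the AR-space of $\repAR$ and not merely in the ambient plane, as Proposition \ref{prop:incompatible rectangles} demands. I plan to address this by observing that the four corners of each derived rectangle are already corners of either $R$ or $R'$, both of which lie in the AR-space of $\repAR$ by hypothesis; since that AR-space is the region bounded by graphs of $\lambda$ functions (Definition \ref{def:lambda function}), a good-slope rectangle whose four corners lie inside $R \cup R'$ also lies inside the AR-space. Degenerate configurations where one of $a, b, a', b'$ vanishes correspond to ``almost complete rectangle'' versions of $R$ or $R'$ and are handled by the same computations with the understanding that a vanishing corner corresponds to a zero summand in the associated distinguished triangle.
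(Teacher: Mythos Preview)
Your overall coordinate framework is sound and closely parallels the paper's geometric case analysis, but there is a genuine gap in Case~(C) and the ``main obstacle'' paragraph does not close it.

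First, the claim that ``the four corners of each derived rectangle are already corners of either $R$ or $R'$'' is literally false even in Case~(A): the rectangle from $U_1=(a,a)$ to $W'=(a'+b',a'-b')$ has bottom corner $(a+b',a-b')$, which lies on the \emph{edge} of $R'$ from $U'_2$ to $W'$, not at a vertex. That subcase is still fine, since an edge point of $R'$ is in the AR-space of $\repAR$; but the argument you wrote does not establish this.

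The real problem is Case~(C). With $V=(0,0)$, $W=(a+b,a-b)$, $U'_1=(-b',b')$, your rectangle from $U'_1$ to $W$ has bottom corner $U_2=(b,-b)$ (good) but top corner $(a-b',\,a+b')$. This point is \emph{not} in $R\cup R'$: it is precisely the top corner of the large rectangle from $W'$ to $W$. When $\{W,W'\}$ is $\mathbf E$-compatible but $W,W'$ lie on opposite sides of $V$ (the paper's Case~(2)(ii)), exactly one of the top or bottom corners of that large rectangle lies outside the AR-space of $\repAR$. If it is the top corner that is outside (so $a+b'>\tfrac{\pi}{2}$), then your derived rectangle from $U'_1$ to $W$ is not contained in the AR-space, and $\{W,U'_1\}$ is genuinely $\mathbf E$-compatible --- your conclusion fails. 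The paper handles this by exhibiting instead the rectangle from $U_2$ to $U'_2$ with corners $V$ and the surviving large-rectangle corner, yielding the third option $\{U_1,U_2,U'_1,U'_2\}$ not $\mathbf E$-compatible, which your argument never produces. You could alternatively repair Case~(C) by also checking $\{W,U'_2\}$ (whose derived rectangle has bottom corner $(b-a',-(a'+b))$, the \emph{other} corner of the large rectangle), and arguing that at least one of the two works; but this requires the non-obvious fact that at least one corner of the large rectangle stays in the AR-space, which is what the paper proves separately.
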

\begin{proof}
There are two cases: (1) when $\{W,W'\}$ is not $\mathbf E$-compatible and (2) when $\{W,W'\}$ is $\mathbf E$-compatible.
By Proposition \ref{prop:incompatible rectangles} this is equivalent to: (1) when there is a rectangle or almost complete rectangle in the AR-space of $\repAR$ whose left and right corners are $W$ and $W'$ and (2) when there is no such (almost complete) rectangle.
By symmetry we will assume that, in the AR-space of $\repAR$, $W$ is never to the right of $W'$.

We assume (1) first.
Then by our symmetry assumption $W$ is the left corner and $W'$ is the right corner.
We already know there is a rectangle or almost complete rectangle with left and right corners $W$ and $V$ and similarly for $W'$ and $V$.
There are then three possible places for $V$ horizontally in the AR-space of $\repAR$: (i) to the left of $W$, (ii) between $W$ and $W'$, and (iii) to the right of $W'$.
We see that (i) and (iii) are similar so we'll just focus on (ii) and (iii).
We have the following schematics in the AR-space of $\repAR$:
\begin{displaymath}\begin{tikzpicture}[scale=2]
\foreach \x in {0,1,2,4,5,6}
	\filldraw[fill=black] (\x,0) circle [radius=.3mm];

\foreach \x in {-1,1}
{
	\foreach \y in {0,4}
	{
		\filldraw[fill=black] (0.5+\y, \x*0.5) circle [radius=.3mm];
		\filldraw[fill=black] (1+\y, \x*1) circle [radius=.3mm];
		\filldraw[fill=black] (1.5+\y, \x*0.5) circle [radius=.3mm];
	}
}
\foreach \y in {0,4}
{
	\draw (0+\y,0) -- (1+\y,1) -- (2+\y,0) -- (1+\y,-1) -- (0+\y,0);
	\draw (0.5+\y,0.5) -- (1.5+\y,-0.5);
	\draw (0.5+\y,-0.5) -- (1.5+\y,0.5);
}
\draw (0,0) node[anchor=north] {$W$};
\draw (1,0) node[anchor=north] {$V$};
\draw (2,0) node[anchor=north] {$W'$};
\draw (0.5,0.5) node[anchor=south] {$U_2$};
\draw (0.5,-0.5) node[anchor=north] {$U_1$};
\draw (1.5,0.5) node[anchor=south] {$U'_2$};
\draw (1.5,-0.5) node[anchor=north] {$U'_1$};
\draw (1,1) node[anchor=south] {$X_2$};
\draw (1,-1) node[anchor=north] {$X_1$};
\draw (1,-1.5) node[anchor=north] {Case (1)(ii)};

\draw (4,0) node[anchor=north] {$W$};
\draw (5,0) node[anchor=north] {$W'$};
\draw (6,0) node[anchor=north] {$V$};
\draw (5,1) node[anchor=south] {$U_2$};
\draw (5,-1) node[anchor=north] {$U_1$};
\draw (5.5,0.5) node[anchor=south] {$U'_2$};
\draw (5.5,-0.5) node[anchor=north] {$U'_1$};
\draw (4.5,0.5) node[anchor=south] {$X_2$};
\draw (4.5,-0.5) node[anchor=north] {$X_1$};
\draw (5,-1.5) node[anchor=north] {Case (1)(iii)};
\end{tikzpicture}\end{displaymath}
In the two schematics, at least one of $X_1, X_2$, least one of $U_1,U_2$, and both of $U'_1,U'_2$ must be nonzero.
In particular, if one of $X_1,X_2$ is 0 we must be in case (1)(ii) so both $U_1$ and $U_2$ are nonzero and similarly for case (1)(iii).

In case (1)(ii), choose $X_i$ to be one of $X_1$ or $X_2$ and nonzero.
Let $j\in\{1,2\}$ such that $\{i,j\}=\{1,2\}$.
Then $X_i$ and $U'_j$ are the top and bottom corners of a rectangle in the AR-space of $\repAR$ whose left corner is $U_i$ and right corner is $W'$.
By Proposition \ref{prop:incompatible rectangles} we see thus means $\{W',U_i\}$ is not $\mathbf E$-compatible and so $\{W',U_1,U_2\}$ is not $\mathbf E$-compatible.

In case (1)(iii) choose $U_i$ to be one of $U_1$ or $U_2$ and nonzero and $j\in\{1,2\}$ such that $\{i,j\}=\{1,2\}$.
We have a rectangle in the AR-space of $\repAR$ with left corner $W$, top and bottom corners $U_i$ and $X_j$, and right corner $U'_i$
By Proposition \ref{prop:incompatible rectangles} again we have $\{W,U'_1,U'_2\}$ is not $\mathbf E$-compatible.

Now we assume (2).
This also comes with subcases.
Either (i) $W$ and $W'$ are on the `same side' of $V$ in the AR-space of $\repAR$ or (ii) $W$ and $W'$ are on `opposite sides' of $V$.
In case (2)(i) this means $\MM^b W$ and $\MM^b W'$ both lie in the $H_V$ as described in \cite[Lemma 2.5.2]{Rock}.
By the same lemma, case (2)(ii) means one of $\MM^b W$ and $\MM^n W'$ lies in $H_V$ and the other in $H_{V[-1]}$.
Since $\{W,W'\}$ is $\mathbf E$-compatible these are equivalent to either (i) one of $W$ or $W'$ being `above' the other in the AR-space of $\repAR$ or (ii) $W$ and $W'$ are `too far apart' to be the left and right corners of a rectangle or almost complete rectangle.
In case (2)(ii), if we draw a rectangle with left and right corners $\MM^b W$ and $\MM^b W'$ in $\R^2$ one of the top or bottom corners will be in the image of the AR-space of $\repAR$ under $\MM$.
(Otherwise, $W$ and $W'$ would not both lie in the AR-space of $\repAR$ as a subspace of the AR-space of $\DbAR$.)

We have the following schematics (where the horizontal dashed line is the lower boundary of the AR-space of $\DbAR$) for cases (2)(i) and (2)(ii):
\begin{displaymath}\begin{tikzpicture}[scale=2]
\foreach \x in {0,1,2,4,5,6}
	\filldraw[fill=black] (\x,0) circle [radius=.3mm];

\foreach \x in {-1,1}
{
	\foreach \y in {0,4}
	{
		\filldraw[fill=black] (0.5+\y, \x*0.5) circle [radius=.3mm];
		\filldraw[fill=black] (1+\y, \x*1) circle [radius=.3mm];
		\filldraw[fill=black] (1.5+\y, \x*0.5) circle [radius=.3mm];
	}
}
\foreach \y in {0,4}
{
	\draw (0+\y,0) -- (1+\y,1) -- (2+\y,0) -- (1+\y,-1) -- (0+\y,0);
	\draw (0.5+\y,0.5) -- (1.5+\y,-0.5);
	\draw (0.5+\y,-0.5) -- (1.5+\y,0.5);
}

\draw[dashed] (3.75,-0.75) -- (6.25,-0.75);
\draw (0.5,0.5) node[anchor=south] {$W$};
\draw (0.5,-0.5) node[anchor=north] {$W'$};
\draw (2,0) node[anchor=north] {$V$};
\draw (1,1) node[anchor=south] {$U_2$};
\draw (1.5,-0.5) node[anchor=north] {$U_1$};
\draw (1,-1) node[anchor=north] {$U'_1$};
\draw (1.5,0.5) node[anchor=south] {$U'_2$};
\draw (0,0) node[anchor=north] {$X_1$};
\draw (1,0) node[anchor=north] {$X_2$};

\draw (4,0) node[anchor=north] {$W$};
\draw (5,0) node[anchor=north] {$V$};
\draw (6,0) node[anchor=north] {$W'$};
\draw (4.5,0.5) node[anchor=south] {$U_2$};
\draw (5.5,0.5) node[anchor=south] {$U'_2$};
\draw (4.5,-0.5) node[anchor=north] {$U_1$};
\draw (5.5,-0.5) node[anchor=north] {$U'_1$};
\draw (5,1) node[anchor=south] {$X$};
\draw (5,-1) node[anchor=north] {$0$};

\draw (1,-1.5) node[anchor=north] {Case (2)(i)};
\draw (5,-1.5) node[anchor=north] {Case (2)(ii)};
\end{tikzpicture}\end{displaymath}
At least one of $U_2$ and $U'_1$ must be nonzero in case (2)(i).
Up to reversing roles, we see that there is a rectangle with left corner $W$, top and bottom corners $U_2$ and $X_2$, and right corner $U'_2$.
Thus $\{W,U'_1,U'_2\}$ is not $\mathbf E$-compatible.
In case (2)(ii) we have argued that $X$ must be nonzero so we have a rectangle with left corner $U_2$, top and bottom corners $X$ and $V$, and right corner $U'_2$.
This means $\{U_1,U_2,U'_1,U'_2\}$ is not $\mathbf E$-compatible because $\{U_2,U'_2\}$ is not $\mathbf E$-compatible.
Therefore the proposition holds.
\end{proof}

\begin{theorem}\label{thm:mutation theorem}
Let $T$ be an $\mathbf E$-cluster and $V\in T$ $\mathbf E$-mutable with choice $W$.
Then $(T\setminus \{V\})\cup\{W\}$ is an $\mathbf E$-cluster and any other choice $W'$ for $V$ is isomorphic to $W$.
\end{theorem}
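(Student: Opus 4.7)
The plan is to prove the two parts of the theorem separately: first that the replacement $(T\setminus\{V\})\cup\{W\}$ remains a maximal $\mathbf E$-compatible set, and then that any second choice $W'$ must be isomorphic to $W$. Both halves rely on Proposition~\ref{prop:rectangle inclusion} to locate the top and bottom corners of the rectangles produced by Proposition~\ref{prop:incompatible rectangles} inside $T$.

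For the cluster property, write $T' := (T\setminus\{V\})\cup\{W\}$ and let $X$ be an indecomposable with $X\notin T'$; I need to produce $Y\in T'$ such that $\{X,Y\}$ is not $\mathbf E$-compatible. If $X\cong V$, take $Y = W$, using the definition of $\mathbf E$-mutability. Otherwise $X\not\cong V$, $X\not\cong W$, and $X\notin T\setminus\{V\}$, hence $X\notin T$, and maximality of $T$ gives some $Y_0\in T$ with $\{X,Y_0\}$ incompatible; if $Y_0\neq V$ we are finished. The remaining case is that $V$ is the \emph{only} source of incompatibility between $X$ and $T$, i.e.\ $\{X,Y\}$ is compatible for every $Y\in T\setminus\{V\}$. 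Let $U_1,U_2$ be the top and bottom corners of the rectangle associated to $V$ and $W$; by Proposition~\ref{prop:rectangle inclusion} they lie in $T$, and since they are corners distinct from $V$ they in fact lie in $T\setminus\{V\}$. If $\{X,W\}$ were compatible, then $\{X,W,U_1,U_2\}$ would be compatible as well, and the ``furthermore'' clause of Lemma~\ref{lem:compatible rectangle} would force $\{X,V\}$ compatible, contradicting $Y_0 = V$. Hence $\{X,W\}$ is incompatible and $Y := W$ works.

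For uniqueness, suppose $W'$ is another choice for $V$. Let $U_1,U_2$ (respectively $U'_1,U'_2$) be the top and bottom corners of the rectangles attached to the incompatible pairs $V,W$ (respectively $V,W'$); Proposition~\ref{prop:rectangle inclusion} places all four (when nonzero) in $T$. Assume for contradiction that $W\not\cong W'$. Lemma~\ref{lem:weird compatible rectangle thing} then asserts that at least one of $\{W,U'_1,U'_2\}$, $\{W',U_1,U_2\}$, $\{U_1,U_2,U'_1,U'_2\}$ fails to be $\mathbf E$-compatible. But the first sits inside $(T\setminus\{V\})\cup\{W\}$, the second inside $(T\setminus\{V\})\cup\{W'\}$, and the third inside $T$, and all three ambient sets are $\mathbf E$-compatible by hypothesis --- a contradiction. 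Hence $W\cong W'$.

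The main obstacle I anticipate is simply selecting the correct auxiliary lemma at each step and tracking membership in each of the candidate clusters. The ``furthermore'' half of Lemma~\ref{lem:compatible rectangle} is what converts compatibility with the whole rectangle into compatibility with its opposite corner, and this single implication drives the entire maximality argument; the three-way disjunction in Lemma~\ref{lem:weird compatible rectangle thing} is calibrated precisely so that each of its alternatives lands inside an already $\mathbf E$-compatible set produced by Proposition~\ref{prop:rectangle inclusion}. Once these set inclusions are arranged carefully---in particular the observation that the geometric corners $U_1,U_2$ are automatically different from $V$ and $W$---both arguments reduce to essentially bookkeeping.
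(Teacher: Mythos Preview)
Your proof is correct and follows essentially the same approach as the paper's: both halves hinge on Proposition~\ref{prop:rectangle inclusion} to place the corners $U_1,U_2,U'_1,U'_2$ inside $T$, then invoke Lemma~\ref{lem:weird compatible rectangle thing} for uniqueness and the ``furthermore'' clause of Lemma~\ref{lem:compatible rectangle} for maximality. The only cosmetic differences are that the paper proves uniqueness first, and for maximality argues directly (assume $T'\cup\{X\}$ is $\mathbf E$-compatible, deduce $\{X,V\}$ compatible, hence $X\in T$) rather than via your contrapositive case analysis.
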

\begin{proof}
First we prove the choice of $W$ is unique up to isomorphism.
By Proposition \ref{prop:rectangle inclusion} we know there are two dinstinguished triangles with indecomposables $U_1$, $U_2$, $U'_1$, and $U'_2$, all of which are in $T$.
By Lemma \ref{lem:weird compatible rectangle thing} we know that if $W\not\cong W'$ then one of $\{W, U'_1, U'_2\}$, $\{W', U_1,U_2\}$, or $\{U_1,U_2,U'_1,U'_2\}$ is not $\mathbf E$-compatible.
Therefore since both $W$ and $W'$ are choices for $V$ we must have $W\cong W'$.

Now let $X$ be an indecomposable in $\CAR$ such that $((T\setminus \{V\})\cup\{W\}) \cup \{X\}$ is $\mathbf E$-compatible.
Then $X$ is $\mathbf E$-compatible with $U_1$ and $U_2$ from the distinguished triangle.
By Lemma \ref{lem:compatible rectangle}, since $\{X,W,U_1,U_2\}$ is $\mathbf E$-compatible so is $\{X,V\}$.
Therefore $X\in T$.
\end{proof}

We may now use the theorem and state the following definition.
\begin{definition}\label{def:mutation}
Let $T$ be an $\mathbf E$-cluster and $V\in T$ be $\mathbf E$-mutable.
We call the indecomposable $W$ such that $(T\setminus\{V\})\cup\{W\}$ is an $\mathbf E$-cluster (Theorem \ref{thm:mutation theorem}) the \ul{$\mathbf E$-replacement} for $V$.

Let $\mu:T \to (T\setminus\{V\})\cup\{W\}$ be the bijection that sends $X$ to $X$ if $X\not\cong V$ and sends $V$ to $W$.
We call $\mu$ an \ul{$\mathbf E$-mutation} and say we have \ul{$\mathbf E$-mutated} $V$ to $W$.
\end{definition}

\section{Relation to Previous Construction}\label{sec:relationship}
This section is dedicated to providing a rigorous connection to the previous construction of the continuous cluster category in \cite{IgusaTodorov1}.

\subsection{Localizations}\label{sec:general localizations}
In this section we create a calculus of fractions in order to construct a triangulated localization of $\DbAR$ and $\CAR$.
We do this using a null system.

\begin{definition}\label{def:degenerate}
Let $V$ be an indecomposable in $\DbAR$.
If $\MM V=(x,\pm\frac{\pi}{2})$ or $\MM^b V=(x,\pm\frac{\pi}{2})$, respectively, for some $x$ we say $V$ is \ul{degenerate}.
Let $V$ now be an indecomposable in $\CAR$, which comes from an indecomposable object $V'$ in $\DbAR$.
If $V'$ is degenerate we say $V$ is also degenerate.

For simplicity we also say the 0 object in each of these categories is degenerate.
Let $V\cong\bigoplus V_i$ be an object in $\repAR$, $\DbAR$, or $\CAR$ where each $V_i$ is indecomposable.
If each $V_i$ is degenerate we say $V$ is degenerate.
\end{definition}

\begin{proposition}\label{prop:degenerate maps}
Let $V$ and $W$ be degenerate indecomposable objects in $\repAR$, $\DbAR$, or $\CAR$, and $f:V\to W$ a morphism.
Then $f$ is either 0 or an isomorphism.
\end{proposition}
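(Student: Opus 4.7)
My plan is to first reduce to the case where both $V$ and $W$ are indecomposable. All three ambient categories are Krull--Schmidt (Theorem \ref{thm:little rep}, Proposition \ref{prop:CAR is K-S}), and by Definition \ref{def:degenerate} an object is degenerate exactly when each of its indecomposable summands is. So given $f:V\to W$, I would decompose $V=\bigoplus V_i$ and $W=\bigoplus W_j$ into degenerate indecomposables and write $f$ as a matrix $(f_{ji})$. Assuming the indecomposable case of the proposition, each $f_{ji}$ is either $0$ or an isomorphism, and a standard Krull--Schmidt matrix manipulation (using that endomorphism rings of indecomposables are local, since $\Hom(V_i,V_i)\cong k$) shows that $f$ itself is either $0$ or an isomorphism.

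For $V,W$ indecomposable, Hom spaces in each of $\repAR$, $\DbAR$, and $\CAR$ are either $0$ or $k$ (Theorem \ref{thm:little rep}(1) together with the Hom proposition at the start of Section \ref{sec:new continuous cluster category}). If $V\cong W$ then $\End(V)\cong k$, so any nonzero endomorphism is automatically an isomorphism. The substantive content is therefore the case $V\not\cong W$, where I need to show $\Hom(V,W)=0$.

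Here I would invoke the Hom support geometry from \cite[Proposition 4.4.2, Lemma 5.2.9]{Rock}: a nonzero morphism $V\to W$ forces $\MM^b W$ to lie in the closed region attached to $\MM^b V$ bounded by rays of slopes $\pm(1,1)$. Degeneracy places $\MM^b V$ on the extremal boundary $y=\pm\frac{\pi}{2}$ of the strip $\R\times[-\frac{\pi}{2},\frac{\pi}{2}]$; since the boundary is the global extreme of the $y$-range and the Hom-support region opens downward/upward accordingly, the intersection of the region with the degenerate line $\{y=\pm\frac{\pi}{2}\}$ is exactly the point $\MM^b V$. The position data in $\{1,2,3,4\}$ (from \cite[Definition 4.1.2]{Rock}), which discriminates coincident indecomposables in the AR-space, then forces $W\cong V$, contradicting $V\not\cong W$. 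The case of $\repAR$ is the $n=0$ slice of the $\DbAR$ case, and the case of $\CAR$ follows via the identification $\Hom_{\CAR}(V,W)\cong\bigoplus_n\Hom_{\DbAR}(V,W[n])$ together with the observation that shift preserves degeneracy (flipping the sign of $y$), so exactly one summand is nonzero and the $\DbAR$-argument applies.

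The main obstacle will be to verify cleanly that the Hom-support region, intersected with the degenerate boundary, really collapses to a single point across all four positions of the diamond and both signs of $y$; I expect a short case analysis using the explicit description of Hom-support from \cite[Lemma 5.2.9]{Rock} together with the position conventions to resolve this uniformly.
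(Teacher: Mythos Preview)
Your first paragraph is both unnecessary and incorrect. The hypothesis already assumes $V$ and $W$ are \emph{indecomposable} degenerate objects, so there is nothing to reduce. More seriously, the reduction you sketch is false: the statement does not extend to arbitrary degenerate objects. For instance, if $A$ is a degenerate indecomposable, the projection $A\oplus A\to A$ is a nonzero morphism between degenerate objects that is not an isomorphism. A matrix whose entries are each zero or an isomorphism need not itself be zero or invertible, and no ``standard Krull--Schmidt manipulation'' will salvage this. You should simply drop the first paragraph.

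Once restricted to indecomposables, your argument is essentially the paper's: for $\repAR$ and $\DbAR$ you invoke the Hom-support description from \cite[Section~4.2 and Lemma~5.2.9]{Rock}, and for $\CAR$ you reduce via the orbit decomposition $\Hom_{\CAR}(V,W)\cong\bigoplus_n\Hom_{\DbAR}(V,W[n])$ to the $\DbAR$ case. One point you leave implicit is the case where $V$ sits on one boundary line $y=\frac{\pi}{2}$ and $W$ on the opposite line $y=-\frac{\pi}{2}$: your phrasing only treats the intersection of the Hom-support region with the \emph{same} boundary. The Hom-support region from a top-boundary $V$ does reach down toward the bottom boundary along a slope-$(-1)$ ray, but it is half-open and does not include that far edge; this is exactly what the cited results in \cite{Rock} give, and it is why no nonzero map exists in that mixed case. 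Make that explicit rather than leaving it to ``a short case analysis''.
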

\begin{proof}
When $V$ and $W$ are in $\repAR$ or $\DbAR$ this follows from \cite[Section 4.2 and Lemma 5.2.9]{Rock}, respectively.
Now suppose $V$ and $W$ are in $\CAR$ and $f$ is not 0.

Since $\Hom_{\CAR}(V,W)\cong\bigoplus_\Z \Hom_{\DbAR}(V,W[n])$ some shift of $W$ must be isomorphic to $V$ in $\DbAR$.
Then in $\CAR$ we see $V\cong W$.
Since $\Hom(V,V)\cong k$ and $f$ is not 0 $f$ must be an isomorphism.
\end{proof}

\begin{definition}\label{def:null system}
Let $\mathcal D$ be a triangulated category and $\mathcal N$ a full subcategory of $\mathcal D$ such that
\begin{enumerate}
\item the 0 object is in $\mathcal N$,
\item if $X$ is an object in $\mathcal N$ and $Y\cong X$ in $\mathcal D$ then $Y$ is an object in $\mathcal N$,
\item an object $X$ is in $\mathcal N$ if and only if $X[1]$ is in $\mathcal N$, and
\item if $X\to Y\to Z\to X[1]$ is a distinguished triangle in $\mathcal D$ and $X$ and $Z$ are both objects in $\mathcal N$ then so is $Y$.
\end{enumerate}
We call $\mathcal N$ a \ul{null system} in $\mathcal D$.
\end{definition}

\begin{remark}
We note that Definition \ref{def:null system} is the traditional definition used.
However, we also note that (3) and (4) imply (1) and (2).
\end{remark}

We recall the following fact from \cite{KashiwaraSchapira} as a proposition.
\begin{proposition}\label{prop:null is localizing}
Let $\mathcal D$ be a triangulated category and $\mathcal N$ a null system in $\mathcal D$.
Then the class of morphisms
\begin{displaymath}
\mathcal NQ:= \{ X\stackrel{f}{\to} Y : \exists \text{ distinguished triangle }X\stackrel{f}{\to} Y\to Z\to, Z\text{ in }\mathcal N\}
\end{displaymath}
admits a left and right calculus of fractions in $\mathcal D$.
\end{proposition}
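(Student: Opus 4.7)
The plan is to verify the standard multiplicative-system axioms for $\mathcal NQ$ in both directions; since $\mathcal NQ$ is defined by a symmetric condition (the existence of a distinguished triangle with third term in $\mathcal N$), the left calculus will follow from the right one by rotating triangles, so I focus on the right calculus. Four things need checking: (i) $\mathcal NQ$ contains all identities, (ii) $\mathcal NQ$ is closed under composition, (iii) the Ore condition, and (iv) the weak cancellation condition.

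For (i), the distinguished triangle $X \xrightarrow{\mathrm{id}} X \to 0 \to X[1]$ together with $0 \in \mathcal N$ places each identity in $\mathcal NQ$. For (ii), given $f: X \to Y$ and $g: Y \to W$ with cones $Z, Z' \in \mathcal N$, I would invoke the octahedral axiom (already recalled in Section 3.1) applied to the composition $g \circ f$ to obtain a distinguished triangle $Z \to \mathrm{cone}(g \circ f) \to Z' \to Z[1]$; conditions (3) and (4) of Definition \ref{def:null system} then force $\mathrm{cone}(g \circ f) \in \mathcal N$, so $g \circ f \in \mathcal NQ$.

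For (iii), given $s: Y' \to Y$ in $\mathcal NQ$ with cone $Z \in \mathcal N$ and an arbitrary morphism $u: X \to Y$, I would complete the composite $X \xrightarrow{u} Y \to Z$ to a distinguished triangle $X' \xrightarrow{s'} X \to Z \to X'[1]$; axiom TR3 then yields a morphism $u': X' \to Y'$ with $u \circ s' = s \circ u'$, and since the cone of $s'$ is $Z[-1] \in \mathcal N$ (by condition (3) of Definition \ref{def:null system}), the morphism $s'$ lies in $\mathcal NQ$. For (iv), suppose $s \circ u = s \circ v$ for some $s: Y \to Y'$ in $\mathcal NQ$ and $u,v: X \to Y$; then $s \circ (u-v) = 0$, so $u-v$ factors through the fiber of $s$, that is, through some $Z'[-1] \to Y$ with $Z' \in \mathcal N$. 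Completing the resulting map $X \to Z'[-1]$ to a triangle $X' \xrightarrow{t} X \to Z'[-1] \to X'[1]$ gives $t \in \mathcal NQ$ with $(u-v) \circ t = 0$, hence $u \circ t = v \circ t$.

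The main conceptual content is the use of the octahedral axiom in step (ii) together with the null-system closure condition (4); the rest is bookkeeping to ensure each constructed morphism has its cone in $\mathcal N$. The potentially delicate point, more technical than conceptual, is verifying that condition (3) of Definition \ref{def:null system} is correctly applied whenever a shift of a null object appears as a cone, so that $\mathcal NQ$ is genuinely closed under the operations that the two-sided calculus of fractions demands.
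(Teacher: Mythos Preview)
Your argument is correct and is precisely the standard verification of the multiplicative-system axioms for the class $\mathcal NQ$ arising from a null system. However, the paper does not supply a proof of this proposition at all: it is introduced with the sentence ``We recall the following fact from \cite{KashiwaraSchapira} as a proposition'' and is used as a black box. So there is nothing to compare your approach against; you have simply filled in what the paper elects to cite.

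One small slip worth correcting: in step (iii), once you have the distinguished triangle $X' \xrightarrow{s'} X \to Z \to X'[1]$, the cone of $s'$ is $Z$ itself, not $Z[-1]$. This makes the argument cleaner, since $Z \in \mathcal N$ by hypothesis and no appeal to condition (3) of Definition \ref{def:null system} is needed at that point. The invocation of condition (3) is genuinely required only in step (iv), where the cone of $t$ is $Z'[-1]$.
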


\begin{proposition}\label{prop:calculus of fractions}
Let $\mathcal D$ be $\DbAR$ or $\CAR$.
Let $\mathcal N$ be the full, wide subcategory of $\mathcal D$ whose objects are the degenerate objects in $\mathcal D$.
Then $\mathcal N$ is a null system and so $\mathcal N Q$ admits a left and right calculus of fractions.
\end{proposition}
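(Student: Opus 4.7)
The plan is to verify the four axioms of Definition \ref{def:null system} for $\mathcal N$; once this is done, Proposition \ref{prop:null is localizing} immediately yields the left and right calculus of fractions. Axiom (1) is built into Definition \ref{def:degenerate}, which declares the zero object degenerate. Axiom (2) is immediate because ``degenerate'' depends only on the images of indecomposable summands under $\MM$ or $\MM^b$, both of which are invariants of isomorphism classes; for $\CAR$ the definition is inherited directly from $\DbAR$, so the same holds. Axiom (3) reduces by Krull--Schmidt and additivity of $[1]$ to the indecomposable case, where it is immediate from the formula $\MM^b(V[1]) = (x+\pi,-y)$: the second coordinate equals $\pm\tfrac{\pi}{2}$ for $V$ iff it does for $V[1]$. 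For $\CAR$ the shift is inherited from $\DbAR$ so the computation is the same.

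The nontrivial axiom is (4): a distinguished triangle $X \to Y \to Z \to X[1]$ with $X,Z$ degenerate forces $Y$ to be degenerate. By rotating to $Z[-1] \to X \to Y \to Z$, it suffices to prove that the cone of any morphism $f\colon V \to W$ between two degenerate objects is degenerate. The key structural input is Proposition \ref{prop:degenerate maps}: every morphism between degenerate indecomposables is either zero or an isomorphism, and a nonzero such morphism forces the two indecomposables to be isomorphic. Writing $V \cong \bigoplus V_i$ and $W \cong \bigoplus W_j$ in Krull--Schmidt form, the component $f_{ji}\colon V_i \to W_j$ therefore vanishes whenever $V_i \not\cong W_j$; otherwise it is an arbitrary scalar in $\Hom(V_i,W_j) \cong k$. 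Grouping the summands by isomorphism class and applying Gaussian elimination on each $k$-block, using change of basis by elements of the matrix algebras $\End(V_i^{\oplus m}) \cong M_m(k)$ and $\End(W_j^{\oplus n}) \cong M_n(k)$, I can replace $f$ by an isomorphic morphism that is block-diagonal: a direct sum of identity maps between matched summands and zero maps on the complementary summands.

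The cone of an identity is $0$, and the cone of a zero map is $V_i[1]$ or $W_j$ on the unmatched sides; hence the cone of $f$ decomposes as a direct sum of shifted summands of $V$ and summands of $W$, which are all degenerate by Axiom (3) already established. This gives $Y$ degenerate and completes the verification of (4); for $\CAR$ the same argument goes through verbatim since Proposition \ref{prop:degenerate maps} applies equally there. The main obstacle is exactly this reduction in Axiom (4); the morphism-matrix argument sidesteps the heavier geometric machinery of Theorem \ref{thm:geometric cones}, but Proposition \ref{prop:degenerate maps} is what makes the dichotomy clean and the decomposition possible.
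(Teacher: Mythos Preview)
Your proof is correct and follows essentially the same approach as the paper. The paper's own proof is terse---it simply says items (1)--(3) are clear from Definition~\ref{def:degenerate} and that item (4) follows from Proposition~\ref{prop:degenerate maps}---while you have spelled out the matrix-reduction argument that makes the appeal to Proposition~\ref{prop:degenerate maps} effective for axiom~(4); this is exactly the intended mechanism.
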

\begin{proof}
Items (1)--(3) in Definition \ref{def:null system} are clear by Definition \ref{def:degenerate}.
Item (4) follows from Proposition \ref{prop:degenerate maps}.
The second half of the conclusion follows from Proposition \ref{prop:null is localizing}.
\end{proof}

We now recall the following well known categorical fact.
\begin{proposition}\label{prop:triangulated localization}
Let $\mathcal D$ be a triangulated category and $\mathcal M$ a class of morphisms that admits a left and right calculus of fractions.
Then there exists a localization $\mathcal D\to \mathcal D[\mathcal M^{-1}]$ such that $\mathcal D[\mathcal M^{-1}]$ is a triangulated category whose distinguished triangles are exactly the images of distinguished triangles in $\mathcal D$.
\end{proposition}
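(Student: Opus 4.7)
The plan is to reconstruct the standard Verdier-style proof (as in \cite{KashiwaraSchapira} or \cite{Neeman}), which proceeds in three phases: build the underlying category, transfer the shift, then transfer and verify the triangulated axioms. First I would define $\mathcal D[\mathcal M^{-1}]$ to have the same objects as $\mathcal D$ and, for morphisms, equivalence classes of right fractions $X \xleftarrow{s} X' \xrightarrow{f} Y$ with $s \in \mathcal M$ (equivalently left fractions, since $\mathcal M$ admits both). The hypothesis that $\mathcal M$ admits a calculus of fractions gives well-defined composition and that two fractions are equivalent iff they can be dominated by a common roof; this produces a category together with a localization functor $Q : \mathcal D \to \mathcal D[\mathcal M^{-1}]$ sending $s \in \mathcal M$ to isomorphisms and satisfying the expected universal property.

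Next I would transfer the additive and shift structure. In the setting of interest (Proposition \ref{prop:calculus of fractions}), $\mathcal M = \mathcal N Q$ comes from a null system, so it is automatically stable under the shift functor $[1]$ and closed under direct sums of the relevant form. Because $[1]$ is an auto-equivalence of $\mathcal D$ that sends $\mathcal M$ to $\mathcal M$, it descends to an auto-equivalence of the localization, which I would take as the shift on $\mathcal D[\mathcal M^{-1}]$. Additivity of $\mathcal D[\mathcal M^{-1}]$ follows from the compatibility of fractions with direct sums (again a consequence of the calculus of fractions).

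Then I would declare the distinguished triangles of $\mathcal D[\mathcal M^{-1}]$ to be exactly those triangles isomorphic (in $\mathcal D[\mathcal M^{-1}]$) to $Q$-images of distinguished triangles of $\mathcal D$, and verify TR1--TR4. TR1 (existence, identity triangles, closure under isomorphism) and TR2 (rotation) are immediate from the construction and the fact that rotation is preserved by $Q$. For TR3 (completion of a morphism of triangles to a morphism of distinguished triangles in the localization), one represents the vertical maps by fractions and uses the calculus of fractions to reduce to a genuine morphism of distinguished triangles in $\mathcal D$, where TR3 holds by hypothesis; the lifted fill-in then descends.

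The main obstacle is the octahedral axiom TR4. The standard trick is: given two composable fractions representing morphisms $X \to Y \to Z$ in $\mathcal D[\mathcal M^{-1}]$, use the right calculus of fractions to find a common denominator in $\mathcal M$, replacing the composable pair by a genuinely composable pair in $\mathcal D$; then apply TR4 in $\mathcal D$ and push the resulting octahedron down through $Q$. Making this replacement coherent (so that the three triangles in the octahedron are simultaneously realized by $\mathcal D$-triangles up to the equivalence of fractions) is the delicate combinatorial point, but it is guaranteed by iterating the Ore conditions that define the two-sided calculus of fractions. Once this is in place, the description of distinguished triangles as images of distinguished triangles of $\mathcal D$ is built into the definition, completing the proof.
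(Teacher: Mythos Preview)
The paper does not actually prove this proposition: it is introduced with the sentence ``We now recall the following well known categorical fact'' and no proof is given. It is being invoked as a standard result from the literature (implicitly from \cite{KashiwaraSchapira} or \cite{Neeman}, which are already cited for the surrounding material on null systems and the octahedral axiom).

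Your proposal is a correct outline of the standard Verdier construction, and it is exactly the argument one finds in those references, so in that sense you are aligned with what the paper intends. But you have done strictly more work than the paper asks for here: the appropriate ``proof'' in this context is simply a citation. If you wish to keep a sketch, it is fine, but be aware that the paper treats this as background rather than as something to be established.
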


\begin{proposition}
Let $\mathcal D$ be $\DbAR$ or $\CAR$ and $\mathcal N$ the full, wide subcategory of degenerate objects in $\mathcal D$.
Then $\mathcal D[\mathcal NQ^{-1}]$ is a triangulated category whose distinguished triangles are images of distinguished triangles in $\mathcal D$.
\end{proposition}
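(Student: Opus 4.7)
The plan is to observe that this statement is essentially an immediate consequence of the two preceding propositions, so the proof will be a short bookkeeping argument rather than any fresh construction. The outline in my head is: identify $\mathcal M := \mathcal NQ$, verify that the hypotheses of Proposition \ref{prop:triangulated localization} are met, and then transport its conclusion verbatim to the present setting.

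The first step is to invoke Proposition \ref{prop:calculus of fractions}, which already does the real work: it says that for $\mathcal D = \DbAR$ or $\CAR$ and $\mathcal N$ the full wide subcategory of degenerate objects, the class $\mathcal NQ$ admits both a left and a right calculus of fractions. In particular, $\mathcal D$ is a triangulated category (this is Definition \ref{def:cluster category} and the standard construction of $\DbAR$) and $\mathcal M := \mathcal NQ$ is a class of morphisms satisfying the hypothesis ``admits a left and right calculus of fractions'' in Proposition \ref{prop:triangulated localization}.

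The second step is then to apply Proposition \ref{prop:triangulated localization} directly with this choice of $\mathcal M$. The output is a localization functor $\mathcal D \to \mathcal D[\mathcal M^{-1}] = \mathcal D[\mathcal NQ^{-1}]$ together with a triangulated structure on the target whose distinguished triangles are exactly the images of distinguished triangles of $\mathcal D$. This is precisely the statement being proved, so no further argument is required beyond noting this identification.

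I do not anticipate a genuine obstacle, since both ingredients have been stated (and in the case of Proposition \ref{prop:null is localizing} attributed to \cite{KashiwaraSchapira}) before the proposition. The only thing worth flagging is that one should make explicit that the null system $\mathcal N$ used in Proposition \ref{prop:calculus of fractions} is literally the same $\mathcal N$ appearing in the present statement, so that the two applications line up; this is immediate from the definitions. Thus the proof will consist of essentially one line invoking the two previous propositions in sequence.
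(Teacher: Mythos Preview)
Your proposal is correct and takes essentially the same approach as the paper: the paper's proof is the single line ``This follows from Propositions \ref{prop:calculus of fractions} and \ref{prop:triangulated localization},'' which is exactly the two-step invocation you describe.
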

\begin{proof}
This follows from Propositions \ref{prop:calculus of fractions} and \ref{prop:triangulated localization}.
\end{proof}

\begin{lemma}\label{lem:localization collapse}
Let $\mathcal D$ be either $\DbAR$ or $\CAR$.
Let $V$ and $W$ be not degenerate indecomposables in $\mathcal D$.
\begin{enumerate}
\item If $\mathcal D= \DbAR$ then $V\cong W$ in $\mathcal D[\mathcal NQ^{-1}]$ if and only if $\MM^b V=\MM^b W$.
\item If $\mathcal D=\CAR$ then $V\cong W$ in $\mathcal D[\mathcal NQ^{-1}]$ if and only if there exists $n\in\Z$ such that $\MM^b V = \MM^b (W[n])$.
\end{enumerate}
\end{lemma}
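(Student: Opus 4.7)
The plan is to prove part (1) and then deduce part (2) from the orbit category structure.

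\textbf{Part (1), direction $(\Leftarrow)$:} Suppose $V$ and $W$ are non-degenerate indecomposables with $\MM^b V=\MM^b W=(x,y)$, so $|y|<\frac{\pi}{2}$. As described in Example \ref{xmp:AR space of DbAR}, indecomposables sharing a single point of the AR-space differ only in the openness/closedness of their support-interval endpoints, giving at most four positions at $(x,y)$. I would construct degenerate ``bridge'' indecomposables $D_+$ and $D_-$ (taking, say, $D_+ = M_{(-\infty,b|}[n]$ on the top boundary $y=\frac{\pi}{2}$ and the dual on the bottom boundary) together with morphisms $D_\pm \to V$ and $D_\pm \to W$ arising from short exact sequences in $\repAR$ whose kernels are also degenerate. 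Concretely, for a bridge like $M_{(-\infty,a)} \hookrightarrow M_{(-\infty,b]} \twoheadrightarrow M_{[a,b]}$, the induced triangle in $\DbAR$ has degenerate cone, so the morphism becomes an isomorphism in the localization. By choosing bridges through $D_+$ to reach position-2 versions and through $D_-$ to reach position-3 versions, one obtains $V \cong W$ by transitivity for any of the four positions.

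\textbf{Part (1), direction $(\Rightarrow)$:} Assume $V \cong W$ in the localization. By Proposition \ref{prop:calculus of fractions}, this is represented by a roof $V \xleftarrow{s} X \xrightarrow{f} W$ with $s \in \mathcal{N}Q$; the inverse isomorphism forces $f$ likewise to have degenerate cone (after further refinement of the roof if necessary). I would then establish the following key claim: if $g:A\to B$ is a morphism in $\DbAR$ with $\mathrm{cone}(g)$ degenerate, then every non-degenerate indecomposable summand of $A$ shares its $\MM^b$-coordinate with some non-degenerate summand of $B$. This follows from Theorem \ref{thm:geometric cones} together with Lemma \ref{lem:geometric cones 1}(2),(3), which link each cone summand via $\pm(1,1)$-slope segments to summands of $A$ and $B$; forcing all cone summands onto the boundary $y=\pm\frac{\pi}{2}$ pairs up the non-degenerate summands of $A$ and $B$ with equal $\MM^b$-values. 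Applying this claim to $s$ and $f$ in turn yields $\MM^b V = \MM^b X = \MM^b W$.

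\textbf{Part (2):} Since $\CAR$ is the orbit category of the doubling of $\DbAR$ by the almost-shift, isomorphism in $\CAR[\mathcal{N}Q^{-1}]$ lifts to an isomorphism in $\DbAR[\mathcal{N}Q^{-1}]$ between $V$ and a suitable iterate of the almost-shift applied to $W$. The almost-shift is a square root of $[1]$, so these iterates correspond at the level of $\MM^b$ to translations by multiples of $\pi$ together with sign flips on the $y$-coordinate, matching the effect of $W \mapsto W[n]$ for $n\in\Z$. Applying part (1) to the lifted isomorphism then yields the existence of $n\in\Z$ with $\MM^b V = \MM^b(W[n])$.

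\textbf{Main obstacle:} The hard part will be the forward direction of part (1), specifically the geometric matching claim. Although Theorem \ref{thm:geometric cones} and Lemma \ref{lem:geometric cones 1} give the right framework, the bookkeeping that identifies which non-degenerate summand of $A$ cancels against which non-degenerate summand of $B$ through a degenerate cone requires careful induction on the number of summands, tracking the octahedral splicing used in Lemma \ref{lem:geometric cones 1} and invoking Lemma \ref{lem:slope lemma}-type slope rigidity to ensure no non-degenerate leftover summands escape.
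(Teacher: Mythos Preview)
Your plan is workable, but it takes a considerably longer path than the paper's proof, and in one direction you are building machinery that is not needed.

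For the direction $(\Leftarrow)$ of part (1), the paper avoids bridges entirely. If $\MM^b V = \MM^b W$ and $V \not\cong W$ in $\mathcal D$, then by the position structure one of $\Hom_{\mathcal D}(V,W)$ or $\Hom_{\mathcal D}(W,V)$ is already $\cong k$; say the former. Now apply Theorem~\ref{thm:triangles are rectangles} directly to a nonzero $f:V\to W$: the associated rectangle has left and right corners at the \emph{same} point $\MM^b V=\MM^b W$, so its top and bottom corners are forced onto $y=\pm\tfrac{\pi}{2}$, hence the cone is degenerate and $f$ becomes an isomorphism in the localization. That is the whole argument. Your bridge construction through $D_\pm$ reproduces this, but with two extra objects and two extra triangles where one suffices.

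For $(\Rightarrow)$ the paper again works directly with a single morphism $V\to W$ in $\mathcal D$ whose cone is degenerate, then reverses the rectangle computation above. Your roof-and-matching argument is more explicit about why such a morphism exists, and your ``key claim'' (non-degenerate summands of source and target pair up when the cone is degenerate) is the right statement; but note that Lemma~\ref{lem:geometric cones 1}(2),(3) only covers the case where one of $A,B$ is indecomposable, so your induction genuinely has to redo the octahedral splicing rather than merely cite those items. This is doable but heavier than what the paper does. What your approach buys is rigor at the step ``$V\cong W$ in the localization $\Rightarrow$ some $f:V\to W$ in $\mathcal D$ has degenerate cone,'' which the paper treats tersely.

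For part (2), the paper does not lift to $\DbAR[\mathcal NQ^{-1}]$ via the almost-shift as you propose. It simply uses that $W\cong W[n]$ in $\CAR$, replaces $W$ by a suitable shift so that a triangle $W\to U\to V[1]\to{}$ in $\CAR$ lifts to $\DbAR$ with matching $\MM^b$-coordinates, and then applies Theorem~\ref{thm:triangles are rectangles} exactly as in part (1). Your route through ``localization commutes with the orbit construction'' would need separate justification.
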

\begin{proof}
In $\CAR$, $W\cong W[n]$ for all $n\in\Z$.
Thus, by replacing $W$ with $W[n]$, and choosing the appropriate $n$, we can prove (2) for $\MM^b V= \MM^b W$.

First suppose $\MM^b V =\MM^b W$ in $\DbAR$.
Without loss of generality, first suppose $\Hom_{\DbAR}(V,W)\cong k$.
By Theorem \ref{thm:triangles are rectangles} any nonzero indecomposable summands of $U$ in the distinguished triangle $W\to U\to V[1]\to $ in $\DbAR$ are degenerate.
Then $U$ is degenerate and so $V\cong W$ in $\DbAR[\mathcal NQ^{-1}]$.
Furthermore if $V$ and $W$ are instead in $\CAR$ then $U$ is still degenerate and so $V\cong W$ in $\CAR[\mathcal NQ^{-1}]$.

Now suppose $V\cong W$ in $\DbAR[\mathcal NQ^{-1}]$.
Then $U$ in the distinguished triangle $W\to U\to V[1]\to $ in $\DbAR$ is degenerate.
We use Theorem \ref{thm:triangles are rectangles} again and see $\MM^b V=\MM^b W$.
Now suppose $V\cong W$ in $\CAR[\mathcal NQ^{-1}]$; so $U$ is degenerate in the distinguished triangle $W\to U\to V\to $ in $\CAR$.
As an orbit category by almost-shift there are choices of lifts $W$, $U$, and $V[1]$ in $\DbAR$ that yield the triangle in $\CAR$.
Again we apply Theorem \ref{thm:triangles are rectangles} and see $\MM^b V=\MM^b W$.
\end{proof}

\begin{lemma}\label{lem:determined by position 1}
Let $\mathcal D$ be either $\DbAR$ or $\CAR$ and $V$ and $W$ be indecomposables in $\mathcal D$.
Let $V_1$ be an indecomposable in $\mathcal D$ such that $\MM^b V=\MM^b V_1$ and $V_1$ has position 1.
Then
\begin{displaymath}
\Hom_{\mathcal D[\mathcal NQ^{-1}]}(V,W)=0 \text{ if and ony if } \Hom_{\mathcal D}(V_1,W)=0.
\end{displaymath}
\end{lemma}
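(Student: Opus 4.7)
The plan is to first reduce the statement to one about $V_1$ alone, and then analyze morphisms in the localized category via the calculus of fractions supplied by Proposition \ref{prop:calculus of fractions}. By Lemma \ref{lem:localization collapse}, the equality $\MM^b V = \MM^b V_1$ gives $V \cong V_1$ in $\mathcal D[\mathcal NQ^{-1}]$, so
\[
\Hom_{\mathcal D[\mathcal NQ^{-1}]}(V,W)\;\cong\;\Hom_{\mathcal D[\mathcal NQ^{-1}]}(V_1,W),
\]
and it suffices to prove that the canonical map $\Hom_{\mathcal D}(V_1,W)\to\Hom_{\mathcal D[\mathcal NQ^{-1}]}(V_1,W)$ sends a nonzero element to a nonzero element and has trivial codomain whenever its domain is trivial. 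Throughout, I would use the standard fact that for a Verdier localization by a null system, a morphism $f$ of $\mathcal D$ is sent to zero in $\mathcal D[\mathcal NQ^{-1}]$ if and only if $f$ factors through some object of $\mathcal N$ (i.e.\ through a degenerate object).

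For the direction $(\Leftarrow)$, assume $\Hom_{\mathcal D}(V_1,W)=0$, and let a morphism in the localization be presented as a roof $V_1\stackrel{f}{\to}W'\stackrel{s}{\leftarrow}W$ with $s\in\mathcal NQ$. Complete $s$ to a distinguished triangle $W\stackrel{s}{\to}W'\to D\to W[1]$ with $D$ degenerate, and apply the cohomological functor $\Hom_{\mathcal D}(V_1,-)$ to obtain the exact sequence
\[
\Hom(V_1,D[-1])\longrightarrow\Hom(V_1,W)\longrightarrow\Hom(V_1,W')\longrightarrow\Hom(V_1,D).
\]
Since $\Hom(V_1,W)=0$, it remains to show that the image of $f$ in $\Hom(V_1,D)$ is zero, i.e.\ that no nonzero map $V_1\to D_i$ to a degenerate indecomposable summand exists that is incompatible with the triangle. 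Using Theorem \ref{thm:triangles are rectangles}, the Hom-support classification from \cite[Sections 4.2 and 5.2]{Rock}, and the position-1 hypothesis, one checks that every $V_1\to D_i$ factors through $s$, so that $f=0$ and the roof is trivial.

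For the direction $(\Rightarrow)$, assume $\Hom_{\mathcal D}(V_1,W)\cong k$ with nonzero generator $f$, and show $f$ does not factor through any degenerate object. Were there such a factorization $f=g\circ h$ with $h:V_1\to D$ and $g:D\to W$, then bilinearity combined with the one-dimensionality of Hom spaces between indecomposables would isolate a single degenerate summand $D_i$ with $\Hom(V_1,D_i)\cong k$, $\Hom(D_i,W)\cong k$, and nonzero composition. The main obstacle, and the geometric heart of the proof, is to rule out this configuration using that $V_1$ sits at position 1: via Theorem \ref{thm:triangles are rectangles}, such a factorization would place $V_1$, $D_i$, and $W$ as aligned corners of rectangles in the AR-space, and a short case analysis on whether $D_i$ lies on the upper or lower boundary of the AR-space (and on the position of $W$) shows that the ``leftmost-corner'' nature of position 1 forces the composite $g\circ h$ to vanish. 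This contradicts the assumed factorization and completes the proof.
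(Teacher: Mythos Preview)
Your reduction to $V_1$ via Lemma~\ref{lem:localization collapse} is fine, and the factorization criterion you invoke for Verdier localizations is correct. The genuine problem is in your $(\Leftarrow)$ argument. Working with \emph{right} roofs $V_1\stackrel{f}{\to}W'\stackrel{s}{\leftarrow}W$ puts the $\mathcal NQ$-leg on the $W$ side, where you have no control over position. Your long exact sequence shows only that $\Hom(V_1,W')\hookrightarrow\Hom(V_1,D)$; to conclude $f=0$ you would need the image of $f$ in $\Hom(V_1,D)$ to vanish, and this is simply false in general. For instance, take $W'=W\oplus D'$ with $D'$ degenerate and $s$ the inclusion; then $D=D'$ and $f$ can map nontrivially into $D'$ while the roof is still zero in the localization (since $f$ factors through a degenerate). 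So you cannot hope to prove $f=0$ in $\mathcal D$, only that $f$ becomes zero after a further $\mathcal NQ$-postcomposition, and your argument does not address this. The phrase ``every $V_1\to D_i$ factors through $s$'' is also unclear: $s$ goes $W\to W'$, not to $D$, so it is not evident what factoring you intend.

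The paper takes the mirror-image route and it is here that position~1 does real work. Using \emph{left} roofs $V\stackrel{f}{\leftarrow}U\stackrel{g}{\to}W$, the $\mathcal NQ$-leg is $f:U\to V$. Because $V_1$ has position~1, it is the unique indecomposable at its $\MM^b$-coordinate that receives no nontrivial map from the others, while it maps to all of them; one checks from this that any $U$ with $U\to V$ in $\mathcal NQ$ admits a map $s:V_1\to U$ with $f\circ s\in\mathcal NQ$. Thus every roof is equivalent to one with middle term $V_1$, and the right leg is then literally an element of $\Hom_{\mathcal D}(V_1,W)$. Both directions follow immediately. Your $(\Rightarrow)$ sketch via ``does not factor through a degenerate'' is plausible and could be completed, but the paper's argument avoids the case analysis entirely by exploiting left roofs; you should switch sides.
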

\begin{proof}
Suppose $\Hom_{\mathcal D}(V_1,W)=0$.
Consider a roof
\begin{displaymath}\underline{f}=\xymatrix{V & U\ar[l]_-f \ar[r]^-g & W}\end{displaymath}
in $\mathcal D[\mathcal NQ^{-1}]$.
Since $f\in \mathcal NQ$ we know (shifting if necessary in $\CAR$) that $\MM^b V=\MM^b U$.
Then $\MM^b V_1=\MM^b U=\MM^b V$ and so $\Hom_{\mathcal D}(V_1,U)\cong k\cong \Hom_{\mathcal D}(V_1,V)$.
Let $s:V_1\to U$ be a nontrivial morphism.
We then have the following commutative diagram in $\mathcal D$:
\begin{displaymath}\xymatrix{
V \ar@{=}[dd] & U \ar[l]_-f \ar[r]^-g & W \ar@{=}[dd] \\
& V_1 \ar[u]^-s \ar@{=}[d] & \\
V & V_1 \ar[l]^-{f\circ s} \ar[r]_-{g\circ s} & W.
}\end{displaymath}
Since $f\circ s\in\mathcal NQ$ we see that these two roofs are equivalent in $\mathcal D[\mathcal NQ^{-1}]$.
Denote the bottom roof by $\underline{f}'$.
If $g=0$ then $\underline{f}$ was 0 all along.
If $g\neq 0$ we have $g\circ s=0$ but $f\circ s\in \mathcal NQ$ and so $\underline{f}'=0$.
Thus $\underline{f}$ must be 0 and so $\Hom_{\mathcal D[\mathcal NQ^{-1}]}(V,W)=0$.

Now suppose $\Hom_{\mathcal D}(V_1,W)\neq 0$.
Choose nonzero morphisms $f:V_1\to V$ and $g:V_1\to W$.
Then we have the following roof
\begin{displaymath} \underline{f} = \xymatrix {V & V_1 \ar[l]_-f \ar[r]^-g & W. }\end{displaymath}
For contradiction, suppose $\underline{f}=0$.
Then there is a roof
\begin{displaymath} \underline{h}=\xymatrix{ V & U \ar[l]_-h \ar[r]^-{g'} & W} \end{displaymath}
where $g'=0$ and the following commutative diagram in $\mathcal D$:
\begin{displaymath}\xymatrix{
V \ar@{=}[dd] & V_1 \ar[l]_-f \ar[r]^-g & W \ar@{=}[dd] \\ & \tilde{U} \ar[u]^-s \ar[d]_-t & \\
V & U \ar[l]_-h \ar[r]^-{g'} & W.
}\end{displaymath}
However, this means (up to shifting in $\CAR$) $\MM^b \tilde{U}=\MM^b V_1$ and so $\tilde{U}\cong  V_1$, a contradiction as  then the right side of the diagram would not commute.
Therefore there exists a nonzero $\underline{f}\in \Hom_{\mathcal D[\mathcal NQ^{-1}]}(V,W)$.
\end{proof}

\begin{proposition}\label{prop:still small hom}
Let $\mathcal D$ be either $\DbAR$ or $\CAR$.
Let $V$ and $W$ be indecomposable objects in $\mathcal D[\mathcal N Q^{-1}]$.
Then $\Hom_{\mathcal D[\mathcal N Q^{-1}]}(V,W)\cong k$ or $\Hom_{\mathcal D[\mathcal N Q^{-1}]}(V,W)=0$.
\end{proposition}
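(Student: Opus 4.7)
The plan is to exploit the calculus of fractions for $\mathcal NQ$ from Proposition \ref{prop:calculus of fractions} and reduce every morphism in the localization to a roof whose apex has position $1$ in the AR-space. Let $V_1$ be a position $1$ indecomposable with $\MM^b V_1 = \MM^b V$, up to a shift in the $\CAR$ case. Lemma \ref{lem:determined by position 1} already tells us $\Hom_{\mathcal D[\mathcal NQ^{-1}]}(V,W) = 0$ if and only if $\Hom_{\mathcal D}(V_1, W) = 0$, so it remains to treat the case $\Hom_{\mathcal D}(V_1, W) \cong k$ and show the localized Hom is still at most one-dimensional.

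First I would show every roof $V \leftarrow U \to W$ with left leg in $\mathcal NQ$ is equivalent to one of the form $V \leftarrow V_1 \to W$. Because the cone of the left leg is degenerate, $U$ is non-degenerate with $\MM^b U$ agreeing with $\MM^b V_1$ (modulo a shift in $\CAR$). Since $V_1$ has position $1$, $\Hom_{\mathcal D}(V_1, U) \cong k$, so pick a nonzero $t : V_1 \to U$. The cone of $t$ is degenerate as its summands all share the $\MM^b$ value of $V_1$, so $t \in \mathcal NQ$; closure of $\mathcal NQ$ under composition (immediate from the null-system axioms and the octahedral axiom) then produces the refined roof $V \leftarrow V_1 \to W$, equivalent to the original.

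Next, I would fix a nonzero $\alpha : V_1 \to V$ in $\mathcal NQ$, unique up to scalar since $\Hom_{\mathcal D}(V_1, V) \cong k$. Every roof through $V_1$ then has the form $(c\alpha, \beta)$ with $c \in k^\times$ and $\beta \in \Hom_{\mathcal D}(V_1, W) \cong k$, and rescaling gives $(c\alpha, \beta) \sim (\alpha, c^{-1}\beta)$. This exhibits a well-defined surjection $\Hom_{\mathcal D}(V_1, W) \twoheadrightarrow \Hom_{\mathcal D[\mathcal NQ^{-1}]}(V, W)$; combined with the non-vanishing half of Lemma \ref{lem:determined by position 1}, the localized Hom is exactly $k$ in this case.

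The main obstacle is proving that this surjection is also injective, i.e.\ that $(\alpha, \beta_1) \sim (\alpha, \beta_2)$ forces $\beta_1 = \beta_2$. Any common refinement involves an apex $\tilde U$ which, by repeating the first step, may be replaced with a position $1$ object sharing the $\MM^b$ image of $V_1$; since $\Hom_{\mathcal D}(V_1, V_1) \cong k$ the two legs of the refining roof are scalar multiples of the identity, and chasing the commutative refinement diagram propagates this equality to $\beta_1 = \beta_2$. This is essentially a reprise of the contradiction argument at the end of Lemma \ref{lem:determined by position 1}, and it is where most of the careful bookkeeping resides.
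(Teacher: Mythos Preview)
Your proposal is correct and follows essentially the same route as the paper: reduce every roof to one with apex the position~$1$ object $V_1$, normalize the left leg to a fixed $\alpha\in\mathcal NQ$, and identify the localized Hom with $\Hom_{\mathcal D}(V_1,W)$. The paper packages the last step as a pair of mutually inverse maps $\Phi,\Psi$ rather than as a surjection plus an injectivity check, but your injectivity argument (passing the refinement apex through $V_1$ and using $\End_{\mathcal D}(V_1)\cong k$) is exactly the content of the paper's verification that $\Psi\Phi=\id$.
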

\begin{proof}
Choose $\mathcal D=\DbAR$ or $\mathcal D=\CAR$.
Let $V$ and $W$ be indecomposables in $\mathcal D[\mathcal N Q^{-1}]$ such that $\Hom_{\mathcal D[\mathcal N Q^{-1}]}(V,W)\neq 0$.

Let $V_1$ be an indecomposable $\mathcal D$ such that $\MM^b V_1=\MM^b V$ and the position of $V_1$ is 1.
By Lemmas \ref{lem:localization collapse} and \ref{lem:determined by position 1} we know $V\cong_{\mathcal D[\mathcal NQ^{-1}]} V_1$ and $\Hom_{\mathcal D}(V_1,W)\neq 0$.
We will show
\begin{displaymath} \Hom_{\mathcal D[\mathcal NQ^{-1}]}(V,W) \cong  \Hom_{\mathcal D}(V_1,W) \end{displaymath}
by defining two maps
\begin{align*}
\Phi:\Hom_{\mathcal D[\mathcal NQ^{-1}]}(V,W) &\to \Hom_{\mathcal D}(V_1,W) \\
\Psi:\Hom_{\mathcal D}(V_1,W) &\to \Hom_{\mathcal D[\mathcal NQ^{-1}]}(V,W).
\end{align*}

Fix a nonzero morphism $f':V_1\to V$ in $\mathcal D$.
As we saw in the proof of Lemma \ref{lem:determined by position 1} ever nonzero morphism $V\to W$ in $\mathcal D[\mathcal NQ^{-1}]$ is equivalent to a roof whose middle term is $V_1$.
Let
\begin{displaymath} \underline{f} = \xymatrix{ V & V_1 \ar[l]_-f \ar[r]^-g & W} \end{displaymath}
in $\mathcal D[\mathcal NQ^{-1}]$.
Since $\Hom_{\mathcal D}(V_1,V)\cong k$ there is a unique $s:V_1\to V_1$ in $\mathcal D$ such that $f\circ s= f'$.
Similarly, there is a unique $g':V_1\to W$ in $\mathcal D$ such that $g'=g\circ s$.
So we set $\Phi(\underline{f})=g'$.

Let $g':V_1\to W$ be a morphism in $\mathcal D$.
Then there is a roof
\begin{displaymath} \underline{f}'=\xymatrix{ V & V_1\ar[l]_-{f'}\ar[r]^-{g'} & W}\end{displaymath}
in $\mathcal D[\mathcal NQ^{-1}]$.
So we set $\Psi(g')=\underline{f}'$.

Note that $\Psi\Phi(\underline{f})=\underline{f}'$ but the following diagram commutes in $\mathcal D$:
\begin{displaymath}\xymatrix{
V \ar@{=}[dd] & V_1 \ar[l]_-f \ar[r]^-g & W\ar@{=}[dd] \\ & V_1 \ar[u]^-s \ar@{=}[d] & \\
V & V_1\ar[l]_-{f'}\ar[r]^-{g'} & W
}\end{displaymath}
Thus $\underline{f}'=\underline{f}$ in $\mathcal D[\mathcal NQ^{-1}]$ and so $\Psi\Phi(\underline{f})=\underline{f}$.
We see $\Phi\Psi(g') = g'$ since in this case the $s$ from our definition of $\Phi( \Psi(g) )$ is the identity.
Therefore $\Phi = \Psi^{-1}$ and $\Psi = \Phi^{-1}$.
Finally, it is straightforward to check that $\Phi$ and $\Psi$ preserve addition and send 0 to 0.
Thus we have the desired isomorphism and the proposition holds.
\end{proof}

\subsection{Triangulated Equivalences}\label{sec:triangulated relationship}
Here we show the localization of the derived and new continuous cluster categories are triangulated equivalent to the previous derived and continuous cluster categories, respectively.
The localization of the new continuous cluster category is \emph{not} equivalent to the previous continuous cluster category in a way that is $\mathbf E$-compatible with mutation. See Section \ref{sec:cluster difference}.

We recall the construction of the continuous derived category $\mathcal D_r$ from \cite{IgusaTodorov1} as modified in \cite{GarciaIgusa}. The idea is that $\mathcal D_\pi$ is the limit as $n\to\infty$ of the bounded derived category of $\rep_k(Q_n)$ where $Q_n$ is the quiver of type $A_n$ with straight orientation. This will be a topological category with a continuous triangulation. The original definition constructed $\mathcal D_r$ for any positive real number $r$ as the stable category of a Frobenius category. Here we take the approach given in \cite{GarciaIgusa} which does not involve construction of an auxiliary category.

\begin{definition}\label{def:D_pi}
The object space of $\mathcal D_r$ is a subset of the plane:
\begin{displaymath}
	\mathcal O b(\mathcal D_r)=\{(x,y)\in\R^2\,|\, |y-x|<r\}.
\end{displaymath}
Equivalently, $x-r<y<x+r$.
We take the discrete topology on the field $k$ and let
\begin{displaymath}
	\Hom_{\mathcal D_r}(X,Y)=\begin{cases} k & \text{if } x_1\le x_2< y_1+r \text{ and } y_1\le y_2<x_1+r\\
    0& \text{otherwise}
    \end{cases}
\end{displaymath}
Thus, for $X=(x,y)$, the support of $\Hom_{\mathcal D_r}(X,-)$ is the half-open rectangle
\begin{displaymath}
	[x, y+r)\times [y,x+r)
\end{displaymath}
and nonzero morphisms are specified by scalars in $k$. When $Y$ converges to a limit point of this half-open interval, morphisms converge to zero. Then $\mathcal D_r$ is a topological $k$-category in the sense that the object and morphism sets are topological spaces and the structure maps of the category (source, target, composition, identity, scalar multiplication) are continuous. 

Following \cite[Section 4.3]{GarciaIgusa}, the distinguished triangles in $\mathcal D_r$ are constructed out of a family of distinguished triangles called \ul{universal virtual triangles}. For each object $X=(x,y)$ these is a family of distinguished triangles:
\begin{equation}\label{eq: virtual triangle}
			X\xrightarrow{\binom11} I_1^\varepsilon X\oplus I_2^\varepsilon X\xrightarrow{(-1,1)} T^\varepsilon X \xrightarrow{ 1} T X
\end{equation}
for sufficiently small $\varepsilon>0$ where $I_1^\varepsilon X=(x,x+r-\varepsilon)$, $I_2^\varepsilon X=(y+r-\varepsilon,y)$ and $T^\varepsilon X=(y+r-\varepsilon,x+r-\varepsilon)$. Morphisms are given by the indicated scalars. If $X$ has several components, we take the direct sum of the virtual triangles \eqref{eq: virtual triangle} over all components of $X$. Note that, as $\varepsilon\to0$, $T^\varepsilon X$ converges to $TX$ and $T^\varepsilon X \xrightarrow{ 1} T X$ converges to the identity morphism on $TX$. The objects $I_1^\varepsilon X$, $I_2^\varepsilon X$ converge to $0$ as $\varepsilon\to0$.

The distinguished triangles in $\mathcal D_r$ are given as follows. For any morphism $f:X\to Y$ in $\mathcal D_r$, we defined the distinguished triangle $X\to_f Y\to_g Z\to_h TX$ to be the limit as $\varepsilon\to0$ of the following pushout diagram.
\begin{displaymath}
\xymatrixrowsep{20pt}\xymatrixcolsep{35pt}
\xymatrix{
X\ar[d]_f\ar[r]^(.3){\binom11} &
	I_1^\varepsilon X\oplus I_2^\varepsilon X\ar[d]\ar[r]^(.6){(-1,1)} &
	T^\varepsilon X\ar[d]\\
Y \ar[r]^g& 
	Z^\varepsilon \ar[r]^h&
	T^\varepsilon X
	}
\end{displaymath}
As $\varepsilon\to0$, the object $Z^\varepsilon$ converges to an object $Z$. The morphisms $g,h$ also stabilize and the limit is well-defined. (See \cite{GarciaIgusa} for details.)
\end{definition}

\setcounter{figure}{\value{lemma}}

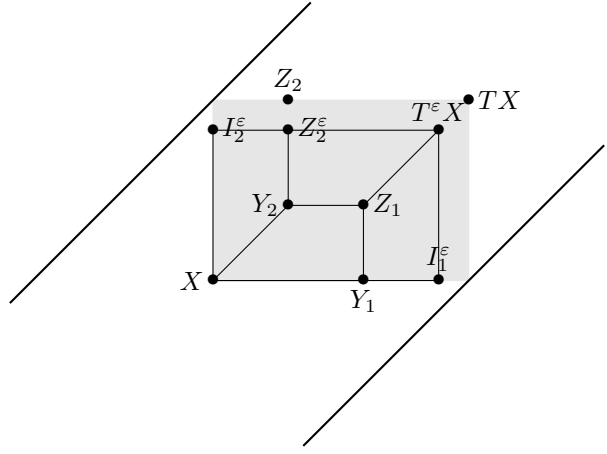
\begin{figure}[htbp]
\begin{center}
\begin{tikzpicture}
\coordinate (X) at (-1,0);
\coordinate (Y1) at (1,0);
\coordinate (Y2) at (0,1);
\coordinate (Z1) at (1,1);
\coordinate (Z2) at (0,2);
\coordinate (Z3) at (0,2.4);
\coordinate (Z) at (2.4,2.4);
\coordinate (I1) at (2,0);
\coordinate (I11) at (2.4,0);
\coordinate (I2) at (-1,2);
\coordinate (I22) at (-1,2.4);
\coordinate (S) at (2,2);
\coordinate (T) at (2.4,2.4);
\draw[fill,color=gray!20!white] (X) rectangle (Z);
\draw (X) node{$\bullet$} node[left]{$X$};
\draw (S) node{$\bullet$} node[above]{$T^\varepsilon X$};
\draw (T) node{$\bullet$} node[right]{$TX$};
\draw (X)--(Y2)--(Z2) (I1)--(X)--(Y1)--(Z1)--(Y2) (X)--(I2)--(S)--(I1) (Z1)--(S);
\draw (Y1) node{$\bullet$} node[below]{$Y_1$};
\draw (Y2) node{$\bullet$} node[left]{$Y_2$};
\draw (Z1) node{$\bullet$} node[right]{$Z_1$};
\draw (Z2) node{$\bullet$} node[right]{$Z_2^\varepsilon$};
\draw (Z3) node{$\bullet$} node[above]{$Z_2$};
\draw (I1) node{$\bullet$} node[above]{$I_1^\varepsilon$};
\draw (I2) node{$\bullet$} node[right]{$I_2^\varepsilon$};
\begin{scope}[xshift=12mm,yshift=8mm]
\draw[thick] (-1,-3)--(3,1);
\end{scope}
\begin{scope}[xshift=-2mm,yshift=2mm]
\draw[thick] (-3.5,-.5)--(0.5,3.5);
\end{scope}
\end{tikzpicture}
\caption{In this example, $Y=Y_1\oplus Y_2$ where $Y_1$ has the same $y$-coordinate as $X$. Then $Z^\varepsilon=I_1^\varepsilon\oplus Z_1\oplus Z_2^\varepsilon$. As $\varepsilon\to 0$, $I_1^\varepsilon$ moves to the right until it becomes $0$, $Z_2^\varepsilon$ moves up to $Z_2$, $Z_1$ stays where it is. Also, $T^\varepsilon X$ goes to $ TX$. Thus the distinguished triangle is $X\to Y_1\oplus Y_2\to Z_1\oplus Z_2\to TX$. The support of $\Hom_{\mathcal D_r}(X,-)$ is shaded.}
\label{Figure99}
\end{center}
\end{figure}

\setcounter{lemma}{\value{figure}}

\begin{definition}\label{def:new to old functor}
We define a functor $G: \DbAR[\mathcal NQ^{-1}]\to \mathcal D_{\pi}$.
We will use the representative objects as in Definition \ref{def:representative}.
Noting Lemma \ref{lem:localization collapse} we will assume our representative object in each isomorphism class has position 1 in the AR-space of $\DbAR$.

Let $V$ be an indecomposable representative object in $\DbAR[\mathcal NQ^{-1}]$.
Let $(x,y)=\MM^b V$.
We define $G V$ to be $M(x-y,x+y)$ in $\mathcal D_{\pi}$.
It is easy to check that $|x-y-x-y|<\pi$.
Since $\Hom_{\DbAR[\mathcal NQ^{-1}]}(V,W)= k$ (Proposition \ref{prop:still small hom}) for two indecomposable representatives $V$ and $W$, we send a morphism $f\in k$ to $f\in k=\Hom_{\mathcal D_{\pi}}(G V, G W)$.
Since both categories are Krull-Schmidt the rest of $G$ is defined by extending bilinearly.
\end{definition}

\begin{theorem}\label{thm:triangulated equivalent}
Assume $A_\R$ has finitely-many sinks and sources.
Then $G$ in Definition \ref{def:new to old functor} is a triangulated equivalence.
\end{theorem}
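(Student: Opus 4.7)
The plan is to verify the four ingredients of a triangulated equivalence for $G$: essential surjectivity, fully faithfulness, commuting with the shift, and sending distinguished triangles to distinguished triangles. All four rest on the elementary observation that the coordinate change $(x,y)\leftrightarrow(x-y,x+y)$ identifies $\R\times(-\frac{\pi}{2},\frac{\pi}{2})$ with $\mathrm{Ob}(\mathcal D_\pi)=\{(a,b)\in\R^2:|b-a|<\pi\}$.

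First I would establish essential surjectivity. Using Lemma \ref{lem:localization collapse} to reduce to representative objects of position $1$, the hypothesis that $A_\R$ has finitely many sinks and sources guarantees that such representatives exhaust all of $\R\times(-\frac{\pi}{2},\frac{\pi}{2})$ under $\MM^b$. This is precisely where the hypothesis is needed: when infinitely many sinks and sources are present, the proof of \cite[Proposition 2.4.4]{Rock} exhibits graphs of certain $\lambda$ functions missing from the image of $\MM^b$, so $G$ could not hit those points of $\mathcal D_\pi$. The coordinate change above then produces every object of $\mathcal D_\pi$ as the image of a representative.

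Next, for fully faithfulness, Proposition \ref{prop:still small hom} reduces the task to matching Hom supports. For indecomposable $V$ in position $1$ with $\MM^b V=(x,y)$, the Hom support in $\DbAR$ is recorded by \cite[Proposition 4.4.2, Lemma 5.2.9]{Rock}; via Lemma \ref{lem:determined by position 1} the same support describes Hom in the localization, and under the coordinate change it becomes exactly the half-open rectangle $[x-y,x+y+\pi)\times[x+y,x-y+\pi)$ supporting $\Hom_{\mathcal D_\pi}(GV,-)$ per Definition \ref{def:D_pi}. Compatibility with the shift is then immediate from the formulas $\MM^b V[1]=(x+\pi,-y)$ and $T(a,b)=(b+\pi,a+\pi)$, giving $G(V[1])=T(GV)$ on the nose.

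Finally, to show that $G$ is triangulated, I would use Theorem \ref{thm:triangles are rectangles} to reduce distinguished triangles in $\DbAR$ to those coming from (almost complete) rectangles in the AR-space and match these with the universal virtual triangles of Definition \ref{def:D_pi}. For a rectangle with left corner $V$, top and bottom corners $U_1,U_2$, and right corner $W$, the image under $G$ should fit the pushout diagram defining $V\to U_1\oplus U_2\to W\to V[1]$ in $\mathcal D_\pi$; the almost-complete case is precisely the phenomenon where one $U_i$ is degenerate (and thus zero in $\mathcal D_\pi$), which parallels the limiting behavior $I_j^\varepsilon\to 0$ in Definition \ref{def:D_pi}. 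I expect the main obstacle to be exactly this matching: the triangulated structure on $\mathcal D_\pi$ is given by a limit of pushouts while the triangles in $\DbAR$ are combinatorial, so the verification requires extending from the indecomposable case to arbitrary distinguished triangles inductively (in the spirit of Lemma \ref{lem:geometric cones 1} and Theorem \ref{thm:geometric cones}) and carefully reconciling the $\varepsilon\to 0$ construction with the localization that inverts degenerate objects.
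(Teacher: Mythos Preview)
Your proposal is correct and follows essentially the same route as the paper: the paper likewise dispatches essential surjectivity and fully faithfulness quickly (via the coordinate change and the Hom computations you cite), then checks that $G$ preserves cones by writing out the explicit coordinates of a rectangle $V,U_1,U_2,W$ in the AR-space, observing that their images $M(x-y,x+y)$, $M(x-y+2\alpha,x+y)$, $M(x-y,x+y+2\beta)$, $M(x-y+2\alpha,x+y+2\beta)$ form a distinguished triangle in $\mathcal D_\pi$, and appealing to the techniques of Theorem~\ref{thm:geometric cones} and Theorem~\ref{thm:derived classification} to pass from the indecomposable case to arbitrary triangles. Your identification of the main obstacle---reconciling the $\varepsilon\to 0$ pushout definition of triangles in $\mathcal D_\pi$ with the combinatorial rectangles in $\DbAR$---is exactly the point the paper handles by this explicit coordinate computation.
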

\begin{proof}
One quickly verifies that $G$ induces a bijection on isomorphism classes of objects and bijections on Hom spaces.
It remains to show that cones in distinguished triangles are taken to cones in distinguished triangles.

Let $V \to U \to W\to $ be a distinguished triangle in $\DbAR[\mathcal NQ^{-1}]$ such that $V$, $U$, and $W$ are all nonzero and distinct and $V$ and $W$ are indecomposable.
Then this comes from a triangle $\tilde{V}\to \tilde{U}\to \tilde{W}\to $ in $\DbAR$ and so $U=U_1\oplus U_2$ where $U_1$ and $U_2$ are indecomposable and at most one is 0.
Furthermore, $\tilde{V}$ and $\tilde{W}$ are indecomposable.
Then by Theorem \ref{thm:triangles are rectangles} there is a rectangle or almost complete rectangle in the AR-space of $\DbAR$ whose sides have slopes $\pm(1,1)$ and whose left and right corner are $\tilde{V}$ and $\tilde{W}$, respectively.
Thus $\tilde{U}$ has at most two indecomposable summands.

Since $U$ is not 0, $\MM^b \tilde{V}, \MM^b\tilde{W}$, and $\MM^b \tilde{U}$ form the corners of a rectangle in $\R^2$ whose left and right corners are $\MM^b \tilde{V}$ and $\MM^b\tilde{W}$, respectively.
Since $U\neq 0$ no more than one indecomposable summand of $\tilde{U}$ may be sent to $\R\times(-\frac{\pi}{2},\frac{\pi}{2})$ by $\MM^b$.

We give the coordinates of the image of $\MM^b$ of the lifts of each indecomposable and compute $G$ on each of our indecomposables below.
First, a few notes.
If one of $\tilde{U}_1$ or $\tilde{U}_2$ is 0 then there is no $\MM^b$ of that indecomposable.
If one of $U_1$ or $U_2$ is 0 then $G$ of that indecomposable will be 0.
However, we will compute $\MM^b$ and $G$ for both possibilities in each case for when those situations arise.
\begin{align*}
{\MM}^b \tilde{V}& =(x,y) & GV &= M(x-y,x+y) \\
{\MM}^b \tilde{U}_1& =(x+\alpha, y-\alpha) &GU_1 &= M(x-y+2\alpha,x+y) \\
{\MM}^b \tilde{U}_2& =(x+\beta, y+\beta) &GU_2 &= M(x-y,x+y+2\beta) \\
{\MM}^b \tilde{W}&= (x+\alpha+\beta,y-\alpha+\beta) &GW &= M(x-y+2\alpha,x+y+2\beta).
\end{align*}
By the description in \cite{IgusaTodorov1} the four indecomposables in the image of $G$ also form a distinguished triangle.
In particular, if $\alpha$ or $\beta$ are 0 then the distinguished triangles in $\DbAR[\mathcal NQ^{-1}]$ and $\mathcal D_\pi$ are split/trivial.
With the same techniques used to prove Theorems \ref{thm:geometric cones} and \ref{thm:derived classification} we see that $G$ takes cones in distinguished triangles to cones in distinguished triangles.
\end{proof}

\begin{theorem}\label{thm:localized cluster category}
Assume $A_\R$ has finitely-many sinks and sources.
Then there is a triangulated equivalence $H:\CAR[\mathcal NQ^{-1}]\to \mathcal C_\pi$.
\end{theorem}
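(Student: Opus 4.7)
The plan is to construct $H$ by descending the equivalence $G$ from Theorem \ref{thm:triangulated equivalent} through the orbit category construction. Since $\CAR$ is defined as the orbit category of (the doubling of) $\DbAR$ by the almost-shift functor $F$ (Definition \ref{def:cluster category}), and $\mathcal C_\pi$ admits an analogous presentation as an orbit category of $\mathcal D_\pi$ by an almost-shift functor $F_\pi$ as in \cite{IgusaTodorov1}, my first step is to verify that $G$ intertwines $F$ and $F_\pi$, i.e.\ $G\circ F\cong F_\pi\circ G$. Since $G$ commutes with the shift functor (implicit in the proof of Theorem \ref{thm:triangulated equivalent}) and since the almost-shift is built from the shift by the same recipe in both settings, this intertwining should be routine from the coordinate description of $G$ in Definition \ref{def:new to old functor}.

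Once the intertwining is established, $G$ descends to a functor on orbit categories. My second step is to observe that the null system $\mathcal N$ of degenerate objects in $\CAR$ is precisely the image of the null system of degenerate objects in $\DbAR$ under the passage to the orbit category: this is immediate from Definition \ref{def:degenerate}, where an object of $\CAR$ is declared degenerate exactly when its lift to $\DbAR$ is. Hence localization and orbit construction commute in our situation, yielding a well-defined functor $H:\CAR[\mathcal NQ^{-1}]\to \mathcal C_\pi$ that on indecomposable representatives is given by the same formula as $G$.

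The third step is to verify that $H$ is a triangulated equivalence. Essential surjectivity and injectivity on isomorphism classes of indecomposables follow from the corresponding properties of $G$ together with the observation that two indecomposables of $\DbAR$ lie in the same $F$-orbit if and only if their images in $\mathcal D_\pi$ lie in the same $F_\pi$-orbit. Fully faithfulness reduces, via Proposition \ref{prop:still small hom} and the Krull-Schmidt property (Proposition \ref{prop:CAR is K-S}), to checking the bijection $\Hom_{\CAR[\mathcal NQ^{-1}]}(V,W)\to \Hom_{\mathcal C_\pi}(HV,HW)$ on indecomposables $V,W$; since each side is a direct sum over the orbit of the target with summands computed in the derived category, this follows from Theorem \ref{thm:triangulated equivalent}. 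Preservation of distinguished triangles is similar: every triangle in $\CAR[\mathcal NQ^{-1}]$ lifts to a triangle in $\DbAR$ via the orbit construction, and Theorem \ref{thm:triangulated equivalent} handles its image under $G$, which then descends to a distinguished triangle in $\mathcal C_\pi$.

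The main obstacle I anticipate is the first step: pinning down the precise form of the almost-shift functor on $\mathcal D_\pi$ used in \cite{IgusaTodorov1} and matching it to the almost-shift on $\DbAR$ under the coordinate formulas for $G$. Once this intertwining is explicit, the rest is essentially formal: Theorem \ref{thm:triangulated equivalent}, the universal property of orbit categories, and the compatibility of localization with the orbit construction at the null system of degenerate objects combine to give the desired equivalence.
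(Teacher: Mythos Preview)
Your approach is essentially the same as the paper's: both factor the equivalence as a composite that first commutes localization with the orbit construction and then applies the derived-level equivalence $G$ of Theorem~\ref{thm:triangulated equivalent} on orbits. The paper makes this explicit by introducing the intermediate category $\mathcal C$ (the orbit of the doubling of $\DbAR[\mathcal NQ^{-1}]$) and writing $H=H_2\circ H_1$ with $H_1:\CAR[\mathcal NQ^{-1}]\to\mathcal C$ and $H_2:\mathcal C\to\mathcal C_\pi$.

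The one point worth flagging is a difference in emphasis. You identify the intertwining $G\circ F\cong F_\pi\circ G$ as the main obstacle and treat the commutation of localization with the orbit construction as routine. The paper does the opposite: it dispatches $H_2$ in a single sentence (since $G$ is a triangulated equivalence, the orbit categories built by the same recipe are equivalent) and spends the bulk of the proof constructing $H_1$ directly---choosing a fundamental domain, defining $H_1$ on indecomposables, and verifying the Hom-space bijection using position-$1$ representatives via Lemma~\ref{lem:determined by position 1}. Your appeal to ``localization and orbit commute because the null system is preserved'' is morally right, but in practice this is where the honest work lies: Hom spaces in $\CAR[\mathcal NQ^{-1}]$ are computed via roofs in the localized orbit category, and matching them with the orbit-of-localization side is exactly what the paper's explicit argument for $H_1$ accomplishes.
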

\begin{proof}
Let $\mathcal C$ be the orbit category of $\DbAR[\mathcal NQ^{-1}]$ via doubling and almost-shift as in \cite{IgusaTodorov1}.
Since $\DbAR[\mathcal NQ^{-1}]$ is triangulated equivalent to $\mathcal D_\pi$, by Theorem \ref{thm:triangulated equivalent}, we see there must be a triangulated equivalence $H_2:\mathcal C\to \mathcal C_\pi$.
We will define a triangulated equivalence $H_1:\CAR[\mathcal NQ^{-1}] \to \mathcal C$.
Afterwards we let $H = H_2 \circ H_1$, completing the proof.

Since $\mathcal C$ is an orbit category we choose our fundamental domain.
We choose those indecomposables $V$ that come from an indecomposable $\tilde{V}$ in $\DbAR$ such that $(\alpha,\beta)=\MM^b V$ satisfy
\begin{align*}
-\frac{\pi}{2} < &\beta < \frac{\pi}{2} \\
\beta \leq &\alpha < \pi - \beta.
\end{align*}
This is precisely the image of the the 0th degree indecomposables in $\DbAR$, excluding the injective indecomposables from $\repAR$.

Recall $\CAR$ has the same objects as $\DbAR$ but different isomorphism classes and similarly for $\mathcal C$ and $\DbAR[\mathcal NQ^{-1}]$, respectively.
For each indecomposable $V$ in $\CAR[\mathcal NQ^{-1}]$ there exists a $\tilde{V}$ in $\DbAR$ in degree 0 such that after taking the orbit, $\tilde{V}$ is sent to $V$ in the localization of $\CAR$.
We define $H_1 V$ to be the indecomposable in $\mathcal C$ that comes from an indecomposable $\hat{V}$ in $\DbAR[\mathcal NQ^{-1}]$ that also comes from $\tilde{V}$ in $\DbAR$.

Let $V$ and $W$ be indecomposables in $\CAR[\mathcal NQ^{-1}]$.
We show $\Hom_{\CAR[\mathcal NQ^{-1}]}(V,W)\cong k$ if and only if $\Hom_{\mathcal C}(H_1 V, H_1 W)\cong k$ and similarly for 0 hom spaces.
First, there are $\tilde{V}$ and $\tilde{W}$ in degree 0 in $\DbAR$ that are sent to $V$ and $W$ after taking the orbit and localization.
Then either $\Hom_{\DbAR}(\tilde{V},\tilde{W})\cong k$ or $\Hom_{\DbAR}(\tilde{V},\tilde{W}[1])\cong k$.
Let $\tilde{V}_1$ be an indecomposable in $\DbAR$ such that $\MM^b \tilde{V}_1=\MM^b \tilde{V}$ and $\tilde{V}_1$ has position 1.
If $\Hom_{\DbAR}(\tilde{V}_1,\tilde{W})$ and $\Hom_{\DbAR}(\tilde{V}_1,\tilde{W}[1])$ were both 0 then $\Hom_{\CAR[\mathcal NQ^{-1}]}(V,W)$ would be 0.
Since this is not the case, $\Hom_{\DbAR}(\tilde{V}_1,\tilde{W})\cong k$ or $\Hom_{\DbAR}(\tilde{V}_1,\tilde{W}[1])\cong k$.
Then $\Hom_{\DbAR[\mathcal NQ^{-1}]}(\hat{V},\hat{W})\cong k$ or $\Hom_{\DbAR[\mathcal NQ^{-1}]}(\hat{V},\hat{W}[1])\cong k$.
In either case $\Hom_{\mathcal C}(H_1 V,H_1 W)\cong k$.
In the case that $\Hom_{\CAR[\mathcal NQ^{-1}]}(V,W)=0$ we have 
\begin{displaymath}\Hom_{\DbAR}(\tilde{V}_1,\tilde{W})=0\Hom_{\DbAR}(\tilde{V}_1,\tilde{W}[1])\end{displaymath} and so $\Hom_{\mathcal C}(H_1 V,H_1 W)=0$ as well.

Now, as in Definition \ref{def:the functor} for our triangulated equivalence of derived categories for different continuous quivers of type $A$, we can choose representatives from each isomorphism class of indecomposables and fix isomorphisms between indecomposables and their respective representatives.
With such a construction we set $H_1 (\Hom_{\CAR[\mathcal NQ^{-1}]}(V,W)) := \Hom_{\mathcal C}(H_1 V,H_1 W)$ for each pair of representatives $V$ and $W$ in $\CAR[\mathcal NQ^{-1}]$.
This gives us an equivalence of categories but must still check triangles.

However, each distinguished triangle $U\to V\to W\to$ in $\CAR[\mathcal NQ^{-1}]$ comes from a triangle $\tilde{U}\to \tilde{V}\to\tilde{W}\to$ in the doubling of $\DbAR$.
But after taking localization and then the orbit, the image of $\tilde{U}\to \tilde{V}\to\tilde{W}\to$ is a distinguished triangle in $\mathcal C$.
This is, by definition, precisely the image of $U\to V\to W\to $ under $H_1$.
Thus, $H_1$ is a triangulated equivalence and so is $H=H_2\circ H_1$.
\end{proof}

\subsection{Comparing the Constructions}\label{sec:cluster difference}
In this section we highlight the differences and similarities in between the previous and new constructions.

The cluster structure introduced by Igusa and Todorov in \cite{IgusaTodorov1} requires that the clusters be discrete.
This was required so that every object in the cluster be mutable.
The new theory given in Section \ref{sec:new continuous cluster category} does not come with this restriction.
Accordingly, not all objects in a cluster are $\mathbf E$-mutable in the new theory.
We refer back to Example \ref{xmp:projective cluster} for the description of the cluster in the following example.

\begin{example}\label{xmp:mutate projective cluster}
Choose a particular continuous quiver $A_\R$ of type $A$.
Consider again the $\mathbf E$-cluster $\mathcal P$ in Example \ref{xmp:projective cluster}.
Let $a\in \R$ such that $a$ is neither a source nor a sink.
By Theorem \ref{thm:projs} there are exactly two projectives at $a$ in $\repAR$.

Let $P$ be the object in $\CAR$ that comes from $P_a$ and $Q$ the object that comes from $P_{(a}$ or $P_{a)}$, whichever exists in $\repAR$.
Then we have a distinguished triangle $Q\to P\to V\to $ in $\CAR$ where $V$ is degenerate.
In particular, there is no distinguished triangle $P'\to W\to V\to$ in $\CAR$ where $P'$ is in $\mathcal P$ and not isomorphic to $Q$.
Thus $(\mathcal P \setminus \{Q\})\cup\{V\}$ is $\mathbf E$-compatible and so $Q$ is $\mathbf E$-mutable.

However, for any object $V$ such that $\{V,P\}$ is not $\mathbf E$-compatible we have $\{V,P_{a+\e}\}$ is not $\mathbf E$-compatible for $0<\e<<1$.
Thus $P$ is not $\mathbf E$-mutable.
\end{example}

We would like to mutate all the projectives from $\repAR$ into all the injectives from $\repAR$.
However, this example appears to present a problem.
The next paper in this series will address this with a continuous generalization of mutation.

\section{Embeddings of Cluster Theories}\label{sec:embeddings of cluster theories}
In this section we demonstrate how to embed existing cluster theories (Definition \ref{def:cluster theory}) in the literature into the new continuous cluster theory in a way that is compatible with mutation.
It should be noted that we do not make an attempt at embedding the cluster categories themselves, as this can lead to unanticipated problems (see Section \ref{sec:problems}).
Thus we create new machinery in Section \ref{sec:cluster theories} to rigorously describe what we mean to embed one cluster theory within another.
Other relationships between between cluster theories is outside the scope of this thesis but may be of interest in the future.

The goal of embedding cluster theories is the following.
We hope, and anticipate, a continuous cluster \emph{algebra} at some point in the future.
If we can embed the existing type $A$ cluster theories into the new $\mathbf E$-cluster theory then this will provide the intuition, and perhaps some machinery, useful for the embedding of the relevant algebras.
The $A_n$ type cluster algebras already exist; the cluster category constructions in \cite{BMRRT, CalderoChapatonSchiffler2006} came after.
In the reverse order, an $A_\infty$ type cluster structure was introduced by Holm and J{\o}rgensen in \cite{HolmJorgensen} and the cluster algebra type came later, introduced by Ndoun\'e in \cite{Ndoune}.
A continuous version of a cluster algebra related to this work or the work in \cite{IgusaTodorov1} is not known to the authors.

\subsection{Cluster Theories: $\mathscr T_{\mathbf P}(\mathcal C)$}\label{sec:cluster theories}
This subsection is dedicated to providing a framework in which to talk about embedding cluster theories without requiring a functor between cluster categories. 
It should be noted that cluster theories are not cluster structures as they are often defined (for examples, \cite{CalderoChapatonSchiffler2006, BMRRT, HolmJorgensen}).
While cluster structures require each indecomposable in a cluster be mutable and mutation be given by approximations, cluster theories do not make such a requirement.
Instead, we require that \emph{if} an indecomposable is mutable then there is a unique choice for a replacement.
In practice this should be related to some homological or other algebraic property but we do not explicitly make this requirement.
The reader should recall that a pairwise compability condition on indecomposable objects takes unordered pairs of indecomposables and determines their compatibility, thus allowing larger sets to be called compatible if every subset of 2 elements is pairwise compatible.

\begin{definition}\label{def:cluster theory}
Let $\mathcal C$ be a skeletally small Krull-Schmidt additive category in which there exists a pairwise compatibility condition $\mathbf P$ on (isomorphism classes of) indecomposable objects.
Suppose also that for each (isomorphism class of) indecomposable $X$ in a maximally $\mathbf P$-compatible set $T$ there exists none or one (isomorphism class of) indecomposable $Y$ such that $\{X,Y\}$ is not $\mathbf P$-compatible but $(T\setminus\{X\})\cup\{Y\}$ is maximally $\mathbf P$-compatible. Then
\begin{itemize}
\item We call the maximally $\mathbf P$-compatible sets \ul{$\mathbf P$-clusters}.
\item We call a function of the form $\mu:T\to (T\setminus\{X\})\cup\{Y\}$ such that $\mu Z=Z$ when $Z\neq X$ and $\mu X=Y$ a \ul{$\mathbf P$-mutation} or \ul{$\mathbf P$-mutation at $X$}.
\item If there exists a $\mathbf P$-mutation $\mu:T\to (T\setminus\{X\})\cup\{Y\}$ we say $X\in T$ is \ul{$\mathbf P$-mutable}.
\item The subcategory $\mathscr T_{\mathbf P}(\mathcal C)$ of $\mathcal S\text{et}$ whose objects are $\mathbf P$-clusters and whose morphisms are generated by $\mathbf P$-mutations (and identity functions) is called \ul{the $\mathbf P$-cluster theory of $\mathcal C$}.
\item The functor $I_{\mathbf P,\mathcal C}:\mathscr T_{\mathbf P}(\mathcal C)\to \mathcal S\text{et}$ is the inclusion of the subcategory.
\end{itemize}
\end{definition}

From now on, when we say ``a Krull-Schmidt category'' we mean ``a skeletally small Krull-Schmidt additive category.''

\begin{remark}\label{rmk:pairwise induces theory}
Let $\mathcal C$ be a Krull-Schmidt category and $\mathbf P$ a pairwise compatibility condition on the indecomposable objects in $\mathcal C$.
If the $\mathbf P$-cluster theory of $\mathcal C$ exists then it is completely determined by $\mathbf P$.
Thus we say that $\mathbf P$ \ul{induces} the $\mathbf P$-cluster theory of $\mathcal C$.
\end{remark}

\begin{remark}\label{rmk:maximality by zorn}
Let $\mathcal C$ be a Krull-Schmidt category and $\mathbf P$ a pairwise compatibility condition on $\Ind(\mathcal C)$.
Using Zorn's lemma we note that there exist maximally $\mathbf P$-compatible sets of indecomposables of $\mathcal C$.
\end{remark}

\begin{definition}\label{def:tilting cluster theory}
Let $\mathcal C$ be a Krull-Schmidt category and $\mathbf P$ a pairwise compatibility condition such that $\mathbf P$ induces the $\mathbf P$-cluster theory of $\mathcal C$.
If for every $\mathbf P$-cluster $T$ and $X\in T$ there is a $\mathbf P$-mutation at $X$ then we call $\mathscr T_{\mathbf P}(\mathcal C)$ the \ul{tilting $\mathbf P$-cluster theory}.
\end{definition}

\begin{remark}
The reader may be familiar with frozen elements of a cluster and notice that sometimes an indecomposable $X$ in a $\mathbf P$-cluster $T$ may not be $\mathbf P$-mutable.
However, we do not call $X$ frozen.
This is because in the next paper of this series we will introduce a continuous generalization of mutation that allows some $X$ which are not $\mathbf P$-mutable to \emph{become} $\mathbf P$-mutable.
The word frozen, then, should be reserved for indecomposables we have intentionally frozen or those that may \emph{never} be made $\mathbf P$-mutable.
\end{remark}

\begin{proposition}
Let $\mathcal C$ be a Krull-Schmidt category and $\mathbf P$ a pairwise compatibility condition such that $\mathbf P$ induces the $\mathbf P$-cluster theory of $\mathcal C$.
Let $T$ be a $\mathbf P$-cluster and $X\in T$ such that there exists a $\mathbf P$-mutation $T\to (T\setminus\{X\})\cup\{Y\}$.
Then there exists a $\mathbf P$-mutation $T'=(T\setminus\{X\})\cup\{Y\}\to (T'\setminus\{Y\})\cup\{X\}$.
\end{proposition}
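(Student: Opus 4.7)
The plan is to prove that $\mathbf P$-mutation is an involution on the relevant data, by directly verifying the defining conditions of a $\mathbf P$-mutation for the pair $(Y,X)$ in $T'$. I expect this to be essentially a bookkeeping argument with no substantive obstacle; the main thing to be careful about is that the definition of a $\mathbf P$-mutation has several clauses and one must check each one.

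First I would unpack what is being assumed. The existence of the $\mathbf P$-mutation $T \to T' = (T \setminus\{X\}) \cup \{Y\}$ at $X$ gives three facts: (i) $T$ is a $\mathbf P$-cluster, (ii) $\{X,Y\}$ is not $\mathbf P$-compatible, and (iii) $T'$ is itself a maximally $\mathbf P$-compatible set, i.e.\ a $\mathbf P$-cluster. I would also note that $Y \neq X$, since any singleton is trivially $\mathbf P$-compatible (the pairwise compatibility condition is vacuous on one-element subsets), so $\{X,X\}$ would be $\mathbf P$-compatible while $\{X,Y\}$ is not. Consequently $X \notin T'$, and in particular $(T' \setminus \{Y\}) \cup \{X\} = T$ as sets.

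Next I would check the three conditions needed to produce a $\mathbf P$-mutation at $Y \in T'$ with replacement $X$. Condition (i), that $T'$ is a $\mathbf P$-cluster, holds by fact (iii) above. Condition (ii), that $\{Y,X\}$ is not $\mathbf P$-compatible, is just fact (ii) since the pairwise compatibility condition is symmetric in its two arguments. Condition (iii), that $(T' \setminus \{Y\}) \cup \{X\}$ is a $\mathbf P$-cluster, follows because this set equals $T$ by the observation in the previous paragraph, and $T$ is a $\mathbf P$-cluster by fact (i).

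Finally, for this to actually qualify as a $\mathbf P$-mutation in the sense of Definition \ref{def:cluster theory}, we need to know that the choice of replacement for $Y$ in $T'$ is unique when it exists. This is precisely the hypothesis that $\mathbf P$ induces the $\mathbf P$-cluster theory of $\mathcal C$ (the ``none or one'' clause), so any such replacement is isomorphic to $X$ and the reverse mutation $T' \to T$ is well-defined. The proposition follows.
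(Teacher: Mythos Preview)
Your proof is correct and follows the same approach as the paper's proof, which is the single sentence: ``We know $\{Y,X\}$ is not $\mathbf P$-compatible but $(T'\setminus\{Y\})\cup\{X\}$ is maximally $\mathbf P$-compatible.'' You have simply unpacked this more carefully, including the observation that $X\neq Y$ so that $(T'\setminus\{Y\})\cup\{X\}=T$; your final paragraph on uniqueness is not strictly needed to establish \emph{existence} of the reverse mutation (exhibiting $X$ as a valid replacement suffices, since the ``none or one'' clause is a standing hypothesis rather than something to verify per mutation), but it does no harm.
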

\begin{proof}
We know $\{Y,X\}$ is not $\mathbf P$-compatible but $(T'\setminus\{Y\})\cup\{X\}$ is maximally $\mathbf P$-compatible.
\end{proof}

\begin{example}\label{xmp:BMRRT model}
Our first example of a cluster theory is the cluster structure defined in \cite{BMRRT} for cluster categories $\mathcal C(Q)$ of dynkin quivers $Q$.
Consider clusters in such cluster categories like those described in \cite{BMRRT} as maximally rigid sets of indecomposables instead of the subcategories generated by those indecomposables.

We define our pairwise compatibility condition $\mathbf R$ to be rigidity.
Then the $\mathbf R$-clusters are maximally rigid sets of indecomposables in $\mathcal C(Q)$ and $\mathbf R$-mutations are traditional cluster tilting in $\mathcal C(Q)$.
This yields the tilting $\mathbf R$-cluster theory of $\mathcal C(Q)$.
\end{example}

\begin{example}\label{xmp:IRT model}
The Euler form on $K_0^{\text{split}}(\CAR)$ in Definition \ref{def:euler form} is by definition a pairwise compatibility condition that we have already called $\mathbf E$-compatibility.
We have shown in Theorem \ref{thm:mutation theorem} that $\mathbf E$ induces the $\mathbf E$-cluster theory of $\CAR$.
In the $\mathbf E$-cluster $T$ in Example \ref{xmp:mutate projective cluster} we see that there exist indecomposables $P$ which are not $\mathbf E$-mutable.
Thus the $\mathbf E$-cluster theory is not tilting.
\end{example}

\begin{example}\label{xmp:CCS model}
Our final example for now is the triangulations of the $(n+3)$-gon model introduced in \cite{CalderoChapatonSchiffler2006}.
In \cite{CalderoChapatonSchiffler2006}, the authors describe the $A_n$ cluster structure as triangulations of the $(n+3)$-gon.
This arises in a category $\mathcal C(A_n)$ whose indecomposable objects are diagonals of the $(n+3)$-gon and rigidity is given by two diagonals not crossing.

We let $\mathbf N_n$ be the pairwise compatibility condition of not crossing.
We let $\mathbf N_n$-clusters be maximal sets of noncrossing diagonals and let $\mathbf N_n$-mutations be the exchanging of one diagonal for another to produce a different triangulation.
This is the tilting $\mathbf N_n$-cluster theory of $\mathcal C(A_n)$.
\end{example}

\begin{definition}\label{def:cluster embedding}
Let $\mathcal C$ and $\mathcal D$ be two Krull-Schmidt categories with respective pairwise compatibility conditions $\mathbf P$ and $\mathbf Q$.
Suppose these compatibility conditions induce the $\mathbf P$-cluster theory and $\mathbf Q$-cluster theory of $\mathcal C$ and $\mathcal D$, respectively.

Suppose there exists a functor $F:\mathscr T_{\mathbf P}(\mathcal C) \to \mathscr T_{\mathbf Q}(\mathcal D)$ such that $F$ is an injection on objects and an injection from $\mathbf P$-mutations to $\mathbf Q$-mutations.
Suppose also there is a natural transformation $\eta: I_{\mathbf P,\mathcal C} \to  I_{\mathbf Q, \mathcal D}\circ F$ whose morphisms $\eta_T: I_{\mathbf P,\mathcal C}(T) \to I_{\mathbf Q, \mathcal D}\circ F(T)$ are all injections.
Then we call $(F,\eta):\mathscr T_{\mathbf P}(\mathcal C) \to \mathscr T_{\mathbf Q}(\mathcal D)$ an \ul{embedding of cluster theories}.
\end{definition}

\begin{remark}\label{5rmk:minimal requirements}
Observing Definition \ref{def:cluster embedding} we see that if we can produce the functions involved in the natural transformation then we obtain the embedding of cluster theories
\end{remark}

We will not immediately provide an example of Definition \ref{def:cluster embedding} as Sections \ref{sec:An embedding} and \ref{sec:Ainfty embedding} are devoted to such examples.

We conclude this subsection with the definition of an $\mathbf E$-compatible set $T_\infty$ for straight descending $A_R$ we will use in Sections \ref{sec:An embedding} and \ref{sec:Ainfty embedding}.

\begin{definition}\label{def:Tinfty}
Let $\{a_i\}_{i\in\Z}$ be a collection of real numbers such that
\begin{itemize} \item $a_i < a_{i+1}$ for all $i\in \Z$ and \item $\lim_{i\to-\infty} a_i, \lim_{i\to+\infty} a_i \in \Z$. \end{itemize}
Let $a_{-\infty} = \lim_{i\to-\infty} a_i$ and $a_{+\infty} = \lim_{i\to + \infty} a_i$.
For each $i,j,\ell\in\Z$ such that $l\geq 0$ and $0\leq j \leq 2^\ell$ define
\begin{displaymath} a_{i,j,\ell} := a_i + \left(\frac{j}{2^\ell}\right)(a_{i+1} - a_i). \end{displaymath}
For each $a_i$, we define the following $\mathbf E$-compatible set:
\begin{align*}
T_{a_i} :=& \left\{ M_{(a_{i,j,\ell},\,  a_{i,j+1,\ell})} :j,\ell\in\Z,\,  \ell \geq 0,\, 0\leq j < 2^\ell\right\} \\
&\cup \left\{ M_{\{x\}} : x\in (a_i,a_i+1),\, x \neq a_{i,j,l} ,\,\, j,\ell\in\Z,\,  \ell \geq 0,\, 0\leq j < 2^\ell\right\}, 
\end{align*}
Note that for any $a_i$ and $a_j$ then $T_{a_i}\cup T_{a_j}$ is $\mathbf E$-compatible.
Now, for each $i\in\Z$ such that $i< a_{-\infty}$ or $i\geq a_{+\infty}$ define a similar type of $\mathbf E$-compatible set:
\begin{align*}
T_i :=& \left\{ M_{(i + j/2^\ell, i+ (j+1)/2^\ell)} : j,\ell\in\Z,\, \ell\geq 0,\, 0\leq j < 2^\ell\right\} \cup \{P_{i+1}\}\\
& \cup \left\{ M_{\{x\}} : x\in (i,i+1),\, x \neq i + j/2^\ell,\, j,\ell\in\Z,\,  \ell \geq 0,\, 0\leq j < 2^\ell \right\}
\end{align*}
The $\mathbf E$-compatible set we want is
\begin{align*}
T_\infty := &\left( \bigcup_{i\in\Z} T_{a_i} \right) \cup \left(\bigcup_{i< a_{-\infty} \text{ or } i \geq a_{+\infty}} T_i \right) \\
& \cup \{M_{(a_{-\infty},a_{+\infty})}, P_{+\infty}\} \cup \left\{P_i : i \leq a_{-\infty} \text{ or } i \geq a_{+\infty} \right\}.
\end{align*}
\end{definition}

\subsection{Embeddings $\mathscr T_{\mathbf N_n} (\mathcal C(A_n)) \to \mathscr T_{\mathbf E}(\CAR)$} \label{sec:An embedding}
In this section we demonstrate how to embed type $A_n$ cluster theories into the $\mathbf E$-cluster theory of $\CAR$.
We will assume that $A_\R$ has the straight descending orientation.

We will use the $\mathbf N_n$-cluster theory in Example \ref{xmp:CCS model}.
Choose $n$ and label the vertices of the $(n+3)$-gon counterclockwise.
\begin{displaymath}\begin{tikzpicture}
 \draw (1.848,.765) -- (.765,1.848) -- (-.765,1.848) -- (-1.848, .765) -- (-1.848,-.765) -- (-.765,-1.848) -- (.765,-1.848) -- (1.848,-.765);
 \draw (1.848,0) node {$\vdots$};
 \draw (1.848,.765) node [anchor=west] {$n$};
 \draw (.765,1.848) node [anchor=south] {$n+1$};
 \draw (-.765,1.848) node[anchor=south] {$n+2$};
 \draw (-1.848, .765) node[anchor=east] {$n+3$};
 \draw (-1.848,-.765) node[anchor=east] {$1$};
 \draw (-.765,-1.848) node[anchor=north] {$2$};
 \draw (.765,-1.848) node[anchor=north] {$3$};
 \draw (1.848,-.765) node[anchor=west] {$4$};
 \filldraw (1.848,.765) circle[radius=.3mm];
 \filldraw (.765,1.848) circle[radius=.3mm];
 \filldraw (-.765,1.848) circle[radius=.3mm];
 \filldraw (-1.848, .765) circle[radius=.3mm];
 \filldraw (-1.848,-.765) circle[radius=.3mm];
 \filldraw (-.765,-1.848) circle[radius=.3mm];
 \filldraw (.765,-1.848) circle[radius=.3mm];
 \filldraw (1.848,-.765) circle[radius=.3mm];
\end{tikzpicture}\end{displaymath}

We will label a diagonal in the $(n+3)$-gon by $i\diag j$, where $i<j$.
An $\mathbf N_n$-cluster is a maximal collection of noncrossing diagonals; this is also called a triangulation of the $(n+3)$-gon.
A pair of diagonals $i\diag j$ and $i'\diag j'$ cross if and only if $i<i'<j<j'$ or $i'<i<j'<j$.

Recall the notation $M_{|a,b|}$ from Defintion \ref{note:interval indecomposable} and Notation \ref{note:intervals}.

\begin{definition}\label{def:Tn}
Assume $A_\R$ has the straight orientation. 
Again let $a_i$ (for all $i\in\Z$), $a_{-\infty}$, and $a_{+\infty}$ be as in Definition \ref{def:Tinfty}.
Let 
\begin{displaymath} 
T_n := T_{\infty} \cup \{M_{(a_i,a_1)}: i < 0\} \cup \{M_{(a_1,a_j)}: j \geq n+3\} \cup \{M_{(a_{-\infty},a_1)}, M_{(a_1,a_{+\infty})}\}
\end{displaymath} 
\end{definition}
It is straightforward to check that $T_n$ is $\mathbf E$-compatible.

\begin{definition}\label{def:An arcs}
Let $T_{\mathbf N_n}$ be an $\mathbf N_n$-cluster in $\mathscr T_{\mathbf N_n}(\mathcal C(A_n))$ as described.
We will construct an $\mathbf E$-cluster $T_{\mathbf E}$ in $\mathscr T_{\mathbf E}(\CAR)$ based on $T_{\mathbf N_n}$.
Thus, define $M_{i\diag j} := M_{(a_i,a_j)}$.
\end{definition}

\begin{proposition}\label{prop:noncrossings A_n}
A set of diagonals $\{i\diag j,i'\diag j'\}$ is $\mathbf N_n$-compatible if and only if the set $\{M_{i\diag j},M_{i'\diag j'}\}$ is $\mathbf{E}$-compatible.
\end{proposition}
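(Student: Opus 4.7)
The plan is to leverage the geometric characterization of $\mathbf E$-incompatibility already established in Proposition \ref{prop:incompatible rectangles} and translate it, via the strict monotonicity of $k \mapsto a_k$, into the combinatorial crossing condition on diagonals. The trivial case where the two diagonals coincide (equivalently, where $M_{i\diag j} = M_{i'\diag j'}$) makes both conditions vacuously true, so I may restrict attention to distinct diagonals.

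First, I would apply Proposition \ref{prop:incompatible rectangles} to the pair $M_{(a_i,a_j)}, M_{(a_{i'},a_{j'})}$, both of which sit in degree $0$ and hence in the image of $\repAR \hookrightarrow \DbAR \to \CAR$. This reduces the $\mathbf E$-compatibility question to the existence of a rectangle or almost complete rectangle in the AR-space of $\repAR$ with sides of slope $\pm(1,1)$ and with these two interval indecomposables as its left and right corners. Equivalently, by Theorem \ref{thm:extensions are rectangles}, the failure of $\mathbf E$-compatibility corresponds to the existence of a nontrivial extension between the two modules in one direction or the other.

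Next, I would use the description of the AR-space of $\repAR$ for the straight descending orientation (as illustrated in Example \ref{xmp:AR space of DbAR}) to characterize combinatorially when $M_{(a,b)}$ and $M_{(c,d)}$ occupy the left and right corners of such a rectangle: after possibly swapping the two modules, this happens exactly when the open intervals overlap in a crossing fashion, namely $a < c < b < d$. Because $k \mapsto a_k$ is strictly increasing, the inequality $a_i < a_{i'} < a_j < a_{j'}$ is equivalent to $i < i' < j < j'$, and symmetrically for the swap. These are precisely the two configurations in which the diagonals $i\diag j$ and $i'\diag j'$ cross, which gives the desired equivalence.

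The main obstacle I anticipate is this geometric characterization of opposite corners: I must verify that no rectangle with the prescribed slopes arises from intervals that are nested, disjoint, or share an endpoint, and confirm that the rectangles produced by the crossing configuration really do place $M_{(a_i,a_j)}$ and $M_{(a_{i'},a_{j'})}$ at the left and right corners rather than at the top and bottom. Intervals whose endpoints are $a_{-\infty}$ or $a_{+\infty}$ will require a brief check using almost complete rectangles against the horizontal boundary of the AR-space, but since the slopes $\pm(1,1)$ determine the rectangle up to the four corner positions, the same inequality on the remaining finite endpoints should still hold. Once these geometric cases are dispatched, the translation back to the indices via strict monotonicity is immediate.
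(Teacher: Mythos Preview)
Your proposal is correct and follows essentially the same route as the paper: both reduce $\mathbf E$-incompatibility to the rectangle criterion of Proposition~\ref{prop:incompatible rectangles}, identify the rectangle with the interval-crossing inequality $a_i<a_{i'}<a_j<a_{j'}$ (up to swap), and then use the strict monotonicity of $k\mapsto a_k$ to recover the diagonal crossing condition. One small simplification: your final paragraph's worry about endpoints $a_{-\infty}$ or $a_{+\infty}$ is unnecessary here, since the vertices of the $(n+3)$-gon are labeled by $\{1,\dots,n+3\}$ and the corresponding $a_i$ are all finite.
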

\begin{proof}
Suppose $i\diag j$ is not $\mathbf N_n$-compatible with $i'\diag j'$.
Then, up to symmetry, $i<i'<j<j'$.
We then know there exists a rectangle in the AR-space of $\repAR$ whose left corner is $M_{i\diag j}$, top corner is $M_{(a_i,a_{j'})}$, bottom corner is $M_{(a_{i'}, a_j)}$, and right corner is $M_{i'\diag j'}$.
Thus $M_{i\diag j}$ and $M_{i'\diag j'}$ are not $\mathbf E$-compatible.

If we start with $M_{i\diag j}$ and $M_{i'\diag j'}$ are not $\mathbf E$-compatible we get the rectangle in the AR-space of $\repAR$ again which implies (up to symmetry) that $a_i < a_{i'} < a_j < a_{j'}$ and so $i<i'<j<j'$.
Therefore $i\diag j$ and $i'\diag j'$ are not $\mathbf N_n$-compatible so the proposition holds.
\end{proof}

\begin{definition}\label{def:image of TNn}
Given an $\mathbf N_n$-cluster $T_{\mathbf N_n}$, let
\begin{displaymath} T_{\mathbf E_n} = T_n \cup \left\{M_{i\diag j}: i\diag j \in T_{\mathbf N_n}\right\}. \end{displaymath}
\end{definition}

With Proposition \ref{prop:noncrossings A_n} it is straightforward to check $T_{\mathbf E_n}$ is $\mathbf E$-compatible.

\begin{proposition}\label{prop:TE is an E-cluster}
The $\mathbf E$-compatible set $T_{\mathbf E_n}$ is an $\mathbf E$-cluster.
\end{proposition}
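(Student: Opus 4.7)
The plan is to apply Proposition \ref{prop:incompatible rectangles} as the main tool. Since $T_{\mathbf{E}_n}$ is already known to be $\mathbf{E}$-compatible, I need only verify maximality: for every indecomposable $U$ in $\CAR$ whose isomorphism class does not belong to $T_{\mathbf{E}_n}$, there exists $V \in T_{\mathbf{E}_n}$ such that $U$ and $V$ occupy the left and right corners (in some order) of a rectangle or almost complete rectangle in the AR-space of $\repAR$ with sides of slope $\pm(1,1)$. Because $\CAR$ is the orbit category of the doubling of $\DbAR$ by almost-shift, and $T_{\mathbf{E}_n}$ sits in degree $0$, it suffices to represent $U$ by $M_I$ for some interval $I \subseteq \R$ in degree $0$.

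I would then case-split on the placement of the endpoints of $I = |c,d|$ relative to the countable set $\{a_i\}_{i \in \Z}$ underlying $T_\infty$. First, if both endpoints lie strictly inside a single gap $(a_k, a_{k+1})$, the dyadic intervals and singletons comprising $T_{a_k}$ (or $T_k$ when $k < a_{-\infty}$ or $k \geq a_{+\infty}$) fill this gap densely; hence either $U$ already belongs to $T_\infty$, or some dyadic interval $M_{(a_{k,j,\ell}, a_{k,j+1,\ell})}$ or some singleton $M_{\{x\}}$ in $T_\infty$ has exactly one endpoint strictly interior to $I$, yielding the required rectangle. Second, if some $a_k$ lies strictly between $c$ and $d$, then when $1 \leq k \leq n+3$ the maximality of the triangulation $T_{\mathbf{N}_n}$ forces the existence of a diagonal $i' \diag j' \in T_{\mathbf{N}_n}$ with exactly one of $a_{i'}, a_{j'}$ strictly between $c$ and $d$ (the key combinatorial input, coming from the fact that the subtriangulation induced on any sub-polygon must connect to the rest via at least one diagonal), and then $M_{i' \diag j'} \in T_{\mathbf{E}_n}$ is $\mathbf{E}$-incompatible with $U$ by Proposition \ref{prop:noncrossings A_n}; when $k \leq 0$ or $k \geq n+3$, one of the tail intervals $M_{(a_i, a_1)}$, $M_{(a_1, a_j)}$, $M_{(a_{-\infty}, a_1)}$, or $M_{(a_1, a_{+\infty})}$ explicitly included in $T_n$ serves the same purpose. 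Finally, when $I$ is unbounded (so $U$ is of projective type $P_a$, $P_{(a}$, or $P_{a)}$), I would pair $U$ against the projectives $\{P_i : i \leq a_{-\infty}\text{ or } i \geq a_{+\infty}\} \cup \{P_{+\infty}\}$ together with $M_{(a_{-\infty}, a_{+\infty})}$, all contained in $T_n$.

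The main technical obstacle will be the careful bookkeeping at the accumulation points $a_{-\infty}$ and $a_{+\infty}$, where the dyadic subdivisions degenerate and the tail intervals of Definition \ref{def:Tn} become essential. One must verify that no indecomposable straddling one of these limit points escapes detection by something in $T_n$; the indexing convention in Definition \ref{def:Tinfty} (with $\lim_{i \to \pm\infty} a_i \in \Z$) is calibrated precisely for this purpose. Once the boundary geometry is settled, the existence of incompatible partners in the other cases is straightforward, and maximality of $T_{\mathbf{E}_n}$ follows by exhaustion.
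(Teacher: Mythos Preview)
Your strategy is essentially the contrapositive of the paper's: the paper assumes $\{M_{|c,d|}\}\cup T_{\mathbf E_n}$ is $\mathbf E$-compatible and then systematically narrows down the possibilities for $|c,d|$ by casing on the value of the left endpoint $c$ (whether $c=-\infty$, $c<a_{-\infty}$, $c=a_{-\infty}$, $c=a_i$, $c$ lies strictly between consecutive $a_i$'s, etc.), concluding in each case that $M_{|c,d|}$ already belongs to $T_{\mathbf E_n}$. Your framing---assume $M_I\notin T_{\mathbf E_n}$ and exhibit an incompatible partner---is logically equivalent, and your appeal to Proposition~\ref{prop:incompatible rectangles} is exactly the tool the paper uses (via Proposition~\ref{prop:noncrossings A_n} and the extension/rectangle dictionary).

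That said, your three-way case split is not exhaustive as written. You cover (i) both endpoints strictly inside one gap $(a_k,a_{k+1})$, (ii) some $a_k$ strictly between $c$ and $d$, and (iii) $I$ unbounded; but you miss intervals with $c=a_i$ and $d\in(a_i,a_{i+1}]$, where no $a_k$ is strictly interior and the endpoints are not both strictly inside a gap. The paper handles these explicitly (forcing $M_{|c,d|}=M_{(a_i,a_{i,j,\ell})}$ or $M_{(a_i,a_{i+1})}$). Also, in your case~(ii) when $c\notin\{a_i\}$ you do not actually need the triangulation combinatorics: the interval $M_{(a_k,a_{k+1})}\in T_\infty$ containing $c$ already provides the incompatible partner, since $a_k<c<a_{k+1}<d$. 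The triangulation argument is only genuinely needed when $c=a_i$ and $d=a_j$ with $1\le i<j\le n+3$, and there the correct input is simply maximality of $T_{\mathbf N_n}$ (any diagonal not in it crosses one that is), not the sub-polygon connectivity statement you cite. With these two adjustments your outline goes through and matches the paper's argument.
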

\begin{proof}
Choose some indecomposable $M_{|c,d|}$ in $\CAR$ such that $\{M_{|c,d|}\}\cup T_{\mathbf E}$ is $\mathbf E$-compatible.
We will show that $M_{|c,d|}\in T_{\mathbf E}$.
Recall the $|$s in our notation mean we not assuming whether or not $c$ or $d$ is in the interval $|c,d|$.
We will check the various possible values of $c$ to complete the proof; note that $c < +\infty$.

Suppose $c =-\infty$.
Then either $d\leq a_{-\infty}$ or $d\geq a_{+\infty}$.
If $d=a_{\pm\infty}$ then $M_{|c,d|}$ must be the corresponding open projective at $a_{\pm\infty}$.
If $d < a_{-\infty}$ or $d>a_{+\infty}$ then $d\in\Z$ or $d=+\infty$ as $M_{(i,i+1)}\in T_i$ from Definition \ref{def:Tinfty}.
Thus in all these cases, $M_{|c,d|}$ must be $P_i$ for some $i\in\Z$ outside $(a_{-\infty},a_{+\infty})$ or $i=+\infty$.
If $-\infty < c < a_{-\infty}$ or $a_{+\infty}\leq c<+\infty$ then either $c=i+j/2^\ell$ and $d=i+(j+1)/2^\ell$ for some $i$, $\ell\geq 0$, $0\leq j < 2^\ell$ or $c=d$ and $M_{|c,d|}=M_{\{c\}}$.

Suppose $c=a_{-\infty}$ 
Then $d = a_1$ or $d =a_{+\infty}$. 
Thus, $M_{|c,d|}$ must be $M_{(a_{-\infty},a_1)}$ or $M_{(a_{-\infty},a_{+\infty})}$.
Suppose $a_{-\infty} < c < a_{+\infty}$ and $c\neq a_i$ for any $i$.
If $c=a_{i,j,\ell}$ for some $i\in\Z$, $\ell \geq 0$, and $0<j<(2^\ell)$ then, up to adjusting $\ell$, $d=a_{i,j+1,\ell}$.
If $c$ is not of this form then $d=c$ and $M_{|c,d|}=M_{\{c\}}$.

If $c=a_i$ for some $i\notin \{1,\ldots, n+2\}$ then either $d=a_{i+1}$, $d=a_1$, or $d=+\infty$.
Thus $M_{|c,d|}$ is one of $M_{(a_i,a_{i+1})}|$, $M_{(a_i,a_1)}$, or $I_{(a_1}$. 

So now we check $c\in \{a_i\}_{i=1}^{n+2}$.
First assume $M_{|c,d|}\neq M_{i\diag j}$ for any $i\diag j$. 
If $c=a_1$ then either $d=a_{n+3}$, $d=a_{+\infty}$, or $d\in (a_1,a_2]$.
If $c=a_i$ for $1<i<n+3$ then $d\in (a_i,a_{i+1}]$. 
In any of these cases, $M_{|c,d|}=M_{(a_1,a_{n+3})}$, $M_{|c,d|}=M_{(a_i, a_{i,j,l})}$, $M_{|c,d|}=M_{(a_1,a_{+\infty})}$, or $M_{|c,d|}=M_{(a_i,a_{i+1})}$.

Now the only possibility left to check is $c\in\{a_i\}_{i=1}^{n+2}$ and $M_{|c,d|}=M_{i\diag j}$ for some $i\diag j$, not necessarily in $T_{\mathbf N_n}$.
For contradiction, assume $i\diag j\notin T_{\mathbf N_n}$.
Then there is a $i'\diag j'\in T_{\mathbf N_n}$ such that $\{i\diag j, i'\diag j'\}$ is not $\mathbf N_n$-compatible.
By symmetry suppose $i<i'<j<j'$.
However, $M_{i\diag j} \to M_{(a_{i'}, a_j)}\oplus M_{(a_i, a_{j'})} \to M_{i'\diag j'}$ is an extension in $\repAR$ and so there is a rectangle in the AR-space of $\repAR$ whose left and right corners are $M_{i\diag j}$ and $M_{i'\diag j'}$, respectively (Theorem \ref{thm:extensions are rectangles}).
Then $M_{i\diag j}$ and $M_{i'\diag j'}$ are not $\mathbf E$-compatible by Proposition \ref{prop:incompatible rectangles}.
Since $M_{i'\diag j'}\in T_{\mathbf E}$, we have a contradiction.
\end{proof}

\begin{lemma}\label{lem:mutation is preserved}
Consider an $\mathbf N_n$-cluster $T_{\mathbf N_n}$ as above and the induced $\mathbf E$-cluster $T_{\mathbf E_n}$.
Suppose $T_{\mathbf N_n} \to (T_{\mathbf N_n} \setminus \{i\diag j\} )\cup\{i'\diag j'\}$ is an $\mathbf N_n$-mutation.
Then $T_{\mathbf E_n} \to (T_{\mathbf E_n}\setminus \{M_{i\diag j} \})\cup\{M_{i'\diag j'}\}$ is an $\mathbf E$-mutation.
\end{lemma}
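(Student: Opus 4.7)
The plan is to realize the candidate $\mathbf{E}$-mutation of $T_{\mathbf{E}_n}$ as the $\mathbf{E}$-cluster induced by the new $\mathbf{N}_n$-cluster, and then invoke the machinery already in place. Write $T_{\mathbf{N}_n}' := (T_{\mathbf{N}_n}\setminus\{i\diag j\})\cup\{i'\diag j'\}$; by hypothesis this is again an $\mathbf{N}_n$-cluster, and the two diagonals $i\diag j$ and $i'\diag j'$ cross.

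The non-compatibility ingredient is immediate: since $i\diag j$ and $i'\diag j'$ cross, Proposition \ref{prop:noncrossings A_n} gives that $\{M_{i\diag j}, M_{i'\diag j'}\}$ is not $\mathbf{E}$-compatible, which is half of the definition of an $\mathbf{E}$-mutation at $M_{i\diag j}$.

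For the other half, I would identify the post-mutation set as the induced cluster of $T_{\mathbf{N}_n}'$. The background piece $T_n$ of Definition \ref{def:Tn} depends only on $n$, and a routine endpoint check shows that no $M_{k\diag l}$ with $k\diag l$ a diagonal of the $(n+3)$-gon lies in $T_n$: the only intervals of $T_n$ whose endpoints both lie on the $a_s$-grid are either polygon sides $M_{(a_s,a_{s+1})}$ (appearing in the $T_{a_s}$ pieces of $T_\infty$) or are of the form $M_{(a_1,a_j)}$ with $j\geq n+3$, which can only coincide with a diagonal in the excluded case $(k,l)=(1,n+3)$. Consequently
\begin{displaymath}
(T_{\mathbf{E}_n}\setminus\{M_{i\diag j}\})\cup\{M_{i'\diag j'}\}
\;=\; T_n \cup \{M_{k\diag l} : k\diag l \in T_{\mathbf{N}_n}'\},
\end{displaymath}
which is exactly $T_{\mathbf{E}_{n'}}$ in the sense of Definition \ref{def:image of TNn}. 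Applying Proposition \ref{prop:TE is an E-cluster} to $T_{\mathbf{N}_n}'$ then shows this set is an $\mathbf{E}$-cluster, and in particular $\mathbf{E}$-compatible.

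Combining the two steps, $M_{i\diag j}$ meets the definition of being $\mathbf{E}$-mutable in $T_{\mathbf{E}_n}$ with replacement $M_{i'\diag j'}$, so the bijection $\mu$ in the statement is an $\mathbf{E}$-mutation; uniqueness of the replacement is then automatic from Theorem \ref{thm:mutation theorem}. I do not expect a serious obstacle: the substantive content is already encoded in Propositions \ref{prop:noncrossings A_n} and \ref{prop:TE is an E-cluster}, and what remains is the bookkeeping disjointness $T_n\cap\{M_{k\diag l}:k\diag l\text{ a diagonal of the }(n+3)\text{-gon}\}=\emptyset$, which is routine from the definitions.
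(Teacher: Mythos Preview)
Your proposal is correct and follows essentially the same approach as the paper: both show non-compatibility of $\{M_{i\diag j},M_{i'\diag j'}\}$ via Proposition~\ref{prop:noncrossings A_n}, identify $(T_{\mathbf E_n}\setminus\{M_{i\diag j}\})\cup\{M_{i'\diag j'}\}$ with the induced set $T'_{\mathbf E_n}$ coming from $T'_{\mathbf N_n}$, and apply Proposition~\ref{prop:TE is an E-cluster}. You are in fact slightly more careful than the paper in spelling out the disjointness $T_n\cap\{M_{k\diag l}\}=\emptyset$ that justifies the identification, which the paper asserts without comment.
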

\begin{proof}
It suffices to show that $\{M_{i\diag j}, M_{i'\diag j'}\}$ is not $\mathbf E$-compatible and $(T_{\mathbf E_n}\setminus \{M_{i\diag j} \})\cup\{M_{i'\diag j'}\}$ is $\mathbf E$-compatible.
By the end of the proof of Proposition \ref{prop:TE is an E-cluster}, we see that $\{M_{i\diag j}, M_{i'\diag j'}\}$ is not $\mathbf E$-compatible.
Let $T'_{\mathbf N_n} = (T_{\mathbf N_n} \setminus \{i\diag j\} )\cup\{i'\diag j'\}$.
Note that $(T_{\mathbf E_n}\setminus \{M_{i\diag j} \})\cup\{M_{i'\diag j'}\} = T'_{\mathbf E_n}$.
By Proposition \ref{prop:TE is an E-cluster}, $T'_{\mathbf E_n}$ is an $\mathbf E$-cluster.
Therefore $T_{\mathbf E_n} \to (T_{\mathbf E_n}\setminus \{M_{i\diag j} \})\cup\{M_{i'\diag j'}\}$ is an $\mathbf E$-mutation.
\end{proof}

\begin{theorem}\label{thm:finite embedding}
There exists an embedding of cluster theories $(F,\eta):\mathscr T_{\mathbf N_n}(\mathcal C(A_n))\to \mathscr T_{\mathbf E_n}(\CAR)$.
\end{theorem}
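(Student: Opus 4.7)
The plan is to build $(F,\eta)$ directly from the assignments already set up in Definitions \ref{def:An arcs} and \ref{def:image of TNn} and the compatibility statements proved in Propositions \ref{prop:noncrossings A_n} and \ref{prop:TE is an E-cluster} and Lemma \ref{lem:mutation is preserved}. First I would define $F$ on objects by $F(T_{\mathbf N_n}) := T_{\mathbf E_n}$, which is an $\mathbf E$-cluster by Proposition \ref{prop:TE is an E-cluster}. Injectivity on objects is immediate: two distinct triangulations $T_{\mathbf N_n} \neq T'_{\mathbf N_n}$ differ on some diagonal $i\diag j$, and by construction the corresponding $\mathbf E$-clusters $T_{\mathbf E_n}$ and $T'_{\mathbf E_n}$ then differ on the indecomposable $M_{i\diag j}$ (the ``scaffold'' $T_n$ is fixed and common to both).

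Next I would define $F$ on morphisms. Since $\mathscr T_{\mathbf N_n}(\mathcal C(A_n))$ is generated by $\mathbf N_n$-mutations, it suffices to send each $\mathbf N_n$-mutation $\mu : T_{\mathbf N_n} \to (T_{\mathbf N_n}\setminus\{i\diag j\})\cup\{i'\diag j'\}$ to the $\mathbf E$-mutation $F(\mu) : T_{\mathbf E_n}\to (T_{\mathbf E_n}\setminus\{M_{i\diag j}\})\cup\{M_{i'\diag j'}\}$ supplied by Lemma \ref{lem:mutation is preserved}. Functoriality reduces to compatibility with the defining relations of the groupoid $\mathscr T_{\mathbf N_n}(\mathcal C(A_n))$: $\mathbf N_n$-mutations are involutions and there are no non-trivial relations between them beyond those forced by composition of bijections of finite sets, so $F$ extends uniquely. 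Injectivity on mutations follows because the unique choice $M_{i'\diag j'}$ provided by Theorem \ref{thm:mutation theorem} is determined by $M_{i\diag j}$ and $T_{\mathbf E_n}$, hence distinct $\mathbf N_n$-mutations have distinct $\mathbf E$-mutation images.

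For the natural transformation $\eta$, I would set $\eta_{T_{\mathbf N_n}} : T_{\mathbf N_n} \hookrightarrow T_{\mathbf E_n}$ to be $i\diag j \mapsto M_{i\diag j}$. This is an injection because the map $i\diag j\mapsto M_{(a_i,a_j)}$ is injective on diagonals. For naturality with respect to $\mu$, I would check on elements: if $k\diag \ell \in T_{\mathbf N_n}\setminus\{i\diag j\}$ then both $F(\mu)\circ\eta_{T_{\mathbf N_n}}$ and $\eta_{T'_{\mathbf N_n}}\circ I_{\mathbf N_n}(\mu)$ send it to $M_{k\diag \ell}$, while on $i\diag j$ both send it to $M_{i'\diag j'}$; on the ``scaffold'' elements of $T_n$ (which are not in the image of $\eta$) nothing needs to be checked.

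The only real content is already contained in the earlier work: Proposition \ref{prop:noncrossings A_n} ensures the assignment $i\diag j\mapsto M_{i\diag j}$ takes (non)compatibility to (non)compatibility, and Lemma \ref{lem:mutation is preserved} ensures that $\mathbf N_n$-mutations go to genuine $\mathbf E$-mutations rather than merely to $\mathbf E$-compatible replacements. The main obstacle, modest but worth confirming, is the check that $F$ is well-defined as a functor; this amounts to verifying that two equal compositions of $\mathbf N_n$-mutations on the nose (as bijections of sets) produce equal compositions of $\mathbf E$-mutations on the nose. Because both source and target categories are subcategories of $\mathcal Set$ and both $\eta_T$'s are inclusions compatible with the mutations on the ``flexible'' part $\{M_{i\diag j}\}$, equality of the underlying set-maps on $T_{\mathbf N_n}$ transports through the commuting naturality squares to equality on the image $\eta_{T}(T_{\mathbf N_n})\subset T_{\mathbf E_n}$, and the scaffold $T_n$ is preserved by every generator, giving the required equality.
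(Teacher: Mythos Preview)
Your proposal is correct and follows essentially the same approach as the paper: define $F(T_{\mathbf N_n}):=T_{\mathbf E_n}$, send $\mathbf N_n$-mutations to the $\mathbf E$-mutations provided by Lemma \ref{lem:mutation is preserved}, set $\eta_{T_{\mathbf N_n}}(i\diag j)=M_{i\diag j}$, and verify injectivity and naturality. Your write-up is in fact more careful than the paper's about the well-definedness of $F$ as a functor and the element-by-element naturality check, but the argument is the same.
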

\begin{proof}
By Lemma \ref{lem:mutation is preserved} we see that defining $F(T_{\mathbf N_n}):= T_{\mathbf E_n}$ and sending $\mathbf N_n$-mutations to the corresponding $\mathbf E$-mutations described in the lemma yields a functor.
Since $M_{i\diag j}\not\cong M_{i'\diag j'}$ if $i\neq i'$ or $j\neq j'$ we see that if $T_{\mathbf N_n}\neq T'_{\mathbf N_n}$ then $T_{\mathbf E_n}\neq T'_{\mathbf E_n}$.
Furthermore, $F$ is injective on clusters and generating morphisms between clusters.

Let $\eta_{T_{\mathbf N_n}}: T_{\mathbf N_n}\to T_{\mathbf E_n}$ be given by $\eta_{T_{\mathbf N_n}} (i\diag j) = M_{i\diag j}$.
We see this is an injection and as described in Lemma \ref{lem:mutation is preserved} these injections commute with mutations.
Therefore $(F,\eta)$ is an embedding of cluster theories.
\end{proof}

\subsection{Embedding $\mathscr T_{\mathbf N_{\infty}} (\mathcal C(A_\infty)) \to \mathscr T_{\mathbf E}(\CAR)$}\label{sec:Ainfty embedding}

In this section we demonstrate how to embed type $A_\infty$ cluster theory into the $\mathbf E$-cluster theory of $\CAR$.
Consider again $A_\R$ with the straight descending orientation.
In particular, we address the structure introduced by Holm and J{\o}rgensen in \cite{HolmJorgensen} using triangulations of the infinity-gon.

We will discuss how to embed the closely related structure on the completed infinity-gon (introduced by Baur and Graz in \cite{BaurGraz}) in the next paper of this series.

\begin{definition}(From \cite{HolmJorgensen})\label{def:infinity gon}
The infinity-gon is has its vertices indexed by $\Z$ and no vertex at infinity.
An arc is a pair of integers $(i,j)$ such that $i<j$ and $i-j\geq 2$.
Two arcs $(i,j)$ and $(i',j')$ are defined to cross if and only if $i<i'<j<j'$ or $i'<i<j'<j$.
We will call this pairwise compatibility condition $\mathbf N_\infty$.
Note the similarity to the definitions in Section \ref{sec:An embedding}.
Thus we will write our arcs as $i\diag j$.

The authors show there is a triangulated category whose indecomposables are the diagonals that are compatible if and only if they do not cross.
We will denote the cluster category in which the arcs exist by $\mathcal C(A_\infty)$.
\end{definition}

We continue to let $\{a_i\}$ be the sequence from Definition \ref{def:Tinfty} and define $M_{i\diag j} := M_{(a_i,a_j)}$.
The following proposition is proved in precisely the same fashion as Proposition \ref{prop:noncrossings A_n}.

\begin{proposition}\label{prop:noncrossings A_infty}
Two arcs $i\diag j$ and $i'\diag j'$ are $\mathbf N_\infty$-compatible if and only if $M_{i\diag j}$ and $M_{i'\diag j'}$ are $\mathbf E$-compatible.
\end{proposition}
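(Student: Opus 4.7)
The plan is to mirror the proof of Proposition \ref{prop:noncrossings A_n} verbatim, since the only features of that argument were the crossing condition on pairs $i\diag j$, $i'\diag j'$ and the resulting rectangle in the AR-space of $\repAR$; neither of these depends on the arcs being diagonals of a finite polygon. Concretely, I would argue via a two-direction implication using Proposition \ref{prop:incompatible rectangles}, with the labelings $M_{i\diag j}=M_{(a_i,a_j)}$ and $M_{i'\diag j'}=M_{(a_{i'},a_{j'})}$ supplied by the sequence $\{a_i\}_{i\in\Z}$ from Definition \ref{def:Tinfty}.

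For the forward direction, I would start by assuming $i\diag j$ and $i'\diag j'$ cross. By definition of $\mathbf N_\infty$ and up to relabeling, $i<i'<j<j'$. Then $a_i<a_{i'}<a_j<a_{j'}$ since the sequence $\{a_i\}$ is strictly increasing, so in the AR-space of $\repAR$ (viewed as a subspace of the AR-space of $\DbAR$ via $A_\R$ in straight descending orientation) there is a rectangle with sides of slope $\pm(1,1)$ whose left corner is $M_{(a_i,a_j)}$, top corner $M_{(a_i,a_{j'})}$, bottom corner $M_{(a_{i'},a_j)}$, and right corner $M_{(a_{i'},a_{j'})}$. Proposition \ref{prop:incompatible rectangles} then gives that $M_{i\diag j}$ and $M_{i'\diag j'}$ are not $\mathbf E$-compatible.

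For the converse, I would assume $\{M_{i\diag j}, M_{i'\diag j'}\}$ is not $\mathbf E$-compatible. Invoking Proposition \ref{prop:incompatible rectangles} in the other direction produces a (possibly almost complete) rectangle in the AR-space of $\repAR$ whose left and right corners are $M_{(a_i,a_j)}$ and $M_{(a_{i'},a_{j'})}$ (up to switching roles). Reading off the $x$-coordinates forces, up to symmetry, $a_i<a_{i'}<a_j<a_{j'}$, and then strict monotonicity of $\{a_i\}$ gives $i<i'<j<j'$; hence $i\diag j$ and $i'\diag j'$ cross.

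There is essentially no obstacle: the only thing to verify carefully is that in this orientation $M_{(a_i,a_j)}$ is a legitimate indecomposable of $\repAR$ for every pair $i<j$ in $\Z$, which is immediate because $(a_i,a_j)$ is a bounded open interval in $\R$, and that the rectangle argument of Proposition \ref{prop:incompatible rectangles} applies equally well when $i,j$ range over all of $\Z$ rather than the finite index set of the $(n+3)$-gon. Once those observations are in place, the proof is literally a restatement of the proof of Proposition \ref{prop:noncrossings A_n}.
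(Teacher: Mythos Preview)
Your proposal is correct and matches the paper's approach exactly: the paper simply states that this proposition ``is proved in precisely the same fashion as Proposition \ref{prop:noncrossings A_n},'' which is exactly the argument you have written out. The only addition you make is the explicit remark that the rectangle argument does not depend on the finiteness of the index set, which is implicit in the paper's one-line reference.
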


\begin{definition}\label{def:image of TEinfty}
Let $T_{\mathbf N_\infty}$ be an $\mathbf N_\infty$-cluster.
Define $T_{\mathbf E_\infty}^\circ$ to be
\begin{displaymath}
T_{\mathbf E_\infty}^\circ = T_{\infty} \cup \{M_{i\diag j}: i\diag j \in T_{\mathbf N_\infty}\}.
\end{displaymath}
\end{definition}
Similar to Section \ref{sec:An embedding} we see that with Proposition \ref{prop:noncrossings A_infty} it is straightforward to check that $T_{\mathbf E_\infty}$ is $\mathbf E$-compatible.

It is not true that $T_{\mathbf E_\infty}^\circ$ is always an $\mathbf E$-cluster.
That is, the clusters considered in \cite{HolmJorgensen} form a \emph{proper} subcategory of $\mathscr T_{\mathbf N_\infty}(\mathcal C(A_\infty))$.
The $\mathbf N_\infty$-cluster theory includes what the authors in \cite{HolmJorgensen} called ``weak clusters.''
Put another way: the $\mathbf N_\infty$-cluster theory of $\mathcal C(A_\infty)$ is not tilting.

\begin{example}
Consider the $\mathbf N_\infty$-cluster
\begin{displaymath} T_{\mathbf N_\infty} = \{ i\diag 0: i < -1\} \cup \{1\diag j : j>2\} \end{displaymath}
as in \cite[Sketch 3, p.279]{HolmJorgensen}.
This is maximally $\mathbf N_{\infty}$-compatible but $T_{\mathbf E_\infty}^\circ$ is not maximally $\mathbf E$-compatible.
However, one may check that
\begin{displaymath}
T_{\mathbf E_\infty} := T_{\mathbf E_\infty} \cup \{M_{(a_{-\infty},a_0)}, M_{(a_{-\infty},a_1)}, M_{(a_1,a_{+\infty})}\}
\end{displaymath}
is maximally $\mathbf E$-compatible.
\end{example}

One issue with $T_{\mathbf N_\infty}$ is addressed by the authors in \cite{HolmJorgensen}: $\add\, T_{\mathbf N_\infty}$ is not functorially finite in $\mathcal C(A_\infty)$.
There is not truly a problem with embedding \emph{too many} cluster-like objects.
Once the embedding has been established, one may take the subgroupoid of $\mathscr T_{\mathbf N_\infty}(\mathcal C(A_\infty))$ consisting of only those $\mathbf N_\infty$-clusters that are part of the cluster structure in \cite{HolmJorgensen}.
Thus there is still an embedding into the $\mathbf E$-cluster theory of $\CAR$.

To create the embedding of cluster theories $\mathscr T_{\mathbf N_\infty}(\mathcal C(A_\infty))\to \mathscr T_{\mathbf E}(\CAR)$ we need the following definitions adapted from \cite[Definition 3.2]{HolmJorgensen}.
\begin{definition}
Let $T$ be an $\mathbf N_\infty$-compatible set of arcs.
\begin{itemize}
\item If, for all $n\in\Z$, there are only finitely many arcs in the set $\{i\diag j \in T : i=n \text{ or }j=n\}$ we say $T$ is \ul{locally finite}.
\item If there exists $n\in \Z$ such that $\{i\diag j \in T: j=n\}$ is infinite we call this set of arcs a left-fountain.
\item If there exists $n\in\Z$ such that $\{i\diag j \in T: i = n\}$ is infinite we call this set of arcs a right-fountain.
\item If there exists $n$ that has both a left- and right-fountain we say $\{i\diag j\in T: i=n\text{ or }j=n\}$ is a fountain.
\end{itemize}
\end{definition}
The authors note in \cite[Lemma 3.3]{HolmJorgensen} that if a left- or right-fountain exists in a $\mathbf N_\infty$-cluster then it must be unique.
Just before the lemma the authors note that a left-fountain exists if and only if a right-fountain exists, crediting Collin Bleak.
This will become quite important.
It is also prudent to note that if there is a left-fountain at $m$ and a right-fountain at $n$ then $m\leq n$.

\begin{definition}\label{def:TEinfty}
We now define $T_{\mathbf E_\infty}$ given $T_{\mathbf N_\infty}$.
\begin{itemize}
\item If $T_{\mathbf N_\infty}$ is locally finite then \begin{displaymath} T_{\mathbf E_\infty} = T_{\mathbf E_\infty}^\circ.\end{displaymath}
\item If $T_{\mathbf N_\infty}$ has a left- or right-fountain it has the other. 
Let $m$ be the vertex with the left-fountain and $n$ the vertex with the right-fountain; note that it is possible $m=n$.
Set
\begin{displaymath} T_{\mathbf E_\infty} = T_{\mathbf E_\infty}^\circ \cup \{M_{(a_{-\infty},a_m)}, M_{(a_{-\infty},a_n)}, M_{(a_n,a_{+\infty})}\}. \end{displaymath}
\end{itemize}
\end{definition}

\begin{proposition}\label{prop:local finite means no skip}
Let $T_{\mathbf N_\infty}$ be an $\mathbf N_\infty$ cluster.
Suppose there exists $\ell\in\Z$ such that for all $i\diag j\in T_{\mathbf N_\infty}$, $\ell \leq i $ or $j\leq \ell$.
Then there exists a left- and right-fountain in $T_{\mathbf N_\infty}$.
\end{proposition}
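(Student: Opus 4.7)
The plan is to argue by contradiction, producing a right-fountain out of the cut-point hypothesis; the observation due to Collin Bleak noted immediately before Lemma 3.3 in \cite{HolmJorgensen} then supplies the matching left-fountain at no extra cost. So assume throughout that $T_{\mathbf N_\infty}$ has no right-fountain.

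The first step is to localise the problem to one side of $\ell$. Set $T^R=\{i\diag j\in T_{\mathbf N_\infty}:i\ge \ell\}$ and $T^L=\{i\diag j\in T_{\mathbf N_\infty}:j\le\ell\}$; by the cut-point hypothesis $T_{\mathbf N_\infty}=T^R\cup T^L$, and no arc of $T^L$ can cross an arc of $T^R$ since their endpoints are separated by $\ell$. Any arc $a\diag b$ with $\ell\le a<b$ that is non-crossing with $T^R$ is therefore non-crossing with all of $T_{\mathbf N_\infty}$, so by maximality of $T_{\mathbf N_\infty}$ it already lies in $T^R$. Hence $T^R$ is itself a maximal non-crossing set on the right half-polygon $\{\ell,\ell+1,\ldots\}$, and still has no right-fountain.

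The contradiction comes from analysing $T^R$ at $\ell$. Let $V=\{k:\ell\diag k\in T^R\}$, finite by assumption, and assume first that $V\ne\emptyset$; set $v_1=\max V$ and $v_2=\max\{k:v_1\diag k\in T^R\}$, again finite. The arc $\ell\diag v_2$ is not in $T^R$ because $v_2>v_1=\max V$, so maximality of $T^R$ provides a blocker $i'\diag j'\in T^R$ with $\ell<i'<v_2<j'$. Non-crossing with $\ell\diag v_1$ forces $i'\ge v_1$ (otherwise $\ell<i'<v_1<j'$ is a crossing), while non-crossing with $v_1\diag v_2$ forces $i'\le v_1$ (otherwise $v_1<i'<v_2<j'$ is a crossing); hence $i'=v_1$ and then $j'>v_2$ violates the maximality of $v_2$. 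The two boundary sub-cases are handled by the same blocker-elimination: when $v_1$ has no right-arc in $T^R$, using $\ell\diag(v_1+1)$ in place of $\ell\diag v_2$ again forces a blocker starting at $v_1$, contradicting the assumption on $v_1$; when $V=\emptyset$, one takes $v^*$ to be the least vertex $>\ell$ starting a right-arc in $T^R$ (whose existence from blocking $\ell\diag(\ell+2)$ is immediate), lets $v_2^*=\max\{k:v^*\diag k\in T^R\}$, and observes that every integer strictly between $\ell$ and $v^*$ is ruled out as a blocker's left endpoint by the minimality of $v^*$, so the only admissible blocker of $\ell\diag v_2^*$ starts at $v^*$, contradicting the maximality of $v_2^*$.

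Having produced a right-fountain in $T^R\subseteq T_{\mathbf N_\infty}$, the Bleak observation yields the matching left-fountain and completes the proof. The principal technical point is the blocker analysis: one must be vigilant that the combined non-crossing constraints from $\ell\diag v_1$ and $v_1\diag v_2$ (or their analogues in the sub-cases) really do pin the blocker's left endpoint down to $v_1$, and that the maximal choices of $v_1$ and $v_2$ yield the final contradiction.
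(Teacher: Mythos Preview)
Your argument is correct. It differs from the paper's proof in its basic decomposition, so a brief comparison is worthwhile.

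The paper assumes $T_{\mathbf N_\infty}$ is locally finite and works symmetrically on both sides of $\ell$: it picks the nearest ``active'' vertices $i_\ell\ge\ell$ and $j_\ell\le\ell$, then the extremal arcs $i_\ell\diag j_0$ and $i_0\diag j_\ell$ emanating from them, and shows directly that $i_0\diag j_0$ is non-crossing with everything in $T_{\mathbf N_\infty}$, hence already belongs to it; since $i_0<\ell<j_0$, this contradicts the cut-point hypothesis. The whole proof is one short construction with no case split.

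You instead isolate the right half-polygon, observe that $T^R$ is maximal there, and run a blocker-elimination argument at the boundary vertex $\ell$, treating separately the cases $V\ne\emptyset$ with $v_1$ right-active, $V\ne\emptyset$ with $v_1$ right-inactive, and $V=\emptyset$. You then invoke the Bleak observation to transfer the right-fountain to a left-fountain. Each step checks out; in particular the pinning of the blocker's left endpoint to $v_1$ (respectively $v^*$) via the two non-crossing constraints is sound, and the boundary sub-cases are handled correctly.

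What the two approaches buy: the paper's argument is shorter, entirely symmetric, and self-contained (it does not need the Bleak observation as an external input, since the contradiction is reached before any fountain is exhibited). Your argument has the virtue of being modular---reducing to a one-sided statement about maximal configurations on a half-line---which could be reused in related settings, at the cost of a case analysis and a dependence on the cited fact from \cite{HolmJorgensen}.
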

\begin{proof}
For contradiction, suppose $T_{\mathbf N_\infty}$ is locally finite.
Let 
\begin{align*}
i_\ell&=\min_i \{i\geq \ell\in \Z : \exists\, i\diag j\in T_{\mathbf N_\infty}\} \\
j_\ell&=\max_j \{j\leq \ell\in\Z : \exists\, i\diag j\in T_{\mathbf N_\infty} \}.
\end{align*}
By the maximality of $T_{\mathbf N_\infty}$ we see that $0\leq i_\ell - j_\ell\leq 1$ and $\ell\in\{i_\ell, j_\ell\}$.
Since we have assumed $T_{\mathbf N_\infty}$ is locally finite, let
\begin{align*}
j_0 &= \max_j \{j\in\Z: i_\ell\diag j\in T\} \\
i_0 &= \min_i \{i\in\Z : i\diag j_\ell \in T\}.
\end{align*}
We will show $i_0\diag j_0\in T_{\mathbf N_\infty}$, contradicting our assumption about $\ell$.

For contradiction, suppose there exists $i \diag j\in T_{\mathbf N_\infty}$ such that $i_0 < i < j_0 < j$.
Since $i_0<i<j_0$, we must have $j_\ell \leq i \leq i_\ell$.
But then $i=i_\ell$ by our definition of $i_\ell$.
However $j_0< j$, contradiction our definition of $j_0$.
Thus there cannot be such a $i\diag j\in T_{\mathbf N_\infty}$.
Similarly, there can be no $i'\diag j'\in T_{\mathbf N_\infty}$ such that $i'<i_0<j'<j_0$.

This means $\{i_0\diag j_0 \}\cup T_{\mathbf N_\infty}$ is $\mathbf N_\infty$-compatible.
Since $T_{\mathbf N_\infty}$ is an $\mathbf N_\infty$-cluster we have $i_0\diag j_0\in T_{\mathbf N_\infty}$.
This contradicts our assumption about $\ell$ since $i_0<\ell < j_0$.
Therefore $T_{\mathbf N_\infty}$ is not locally finite; i.e.\ there exists a left- and right-fountain in $T_{\mathbf N_\infty}$.
\end{proof}

\begin{proposition}\label{prop:TEinfty is an E-cluster}
Let $T_{\mathbf N_\infty}$ be an $\mathbf N_\infty$-cluster.
Then $T_{\mathbf E_\infty}$ is an $\mathbf E$-cluster.
\end{proposition}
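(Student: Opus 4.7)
The plan is to follow the structure of Proposition \ref{prop:TE is an E-cluster}: first show that $T_{\mathbf E_\infty}$ is pairwise $\mathbf E$-compatible, and then show maximality by producing, for each indecomposable $M_{|c,d|} \notin T_{\mathbf E_\infty}$, an element of $T_{\mathbf E_\infty}$ that is not $\mathbf E$-compatible with it.

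Pairwise compatibility assembles three pieces. The set $T_\infty$ is $\mathbf E$-compatible by Definition \ref{def:Tinfty}; the arc images $\{M_{i \diag j} : i \diag j \in T_{\mathbf N_\infty}\}$ are $\mathbf E$-compatible among themselves by Proposition \ref{prop:noncrossings A_infty}; and mixed pairs are dispatched by direct invocation of the rectangle criterion in Proposition \ref{prop:incompatible rectangles}, since any arc $M_{i \diag j}$ and any dyadic $T_\infty$-interval are either nested, disjoint, or share an endpoint. In the fountain case one additionally verifies that each of the three extra modules $M_{(a_{-\infty}, a_m)}, M_{(a_{-\infty}, a_n)}, M_{(a_n, a_{+\infty})}$ is compatible with everything else, using the fountain structure to rule out straddling rectangles.

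For maximality I would case-split on $(c,d)$. When $(c,d) \subset (a_i, a_{i+1})$ for some $i$, or $(c,d)$ lies entirely outside $(a_{-\infty}, a_{+\infty})$, the argument mirrors the corresponding part of the proof of Proposition \ref{prop:TE is an E-cluster}, driven by the local structure of $T_\infty$. When $c = a_i$ and $d = a_j$ for integers with $j - i \geq 2$, the indecomposable equals $M_{i \diag j}$ for a diagonal $i \diag j \notin T_{\mathbf N_\infty}$; maximality of $T_{\mathbf N_\infty}$ supplies a crossing diagonal $i' \diag j' \in T_{\mathbf N_\infty}$, and Proposition \ref{prop:noncrossings A_infty} promotes this crossing to an $\mathbf E$-incompatibility with $M_{i' \diag j'} \in T_{\mathbf E_\infty}$.

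The main obstacle is the remaining case of unbounded intervals $M_{(a_{-\infty}, a_k)}$ and $M_{(a_k, a_{+\infty})}$ for integer $k$, where the two sub-cases of Definition \ref{def:TEinfty} diverge. In the locally finite case, the contrapositive of Proposition \ref{prop:local finite means no skip} provides, for every integer $k$, a straddling arc $i \diag j \in T_{\mathbf N_\infty}$ with $i < k < j$; the rectangle with corners $a_{-\infty} < a_i < a_k < a_j$ then witnesses incompatibility with $M_{i \diag j}$, and symmetrically for $M_{(a_k, a_{+\infty})}$. In the fountain case with $n - m \geq 2$, the pivotal sub-lemma is that the arc $m \diag n$ itself must lie in $T_{\mathbf N_\infty}$: any diagonal $(i,j)$ crossing $m \diag n$ would have one endpoint strictly inside $(m,n)$ and the other outside $[m,n]$, and such a diagonal necessarily crosses an arc of the corresponding left- or right-fountain, so by maximality $m \diag n$ is forced into $T_{\mathbf N_\infty}$. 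Then $M_{(a_m, a_n)} \in T_{\mathbf E_\infty}$ supplies the rectangle conflict for $M_{(a_{-\infty}, a_k)}$ whenever $m < k < n$; the cases $k < m$ and $k > n$ are handled by a left-fountain arc $(i,m)$ with $i < k$ and by the extra $M_{(a_n, a_{+\infty})}$, respectively; and $k \in \{m,n\}$ is already in $T_{\mathbf E_\infty}$ by construction. The intervals $M_{(a_k, a_{+\infty})}$, the half-closed analogues, and any shifted indecomposables in $\CAR$ are treated symmetrically, reducing each to one of the cases above.
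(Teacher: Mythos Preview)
Your proposal is correct and follows essentially the same approach as the paper: a case analysis on the endpoints of $M_{|c,d|}$, reducing the ``interior'' and ``exterior'' cases to the argument of Proposition~\ref{prop:TE is an E-cluster}, handling the diagonal case via maximality of $T_{\mathbf N_\infty}$ and Proposition~\ref{prop:noncrossings A_infty}, and treating the unbounded intervals $M_{(a_{-\infty},a_k)}$, $M_{(a_k,a_{+\infty})}$ separately in the locally finite and fountain sub-cases. Your write-up is in fact more explicit than the paper's on two points: you spell out the use of the contrapositive of Proposition~\ref{prop:local finite means no skip} in the locally finite case, and you isolate the sub-lemma that $m\diag n\in T_{\mathbf N_\infty}$ when $n-m\geq 2$, which the paper absorbs into the terse assertion ``then $d=a_m$, $d=a_n$, or $d=a_{+\infty}$''.
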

\begin{proof}
Recall $T_{\mathbf E_\infty}$ is an $\mathbf E$-compatible set.
Now suppose $M_{|c,d|}$ is an indecomposable in $\CAR$ and $T_{\mathbf E_\infty}\cup\{M_{|c,d|}\}$ is $\mathbf E$-compatible.
We will first assume $M_{|c,d|}$ is not of the form $M_{i\diag j}$ for any pair of integers $i<j$ where $j-i\geq 2$.
Similar to Proposition \ref{prop:TE is an E-cluster} we will check various possibilities for $c$.
The argument when $c<a_{-\infty}$ or $c\geq a_{+\infty}$ is identical to Proposition \ref{prop:TE is an E-cluster}.

Suppose $c=a_{-\infty}$.
Suppose $T_{\mathbf N_\infty}$ has a left- and right-fountain at $m$ and $n$, respectively.
Then $d =a_m$, $d=a_n$, or $d=a_{+\infty}$ since there is a left-fountain at $m$ and a right-fountain at $n$.
Note that if $d=a_{+\infty}$ then either $c=a_{-\infty}$ or $c=a_n$.
If $T_{\mathbf N_\infty}$ is locally finite then $c=a_{-\infty}$ if and only if $d=a_{+\infty}$.

We now suppose neither $c$ nor $d$ is in $\{a_{-\infty},a_{+\infty}\}$.
If $a_i < c < a_{i+1}$ for some $i\in\Z$ then $M_{|c,d|}=M_{\{c\}}$ or $M_{|c,d|}=M_{(a_{i,j,\ell}, a_{i,j+1,\ell})}$ for $0\leq \ell$ and $0\leq j < 2^\ell$ similar to the proof of Proposition \ref{prop:TE is an E-cluster}.
If $c = a_i$ for some $i\in \Z$ then, since we are still assuming $M_{|c,d|}\neq M_{i'\diag j'}$ for any $i'<i'+1<j'$, $M_{|c,d|}=M_{(a_i,a_{i,j,\ell})}$ for some $0\leq \ell$ and $0\leq j \leq 2^\ell$.

Now we finally suppose $M_{|c,d|}=M_{i\diag j}$ for some $i<i+1<j$.
For contradiction suppose $i\diag j\notin T_{\mathbf N_\infty}$.
Since $T_{\mathbf N_\infty}$ is an $\mathbf N_\infty$-cluster we know there exists $i'\diag j'\in T_{\mathbf N_\infty}$ such that $i<i'<j<j'$ or $i'<i<j'<j$.
But then by Proposition \ref{prop:noncrossings A_infty} $\{M_{i\diag j},M_{i'\diag j'}\}$ is not $\mathbf E$-compatible, a contradiction since $M_{i'\diag j'}\in T_{\mathbf E_\infty}$.
Therefore, in all possibilities, $M_{|c,d|}\in T_{\mathbf E_\infty}$ already and so $T_{\mathbf E_\infty}$ is an $\mathbf E$-cluster.
\end{proof}

\begin{lemma}\label{lem:mutation is preserved A_infty}
Consider an $\mathbf N_\infty$-cluster $T_{\mathbf N}$ and the induced $\mathbf E$-cluster $T_{\mathbf E_\infty}$.
Suppose $T_{\mathbf N_\infty} \to (T_{\mathbf N_\infty} \setminus \{i\diag j\} )\cup\{i'\diag j'\}$ is an $\mathbf N_\infty$-mutation.
Then $T_{\mathbf E_\infty} \to (T_{\mathbf E_\infty}\setminus \{M_{i\diag j} \})\cup\{M_{i'\diag j'}\}$ is an $\mathbf E$-mutation.
\end{lemma}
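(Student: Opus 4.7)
The plan is to mirror the proof of Lemma \ref{lem:mutation is preserved}, with one extra step to handle the case distinction in Definition \ref{def:TEinfty} between locally finite $\mathbf N_\infty$-clusters and those admitting a fountain. As in the finite case, there are only two things to verify: that the exchanged pair $\{M_{i\diag j},M_{i'\diag j'}\}$ fails to be $\mathbf E$-compatible, and that the result $(T_{\mathbf E_\infty}\setminus\{M_{i\diag j}\})\cup\{M_{i'\diag j'}\}$ is itself an $\mathbf E$-cluster.

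The first point is immediate: since $i\diag j$ and $i'\diag j'$ are exchanged by an $\mathbf N_\infty$-mutation, they cross, so $\{i\diag j, i'\diag j'\}$ is not $\mathbf N_\infty$-compatible and Proposition \ref{prop:noncrossings A_infty} translates this into the failure of $\mathbf E$-compatibility for $\{M_{i\diag j},M_{i'\diag j'}\}$.

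For the second point, set $T'_{\mathbf N_\infty} = (T_{\mathbf N_\infty}\setminus\{i\diag j\})\cup\{i'\diag j'\}$ and $T'_{\mathbf E_\infty}$ its associated $\mathbf E$-cluster via Definition \ref{def:TEinfty}. The strategy is to show
\[
(T_{\mathbf E_\infty}\setminus\{M_{i\diag j}\})\cup\{M_{i'\diag j'}\} \;=\; T'_{\mathbf E_\infty},
\]
and then invoke Proposition \ref{prop:TEinfty is an E-cluster} to conclude that the right-hand side is an $\mathbf E$-cluster. The set of "interior" arcs clearly matches, so the equality reduces to showing that the "extra" elements adjoined by Definition \ref{def:TEinfty} coincide; equivalently, that $T_{\mathbf N_\infty}$ and $T'_{\mathbf N_\infty}$ fall into the same case of that definition, with the same fountain vertices $m$ and $n$ (if applicable). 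This is the content of the key combinatorial step: since $T'_{\mathbf N_\infty}$ differs from $T_{\mathbf N_\infty}$ by exactly one removed arc and one added arc, the number of arcs incident to any fixed vertex changes by at most one, so infinitude at a vertex is preserved, and local finiteness at every vertex is preserved. In particular a left-fountain at $m$ (respectively right-fountain at $n$) in $T_{\mathbf N_\infty}$ remains a left-fountain at $m$ (resp.\ right-fountain at $n$) in $T'_{\mathbf N_\infty}$; by uniqueness of the fountain locations (\cite[Lemma 3.3]{HolmJorgensen}), no new fountain can appear either.

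The main obstacle is this preservation of the fountain structure; once it is established, the desired equality of sets is routine and the lemma follows.
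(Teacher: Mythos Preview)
Your proposal is correct and follows essentially the same route as the paper: both reduce to showing that $\{M_{i\diag j},M_{i'\diag j'}\}$ is not $\mathbf E$-compatible (via Proposition \ref{prop:noncrossings A_infty}) and that $(T_{\mathbf E_\infty}\setminus\{M_{i\diag j}\})\cup\{M_{i'\diag j'}\}=T'_{\mathbf E_\infty}$ by observing that a single mutation cannot create or destroy a fountain, then invoke Proposition \ref{prop:TEinfty is an E-cluster}. Your explicit justification of fountain preservation (the incidence count at each vertex changes by at most one) is a bit more detailed than the paper's one-line assertion, but the structure is identical.
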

\begin{proof}
As with Lemma \ref{lem:mutation is preserved} it suffices to show $\{M_{i\diag j} ,M_{i'\diag j'}\}$ is not $\mathbf E$-compatible but $(T_{\mathbf E_\infty}\setminus \{M_{i\diag j} \})\cup\{M_{i'\diag j'}\}$ is $\mathbf E$-compatible.
Since $\{i\diag j, i'\diag j'\}$ is not $\mathbf N_\infty$-compatible we know $\{M_{i\diag j} ,M_{i'\diag j'}\}$ is not $\mathbf E$-compatible by Proposition \ref{prop:noncrossings A_infty}.
Since $T'_{\mathbf N_\infty} = (T_{\mathbf N_\infty} \setminus \{i\diag j\} )\cup\{i'\diag j'\}$ is an $\mathbf N_\infty$-cluster and one mutation cannot introduce or remove a left- or right-fountain we see $T'_{\mathbf E_\infty} = (T_{\mathbf E_\infty}\setminus \{M_{i\diag j} \})\cup\{M_{i'\diag j'}\}$ and $T'_{\mathbf E_\infty}$ is an $\mathbf E$-cluster by Proposition \ref{prop:TEinfty is an E-cluster}.
\end{proof}

\begin{theorem}\label{thm:infinite embedding}
There exists an embedding of cluster theories $(F,\eta):\mathscr T_{\mathbf N_\infty}(\mathcal C(A_\infty))\to \mathscr T_{\mathbf E}(\CAR)$.
\end{theorem}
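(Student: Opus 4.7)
The plan is to imitate the proof of Theorem \ref{thm:finite embedding} essentially verbatim, with Lemma \ref{lem:mutation is preserved A_infty} and Proposition \ref{prop:TEinfty is an E-cluster} playing the roles that Lemma \ref{lem:mutation is preserved} and Proposition \ref{prop:TE is an E-cluster} played before. All of the heavy lifting has been done: the correspondence $i\diag j \mapsto M_{i\diag j}$ is a bijection on indecomposables in the two theories, Proposition \ref{prop:noncrossings A_infty} matches $\mathbf N_\infty$-compatibility with $\mathbf E$-compatibility pairwise, and the definition of $T_{\mathbf E_\infty}$ (Definition \ref{def:TEinfty}) carefully augments $T_{\mathbf E_\infty}^\circ$ with precisely the intervals needed to achieve maximality in both the locally finite and fountain cases.

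First I would define $F$ on objects by $F(T_{\mathbf N_\infty}) := T_{\mathbf E_\infty}$, where $T_{\mathbf E_\infty}$ is as in Definition \ref{def:TEinfty}. Proposition \ref{prop:TEinfty is an E-cluster} guarantees the codomain is an $\mathbf E$-cluster. Since the morphisms of $\mathscr T_{\mathbf N_\infty}(\mathcal C(A_\infty))$ are generated by $\mathbf N_\infty$-mutations and identities, I would define $F$ on generators by sending an $\mathbf N_\infty$-mutation $T_{\mathbf N_\infty}\to (T_{\mathbf N_\infty}\setminus\{i\diag j\})\cup\{i'\diag j'\}$ to the corresponding $\mathbf E$-mutation $T_{\mathbf E_\infty}\to (T_{\mathbf E_\infty}\setminus\{M_{i\diag j}\})\cup\{M_{i'\diag j'\}}$; this is well-defined by Lemma \ref{lem:mutation is preserved A_infty}. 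Because the mutations compose as functions in both groupoids in the same combinatorial way, $F$ extends to a functor.

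Next I would check injectivity. If $T_{\mathbf N_\infty}\neq T'_{\mathbf N_\infty}$, then they differ on some arc $i\diag j$, hence $T_{\mathbf E_\infty}\neq T'_{\mathbf E_\infty}$, since $M_{i\diag j}$ determines $i\diag j$ and the augmentation in Definition \ref{def:TEinfty} is determined by the (fountain structure of the) cluster itself. Injectivity on generating morphisms is immediate since the source and target of an $\mathbf N_\infty$-mutation are recovered from its image under $F$. Finally I would define the natural transformation $\eta:I_{\mathbf N_\infty,\mathcal C(A_\infty)}\to I_{\mathbf E,\CAR}\circ F$ by $\eta_{T_{\mathbf N_\infty}}(i\diag j) := M_{i\diag j}$, which is clearly an injection; naturality with respect to each generating $\mathbf N_\infty$-mutation is built into the construction in Lemma \ref{lem:mutation is preserved A_infty} (the replacement arc on the domain is sent to the replacement indecomposable on the codomain, and everything else is fixed).

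I do not anticipate a real obstacle: the fountain case is the only point where the argument departs from the finite case, but Lemma \ref{lem:mutation is preserved A_infty} already records the key fact that a single mutation can neither create nor destroy a left- or right-fountain, so the augmentation attached to $T_{\mathbf E_\infty}^\circ$ is unchanged under mutation and $F$ commutes cleanly with the groupoid operations. The mild subtlety to watch for is simply verifying that for a mutation $T_{\mathbf N_\infty}\to T'_{\mathbf N_\infty}$, the identity $T'_{\mathbf E_\infty} = (T_{\mathbf E_\infty}\setminus\{M_{i\diag j}\})\cup\{M_{i'\diag j'}\}$ holds on the nose (including the fountain augmentations), which is exactly what Lemma \ref{lem:mutation is preserved A_infty} asserts.
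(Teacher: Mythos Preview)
Your proposal is correct and follows essentially the same approach as the paper: define $F(T_{\mathbf N_\infty}):=T_{\mathbf E_\infty}$, send $\mathbf N_\infty$-mutations to the corresponding $\mathbf E$-mutations via Lemma \ref{lem:mutation is preserved A_infty}, check injectivity as in Theorem \ref{thm:finite embedding}, and set $\eta_{T_{\mathbf N_\infty}}(i\diag j)=M_{i\diag j}$. Your added remarks on the fountain augmentation being stable under mutation are exactly the content hidden in the paper's one-line appeal to Lemma \ref{lem:mutation is preserved A_infty}.
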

\begin{proof}
By Lemma \ref{lem:mutation is preserved A_infty} we see that defining $F(T_{\mathbf N_\infty}):= T_{\mathbf E_\infty}$ and sending $\mathbf N_\infty$-mutations to the corresponding $\mathbf E$-mutations yields a functor $\mathscr T_{\mathbf N_\infty}(\mathcal C(A_\infty))\to \mathscr T_{\mathbf E}(\CAR)$.
As with Theorem \ref{thm:finite embedding} $F$ is an injection on clusters and mutations.

Let $\eta_{T_{\mathbf N_\infty}}:T_{\mathbf N_\infty}\to T_{\mathbf E_\infty}$ be defined by $\eta_{T_{\mathbf N_\infty}} (i\diag j)  := M_{i\diag j}$.
This is an injection by definition and by Lemma \ref{lem:mutation is preserved A_infty} the $\eta$'s commute with mutation.
Therefore $(F,\eta)$ is an embedding of cluster theories.
\end{proof}

\subsection{Embedding $\mathscr T_{\mathbf N_{\R}} (\mathcal C_{\pi}) \to \mathscr T_{\mathbf E}(\CAR)$}\label{sec:continuous embedding}
In this section we demonstrate how to embed the previous continuous cluster theory into the new continuous cluster theory.
Let $A_\R$ again have the straight orientation.

In \cite{IgusaTodorov1} the continuous cluster category $\mathcal C_\pi$ is the orbit category of the doubling of $\mathcal D_\pi$ category (Definition \ref{def:D_pi}) via almost-shift.
Two indecomposables $V$ and $W$ in $\mathcal C_\pi$ are defined to be compatible if and only if
\begin{displaymath}
\dim (\Ext(V,W)\oplus \Ext(W,V)) \leq 1.
\end{displaymath}
We will denote this compatibility condition by $\mathbf N_\R$.

Equivalently, $V$ and $W$ are not compatible in $\mathcal C_\pi$ if there exists $n\in\Z$ such that there is a rectangle contained in $\mathcal D_\pi$ with lower left corner and upper right corner equal to $V$ and $W[n]$ and at most one of the other two corners on the boundary.
I.e., there may be up to one point missing from the rectangle and it must be one of the corners not equal to $V$ of $W$.

Let $V$ and $W$ be indecomposables in $\mathcal C_\pi$, which come from indecomposables in $\mathcal D_\pi$.
Using the functor $G$ (Section \ref{sec:triangulated relationship}, Definition \ref{def:new to old functor}), we have a guide for where indecomposables from $\mathcal C_\pi$ should approximately be sent.
Let $P_a$ be a projective indecomposable from $\repAR$, where $a\neq+\infty$, as an indecomposable in degree 0 in $\DbAR$.
Then $\MM^b P_a = (\tan^{-1}a, \tan^{-1}a)$.

We will take our fundamental domain of $\mathcal C_\pi$ to be those indecomposables between the lines given by $M(0,y)$ and $M(x,\pi)$, including the $M(0,y)$ indecomposables and excluding the $M(x,\pi)$ indecomposables.
\begin{displaymath}\begin{tikzpicture}
\filldraw[fill=white!70!black, draw=white] (0,-1) -- (0,1) -- (2,1) -- (0,-1);
 \draw[white!65!black, thick] (0,-1) -- (0,1);
\draw[dashed] (-2,0) -- (2,0);
\draw[dashed] (0,-2) -- (0,2);
\draw[dotted] (-1,-2) -- (2,1);
\draw[dotted] (-2,-1) -- (1,2);
\draw[dotted] (0,1) -- (2,1);
\filldraw[fill=white] (0,-1) circle [radius=.4mm];
\filldraw[fill=white] (0,1) circle [radius=.4mm];
\filldraw[fill=white] (2,1) circle [radius=.4mm];
\end{tikzpicture}\end{displaymath}
Will send send each of the indecomposables in this fundamental domain to an indecomposable in $\CAR$.
Recall that for an interval $|a,b|$ we denote by $M_{|a,b|}$ the indecomposable in $\repAR$ (and its image in $\DbAR$ and $\CAR$) corresponding to the interval.
To avoid confusion in notation, we let ${_{(x,y)}M}$ denote the indecompoable in $\CAR$ that we obtain from $M(x,y)$ in $\mathcal C_\pi$.
Each of our ${_{(x,y)}M}$ indecomposables will be representatives chosen from the 0th degree in $\DbAR$.

The line segment (without its endpoints) from $(-\frac{\pi}{2},-\frac{\pi}{2})$ to $(\frac{\pi}{2},\frac{\pi}{2})$ in $\R\times[-\frac{\pi}{2},\frac{\pi}{2}]$ will be the image of the indecomposables in $\mathcal D_\pi$ of the form $M(0,y)$.
It is also the image of the projective indecomposables from $\repAR$ in the 0th degree, with the exception of $P_{+\infty}$, in $\DbAR$.
The dotted line bordering the fundamental domain in the picture are the indecomposables in $\mathcal D_\pi$ of the form $M(x,\pi)$.
These \emph{would} be sent to the line segment from $(\frac{\pi}{2},\frac{\pi}{2})$ to $(\pi,-\frac{\pi}{2})$.
This is precisely the image of the injectives from $\repAR$ in the 0th degree in $\DbAR$ under $\MM^b$.
As we've shown the rest of the the shaded triangle will then correspond to indecomposables in degree 0 in $\DbAR$ that (i) are not degenerate and (ii) are from neither projectives nor injectives in $\repAR$.

\begin{definition}\label{def:alpha beta a b}
For each $M(x,y)$ in the fundamental domain, set 
\begin{align*}
\alpha_{x,y} & =\frac{y+x}{2} & \beta_{x,y} &=\frac{y-x}{2}.
\end{align*}
It is straightforward to see that $-\frac{\pi}{2} < \beta_{x,y} < \frac{\pi}{2}$ and $\beta_{x,y} \leq \alpha_{x,y} < \pi-\beta_{x,y}$.

We define ${_{(x,y)}M}$ to be the indecomposable $M_{(a,b)}$ in $\CAR$ whose image under $\MM^b$ is $(\alpha_{x,y},\beta_{x,y})$.
We set
\begin{align*}
a_{x,y} &= \tan \left(\frac{\alpha_{x,y}-\beta_{x,y}-\pi}{2}\right) & b_{x,y} &= \tan \left(\frac{\alpha_{x,y}+\beta_{x,y}}{2}\right).
\end{align*}
Note it is possible that $a_{x,y}=-\infty$, in which case we have $\MM^b P_{b_{x,y}}$.
Then we set
\begin{displaymath}
{_{(x,y)}M} = M_{(a_{x,y}, b_{x,y})}.
\end{displaymath}
\end{definition}

We will use the following definition a few times.
\begin{definition}\label{def:A B f}
Let $\mathcal CA$, $\mathcal CB$, and $\mathcal CC$ be the sets below:
\begin{align*}
\mathcal CA &=  \{(x,y)\in \R^2 : |x-y|<\pi, x \geq 0, y < \pi \} \\
\mathcal CB &= \left\{(\alpha,\beta)\in \R^2: -\frac{\pi}{2} < \beta < \frac{\pi}{2} \text{ and } \beta \leq \alpha < \pi - \beta \right\} \\
\mathcal CC&= \{(a,b)\in (\R\cup\{-\infty\})\times \R: -\infty \leq a < b < +\infty\}.
\end{align*}
Using Definition \ref{def:alpha beta a b}, let
\begin{align*}
\mathfrak g: \mathcal CA &\to \mathcal CB \\
 (x,y) &\mapsto (\alpha_{x,y},\beta_{x,y}) \\
\mathfrak h: \mathcal CB &\to \mathcal CC \\
(\alpha_{x,y},\beta_{x,y}) &\mapsto (a_{x,y},b_{x,y}).
\end{align*}
Let $\mathfrak f:\mathcal CA \to \mathcal CC$ be the composite $\mathfrak h \circ \mathfrak g$.
For $i\in\{1,2\}$ define $\mathfrak g_i$, $\mathfrak h_i$, and $\mathfrak f_i$ to be the projection onto the $i$th coordinate.
\end{definition}

\begin{proposition}\label{prop:f is a bijection}
The function $\mathfrak f$ in Definition \ref{def:A B f} is a bijection.
\end{proposition}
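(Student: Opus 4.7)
The plan is to decompose $\mathfrak f$ through the intermediate domain $\mathcal CB$, as already suggested by the factorization $\mathfrak f = \mathfrak h\circ\mathfrak g$, and show separately that $\mathfrak g:\mathcal CA\to\mathcal CB$ and $\mathfrak h:\mathcal CB\to\mathcal CC$ are bijections. The first is routine: $\mathfrak g$ is the restriction of the linear isomorphism $(x,y)\mapsto\left(\tfrac{y+x}{2},\tfrac{y-x}{2}\right)$, with inverse $x=\alpha-\beta$, $y=\alpha+\beta$. A direct substitution verifies the conditions translate as $|x-y|<\pi\iff|\beta|<\pi/2$, $x\ge0\iff\alpha\ge\beta$, and $y<\pi\iff\alpha<\pi-\beta$, so $\mathfrak g$ is a bijection between the two described regions.

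For the nontrivial step, I would construct the inverse of $\mathfrak h$ explicitly. Given $(a,b)\in\mathcal CC$ I define
\[
\beta := \arctan b-\arctan a-\tfrac{\pi}{2},\qquad \alpha := \arctan a+\arctan b+\tfrac{\pi}{2},
\]
with the convention $\arctan(-\infty)=-\pi/2$. Using the identity $\tan(\theta+\pi/2)=-\cot\theta$ (or directly the half-angle relations defining $a_{x,y}$ and $b_{x,y}$), one checks $\tan\bigl((\alpha-\beta-\pi)/2\bigr)=\tan(\arctan a)=a$ and $\tan\bigl((\alpha+\beta)/2\bigr)=\tan(\arctan b)=b$, with the boundary case $a=-\infty$ corresponding exactly to $\alpha=\beta$ and hence to $\tan(-\pi/2)=-\infty$, as in Definition \ref{def:alpha beta a b}.

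The main work is then verifying that $(\alpha,\beta)$ lies in $\mathcal CB$. For $-\tfrac{\pi}{2}<\beta<\tfrac{\pi}{2}$: the lower bound uses $\arctan b>\arctan a$ (since $b>a$ and $\arctan$ is increasing), and the upper bound uses that both $\arctan a,\arctan b\in[-\pi/2,\pi/2)$ so their difference is less than $\pi$. For $\beta\le\alpha$: this rearranges to $\arctan a\ge-\pi/2$, which holds by our convention. For $\alpha<\pi-\beta$: summing gives the equivalent condition $2\arctan b<\pi$, which holds since $b<+\infty$. This shows $\mathfrak h$ is surjective, and since the formulas for $\alpha,\beta$ are uniquely determined by $(a,b)$ via the injectivity of $\tan$ on $(-\pi/2,\pi/2)$, it is also injective.

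The hard part — the only place careful bookkeeping is required — is the $a=-\infty$ edge of $\mathcal CC$, which must be matched with the collapsed edge $\alpha=\beta$ of $\mathcal CB$. Once the arctangent convention is fixed and the identity $\alpha=\beta$ is traced back through $\mathfrak g^{-1}$ to give $x=0$, the two boundary strata on each side line up correctly. Composing the two bijections yields $\mathfrak f:\mathcal CA\to\mathcal CC$ as a bijection, completing the proof.
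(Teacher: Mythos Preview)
Your proof is correct and follows essentially the same route as the paper: both construct the inverse via $\alpha=\arctan a+\arctan b+\tfrac{\pi}{2}$ and $\beta=\arctan b-\arctan a-\tfrac{\pi}{2}$, then pull back through $x=\alpha-\beta$, $y=\alpha+\beta$, checking the same inequalities. Your presentation is slightly cleaner in that you explicitly factor through $\mathfrak g$ and $\mathfrak h$ and treat the boundary case $a=-\infty$ (i.e.\ $\alpha=\beta$) with more care than the paper does, but the underlying argument is the same.
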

\begin{proof}
We first show $\mathfrak f$ is well-defined.
The set $\mathcal CA$ is precisely the set corresponding to the fundamental domain of $\mathcal C_\pi$ we have chosen.
We know $\alpha_{x,y}$ and $\beta_{x,y}$ are defined in terms of $x$ and $y$.
Since $-\frac{\pi}{2} < \beta_{x,y} < \frac{\pi}{2}$ and $ \beta_{x,y} \leq \alpha_{x,y} < \pi - \beta_{x,y}$ we see that 
\begin{displaymath} -\pi \leq \alpha_{x,y} - \beta_{x,y} - \pi < \pi. \end{displaymath}
so $-\infty \leq a < +\infty$.
We also see that 
\begin{displaymath} \alpha_{x,y} - \beta_{x,y} - \pi < \alpha_{x,y} + \beta_{x,y}   <\pi. \end{displaymath}
Thus, $-\infty \leq a_{x,y} < b_{x,y} < +\infty$ and so $f(x,y)\in B$.

Now suppose $(x,y)\neq (x',y')$.
Then $(\alpha_{x,y},\beta_{x,y}) \neq (\alpha_{x,y}',\beta_{x,y}')$ and so $(a_{x,y},b_{x,y})\neq (a_{x,y}',b_{x,y}')$.
Thus, $\mathfrak f$ is injective.

Let $(a,b)\in \mathcal CC$.
Set
\begin{align*}
\alpha := \tan^{-1}b+\tan^{-1}a + \frac{\pi}{2} \\
\beta := \tan^{-1}b-\tan^{-1}a -\frac{\pi}{2}.
\end{align*}
We immediately see that $-\frac{\pi}{2} < \beta < \frac{\pi}{2}$.
It is straightforward to see that $\beta \leq \alpha < \pi - \beta$.
Then we define
\begin{align*}
x := \alpha - \beta \\
y := \alpha + \beta.
\end{align*}
We see $x \geq 0$, $y < \pi$, and $|x-y|<\pi$.
Thus $\mathfrak f(x,y) = (a,b)$ and so $\mathfrak f$ is surjective.
\end{proof}

\begin{lemma}\label{lem:compatible compatibility}
Let $M(x,y)$ and $M(x',y')$ be indecomposables in the fundamental domain of $\mathcal C_\pi$.
The set $\{M(x,y), M(x',y')\}$ is $\mathbf N_\R$-compatible if and only if $\{{_{(x,y)}M},{_{(x',y')}M}\}$ is $\mathbf E$-compatible.
\end{lemma}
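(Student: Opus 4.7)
The plan is to reduce both sides of the equivalence to a geometric rectangle condition and then match the two descriptions using the coordinate change $\mathfrak f$ of Definition \ref{def:A B f}. Both $_{(x,y)}M$ and $_{(x',y')}M$ are chosen as degree-0 representatives in $\CAR$, so I may apply Proposition \ref{prop:incompatible rectangles} directly: the pair is not $\mathbf E$-compatible if and only if there is a rectangle or almost complete rectangle in the AR-space of $\repAR$ (viewed as a subspace of the AR-space of $\DbAR$) whose sides have slopes $\pm(1,1)$ and whose left and right corners are these two indecomposables (in some order). Under $\MM^b$, the two corners sit at $(\alpha_{x,y},\beta_{x,y})$ and $(\alpha_{x',y'},\beta_{x',y'})$.

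On the other side, I unpack $\mathbf N_\R$-compatibility in $\mathcal C_\pi$. A nontrivial element of $\Ext_{\mathcal C_\pi}(M(x,y),M(x',y'))\oplus\Ext_{\mathcal C_\pi}(M(x',y'),M(x,y))$ corresponds, via the universal virtual triangles of Definition \ref{def:D_pi} and passage to the orbit category by almost-shift, to a closed rectangle in $\R^2$ whose lower-left and upper-right corners are $M(x,y)$ and $M(x',y')$ (or the reverse, after almost-shift), such that the rectangle is contained in $\mathcal Ob(\mathcal D_\pi)=\{|y-x|<\pi\}$ except possibly for one of its other two corners, which would lie on the boundary $|y-x|=\pi$. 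This recovers the ``rectangle with at most one missing corner'' characterization.

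Next I would check the geometric correspondence via $\mathfrak f$. Since $\mathfrak g(x,y)=((x+y)/2,(y-x)/2)$ is an affine bijection rotating and scaling by $45^\circ$, it sends any rectangle in $\mathcal Ob(\mathcal D_\pi)$ with sides parallel to the coordinate axes to a rectangle in $\R\times[-\tfrac{\pi}{2},\tfrac{\pi}{2}]$ whose sides have slopes $\pm(1,1)$, and it preserves the left/right corner pair. Moreover $\mathfrak g$ carries the $\mathcal D_\pi$ boundary $|y-x|=\pi$ to the AR-space boundary $\beta=\pm\tfrac{\pi}{2}$, so ``one corner on the $\mathcal D_\pi$ boundary'' matches ``almost complete rectangle'' in the AR-space; then $\mathfrak h$ just records the interval data of the corresponding representative. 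Applying the reverse map of $\mathfrak f$ goes the other way. Combining the two reductions gives both directions.

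The main obstacle I expect is bookkeeping around degenerate and edge cases, in particular: (i) confirming that the almost-shift identification on the $\mathcal C_\pi$ side matches the choice of degree-0 representative on the $\CAR$ side, so that the ``up to shift in $\mathcal C_\pi$'' in the $\mathbf N_\R$-definition corresponds precisely to the equality of g-vectors used in $\mathbf E$-compatibility; (ii) handling the projective case $a_{x,y}=-\infty$, where $_{(x,y)}M=P_{b_{x,y}}$ rather than a generic $M_{(a,b)}$, and showing that the relevant rectangles still have the described left corner on the projective line in the AR-space; and (iii) verifying that the two ``other'' corners of a rectangle in $\mathcal D_\pi$ landing on the boundary correspond exactly to the cases excluded from almost complete rectangles in Theorem \ref{thm:extensions are rectangles}, so no spurious incompatibilities are created or lost at the boundary.
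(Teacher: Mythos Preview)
Your proposal is correct and follows essentially the same strategy as the paper: both argue by the contrapositive, translating $\mathbf N_\R$-incompatibility into a rectangle condition in $\mathcal D_\pi$, pushing that rectangle through the $45^\circ$ coordinate change $\mathfrak g$ to obtain a rectangle in the AR-space of $\repAR$, and then invoking Proposition \ref{prop:incompatible rectangles} to conclude $\mathbf E$-incompatibility (and conversely). The paper's proof is considerably terser---it simply asserts that $M(x',y)$ and $M(x,y')$ lie in the fundamental domain and that their images form the required rectangle---whereas you are more explicit about the role of $\mathfrak g$ as a rotation and more careful about the almost-complete rectangle case (one corner on the boundary $|y-x|=\pi$ mapping to $\beta=\pm\tfrac{\pi}{2}$), which the paper's argument does not spell out.
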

\begin{proof}
First suppose $\{M(x,y), M(x',y')\}$ is not $\mathbf N_\R$-compatible.
Then $M(x',y)$ and $M(x,y')$ are indecomposables in the fundamental domain of $\mathcal C_\pi$.
This means ${_{(x',y)}M}$ and ${_{(x,y')}M}$ are well-defined indecomposables in $\CAR$ that, with ${_{(x,y)}M}$ and ${_{(x',y')}M}$, form a rectangle in the AR-space of $\DbAR$ which is entirely contained in the AR-space of $\repAR$.
Thus, $\{{_{(x,y)}M},{_{(x',y')}M}\}$ is not $\mathbf E$-compatible.

Now suppose $\{{_{(x,y)}M},{_{(x',y')}M}\}$ is not $\mathbf E$-compatible.
We reverse the argument and see that $M(x',y)$ and $M(x,y')$ are indecomposables in the fundamental domain of $\mathcal C_\pi$.
Therefore, $\{M(x,y), M(x',y')\}$ is not $\mathbf N_\R$-compatible.
\end{proof}

\begin{definition}\label{def:image of TNR}
Let $T_{\mathbf N_\R}$ be an $\mathbf N_\R$-cluster.
We define
\begin{align*}
T_{\mathbf E_\R}^\circ =& \{P_{+\infty}\}\cup \left\{ {_{(x,y)}M} = M_{\mathfrak f(x,y)} : M(x,y) \in T_{\mathbf N_\R} \right\} \\
& \cup \{ M_{\{z\}} :z \in\R,\, \not\exists M(x,y)\in T_{\mathbf N_\R} \text{ such that } (\mathfrak f_1(x,y)= z \text{ or } \mathfrak f_2(x,y)=z)\}
\end{align*}
\end{definition}
For each ${_{(x,y)}M}=M_{(a,b)}\in T_{\mathbf E_\R}^\circ$ we will define the set $\tau(a,b)$ that we will use to construct $T_{\mathbf E_\R}$.
\begin{definition}\label{def:satisfactory}
At $a$ we can check the following conditions.
\begin{enumerate}
\item There exist $M_{(c,a)}\in T_{\mathbf E_\R}^\circ$.
\item There exist $M_{(a,b')}\in T_{\mathbf E_\R}^\circ$ where $b' > b$.
\item $a=-\infty$.
\end{enumerate}
If $a$ satisfies any of (1), (2), or (3) we say it is \ul{satisfactory}.
We check similar conditions to (1) and (2) for $b$ and use the same definition of satisfactory.
Now we define $\tau(a,b)$.
\begin{itemize}
\item If both $a$ and $b$ are satisfactory let $\tau(a,b)=\emptyset$. 
\item If $a$ is satisfactory but $b$ is not let $\tau(a,b)=\{M_{(a,b]}\}$.
\item If $b$ is satisfactory but $a$ is not let $\tau(a,b)=\{M_{[a,b)}\}$. 
\item If neither $a$ nor $b$ are satisfactory let $\tau(a,b) = \{M_{[a,b]}, M_{[a,b)}\}$. 
\end{itemize}

We now define $T_{\mathbf E_\R}$ in one of two ways. 
Let $\mathcal P =\{ P_{b)}: P_{b)}\in T_{\mathbf E_\R}^\circ, b < +\infty \}$ with total order given by $P_{b)} \leq P_{b')}$ if and only if $b \leq b'$.
\begin{itemize}
\item If $\mathcal P$ is empty or has no maximal element then define
\begin{displaymath} T_{\mathbf E_\R} := \left( \bigcup_{M_{(a,b)}\in T_{\mathbf E_\R}^\circ} \tau(a,b) \right)\cup  T_{\mathbf E_\R}^\circ. \end{displaymath} 
\item If $\mathcal P$ is nonempty with a maximal element $P_{b)}$ then define 
\begin{displaymath} T_{\mathbf E_\R} := \left( \bigcup_{P_{b)}\neq M_{(a,b)}\in T_{\mathbf E_\R}^\circ} \tau(a,b) \right)\cup \{I_{(b}\} \cup T_{\mathbf E_\R}^\circ. \end{displaymath} 
\end{itemize}
\end{definition}

\begin{proposition}\label{prop:old cluster new cluster}
Let $T_{\mathbf N_\R}$ be an $\mathbf N_\R$-cluster.
Then $T_{\mathbf E_\R}$ is an $\mathbf E$-cluster
\end{proposition}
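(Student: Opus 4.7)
The plan is to establish two things: (i) $T_{\mathbf E_\R}$ is pairwise $\mathbf E$-compatible, and (ii) any indecomposable $M_{|c,d|}$ not in $T_{\mathbf E_\R}$ fails $\mathbf E$-compatibility with some element of $T_{\mathbf E_\R}$. Throughout we identify ${_{(x,y)}M}$ with $M_{(a,b)}=M_{\mathfrak f(x,y)}$ and use Proposition \ref{prop:incompatible rectangles}: two indecomposables are not $\mathbf E$-compatible iff one can find a (almost-complete) rectangle in the AR-space of $\repAR$ whose left and right corners are those indecomposables, with sides of slopes $\pm(1,1)$.

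First, for compatibility I would argue case by case on the two factors. Pairs lying in $T_{\mathbf E_\R}^\circ \setminus \{P_{+\infty}\}$ come from pairs in $T_{\mathbf N_\R}$, so Lemma \ref{lem:compatible compatibility} applies directly. For a simple $M_{\{z\}}$ paired with some $M_{(a,b)}\in T_{\mathbf E_\R}^\circ$, the hypothesis in Definition \ref{def:image of TNR} forces $z\neq a$ and $z\neq b$ (otherwise some $M(x,y)\in T_{\mathbf N_\R}$ would have $\mathfrak f_i(x,y)=z$), so $z\notin \{a,b\}$ and no rectangle in the AR-space has $M_{\{z\}}$ and $M_{(a,b)}$ as its left/right corners. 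Two simples $M_{\{z\}},M_{\{z'\}}$ are always $\mathbf E$-compatible. For $\tau(a,b)$ elements, the key point is the definition of \emph{satisfactory}: if $a$ is satisfactory then some $M_{(c,a)}$ or $M_{(a,b')}$ with $b'>b$ (or $a=-\infty$) is in $T_{\mathbf E_\R}^\circ$, so adding a closed endpoint at $a$ cannot be done without creating a rectangle against the witness; consequently $a$ being \emph{unsatisfactory} is precisely what allows closing the interval at $a$. Checking rectangles directly (using that sides have slopes $\pm(1,1)$ and corners correspond to half-open/closed interval changes) then shows each element of $\tau(a,b)$ is compatible with every other element of $T_{\mathbf E_\R}$. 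Finally, compatibility of $P_{+\infty}$ and of $I_{(b}$ (when present) reduces to observing that $P_{+\infty}$ lies on the upper boundary $\lambda=\frac{\pi}{2}$ of the AR-space, and for $I_{(b}$ the choice of $P_{b)}$ as the maximum of $\mathcal P$ is exactly what rules out any rectangle using $I_{(b}$ as a corner.

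Second, for maximality let $V=M_{|c,d|}$ be an indecomposable such that $T_{\mathbf E_\R}\cup\{V\}$ is $\mathbf E$-compatible; I aim to show $V\in T_{\mathbf E_\R}$. Using Proposition \ref{prop:f is a bijection} I can pull $(c,d)$ back through $\mathfrak f^{-1}$ whenever $-\infty\leq c<d<+\infty$; compatibility of the open version $M_{(c,d)}$ with every ${_{(x,y)}M}\in T_{\mathbf E_\R}^\circ$ translates, via Lemma \ref{lem:compatible compatibility}, into $\mathbf N_\R$-compatibility of $M(\mathfrak f^{-1}(c,d))$ with every element of $T_{\mathbf N_\R}$. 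Since $T_{\mathbf N_\R}$ is an $\mathbf N_\R$-cluster, this forces $M(\mathfrak f^{-1}(c,d))\in T_{\mathbf N_\R}$, hence $M_{(c,d)}\in T_{\mathbf E_\R}^\circ$. If instead $V$ is a simple $M_{\{z\}}$ then compatibility with every element of $T_{\mathbf E_\R}^\circ$ forces $z$ to avoid all the endpoints $\mathfrak f_1(x,y),\mathfrak f_2(x,y)$, so $V$ already lies in $T_{\mathbf E_\R}^\circ$ by construction. If $V$ is half- or fully closed at an endpoint, compatibility with the witnesses making that endpoint satisfactory is the obstruction; absence of such a witness (i.e.\ unsatisfactory endpoint) places $V$ into $\tau(\mathfrak f(x,y))$. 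The remaining edge cases are $V=P_{+\infty}$ (already in $T_{\mathbf E_\R}$), $V=P_{b)}$ or $V=I_{(b}$ with $b<+\infty$, which are handled by the $\mathcal P$-dichotomy in Definition \ref{def:satisfactory}: if $\mathcal P$ has a maximum $P_{b)}$, then the added $I_{(b}$ already blocks any $I_{(b'}$ with $b'\neq b$, and if $\mathcal P$ has no maximum, a cofinal sequence in $\mathcal P$ rules out every injective as a new addition.

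The main obstacle I expect is the bookkeeping around endpoints: the definition of $\tau(a,b)$ and the case split on whether $\mathcal P$ has a maximum are subtle, and one has to verify in the maximality argument that, conversely, \emph{every} closed-endpoint indecomposable compatible with all of $T_{\mathbf E_\R}^\circ$ is forced to lie in $\tau(a,b)$ for the correct $(a,b)$, with the correct closure type matching the satisfactory/unsatisfactory status of the endpoints. This hinges on the fact that two intervals $M_{|a,b|}$ and $M_{|a,b'|}$ sharing a left endpoint $a$ fail to be $\mathbf E$-compatible precisely when the endpoint conventions create a length-zero degenerate rectangle on that side, which I would formalize using Theorem \ref{thm:extensions are rectangles} together with a careful inspection of the four endpoint-convention cases.
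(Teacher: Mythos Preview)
Your proposal is correct and follows essentially the same strategy as the paper: first establish $\mathbf E$-compatibility of $T_{\mathbf E_\R}$ via Lemma \ref{lem:compatible compatibility} plus direct rectangle checks for the simples, the $\tau(a,b)$ elements, $P_{+\infty}$, and $I_{(b}$; then prove maximality by taking an arbitrary compatible $M_{|c,d|}$, pulling the open kernel $(c,d)$ back through $\mathfrak f^{-1}$ (Proposition \ref{prop:f is a bijection}), invoking maximality of $T_{\mathbf N_\R}$, and finally matching closed endpoints to the satisfactory/unsatisfactory dichotomy.

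The only organizational difference is that the paper splits the maximality argument according to whether $c$ already equals $\mathfrak f_1(x,y)$ for some $M(x,y)\in T_{\mathbf N_\R}$ (and shows in the negative case that $M_{|c,d|}$ must be a simple), whereas you split by the closure type of $|c,d|$. Both routes use the same ingredients; the paper's split has the mild advantage that it identifies $d$ as an existing endpoint \emph{before} passing to the open interval, so the assertion ``$\{M_{(c,d)}\}\cup T_{\mathbf E_\R}$ is $\mathbf E$-compatible'' is cleaner, while your route requires the (true but unstated) observation that opening the endpoints of a compatible interval preserves $\mathbf E$-compatibility with every ${_{(x,y)}M}$.
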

\begin{proof}
Using Lemma \ref{lem:compatible compatibility} it is straightforward to check that $T_{\mathbf E_\R}$ is $\mathbf E$-compatible.
Let $M_{|c,d|}$ be an indecomposable in $\CAR$ such that $T_{\mathbf E_\R}\cup\{M_{|c,d|}\}$ is $\mathbf E$-compatible.
We will show $M_{|c,d|}\in T_{\mathbf E_\R}$.

As before we can check various values for $c$ but in a different fashion than before.
First suppose there exists $M(x,y)\in T_{\mathbf N_\R}$ such that $\mathfrak f_1(x,y)=c$.
Then there is no $M(x',y')\in T_{\mathbf N_\R}$ such that $\mathfrak f(x,y) = (a',b')$ and $a' < d < b'$.
Thus, there exist $M(x',y')\in T_{\mathbf N_\R}$ such that $d=\mathfrak f_1(x',y')$ or $d=\mathfrak f_2(x',y')$.
Then $M_{\{c\}}$ and $M_{\{d\}}$ are not in $T_{\mathbf E_\R}$.

Now we have $\{M_{(c,d)}\}\cup T_{\mathbf E_\R}$ is $\mathbf E$-compatible.
By Proposition \ref{prop:f is a bijection} there is a $M(x'',y'')$ that is compatible with $T_{\mathbf N_\R}$ such that $\mathfrak f(x'',y'')=(c,d)$.
However, since $T_{\mathbf N_\R}$ is an $\mathbf N_\R$-cluster $M(x'',y'')\in T_{\mathbf N_\R}$ and so $M_{(c,d)}\in T_{\mathbf E_\R}$.
If $c\in |c,d|$ or $c=-\infty$ then there is no $M_{(a,c)}\in T_{\mathbf E_\R}$ and if $d\in |c,d|$ there is no $M_{(d,b)}\in T_{\mathbf E_\R}$.
Thus, either $|c,d|=(c,d)$ or $M_{|c,d|}\in \tau(c,d)$.
In either case $M_{|c,d|}\in T_{\mathbf E_\R}$.

Now suppose there is no $M(x,y)\in T_{\mathbf N_\R}$ such that $\mathfrak f_1(x,y)=c$.
For contradiction suppose $d> c$.
Then $\{M(\mathfrak f^{-1}(c,d))\} \cup T_{\mathbf N_\R}$ is not $\mathbf N_\R$-compatible and so by Lemma \ref{lem:compatible compatibility} there is $M(x',y')\in T_{\mathbf N_\R}$ such tha (i) $\mathfrak f(x',y')=(a,b)$ and (ii) $a<c<b<d$ or $c<a<d<b$.
Thus, $\{M_{|c,d|}\}\cup T_{\mathbf E_\R}$ is not $\mathbf E$-compatible, a contradiction.
Thus $d=c$ and $M_{|c,d|}=M_{\{c\}}$.
Then we know $d=c$ cannot be $\mathfrak f_2(x',y')$ for some $M(x',y')\in T_{\mathbf N_\R}$ or else $\{M_{\{c\}}\}\cup T_{\mathbf E_\R}$ would not be $\mathbf E$-compatible.
Therefore $M_{\{c\}}$ is already in $T_{\mathbf E_\R}^\circ \subset T_{\mathbf E_\R}$.
\end{proof}

\begin{lemma}\label{lem:preserves mutation AR}
Let $T_{\mathbf N_\R}$ be an $\mathbf N_\R$-cluster and $T_{\mathbf E_\R}$ the induced $\mathbf E$-cluster.
If $T_{\mathbf N_\R} \to (T_{\mathbf N_\R}\setminus \{M(x,y)\})\cup \{M(x',y')\}$ is an $\mathbf N_\R$-mutation then $T_{\mathbf E_\R} \to (T_{\mathbf E_\R}\setminus \{{_{(x,y)}M}\})\cup \{{_{(x',y')}M}\}$ is an $\mathbf E$-mutation.
\end{lemma}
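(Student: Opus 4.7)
The plan mirrors the proof structure of Lemmas \ref{lem:mutation is preserved} and \ref{lem:mutation is preserved A_infty}. It suffices to verify two facts: (i) the exchanged pair $\{{_{(x,y)}M},{_{(x',y')}M}\}$ is not $\mathbf E$-compatible, and (ii) the candidate set $T'_{\mathbf E_\R} := (T_{\mathbf E_\R}\setminus\{{_{(x,y)}M}\})\cup\{{_{(x',y')}M}\}$ is an $\mathbf E$-cluster. For (i), I invoke Lemma \ref{lem:compatible compatibility} directly: since $\{M(x,y),M(x',y')\}$ fails to be $\mathbf N_\R$-compatible (it is the pair being exchanged), its image under our embedding fails to be $\mathbf E$-compatible.

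For (ii), the plan is to show that $T'_{\mathbf E_\R}$ coincides with the $\mathbf E$-cluster $(T''_{\mathbf N_\R})_{\mathbf E_\R}$ induced by the mutated $\mathbf N_\R$-cluster $T''_{\mathbf N_\R} := (T_{\mathbf N_\R}\setminus\{M(x,y)\})\cup\{M(x',y')\}$ in the sense of Definitions \ref{def:image of TNR} and \ref{def:satisfactory}; then Proposition \ref{prop:old cluster new cluster} immediately gives the conclusion. The first step is to compare the ``core'' pieces, showing
\[
(T_{\mathbf E_\R}^\circ\setminus\{{_{(x,y)}M}\})\cup\{{_{(x',y')}M}\} \;=\; (T''_{\mathbf N_\R})^\circ_{\mathbf E_\R},
\]
which reduces to checking that $\mathfrak{f}$ is compatible with the exchange (Proposition \ref{prop:f is a bijection}), together with a bookkeeping step on the singletons $M_{\{z\}}$ for $z\in\{a,b,a',b'\}$, where $\mathfrak f(x,y)=(a,b)$ and $\mathfrak f(x',y')=(a',b')$: the singletons that appear on each side are exactly those whose coordinate $z$ is not an endpoint of any remaining interval.

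The hard part --- and the main obstacle --- is handling the $\tau(a,b)$ contributions and, when applicable, the singleton $\{I_{(b}\}$ coming from the maximal element of $\mathcal P$. Removing $M_{(a,b)}$ and inserting $M_{(a',b')}$ can in principle alter the satisfactory status (Definition \ref{def:satisfactory}) of endpoints of other intervals in $T_{\mathbf E_\R}^\circ$, or change whether $\mathcal P$ acquires or loses a maximal element. The plan is to exploit the rectangle geometry of $\mathbf N_\R$-mutation established in \cite{IgusaTodorov1}, which replaces one ``crossing diagonal'' $M(x,y)$ by its unique partner $M(x',y')$, together with Lemma \ref{lem:compatible compatibility}. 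Concretely, for every $M_{(c,d)}\in T_{\mathbf E_\R}^\circ$ other than $M_{(a,b)}$, one verifies that any condition of Definition \ref{def:satisfactory} witnessed before the exchange is still witnessed after: either the witness was a third interval unchanged by the mutation, or the lost witness $M_{(a,b)}$ is replaced by $M_{(a',b')}$ with a matching endpoint (this is exactly where the local geometry of the mutation is used). A parallel check establishes the reverse direction, and a short argument on the ordering of $\mathcal P$ handles the $\{I_{(b}\}$ term. With $T'_{\mathbf E_\R}=(T''_{\mathbf N_\R})_{\mathbf E_\R}$ in hand, Proposition \ref{prop:old cluster new cluster} finishes the proof.
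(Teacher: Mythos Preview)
Your overall strategy is correct and matches the paper's: reduce to showing that $(T_{\mathbf E_\R}\setminus\{{_{(x,y)}M}\})\cup\{{_{(x',y')}M}\}$ equals the $\mathbf E$-cluster induced by the mutated $\mathbf N_\R$-cluster, then apply Proposition~\ref{prop:old cluster new cluster}. Part (i) is fine.

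However, your handling of the $\tau$-contributions contains a genuine error. You write that ``the lost witness $M_{(a,b)}$ is replaced by $M_{(a',b')}$ with a matching endpoint,'' but with $(a,b)=\mathfrak f(x,y)$ and $(a',b')=\mathfrak f(x',y')$ the four numbers $a,a',b,b'$ are all distinct (this is exactly what makes $\{M(x,y),M(x',y')\}$ not $\mathbf N_\R$-compatible). So $M_{(a',b')}$ cannot serve as a replacement witness for any satisfactory condition previously witnessed by $M_{(a,b)}$, and your argument as stated breaks down.

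What the paper does instead is prove, by a direct case analysis on all $M(w,z)\in T_{\mathbf N_\R}$, that the four auxiliary objects $M(x',y)$, $M(x,y')$, $M(x,x')$, $M(y,y')$ are $\mathbf N_\R$-compatible with everything in $T_{\mathbf N_\R}$, hence already lie in $T_{\mathbf N_\R}$ by maximality. Under $\mathfrak f$ these become $M_{(a',b)}$, $M_{(a,b')}$, $M_{(a,a')}$, $M_{(b,b')}\in T_{\mathbf E_\R}^\circ$. These four objects---not $M_{(a',b')}$---supply the needed witnesses: they make both endpoints of $M_{(a,b)}$ satisfactory (so $\tau(a,b)=\emptyset$ and nothing is lost when $M_{(a,b)}$ is removed), and likewise make $a$, $a'$, $b$, $b'$ satisfactory in $M_{(a,a')}$, $M_{(b,b')}$, $M_{(a',b)}$, $M_{(a,b')}$ both before and after the exchange. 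You allude to ``rectangle geometry from \cite{IgusaTodorov1}'' for this, but that only yields the two approximation objects $M(x',y)$, $M(x,y')$; the presence of $M(x,x')$ and $M(y,y')$ requires the additional compatibility check the paper carries out.
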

\begin{proof}
By Lemma \ref{lem:compatible compatibility} we know $\{{_{(x,y)}M},{_{(x',y')}M}\}$ is not $\mathbf E$-compatible.
Thus it remains to show $(T_{\mathbf E_\R}\setminus \{{_{(x,y)}M}\})\cup \{{_{(x',y')}M}\}$ is an $\mathbf E$-cluster.
Without loss of generality we will assume $y' > y$ which implies $x' > x$, since $\{M(x,y),M(x',y')\}$ is not $\mathbf N_\R$-compatible.
This also means $x'-\pi \leq y$, $x-\pi < x'-\pi$, $x' \leq \pi + y$, and $\pi + y < \pi + y'$.

Let $M(w,z)\in T_{\mathbf N_\R}$ such that $(w,z)\neq (x,y)$.
Then $\{M(w,z), M(x,y)\}$ and\\ $\{M(w,z), M(x',y')\}$ are both $\mathbf N_\R$-compatible.
We will check various possibilities for $z$ starting with the highest possible values and and working down.
\begin{itemize}
\item If $y' < z < \pi$ then $w \leq x$ or $\pi + y' < w$.
\item If $z=y'$ then $w \leq x$ or $\pi + y \leq w < \pi + y'$.
\item If $y <z<y'$ then $\pi + y\leq w$.
\item If $z=y$ then $x'\leq w < \pi + y$.
\item If $x'-\pi < z < y$ then $x' \leq w$.
\item If $z=x'-\pi$ then $x\leq w<x'$.
\item If $x- \pi < z < x'-\pi$ then $x \leq w$.
\item If $z=x-\pi$ then $w<x$.
\item If $z < x-\pi$ then $w \leq x$.
\end{itemize}
In each of these cases $\{M(w,z),M(x',y),M(x,y')\}$ is $\mathbf N_\R$-compatible.
Thus,\\$M(x',y),M(x,y')\in T_{\mathbf N_\R}$.
We can further work with $M(w,z)$.
\begin{itemize}
\item We have $x \leq w \leq x'$ if and only if $x-\pi < z \leq x'-\pi$.
\item We have $\pi + y \leq w \leq \pi + y'$ if and only if $y  < z \leq y'$.
\end{itemize}
Thus $\{M(w,z), M(x,x'), M(y,y')\}$ is also $\mathbf N_\R$-compatible and so $M(x,x'),M(y,y')\in T_{\mathbf N_\R}$.

Let $(a,b)=\mathfrak f(x,y)$ and $(a',b')=\mathfrak f(x',y')$.
Then
\begin{align*}
(a',b) &= \mathfrak f(x',y) & (a,a') &= \mathfrak f(x,x') \\
(a,b') &= \mathfrak f(x,y') & (b,b') &= \mathfrak f(y,y').
\end{align*}
We see that for $\tau(a,b)$ both $a$ and $b$ are satisfactory.
Furthermore, for $\tau(a,a')$ and $\tau(b,b')$ all of $a$, $a'$, $b$, and $b'$ are satisfactory.
Thus $(T_{\mathbf E_\R}\setminus \{{_{(x,y)}M}\})\cup \{{_{(x',y')}M}\}$ is $\mathbf E$-compatible.
Furthermore, let $T'_{\mathbf N_\R} = (T_{\mathbf N_\R}\setminus \{M(x,y)\})\cup\{M(x',y')\}$.
Then 
\begin{displaymath}
T'_{\mathbf E_\R}  = (T_{\mathbf E_\R}\setminus \{{_{(x,y)}M}\})\cup \{{_{(x',y')}M}\}.
\end{displaymath}
Therefore $T'_{\mathbf E_\R}$ is an $\mathbf E$-cluster by Proposition \ref{prop:old cluster new cluster} and the lemma holds.
\end{proof}

\begin{theorem}\label{thm:continuous embedding}
There exists an embedding of cluster theories $(F,\eta):\mathscr T_{\mathbf N_\R}(\mathcal C_\pi)\to$ $\mathscr T_{\mathbf E}(\CAR)$.
\end{theorem}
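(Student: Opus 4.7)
The plan is to follow exactly the strategy used for Theorems \ref{thm:finite embedding} and \ref{thm:infinite embedding}, since all of the heavy lifting has already been done in the preceding propositions and lemmas. The construction assigning $T_{\mathbf N_\R}\mapsto T_{\mathbf E_\R}$ from Definition \ref{def:image of TNR} (together with the modification adding the $\tau(a,b)$ sets and possibly $I_{(b}$) gives the object part of the functor $F$, and Proposition \ref{prop:old cluster new cluster} guarantees the target really is an $\mathbf E$-cluster. For the morphism part, given any $\mathbf N_\R$-mutation $T_{\mathbf N_\R}\to (T_{\mathbf N_\R}\setminus\{M(x,y)\})\cup\{M(x',y')\}$ we assign to it the $\mathbf E$-mutation $T_{\mathbf E_\R}\to (T_{\mathbf E_\R}\setminus\{{_{(x,y)}M}\})\cup\{{_{(x',y')}M}\}$ supplied by Lemma \ref{lem:preserves mutation AR}.

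Next, I need to verify functoriality, injectivity on objects, and injectivity on generating morphisms. Functoriality on identity morphisms is immediate from the definition, and composites are preserved because mutations generate the morphisms in each cluster theory. Injectivity on objects follows from Proposition \ref{prop:f is a bijection}: distinct $M(x,y),M(x',y')$ give distinct interval indecomposables ${_{(x,y)}M},{_{(x',y')}M}$, so two $\mathbf N_\R$-clusters that differ in some $M(x,y)$ produce $\mathbf E$-clusters that differ in ${_{(x,y)}M}$. Injectivity on $\mathbf N_\R$-mutations follows from the same observation, since each $\mathbf N_\R$-mutation is recorded by the pair $(M(x,y),M(x',y'))$ it exchanges.

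For the natural transformation $\eta$, I define $\eta_{T_{\mathbf N_\R}}:I_{\mathbf N_\R,\mathcal C_\pi}(T_{\mathbf N_\R})\to I_{\mathbf E,\CAR}(F(T_{\mathbf N_\R}))$ by $M(x,y)\mapsto {_{(x,y)}M}$. This is an injection by Proposition \ref{prop:f is a bijection}. Naturality with respect to a mutation is the assertion that the following square commutes:
\begin{displaymath}\xymatrix{
T_{\mathbf N_\R}\ar[r]^-{\mu}\ar[d]_-{\eta_{T_{\mathbf N_\R}}} & (T_{\mathbf N_\R}\setminus\{M(x,y)\})\cup\{M(x',y')\}\ar[d]^-{\eta_{\mu T_{\mathbf N_\R}}}\\
T_{\mathbf E_\R}\ar[r]_-{F(\mu)} & (T_{\mathbf E_\R}\setminus\{{_{(x,y)}M}\})\cup\{{_{(x',y')}M}\}
}\end{displaymath}
which holds since the two paths send $M(x,y)\mapsto {_{(x',y')}M}$ and agree on every other element.

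The main obstacle has already been absorbed into Lemma \ref{lem:preserves mutation AR} and Proposition \ref{prop:old cluster new cluster}: one must check that the auxiliary elements added in the $\tau(a,b)$ step and the possible extra injective $I_{(b}$ behave correctly under mutation, so that replacing ${_{(x,y)}M}$ by ${_{(x',y')}M}$ really does transform $T_{\mathbf E_\R}$ into $T'_{\mathbf E_\R}$ without perturbing the auxiliary elements. Once these lemmas are in hand the remainder is formal bookkeeping, and the proof reduces to citing them, exactly as in the proofs of Theorems \ref{thm:finite embedding} and \ref{thm:infinite embedding}.
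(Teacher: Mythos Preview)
Your proposal is correct and follows essentially the same approach as the paper: define $F(T_{\mathbf N_\R})=T_{\mathbf E_\R}$ via Proposition \ref{prop:old cluster new cluster}, send mutations via Lemma \ref{lem:preserves mutation AR}, and set $\eta_{T_{\mathbf N_\R}}(M(x,y))={_{(x,y)}M}$. Your write-up is in fact somewhat more explicit than the paper's, which simply asserts that injectivity on clusters and mutations is straightforward; your invocation of Proposition \ref{prop:f is a bijection} for these points and the drawn-out naturality square are welcome elaborations of the same argument.
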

\begin{proof}
By Proposition \ref{prop:old cluster new cluster} and Lemma \ref{lem:preserves mutation AR} we see that defining $F(T_{\mathbf N_\R}):= T_{\mathbf E_\R}$ and sending $\mathbf N_\R$-mutations to the corresponding $\mathbf E$-mutations yields a functor $\mathscr T_{\mathbf N_\R}(\mathcal C_\pi)\to \mathscr T_{\mathbf E}(\CAR)$.
It is straightforward to check $F$ is an injection on clusters and mutations.

Let $\eta_{T_{\mathbf N_\R}}:T_{\mathbf N_\R}\to T_{\mathbf E_\R}$ be defined by $\eta_{T_{\mathbf N_\R}} (M(x,y))  := {_{(x,y)}M}$.
This is an injection by definition and by Lemma \ref{lem:preserves mutation AR} the $\eta$'s commute with mutation.
Therefore $(F,\eta)$ is an embedding of cluster theories.
\end{proof}

\begin{remark}
We remark here that all clusters in \cite{IgusaTodorov1} are $\mathbf N_\R$-clusters.
However, there are $\mathbf N_\R$-clusters that are not part of the cluster structure in \cite{IgusaTodorov1}.
For example, the vertical line $\{M(0,y): -\pi < y < \pi\}$ is an $\mathbf N_\R$-cluster but not part of the cluster structure in $\mathcal C_\pi$.

However, as with $A_\infty$, this is not truly an issue.
We have an injection on objects and so taking a subgroupoid of $\mathscr T_{\mathbf N_\R}(\mathcal C_\pi)$ that only contains the clusters in the cluster structure in $\mathcal C_\pi$ still embeds into $\mathscr T_{\mathbf E}(\mathcal C_\pi)$ while preserving mutation.
This is done in the next paper in this series.
\end{remark}

\subsection{Issues with Functors Between Cluster Categories}\label{sec:problems}
In this short subsection we describe issues to any simple or straightforward embedding $\mathcal C_\pi\to\CAR$ that is somehow compatible with mutation.
The first issue is the following: any embedding of $\mathcal C_\pi \to \CAR$ cannot originate as a functor $\mathcal D_\pi\to \DbAR$.

Since they are triangulated equivalent, we will consider $\DbAR[\mathcal NQ^{-1}]$ instead of $\mathcal D_\pi$.
As an example, consider an embedding that sends all the indcemposables in $\mathcal D_\pi$ to the indecomposable in position 1 in the inverse image of the localization $\DbAR \to \DbAR[\mathcal NQ^{-1}]$.
This type of embedding will preserve the triangulated structure but cause problems with compatibility after taking the orbit category.
In $\mathcal C_\pi$, $M(x,y)$ and $M(\pi+y,y')$ are compatible if $y < y'$. 
However, using an embedding $\DbAR[\mathcal NQ^{-1}]\to \DbAR$ and then taking the orbit sends $M(x,y)$ and $M(\pi+y,y')$ to $M_{[a,b)}$ and $M_{[b, b')}$. 
These are not $\mathbf E$-compatible.
Thus any such embedding would have to embed to positions 2 and 3, using position 3 in even degrees and position 2 in odd degrees to preserve the triangulated structure.

This creates a new problem.
Consider $M(x,y)$ and $M(x,y')$ where $y < y'$.
Suppose $M(x,y)$ is sent to degree 0 and $M(x,y')$ to degree 1 in $\DbAR$.
But now the slope from $M_{(a,b)}$ to $M_{(c,a)}[1]$ is greater than $(1,1)$ in the AR-space of $\DbAR$ and so $\Hom(M_{(a,b)}, M_{(c,a)}[1])=0$.
While this doesn't necessarily prevent such a functor from being triangulated, or even preserving mutation in some way after taking orbits, this is no longer an embedding.

So we are left with a functor $\mathcal C_\pi\to\CAR$.
As we've seen, we need to send the fundamental domain to position 3 as we did in Section \ref{sec:continuous embedding}.
But then we have the same problem as we've just described with Hom support, and so we don't have an embedding.
Since $\Ext (V,W)=\Hom(W,V)$ in both $\mathcal C_\pi$ and $\CAR$ and compatibility in $\mathcal C_\pi$ is determined by $\Ext$ this is fundamentally a problem.
Therefore, the authors resorted to cluster theories in order to create an embedding that makes sense.

\end{document}